\newcommand{\nc}{\newcommand}
\newcommand{\B}[1]{\overline{#1}}
\newcommand{\dd}{\textrm{d}}
\newcommand{\m}[2]{\left\langle #1,#2 \right\rangle}
\newcommand{\f}[1]{\mathfrak{#1}}
\newcommand{\ad}{\operatorname{ad}}
\newcommand{\R}{\mathbb{R}}
\newcommand{\id}{\operatorname{Id}}
\newcommand{\comment}[1]{}
\newcommand{\mc}[1]{\mathcal{#1}}
\nc{\Or}{\mathsf{O}}
\nc{\mud}{{\bm{\mu}}} 
\nc{\nud}{{\bm{\nu}}}
\nc{\ip}{{\langle \,\cdot \,,\cdot \,\rangle }} 
\nc{\la}{\langle} \nc{\ra}{\rangle}
\nc{\lla}{\langle \! \langle}
\nc{\rra}{\rangle \! \rangle}
\nc{\iip}{{\lla \cdot \, , \, \cdot \rra}}
\nc{\cca}{\mathcal{C}} 
\nc{\gca}{\mathcal{G}}
\nc{\mca}{\mc M}
\nc{\TT}{{T \oplus T^*}}
\nc{\TTH}{{(T \oplus T^*)_H}}
\nc{\lb}{[\cdot, \cdot]}
\nc{\G}{\mathsf{G}}
\nc{\bG}{\overline{\mc G}}
\nc{\infECA}{V}
\nc{\ggo}{\mathfrak{g}}
\nc{\lgo}{\mathfrak{l}}
\nc{\gggo}{\mathfrak{g} \oplus \mathfrak{g}^*}
\nc{\tr}{\operatorname{tr}}
\nc{\Lie}{\operatorname{Lie}}
\nc{\Ll}{\mathsf{L}}
\nc{\End}{\operatorname{End}}
\nc{\Ad}{\operatorname{Ad}}
\nc{\mm}{\operatorname{M}}
\nc{\Ricb}{\operatorname{Ric}^B}
\nc{\scal}{\operatorname{R}}
\theoremstyle{definition}
\newtheorem{thm}{Theorem}[section]
\newtheorem{defn}[thm]{Definition}
\newtheorem{prop}[thm]{Proposition}
\newtheorem{lem}[thm]{Lemma}
\newtheorem{cor}[thm]{Corollary}
\newtheorem{quest}[thm]{Question}
\newtheorem{example}[thm]{Example}
\newtheorem{remark}[thm]{Remark}
\newtheorem{thmintro}{Theorem}
\newtheorem{corintro}[thmintro]{Corollary}
\title{The homogeneous generalized Ricci flow}
 \author{Elia Fusi}
 \author{Ramiro Lafuente}
 \author{James Stanfield}
\begin{document}

 \begin{abstract}
 We develop a framework inspired by Lauret's ``bracket flow'' to study the generalized Ricci flow, as introduced by Streets, on discrete quotients of Lie groups. As a first application, we establish global existence on solvmanifolds in arbitrary dimensions, a result which is new even for the pluriclosed flow. We also define a notion of generalized Ricci soliton on exact Courant algebroids that is geometrically meaningful and allows for non-trivial expanding examples. On nilmanifolds, we show that these solitons arise as rescaled limits of the generalized Ricci flow, provided the initial metrics have ``harmonic torsion", and we classify them in low dimensions. Finally, we provide a new formula for the generalized Ricci curvature of invariant generalized metrics in terms of a moment map for the action of a non-reductive real Lie group.
 \end{abstract}
 %
\maketitle

\setcounter{tocdepth}{1} 

\tableofcontents

\section{Introduction}\
  
Motivated by non-K\"ahler Hermitian geometry and mathematical physics, the \emph{generalized Ricci flow} is an analogue of the classical Ricci flow in the setting of Hitchin's \emph{generalized geometry} \cite{Cou90, CSCW11, GF19, GRFbook, GuaAnnals, KW07,LWX97, Sletters, SV17, SV20}. It is a flow of \emph{generalized Riemannian metrics} on an exact Courant algebroid. The latter can be described, up to isomorphism, by a smooth manifold $M$ and a choice of cohomology class $[H]\in H^3(M, \R)$, whereas the former can  be thought of as a pair $(g,b)$, with $g$ a Riemannian metric on $M$, and $b \in \Omega^2(M)$ a 2-form on $M$. A smooth family $(g_t,b_t)_{t \in I}$ of generalized Riemannian metrics on $(M,[H])$ solves the \emph{generalized Ricci flow} if it satisfies the following weakly parabolic PDE:
\begin{equation}\label{eqn_GRFintro}
	\begin{cases}
		\tfrac{\partial}{\partial t} g_t &= \ -2 \operatorname{Ric}(g_t) + \frac 12 (H + \dd b_t)^2_{g_t},\\
		\tfrac{\partial}{\partial t} b_t &= \  - \dd^*_{g_t}(H + \dd b_t),
	\end{cases}
\end{equation}
where, for a $3$-form $H$ and Riemannian metric $g$, $H_g^2 \in \operatorname{Sym}^2(T^*M)$ is defined by $H^2_g(X,X) = |\iota_XH|_g^2$ for all $X \in TM$. The classical Ricci flow is the particular case where $H = 0$ and $b_t \equiv 0$.
By abuse of notation, we will sometimes call the pair $(g,H + \dd b)$ a generalized Riemannian metric on $M$, meaning that the exact Courant algebroid is $(M,[H])$ and the generalized metric is the pair $(g,b)$. Similarly, we may refer to a family $(g_t,H_t := H + \dd b_t)$ as a generalized Ricci flow on $M$.


The primary goal of this article is to give a systematic treatment of the generalized Ricci flow in the locally homogeneous case. To simplify the presentation, we focus on exact Courant algebroids over Lie groups (or their discrete quotients), endowed with left-invariant generalized metrics. That is, we consider exact Courant algebroids of the form $(\mathsf G,[H])$, where $\mathsf{G}$ is a Lie group and $[H] \in H^3(\mathsf G, \R)^{\mathsf{G}}$ is an invariant cohomology class, and we restrict ourselves to generalized metrics $(g,b)$ that are left-invariant. In this direction, our first main result is that the invariant generalized Ricci flow exhibits long-time existence on \emph{solvmanifolds}:

\begin{thmintro}\label{thmIntro_solvmfdGRF} Any invariant solution to the generalized Ricci flow on a solvmanifold exists for all positive times.
\end{thmintro}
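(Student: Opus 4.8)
The plan is to follow the \emph{bracket flow} strategy. First I would recast the invariant generalized Ricci flow \eqref{eqn_GRFintro} as an ODE: fixing the underlying vector space $\ggo = \Lie(\G)$ together with a background inner product, the evolving invariant data $(g_t,H_t)$ can be gauged---via a time-dependent family of Lie group automorphisms together with $B$-field transformations---into a flow of a pair $(\mu_t,H_t)$, where $\mu_t \in \Lambda^2\ggo^*\otimes\ggo$ is a Lie bracket isomorphic to that of $\G$ and $H_t \in \Lambda^3\ggo^*$ is a $\mu_t$-closed $3$-form. This \emph{generalized bracket flow} has the schematic form $\tfrac{d}{dt}(\mu_t,H_t) = -\pi(\Ricb_{(\mu_t,H_t)})\,(\mu_t,H_t)$, where $\pi$ is the natural linear action of $\End(\ggo)$ on the ambient space of pairs and $\Ricb$ is the generalized Ricci operator. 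Since the right-hand side is a polynomial, hence smooth, vector field on a finite-dimensional variety, standard ODE theory yields a unique maximal solution on some interval $[0,T)$, and by the equivalence of the two flows the solution of \eqref{eqn_GRFintro} exists on the same interval. It therefore suffices to show $T=\infty$.

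Next I would exploit the blow-up alternative for such an ODE: either $T=\infty$, or $\|(\mu_t,H_t)\|\to\infty$ as $t\nearrow T$. Since $\|\tfrac{d}{dt}(\mu_t,H_t)\| \le c\,\|\Ricb_{(\mu_t,H_t)}\|\,\|(\mu_t,H_t)\|$, Gr\"onwall's inequality shows the norm cannot escape to infinity on a finite interval as soon as $t \mapsto \|\Ricb_{(\mu_t,H_t)}\|$ is integrable there. By Cauchy--Schwarz, on a finite interval this in turn follows from a bound on $\int_0^T \|\Ricb_{(\mu_t,H_t)}\|^2\,dt$ that is uniform in $T$.

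The crux is therefore to produce this integral bound, and this is where solvability enters decisively. The generalized scalar curvature $\scal_{g_t} - \tfrac1{12}|H_t|^2_{g_t}$ should be monotone non-decreasing along the flow, with $\tfrac{d}{dt}\big(\scal_{g_t}-\tfrac1{12}|H_t|^2_{g_t}\big) = c'\,\|\Ricb_{(\mu_t,H_t)}\|^2$ for some $c'>0$. On the other hand, each $(\ggo,\mu_t)$ is a solvable Lie algebra isomorphic to $\Lie(\G)$, so by Milnor's theorem no invariant metric has positive scalar curvature, i.e.\ $\scal_{g_t}\le 0$; as the torsion term is also non-positive, the generalized scalar curvature remains $\le 0$ for all $t$. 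Being monotone and bounded above, it converges, and integrating the evolution equation gives $\int_0^T \|\Ricb_{(\mu_t,H_t)}\|^2\,dt \le \tfrac1{c'}\big(0-(\scal_{g_0}-\tfrac1{12}|H_0|^2_{g_0})\big)$, a constant independent of $T$. Combined with the previous paragraph, this forces $T=\infty$.

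I expect the main obstacle to lie in rigorously establishing the two geometric inputs of the last step in the \emph{generalized} (torsion-coupled) setting: the monotonicity of the generalized scalar curvature, together with the identification of its non-negative time-derivative as precisely the operator norm-squared of the object generating the bracket flow; and the persistence of the bound $\scal_{g_t}\le 0$, which hinges on solvability being preserved along the generalized bracket flow despite the coupling with $H_t$. Setting up the equivalence between \eqref{eqn_GRFintro} and the generalized bracket flow carefully enough that these curvature identities transfer faithfully to the ODE side is the technical heart of the argument.
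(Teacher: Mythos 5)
Your proposal is correct and follows essentially the same route as the paper: pass to the bracket flow, note that the bracket norm must blow up at a finite maximal time, control its growth via Gr\"onwall by $\int \lvert\mc Rc\rvert^2\,\dd t$, identify that integral with the total variation of the generalized scalar curvature through the monotonicity formula $\tfrac{\dd}{\dd t}\mc S = \lvert\mc Rc(\gca_t)\rvert^2$, and conclude from the non-positivity of the scalar curvature of invariant metrics on solvable groups (the torsion term being manifestly non-positive). The two technical points you flag are handled in the paper exactly as you anticipate: the monotonicity formula comes from Streets' Perelman-type identity with a spatially constant potential, and the discrepancy between $\mc Rc_\mud$ and the operator $\mc Rc_\mud - A_\mud$ driving the bracket flow is absorbed by the elementary bound $\lvert\mc Rc_\mud - A_\mud\rvert^2 \le 2\lvert\mc Rc_\mud\rvert^2$, while solvability is automatic since the bracket flow stays in the $\Ll$-orbit of the initial bracket.
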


 More generally, we characterise the maximal existence time for the invariant generalized Ricci flow on any Lie group $\mathsf G$ in terms of the blow-up behaviour of the \emph{generalized scalar curvature} (see \Cref{thm_blowup}). Here by \emph{solvmanifold} we mean any (not necessarily compact) locally homogeneous space of the form $\mathsf{G} / \Gamma$, where $\mathsf{G}$ is a solvable Lie group, and $\Gamma \leq \mathsf{G}$ is a discrete subgroup. It is called a \emph{nilmanifold} if $\mathsf{G}$ is nilpotent. An \emph{invariant solution} is simply a solution to \eqref{eqn_GRFintro} on $\mathsf{G}/\Gamma$ that lifts to a left-invariant solution on $\mathsf{G}$. The class of solvmanifolds is known to be very rich. Examples in four dimensions include  Inoue surfaces and primary Kodaira surfaces.

Notice that \Cref{thmIntro_solvmfdGRF} is new even in the case of the pluriclosed flow. Recall that when $(M,J,g)$ is Hermitian and $H$ is the torsion of the Bismut connection, \eqref{eqn_GRFintro} is gauge-equivalent to the pluriclosed flow on $(M,J)$ \cite{GRFbook}. As an immediate application of \Cref{thmIntro_solvmfdGRF} we get 

\begin{corintro} \label{cor_main_PCF}
Any invariant solution to the pluriclosed flow on a solvmanifold exists for all positive times.
\end{corintro}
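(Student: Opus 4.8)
The plan is to deduce the statement directly from \Cref{thmIntro_solvmfdGRF}, using the Bismut gauge-equivalence between the pluriclosed flow and the generalized Ricci flow recalled in the introduction. I would begin by setting up the correspondence at the level of initial data. Let $\omega_t$ be an invariant solution to the pluriclosed flow on a solvmanifold $\mathsf{G}/\Gamma$ carrying an invariant complex structure $J$, and associate to $\omega_0$ the Hermitian metric $g_0 = \omega_0(\cdot,J\cdot)$ together with the Bismut torsion $H$ of $(g_0,J)$, which satisfies $H = -\dd^c\omega_0$. Since $\omega_0$ and $J$ are both left-invariant, so is $H$; moreover the pluriclosed condition $\partial\bar\partial\omega_0 = 0$ is equivalent to $\dd H = 0$, so that $H$ is a closed invariant $3$-form determining an invariant class $[H]\in H^3(\mathsf{G},\R)^{\mathsf{G}}$. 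This produces the exact Courant algebroid $(\mathsf{G},[H])$ equipped with the invariant generalized metric $(g_0,0)$.

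Next I would invoke the gauge-equivalence (see \cite{GRFbook}): the pluriclosed flow issuing from $\omega_0$ agrees, up to a one-parameter family of diffeomorphisms $\phi_t$ and closed $b$-field transformations, with the generalized Ricci flow \eqref{eqn_GRFintro} on $(\mathsf{G},[H])$ starting from $(g_0,0)$. The essential observation in the homogeneous setting is that the vector field generating $\phi_t$ is built from the Lee form of the invariant Hermitian structure and is therefore itself left-invariant; since left-invariant vector fields on $\mathsf{G}/\Gamma$ are complete, $\phi_t$ is defined on exactly the interval on which the flow exists, and the closed $b$-field shifts do not affect that interval. Hence the pluriclosed flow and the corresponding generalized Ricci flow share the same maximal existence time.

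Finally, the generalized Ricci flow constructed above is an invariant solution on the solvmanifold $(\mathsf{G}/\Gamma,[H])$, so \Cref{thmIntro_solvmfdGRF} applies and yields existence for all positive times; transporting this back through the gauge-equivalence gives global existence of $\omega_t$. The step requiring the most care is precisely the gauge reduction: one must verify that the Bismut correspondence sends invariant pluriclosed metrics to invariant generalized metrics, and that the generating vector field is invariant and hence complete, so that no existence time is lost when passing between the two flows. Both verifications are routine within the invariant category, and it is this completeness of the gauge diffeomorphisms that makes the corollary an immediate consequence of \Cref{thmIntro_solvmfdGRF}.
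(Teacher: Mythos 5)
Your proposal is correct and follows essentially the same route as the paper: reduce to the generalized Ricci flow via the Streets--Tian gauge-equivalence (generated by the Lee vector field, which is left-invariant hence complete in the invariant setting) and then apply the global existence result on solvmanifolds. The only point you leave implicit that the paper makes explicit is the constant time reparametrization entering the gauge-equivalence, which of course does not affect the conclusion that the maximal existence time is infinite.
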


This was previously known in the case of complex surfaces, for nilmanifolds, for $6$-dimensional solvmanifolds with trivial canonical bundle, for almost abelian solvmanifolds, for some almost nilpotent solvmanifolds, and for Oeljeklaus--Toma manifolds \cite{Bol2016,FV15,AL19,AN22,FP22,FV23}. Our approach gives a unified treatement of all these results, and applies to SKT solvmanifolds on which the pluriclosed flow has not yet been studied. For instance, the Endo--Pajitnov manifolds \cite{EP20} which were recently shown to admit SKT structures in \cite{COS24}. Moreover, the existence of SKT metrics was characterized  by Freibert--Swann in \cite{FS21} on a large class of 2-step solvmanifolds,   and on solvmanifolds admitting a codimension $2$ abelian ideal by Guo--Zheng \cite{GZ23} and Cao--Zheng \cite{CZ23}.

It is conjectured that the pluriclosed flow on a compact complex manifold $(M,J)$ exists for all time if and only if $-c_1^A(M,J)$ is a nef class (that is, $-c_1^A(M,J)$ lies in the closure of the cone of classes in $H_A^{1,1}(M,J)$ containing a positive representative), where $c_1^A(M,J)\in H_A^{1,1}(M,J)$ is the \emph{first Aeppli--Chern class} of $(M,J)$ \cite{S16conj, S20conj}. It is well-known that the first Chern class of solvmanifolds is zero, hence the first Aeppli--Chern class vanishes as well. In particular, $-c_1^A(\mathsf{G}/\Gamma,J)$ is a nef class on compact solvmanifolds, and thus \Cref{cor_main_PCF} verifies the conjecture in this setting. 

After having established global existence, a natural problem is to determine the long-time asymptotics of solutions. In this direction, on a nilmanifold $M = \mathsf{G}/\Gamma$ with harmonic $H_0$  (i.e.~ $\dd\dd^*H_0 = 0$), we can characterise the possible limits as  follows:

\begin{thmintro}\label{thm_main_nil} Let  $(M,g_t,H_t = H_0 + \dd b_t)_{t \in [0,\infty)}$ be an invariant solution to the generalized Ricci flow on a nilmanifold $M=\mathsf{G}/\Gamma$ with  $H_0$ harmonic. Then, after pulling back to the universal cover, the rescaled generalized metrics $(\frac{1}{1+t}g_t,\frac{1}{1+t}H_t)_{t \in [0,\infty)}$ on the time-dependent exact Courant algebroids $(\mathsf G, \frac{1}{1+t}[H_0])$ converge, in the \emph{generalized Cheeger--Gromov} topology, to a left-invariant, expanding generalized Ricci soliton $(g_\infty,H_\infty)$ on a nilpotent Lie group $\mathsf{G}_\infty$.
\end{thmintro}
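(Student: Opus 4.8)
The plan is to follow the bracket-flow philosophy of Lauret, adapted to the generalized geometry setting developed earlier in the paper. The key idea is that on a nilmanifold, left-invariant data is encoded by algebraic objects on the Lie algebra $\ggo = \Lie(\mathsf G)$: the Lie bracket $\mu = \lb$, a positive-definite inner product (the generalized metric), and the torsion $3$-form $H_0 \in \Lambda^3 \ggo^*$. Rather than evolving $(g_t, H_t)$ with a fixed bracket, I would pass to the bracket flow, where the inner product and torsion are held fixed at a reference and the Lie bracket $\mu_t$ evolves by an ODE whose stationary/self-similar points correspond to generalized Ricci solitons. The gauge-theoretic/moment-map description promised in the abstract (and the blow-up characterization in \Cref{thm_blowup}) should provide the dictionary translating \eqref{eqn_GRFintro} into this bracket ODE on the variety of nilpotent Lie brackets.

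\smallskip

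\noindent First I would set up the rescaling. The factor $\frac{1}{1+t}$ is the natural expanding normalization: under it, the nilpotent bracket $\mu_t$ rescales, and I would introduce the bracket flow $\tfrac{\dd}{\dd t}\mu_t = -\mu_t \cdot (\text{curvature endomorphism})$ living on the closure of a $\mathsf G$-orbit in the space $\Lambda^2\ggo^* \otimes \ggo$ of brackets (together with the evolving $H$-component in $\Lambda^3\ggo^*$). The harmonicity hypothesis $\dd\dd^* H_0 = 0$ should ensure that the torsion part of the flow is well-behaved—specifically, that $H_t$ stays in a controlled family and does not force degeneration of the $[H_0]$-class faster than the metric, so that the normalized Courant algebroid data $(\mathsf G, \tfrac{1}{1+t}[H_0])$ converges. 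Next I would establish \textbf{boundedness and convergence of the normalized bracket}: using global existence (\Cref{thmIntro_solvmfdGRF}) together with a monotonicity/Lyapunov quantity—presumably the generalized scalar curvature or an associated moment-map norm functional—I would show the normalized bracket $\mu_t$ stays in a compact subset and subconverges, as $t \to \infty$, to a limit bracket $\mu_\infty$ defining $\mathsf G_\infty$.

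\smallskip

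\noindent The third step is to \textbf{identify the limit as a soliton}. By the general principle that fixed points of a normalized bracket flow are precisely the self-similar (soliton) solutions, the limit $(\ggo_\infty, g_\infty, H_\infty)$ with $\mu_\infty$ should satisfy the algebraic generalized Ricci soliton equation defined earlier in the paper; the expanding sign of the normalization forces it to be an expanding soliton. Here I would invoke the real-GIT/moment-map picture: the limit $\mu_\infty$ is a critical point (minimum along the orbit, or a point whose curvature endomorphism equals a multiple of identity plus a derivation), which is exactly the algebraic soliton condition. Finally, I would upgrade algebraic convergence of brackets to \textbf{geometric convergence} in the generalized Cheeger--Gromov topology: convergence $\mu_t \to \mu_\infty$ of brackets yields convergence of the associated simply connected nilpotent Lie groups with their invariant generalized metrics and torsion, after the appropriate diffeomorphisms (coming from the changing Lie group structure) and the $b$-field gauge transformations intrinsic to the generalized-geometric framework—this is why the statement passes to the universal cover and uses the \emph{generalized} Cheeger--Gromov notion.

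\smallskip

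\noindent \textbf{The hard part} will be the convergence/non-degeneration step: controlling the torsion $3$-form $H_t$ simultaneously with the bracket, and proving that the rescaled bracket cannot run off to infinity or collapse. On nilmanifolds the classical Ricci-flow analogue relies on the stratification of the nilpotent-bracket variety and on the fact that the scalar curvature is a proper-like functional along orbits; the genuinely new difficulty here is that the relevant curvature is the \emph{generalized} Ricci/scalar curvature, which couples the metric and the torsion, so the Lyapunov argument must be run for the moment map of the larger, non-reductive real group acting on pairs $(\mu, H)$. I expect the harmonicity assumption on $H_0$ to be exactly what closes this gap, guaranteeing that the torsion evolves compatibly and that the limit carries a well-defined harmonic torsion making $(g_\infty, H_\infty)$ a genuine expanding generalized Ricci soliton.
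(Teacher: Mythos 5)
Your plan follows the paper's route: pass to the (normalized) generalized bracket flow, use the moment-map structure of the generalized Ricci curvature under the non-reductive group $\Ll = \mathsf{GL}(\ggo)\ltimes \Lambda^2\ggo^*$ to identify the normalized flow as a gradient flow, and then convert bracket convergence into generalized Cheeger--Gromov convergence. The skeleton is right and you correctly locate the difficulty in the coupling of torsion and metric.

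The genuine gap is that the crucial step is left as a hope --- ``I expect the harmonicity assumption to be exactly what closes this gap'' --- without the mechanism, and the mechanism is more specific than the torsion ``staying in a controlled family''. Two facts do the work. First, harmonicity is \emph{preserved} along the bracket flow: $\dd^*_{\mu}H$ satisfies an ODE that keeps it zero if it starts zero (\Cref{lem:evdstar}); without this the hypothesis on $H_0$ says nothing about later times. Second, for nilpotent $\mu$ one has $\mc Rc_\mud = \mm_\mud + A_\mud$, where $A_\mud$ is the skew endomorphism built from $\dd^*_\mu H$ (\Cref{cor_mmnil}); harmonicity kills $A_\mud$, so the endomorphism driving the flow is \emph{literally} the moment map $\mm_\mud$ for the $\Ll$-action on Dorfman brackets, which is symmetric. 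Only then is the scalar-normalized flow the negative gradient flow of the real-analytic functional $\mud \mapsto |\mm_\mud|^2/|\mud|^4$, and {\L}ojasiewicz's theorem upgrades your ``subconverges'' to actual convergence to a unique fixed point (\Cref{prop_BFConv}) --- the theorem asserts convergence, and for non-harmonic $H_0$ this structure genuinely fails, since $\widetilde{\mc Rc}_\mud = \mm_\mud$ is then not symmetric and the normalized flow is not a gradient flow. The final upgrade to generalized Cheeger--Gromov convergence is as you describe, using that nilpotency makes the exponential maps global diffeomorphisms and choosing explicit primitive $2$-forms as the $b$-field gauges (\Cref{thm_BToCG}).
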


By \emph{generalized Ricci soliton} we mean a solution $(g_t, H_t = H + \dd b_t)$ to \eqref{eqn_GRFintro} which, up to pull-back by time-dependent diffeomorphisms, evolves only by scaling. That is, there exist $c(t) \in \R_{>0}$, and $\varphi_t \in {\rm Diff}(M)$ such that, for all $t$,  it holds that
\[
		 \left( \varphi_t^* g_t,  \varphi_t^* H_t\right) = (c(t) \, g_0, c(t)\, H_0).
\]
Notice that the generalized metrics $(c(t) \, g_0, c(t)\, H_0)$ live on the time-dependent exact Courant algebroids $(M, c(t) [H])$. By allowing $H$ to scale, this definition differs from others found in the literature \cite[4.4.2]{GRFbook} (compare however with \cite[Remark 6.31]{GRFbook}), but agrees with the definition posed in \cite{PR23}. We adpot it here because it allows for non-trivial, non-steady solitons, and because examples arise naturally as asymptotic limits by Theorem \ref{thm_main_nil}. We refer the reader to Section \ref{sec_GG} for more technical details regarding this notion and the scaling of exact Courant algebroids, and to Section \ref{sec_LongNil} for the definition of generalized Cheeger-Gromov topology.

Notice that the limit group may be non-isomorphic to $\G$ (this occurs already in the classical setting \cite{nilRF}).  We conjecture that the assumption $H_0$ harmonic  is not necessary. This will be the subject of forthcoming work.  However, let us mention that in dimensions up to $4$, all  left-invariant generalized Ricci solitons on nilpotent Lie groups do have harmonic $H_0$. This follows from our complete classification  in \Cref{sec_classifi}.


Let us now turn to the methods used to prove our main results. Proceeding more precisely, the exact Courant algebroid over $M$ correponding to the cohomology class $[H_0] \in H^3(M, \R)$ can be identified with the bundle $(TM\oplus T^*M,[\cdot,\cdot]_{H_0})$ endowed with the \emph{twisted Dorfmann bracket} $[\cdot,\cdot]_{H_0}$, a bracket law on $TM \oplus T^*M$ determined by $H_0$ and the usual Lie bracket of vector fields $[\cdot,\cdot]$ on $TM$, see \eqref{eqn_[]H-general}. The pair $(g,b)$ then determines a generalized metric on $TM \oplus T^*M$, which is an endomorphism $\mathcal{G}(g,b) \in \operatorname{End}(TM \oplus T^*M)$. The \emph{generalized Ricci curvature} $\mathcal Rc(\mathcal G) \in \operatorname{End}(TM \oplus T^*M)$ of $\mathcal{G}$ is a $\mathcal{G}$-symmetric endomorphism on $TM \oplus T^*M$ (see \Cref{sec_genRic} for its definition) and the generalized Ricci flow is the following PDE:
\[
\frac{\partial}{\partial t}\mathcal G = -2 \, \mathcal{G}  \, \mathcal Rc(\mathcal G).
\]

If $M = \mathsf{G}$ is a Lie group and $[H_0] \in H^3(\mathsf{G}, \R)^{\mathsf G}$ is invariant, then the exact Courant algebroid structure $(T\mathsf{G}\oplus T^*\mathsf{G},[\cdot,\cdot]_{H_0})$ can be identified with the pair $(\f g\oplus \f g^*,\mud_{H_0})$, where $\f g$ is the Lie algebra of $\mathsf{G}$, and $\mud_{H_0}$ is a bracket on $\f g \oplus \f g^*$ determined by $H_0 \in \Lambda^3\f g^*$ and $\mu$, the Lie bracket of $\f g$. The invariant generalized Ricci flow is then equivalent to an ODE on the space of endomorphisms of $\f g \oplus \f g^*$. We now follow Lauret's \emph{bracket flow} approach. The non-reductive group $\mathsf{L} = \B{\mathsf{GL}(\f g)}\cdot\operatorname{exp}(\Lambda^2 \f g^*) \cong \mathsf{GL}(\f g) \ltimes \Lambda^2\f g^*$ acts transitively on the space of invariant generalized metrics. Thus, by fixing a background generalized metric $\B{\mathcal{G}} = \mathcal{G}(g,0)$, we pull-back the generalized Ricci flow solution to an ODE on the space of brackets on $\f g \oplus \f g^*$ via the action of $\mathsf{L}$. 

\begin{thmintro}\label{thm_BFMain} The invariant generalized Ricci flow on $(\mathsf{G},[H_0])$ is gauge-equivalent to the following ODE of Dorfman brackets on $\f g \oplus \f g^*$:
\begin{equation} 
	\frac{\dd}{\dd t}\mud(t) = - \widetilde{\mc Rc}_{\mud(t)}\mud(t) + \mud(t)(\widetilde{\mc Rc}_{\mud(t)}\cdot,\cdot) + \mud(t)(\cdot,\widetilde{\mc Rc}_{\mud(t)}\cdot).
\end{equation}
\end{thmintro}
Here, for a twisted Dorfman bracket $\mud = \mud_{H,\mu}$, $\widetilde{\mc Rc}_\mud := \mc Rc_\mud - A_{\mud}$ where $\mathcal Rc_{\mud} \in \operatorname{End}(\f g \oplus \f g^*)$ is the generalized Ricci curvature of the infinitesimal exact Courant algebroid structure $(\f g \oplus \f g^*,\mud_H)$, and $A_{\mud} := \operatorname{Skew}_{\B{\mathcal G}}(\dd^*H)$, where we consider the two-form $\dd^*H \colon \f g \to \f g^*$ as a map from $\f g$ to $\f g^*$.

Let us remark that a bracket flow approach to the homogeneous generalized Ricci flow was considered by Paradiso in \cite{Par} using the action of $\mathsf{GL}(\f g) \leq \mathsf L$. The novelty of our approach, and what makes \Cref{thm_BFMain} work, is that we consider the bigger group $\mathsf L$ which acts transitively on the space of generalized metrics.
The two approaches agree only when $H_0$ is harmonic (see \Cref{rmk_par}).

Another important ingredient in our methods is a new formula for the generalized Ricci curvature of an invariant exact Courant algebroid resembling that of the classical Ricci curvature on a Riemannian homogeneous manifold:
\begin{equation}\label{eqn_GenBFIntro}
\mc Rc_{\mud} - A_{\mud} = \mm_{\mud} -\tfrac 12 \B{B} -\B{\operatorname{Sym}_g(\ad_U)}.
\end{equation}
Here, $U \in \f g$ is the \emph{mean curvature vector} defined by $g(U,X) = \tr(\ad_X),$ for $X \in \f g$, and $B$ is the \emph{Killing form} defined by $g(BX,Y) = \tr(\ad_X \circ \ad_Y)$ for $X,Y \in \f g$. For $F \in \operatorname{End}(\f g)$, we define $\B F := F \oplus -F^* \in \operatorname{End}(\f g \oplus \f g^*)$. Finally, $\mm_{\mud} \in \operatorname{End}(\f g \oplus \f g^*)$ is the \emph{moment map} for the natural action of $\mathsf{L}$ on the space of brackets $\Lambda^2(\f g \oplus \f g^*)^* \otimes (\f g \oplus \f g^*)$  (see \cite[Definition 1.2]{GIT20}).

The identification of a moment map in the structure of the generalized Ricci curvature allows us to use techniques from real geometric invariant theory \cite{GIT20}. In particular, for nilmanifolds with $H_0$ harmonic, it holds that $\mathcal Rc_\mud = \mm_{\mud}$. This is the key ingredient in the proof of \Cref{thm_main_nil} since after normalizing, \Cref{eqn_GenBFIntro} becomes a real analytic gradient flow (see \Cref{prop_BFConv}). If $H_0$ is not harmonic, we have $\widetilde{\mathcal Rc}_{\mud} = \mm_{\mud}$ which is not symmetric, as $\mathsf{L}$ is not a reductive Lie group.

There have been many other recent works on generalized Ricci flows with symmetries. The Bismut flat spaces - which are fixed points of the generalized Ricci flow - were classified by \cite{CS26, CS26bis, WYZ20}. They are covered by compact Lie groups with bi-invariant metric, and the form $H$ given by the contraction of the Lie bracket with said metric. By a recent result of Lee \cite{Lee24CY}, they are dynamically stable with respect to the GRF. Furthermore, Barbaro in \cite{Bar22, Bar23} proved  global stability for Bismut-flat metrics on complex  Bismut--flat manifolds with finite fundamental group with respect to the pluriclosed flow using a stability result appeared in  \cite{GFJS23}. The first examples of homogeneous Bismut--Ricci flat metrics which are non-flat were given by Podestà--Raffero \cite{PR22, PR24}. Lauret--Will in \cite{LW23} constructed further examples in this setting. The stability of many of these examples was studied by Gutierrez in \cite{GU24}. In \cite{PR23} Podestà--Raffero construct examples of $\mathsf{SO}(3)$-invariant steady, gradient generalized solitons on $\mathbb{R}^3$, resembling the structure of the classical Bryant soliton for the Ricci flow, see \cite{Brsol}. On the other hand, it was recently shown that all compact shrinking solitons of gradient type are in fact classical solitons \cite{LY24}. Moreover, in \cite{SU21},  Streets--Ustinovskiy classified  generalized K\"ahler--Ricci solitons on complex surfaces. Beyond soliton solutions, the generalized Ricci flow was studied on nilmanifolds by Paradiso \cite{Par} and on cohomogeneity-one manifolds with nilpotent symmetry by Gindi--Streets \cite{GS23}. The toric, Fano case of the generalized K\"ahler--Ricci flow was treated by Apostolov--Streets and Ustinovskiy in \cite{ASU22}.

The rest of the article is structured as follows: In \Cref{sec_GG}, we recall the notion of an exact Courant algebroid and generalized metrics. We also define \emph{scaled} exact Courant algebroids with the motivation of defining non-trivial expanding or shrinking solitons. We then discuss rigorously the generalized Ricci curvature and generalized Ricci flow. In \Cref{sec_solitons}, we define and discuss generalized Ricci solitons, and present in detail some examples on the Heisenberg group. In \Cref{sec_homGG} and \Cref{sec_BF}, we treat homogeneous generalized geometry and the generalized bracket flow, then derive \Cref{eqn_GenBFIntro}, and prove \Cref{thm_BFMain}. In Sections \ref{sec_solv} -- \ref{sec_LongNil} we prove our main theorems \Cref{thmIntro_solvmfdGRF} and \Cref{thm_main_nil} using the generalized bracket flow and techniques from real geometric invariant theory. We discuss in \Cref{sec_PCF} the relationship between the pluriclosed flow and the generalized Ricci flow in our (not necessarily compact) setting, leading to \Cref{cor_main_PCF}. Finally, in \Cref{sec_classifi} we classify generalized Ricci solitons on nilmanifolds in dimensions less than or equal to 4.

\subsection*{Acknowledgements} This project originated during the first-named author's visit to the University of Queensland. For this,  he wishes to express his deep gratitude to it and to the second and third author for their cheerful hospitality and for many inspiring conversations. The third-named author would like to thank Christoph B\"ohm, Alberto Raffero, and Luigi Vezzoni for fruitful conversations and their interest in this work. The first-named author is supported by GNSAGA of INdAM. The second-named author was supported by the ARC project DE190101063. The third-named author is funded by the Deutsche Forschungsgemeinschaft (DFG, German Research Foundation) under Germany's Excellence Strategy EXC 2044 –390685587, Mathematics Münster: Dynamics–Geometry–Structure and was previously supported by an Australian Government Research Training Program (RTP) Scholarship.








\section{Preliminaries on generalized geometry}\label{sec_GG}

In this section we review the basic notions of generalized Geometry and setup our notation. We mainly follow \cite{GRFbook}. We also introduce a scaling of exact Courant algebroids that will be central in our definition of soliton solutions for the generalized Ricci flow.  

 For additional motivation and details on exact Courant algebroids we refer the reader to \cite{Cou90, GuaAnnals, LWX97,Sletters, SV17, SV20} and the references therein.

\subsection{Exact Courant algebroids}

An exact Courant algebroid (ECA, for short) is a vector bundle $E\to M^n$ of rank $2n$ together with the following data:
\begin{itemize}
	\item A non-degenerate, symmetric bilinear form $\ip$ of signature $(n,n)$, called the \emph{neutral inner product};
	\item A bracket $\lb$ on $\Gamma(E)$, called the \emph{Dorfman bracket};
	\item A bundle map $\pi : E \to TM$, called the \emph{anchor map};
\end{itemize}
subject to the following axioms being satisfied for all $a,b,c \in \Gamma(E)$ and $f\in C^\infty(M)$:
\begin{enumerate}[({ECA}{\arabic*})]
	\item $[a,[b,c]] = [[a,b],c] + [b,[a,c]]$;
	\item $\pi[a,b] = [\pi a, \pi b]$;
	\item $[a,fb] = f[a,b] + \pi(a) fb$;
	\item $\pi(a)   \la b,c \ra  = \la [a,b], c \ra + \la b, [a,c]\ra$;
	\item $[a,b] + [b,a] =  (\pi^* \circ \dd) \la a,b \ra$;
	\item there is an exact sequence of vector bundles 
	\begin{equation}\label{eqn_exactseq}
		 0 \longrightarrow  T^*M \overset{\pi^*}\longrightarrow E \overset{\pi} \longrightarrow TM \longrightarrow 0 \, .
	\end{equation}
\end{enumerate}

In the last two axioms  we have abused notation and considered $\pi^*: T^* M \to E^*$ as a map $\pi^* : T^* M \to E$, after composing with the isomorphism $E^* \simeq E$ given by $\ip$. Notice also that  $\dd$ in  (ECA5) is the exterior differential in $M$,  and the bracket in the right-hand-side of (ECA2) is the Lie bracket of vector fields in $M$. The Dorfman bracket is not necessarily skew-symmetric, but it does satisfy the Jacobi identity, see \cite[Lemma 2.5]{GRFbook} for the proof. Although we will not be making use of it, it is worth mentioning that,   in the literature, the skew-symmetrization of the Dorfman bracket is called \emph{Courant bracket} and it is defined by:
$$
[a, b]_c:=\frac12([a,b]-[b, a])\,, \quad a,b \in \Gamma(E)\,.
$$ 
 Despite being skew-symmetric, the Courant bracket does not satisfy the Jacobi identity.  Indeed, one can show that
 $$
 [a, [b, c]_c]_c+[c, [a,b]_c]_c+[b, [c, a]_c]_c=\frac13\dd(\la[a, b]_c, c\ra+\la[b, c]_c, a\ra+\la[c, a]_c, b\ra)
  $$ holds for all $a, b,c\in\Gamma(E)$.
  
The first example of exact Courant algebroid is the generalized tangent bundle. On this, we can consider different structures of exact Courant algebroids depending on the choice of a closed $3$-form on $M$.

\begin{example} \label{ex_untwistedTB}
The generalized tangent bundle $TM \oplus T^* M$ has a natural structure of exact Courant algebroid, with data given by:
\begin{equation}\label{eqn_TT}
		\la X+\xi, Y+ \eta \ra := \tfrac12 (\xi(Y) + \eta(X)), \qquad [X+\xi, Y+\eta] := [X,Y] + \mc L_X \eta - \iota_Y \dd \xi, \qquad \pi(X+\xi) := X,
\end{equation}
for $X+ \xi, Y+ \eta\in \Gamma(TM\oplus T^*M)$. 

Moreover, for any closed $3$-form $H\in \Omega^3( M)$, we can consider another ECA structure on $\TT$, denoted by
\[
  \TTH := (TM\oplus T^*M, \ip,\lb_H,\pi).
\]
Here, $\ip$ and $\pi$ are as in \Cref{eqn_TT}, and $\lb_H$ denotes the \emph{$H$-twisted Dorfman bracket} defined by:
\begin{equation}\label{eqn_[]H-general}
		[X+\xi, Y+\eta]_H := [X,Y] + \mc L_X \eta - \iota_Y \dd \xi + \iota_Y\iota_XH,\qquad X+\xi,Y+\eta \in \Gamma(TM\oplus T^*M).
\end{equation}
\end{example}

Let  $(E_i \to M_i,\ip_i,\lb_i,\pi_i)$, $i = 1,2$,   be  two exact Courant algebroids. Recall that a vector bundle isomorphism is a pair $(F,f)$ where $f: M_1 \to M_2$, $F: E_1 \to E_2$ are diffeomorphisms, and,  for each $p\in M_1$, $F$ restricts to a linear isomorphism between the fiber over $p$ and the fiber over $f(p)$. We call such a pair an \emph{isomorphism of exact Courant algebroids} if,  for any $a,b\in \Gamma(E_1)$,  we have
\begin{itemize}
	\item $\la F a, F b \ra_2 = \la a, b\ra_1$;
	\item  $[F a, F b]_2 = F [a, b]_1$.
\end{itemize} 
It follows from  the second condition above that $\dd f\circ \pi_1 = \pi_2 \circ F$, see for instance \cite[Lemma 2.5]{KW07} for the detailed proof.\\
 The next Proposition guarantees that any exact Courant algebroid is isomorphic  to one described in \Cref{ex_untwistedTB}.

\begin{prop}\cite[Proposition~2.10]{GRFbook}\label{prop_EsimeqTTH}
Let $E$ be an exact Courant algebroid. Any isotropic splitting $\sigma$ of \eqref{eqn_exactseq} induces an isomorphism of exact Courant algebroids
\[
	E \simeq_\sigma (\TT)_H \,.
\]
Here $H \in \Omega^3 (M)$ is the closed $3$-form given by
\begin{equation}\label{eqn_H_sigma}
		H(X,Y,Z) = 2 \,  \la [\sigma X,  \sigma Y],  \sigma Z \ra, \qquad X,Y,Z \in \Gamma(TM).
\end{equation}
Moreover, given another isotropic splitting $\tilde \sigma$, the corresponding isomorphism satisfies 
\[
	E \simeq_{\tilde \sigma} (\TT)_{H + \dd b} \, ,
\]
for some $b\in \Omega^2( M)$.
\end{prop}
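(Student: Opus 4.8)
The plan is to turn the splitting $\sigma$ into an explicit bundle isomorphism and then check that it intertwines the inner product and the Dorfman bracket, afterwards comparing two splittings by a $B$-field argument.

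\textbf{Step 1: constructing the isomorphism.} Since $\sigma$ splits \eqref{eqn_exactseq} we have $\pi\circ\sigma=\id_{TM}$, and exactness (ECA6) identifies $\ker\pi=\pi^*(T^*M)$. The subspace $\sigma(TM)$ is isotropic by hypothesis, while $\pi^*(T^*M)$ is automatically isotropic because $\la\pi^*\xi,\pi^*\eta\ra$ is controlled by $\pi\circ\pi^*=0$. As these two subbundles intersect trivially and their ranks add to $2n=\operatorname{rank}E$, we get a splitting $E=\sigma(TM)\oplus\pi^*(T^*M)$. I would then define $F\colon(\TT)_H\to E$ by $F(X+\xi):=\sigma X+\pi^*\xi$, a fibrewise linear isomorphism covering $\id_M$.

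\textbf{Step 2: compatibility with the structure.} Inner-product preservation is immediate: in $\la\sigma X+\pi^*\xi,\sigma Y+\pi^*\eta\ra$ the two isotropic terms vanish and the cross terms recover $\tfrac12(\xi(Y)+\eta(X))$ once one uses the defining relation between $\pi^*$ and $\pi$ through $\ip$ (which pins down the normalisation). The substance of the proof is bracket compatibility, which I would verify by expanding $[\sigma X+\pi^*\xi,\sigma Y+\pi^*\eta]$ into four brackets and evaluating each against an arbitrary $\sigma Z$. For $[\sigma X,\sigma Y]$, axiom (ECA2) gives $\pi[\sigma X,\sigma Y]=[X,Y]$, so it equals $\sigma[X,Y]$ plus a term in $\ker\pi$; pairing with $\sigma Z$ and invoking \eqref{eqn_H_sigma} identifies that term as $\pi^*(\iota_Y\iota_X H)$. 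The mixed brackets $[\sigma X,\pi^*\eta]$ and $[\pi^*\xi,\sigma Y]$ lie in $\ker\pi$ by (ECA2); using (ECA4) together with Cartan's formula yields $\pi^*(\mc L_X\eta)$ for the first, and combining (ECA5) with the first computation yields $\pi^*(-\iota_Y\dd\xi)$ for the second. Finally $[\pi^*\xi,\pi^*\eta]$ vanishes, since (ECA4) applied to $\pi(\pi^*\xi)=0$ together with the isotropy of $\ker\pi$ forces its pairing with every $\sigma Z$ to be zero. Reassembling the four pieces reproduces precisely the twisted Dorfman bracket \eqref{eqn_[]H-general}. Closedness $\dd H=0$ is then automatic: the Jacobi identity (ECA1) transports through $F$, and the twisted Dorfman bracket satisfies Jacobi exactly when $H$ is closed.

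\textbf{Step 3: change of splitting.} For a second isotropic splitting $\tilde\sigma$, the difference $\tilde\sigma-\sigma$ lands in $\ker\pi=\pi^*(T^*M)$, so $\tilde\sigma X=\sigma X+\pi^*(\iota_X b)$ for a unique $(0,2)$-tensor $b$; imposing isotropy of $\tilde\sigma(TM)$ kills the symmetric part of $b$, so $b\in\Omega^2(M)$. I would then recompute $\tilde H(X,Y,Z)=2\,\la[\tilde\sigma X,\tilde\sigma Y],\tilde\sigma Z\ra$ by expanding multilinearly and reusing the three bracket formulas from Step 2. Collecting all terms involving $b$ and matching them against the intrinsic expression for $\dd b$ gives $\tilde H=H+\dd b$ (the sign of $b$ being a convention), and $F_{\tilde\sigma}^{-1}\circ F_\sigma$ supplies the stated isomorphism $E\simeq_{\tilde\sigma}(\TT)_{H+\dd b}$.

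\textbf{Main obstacle.} The only genuinely delicate point is Step 2: producing all four summands of the Dorfman bracket from the axioms while correctly tracking the factor of $\tfrac12$ in the inner-product convention, equivalently the normalisation built into the identification $E\simeq E^*$. Once that is settled, the $B$-field computation of Step 3 is a routine, if lengthy, verification that $\tilde H-H=\dd b$.
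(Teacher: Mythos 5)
Your argument is correct and is essentially the standard proof of this statement, which the paper does not reprove but cites from \cite[Proposition~2.10]{GRFbook}: decompose $E=\sigma(TM)\oplus\pi^*(T^*M)$, identify the four bracket components via (ECA2)--(ECA5), and compare splittings by a $B$-field. The only point left implicit in your Step 2 is the verification that $H$ as defined in \eqref{eqn_H_sigma} is actually tensorial and totally skew-symmetric (this follows from (ECA3), (ECA4), (ECA5) and the isotropy of $\sigma$), which you should record before writing the $\ker\pi$-component of $[\sigma X,\sigma Y]$ as $\pi^*(\iota_Y\iota_XH)$.
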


More generally, \v Severa's classification of exact Courant algebroids is the content of the following Theorem.

\begin{thm}\cite{Sletters} The isomorphism classes of exact Courant algebroids over $M$ are in one-to-one correspondence with \allowbreak $H^3(M,\R) / \operatorname{Diff}(M) = H^3(M,\R)/\Gamma_M$, where $\Gamma_M = \operatorname{Diff}(M)/\operatorname{Diff}_0(M)$ is the mapping class group of $M$.
\end{thm}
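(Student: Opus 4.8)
The plan is to classify via the \v Severa invariant $[H] \in H^3(M,\R)$ and then to track how this invariant transforms under the two elementary symmetries of exact Courant algebroids: B-field transforms and diffeomorphism lifts. First I would attach to each exact Courant algebroid $E$ over $M$ a well-defined cohomology class. By \Cref{prop_EsimeqTTH}, any isotropic splitting $\sigma$ of \eqref{eqn_exactseq} identifies $E \simeq_\sigma \TTH$, where $H$ is the closed $3$-form of \eqref{eqn_H_sigma}, and any other isotropic splitting replaces $H$ by $H + \dd b$. Hence $[H] \in H^3(M,\R)$ depends only on $E$, so $E \mapsto [H]$ is an invariant of isomorphisms covering $\operatorname{id}_M$. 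The task is then to promote this to isomorphisms covering arbitrary diffeomorphisms and to verify that the invariant is complete.

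Next I would carry out the two model computations on $(\TT)_{H}$. For the B-field step: if $[H_1]=[H_2]$, say $H_2 = H_1 - \dd b$, then the B-field transform $e^b(X+\xi) := X+\xi+\iota_X b$ is an ECA isomorphism $(\TT)_{H_1} \to (\TT)_{H_2}$ covering $\operatorname{id}_M$, which one checks directly from \eqref{eqn_[]H-general}. Conversely, any ECA isomorphism covering $\operatorname{id}_M$ must preserve the neutral pairing $\ip$, the anchor $\pi$, and the subbundle $\pi^*(T^*M) \cong T^*M$; this forces it to act as the identity on the induced $TM$- and $T^*M$-pieces, so that it has the form $e^b$ for some $b \in \Omega^2(M)$, and compatibility with the bracket then yields $H_1 - H_2 = \dd b$. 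Thus, up to identity-covering isomorphism, $(\TT)_H$ is classified exactly by $[H] \in H^3(M,\R)$.

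For the diffeomorphism step: given $f \colon M \to M$, the canonical lift $\hat f := \dd f \oplus (\dd f^{-1})^*$ is an ECA isomorphism $(\TT)_{f^*H} \to (\TT)_{H}$ covering $f$, which I would confirm by transporting the defining data through $f$, using naturality of the Lie bracket, the Lie derivative $\mc L$, the exterior differential $\dd$, and the interior product $\iota$. Consequently $(\TT)_{H}$ and $(\TT)_{H'}$ admit an isomorphism covering $f$ if and only if $[H'] = f^*[H]$, i.e.\ precisely when $[H]$ and $[H']$ lie in the same $\operatorname{Diff}(M)$-orbit in $H^3(M,\R)$. Combining this with the reduction of an arbitrary $E$ to a model via \Cref{prop_EsimeqTTH} produces a bijection between isomorphism classes of exact Courant algebroids over $M$ and $H^3(M,\R)/\operatorname{Diff}(M)$. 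Finally I would note that a diffeomorphism isotopic to the identity induces the identity on de Rham cohomology, so the $\operatorname{Diff}(M)$-action on $H^3(M,\R)$ factors through $\Gamma_M = \operatorname{Diff}(M)/\operatorname{Diff}_0(M)$, giving $H^3(M,\R)/\operatorname{Diff}(M) = H^3(M,\R)/\Gamma_M$.

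The step I expect to be the main obstacle is the converse in the B-field computation: showing that every ECA isomorphism covering $\operatorname{id}_M$ is a B-field transform, and hence that exactness of $H_1 - H_2$ is not merely sufficient but necessary. This is where all the Courant algebroid axioms are genuinely used, since compatibility with $\ip$ and $\pi$ pins the isomorphism down to act trivially modulo a skew map $b \colon TM \to T^*M$, after which the bracket axiom translates into $\dd b = H_1 - H_2$. Everything else is either supplied by \Cref{prop_EsimeqTTH} or is a naturality verification.
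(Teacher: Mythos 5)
The paper does not prove this theorem; it is stated as a citation to \v Severa's letters, and the only ingredients the paper itself supplies are \Cref{prop_EsimeqTTH} and the description of $\operatorname{Aut}(\TTH)$. Your proposal is the standard argument behind the cited result and is correct: the well-definedness of $[H]$ follows from \Cref{prop_EsimeqTTH}, your B-field computation is consistent with the convention \eqref{eqn_[]H-general} (one checks $e^b\colon (\TT)_{H+\dd b}\to (\TT)_{H}$, i.e.\ $H_2=H_1-\dd b$ as you state), and the key converse step is handled properly: an identity-covering isomorphism preserves $\ker\pi=\pi^*(T^*M)$ and induces the identity on the quotient $TM$, and orthogonality with respect to $\ip$ then forces the $T^*M\to T^*M$ block to be the identity and the $TM\to T^*M$ block to be skew, so the map is $e^b$ and bracket-compatibility gives exactness of the difference of the two $3$-forms. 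Your identification of that converse as the place where the axioms are genuinely used, and the final reduction of the $\operatorname{Diff}(M)$-action to the mapping class group via homotopy invariance of de Rham cohomology, both match what a full proof requires; I see no gap.
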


Given the correspondence in \Cref{prop_EsimeqTTH}, we can characterise \emph{automorphisms} of any exact Courant algebroid.  Before stating the  result itself, let us introduce two classes of maps which play an important role in generalized Geometry. Any $b\in \Omega^2 (M)$ may be viewed as a map $b : TM \to T^*M$ and thus it gives rise to an endomorphism of the generalized tangent bundle which in matrix form is represented by
\[
		\begin{pmatrix}0&0\\b&0\end{pmatrix} : T \oplus T^* \longrightarrow T \oplus T^*.
\]
We then set
\[
		e^b := \begin{pmatrix}\id&0\\b&\id\end{pmatrix} \in \End(T \oplus T^*), \qquad b\in \Omega^2 (M).
\] The latter transformations are classically known as \emph{B-field transformations} which appear naturally  as a generalization of the electromagnetic field in string theory. \\
Additionally,  fixed $f\in {\rm Diff}(M)$, we can produce a  bundle map  of the generalized tangent bundle covering $f$, as follows:
$$
\bar f:=\begin{pmatrix}\dd f & 0 \\0 & (f^{-1})^*\end{pmatrix}\colon \TT\to \TT\,.
$$
Then, the group of automorphism of $(\TT)_H$, and then of any ECA,  consists  of  bundle maps which are composition of  maps introduced above. 
%

\begin{thm} \cite[~Proposition 2.21]{GRFbook} For any closed $3$-form $H \in \Omega^3 (M)$, the group of  automorphisms of the exact Courant algebroid $\TTH$ is given by: 
\[
\operatorname{Aut}(\TTH) = \left\{\left(\bar f e^{b}
, f\right) : f \in \operatorname{Diff}(M),\quad b \in \Omega^2(M),\quad f^*H = H - \dd b\right\}.
\]
Moreover, the Lie algebra of $\operatorname{Aut}(\TTH)$ is
\[
\f{aut}(\TTH) = \{X + b \in \Gamma(TM) \oplus \Omega^2(M) : \mc{L}_XH = -\dd b\}.
\]
\end{thm}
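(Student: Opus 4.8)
The plan is to prove both inclusions by first pinning down the \emph{shape} of an arbitrary automorphism and then extracting the compatibility condition on $H$. So let $F$ be an automorphism of $\TTH$ covering a diffeomorphism $f \in \operatorname{Diff}(M)$. Bracket preservation forces the anchor compatibility $\dd f \circ \pi = \pi \circ F$ recorded after the definition of ECA-isomorphism; in particular $F$ preserves $\ker \pi = T^*M$, and its $TM$-component equals $\dd f$. Writing $F$ in block-triangular form with respect to the splitting $\TT$ and imposing that $F$ preserve the neutral inner product $\ip$ then forces the lower-left block to be (the pullback of) a genuine skew form $b \in \Omega^2(M)$ and the lower-right block to be $(f^{-1})^*$; that is, $F = \bar f\, e^{b}$. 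Conversely every such composition preserves $\ip$, since $\bar f$ does and, for skew $b$, the $e^b$-contribution to $\la e^b a, e^b a'\ra$ is the symmetric quantity $\tfrac12(b(X,Y)+b(Y,X))=0$. This reduces the theorem to determining which pairs $(f,b)$ make $\bar f e^b$ preserve the $H$-twisted bracket.

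The heart of the argument is two intertwining identities for the building blocks, each a direct computation with Cartan's formula. First, for any $3$-form $K$ one has $[e^b a, e^b a']_K = e^b [a,a']_{K+\dd b}$, the extra term $\iota_Y\iota_X \dd b$ arising from the identity $\mc L_X(\iota_Y b) - \iota_Y \dd(\iota_X b) - \iota_{[X,Y]} b = \iota_Y \iota_X \dd b$ (where $a = X+\xi$, $a' = Y + \eta$). Second, since the untwisted Dorfman bracket is natural under push-forward and contraction commutes with pullback, $[\bar f a, \bar f a']_K = \bar f [a,a']_{f^* K}$. Composing with $F = \bar f e^b$ and taking $K = H$ gives
\[
[F a, F a']_H \;=\; \bar f\, [e^b a, e^b a']_{f^* H} \;=\; \bar f\, e^b\, [a,a']_{f^* H + \dd b} \;=\; F\,[a,a']_{f^* H + \dd b}.
\]
Because $[a,a']_{K}$ depends on $K$ only through $\iota_Y \iota_X K$ and $F$ is an isomorphism on each fiber, the right-hand side equals $F[a,a']_H$ for all $a,a'$ if and only if $f^* H + \dd b = H$, i.e.\ $f^* H = H - \dd b$, which is exactly the stated constraint.

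For the infinitesimal statement I would differentiate the group condition along a one-parameter subgroup. Taking the flow $f_t$ of a vector field $X \in \Gamma(TM)$ together with a curve $\beta_t \in \Omega^2(M)$ of $B$-fields with $\beta_0 = 0$ and $\dot\beta_0 = b$, the constraint reads $f_t^* H = H - \dd\beta_t$; differentiating at $t=0$ and using $\tfrac{\dd}{\dd t}\big|_0 f_t^* H = \mc L_X H$ yields $\mc L_X H = -\dd b$, giving $\f{aut}(\TTH) = \{X + b : \mc L_X H = -\dd b\}$. The main technical obstacle is the careful bookkeeping in the two intertwining identities — in particular tracking the signs and the pullback-versus-push-forward conventions, so that the twists compose precisely to $f^* H + \dd b$ rather than some other combination; once these are fixed, both the compatibility condition and its linearization follow immediately.
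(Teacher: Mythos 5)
Your argument is correct and follows essentially the same route as the paper's own proof of the more general scaled version (\Cref{lem_der}, whose $c=1$ case is this theorem): anchor compatibility and orthogonality pin down $F=\bar f e^b$ with $b\in\Omega^2(M)$, and the two intertwining identities $[e^b\cdot,e^b\cdot]_K=e^b[\cdot,\cdot]_{K+\dd b}$ and $[\bar f\cdot,\bar f\cdot]_K=\bar f[\cdot,\cdot]_{f^*K}$ yield the constraint $f^*H=H-\dd b$.

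The one piece you leave out is the converse inclusion in the Lie algebra statement: differentiating a curve in $\operatorname{Aut}(\TTH)$ only shows $\f{aut}(\TTH)\subseteq\{X+b:\mc L_XH=-\dd b\}$. For equality one must also integrate: given $X+b$ with $\mc L_XH=-\dd b$, take the flow $f_t$ of $X$ and set $\bar b_t=\int_0^t f_s^*b\,\dd s$, so that $\dd\bar b_t=-\int_0^t f_s^*\mc L_XH\,\dd s=H-f_t^*H$ and hence $\bar f_t e^{\bar b_t}\in\operatorname{Aut}(\TTH)$ with derivative $X+b$ at $t=0$. This is exactly the construction the paper carries out (with the extra scaling factor $c(s)$) in the proof of \Cref{lem_der}; adding it makes your proof complete.
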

 We refer the reader to \cite{RT20} for a detailed study of the ILH Lie group structure  on $\operatorname{Aut}(\TTH) $.\\
To conclude this Section, we observe that there is a natural way of scaling exact Courant algebroids. This will play a key role in the next Sections. 

\begin{lem}
Let $(E,\ip, \lb, \pi)$ be an exact Courant algebroid and let $c\in \mathbb{R}\backslash\{0\}$.  Then, the data $(c \, \ip, \lb, \pi)$ defines a new exact Courant algebroid structure on $E$. 
\end{lem}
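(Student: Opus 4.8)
The plan is to verify the six axioms (ECA1)--(ECA6) directly for the rescaled data $(c\,\ip,[\cdot,\cdot],\pi)$, $c\neq 0$, exploiting that only the neutral inner product has been altered. First I would dispose of the axioms that make no reference to $\ip$: the Jacobi identity (ECA1), the anchor compatibility (ECA2), and the Leibniz rule (ECA3) all involve solely the unchanged bracket $[\cdot,\cdot]$ and anchor $\pi$, so they continue to hold verbatim. I would also record at once that $c\,\ip$ is again a non-degenerate symmetric bilinear form (as $c\neq 0$), and that it still has signature $(n,n)$: for $c>0$ this is clear, while for $c<0$ multiplication by $c$ interchanges the positive- and negative-definite directions, which is harmless \emph{precisely because} the two indices coincide. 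Thus the only real work lies in the axioms coupling the bracket to the metric, namely (ECA4), (ECA5), and (ECA6).

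Axiom (ECA4) is immediate: since $c$ is a constant, the derivation $\pi(a)$ commutes with multiplication by $c$, and each pairing on the right-hand side scales by $c$ as well, so the identity for $c\,\ip$ is exactly $c$ times the identity for $\ip$. The genuinely delicate point---and the one I expect to be the crux---is that (ECA5) and (ECA6) both involve the map $\pi^*\colon T^*M\to E$, and this map is \emph{defined using the metric identification} $E\simeq E^*$. Writing $\flat\colon E\to E^*$, $\flat(a)=\m{a}{\cdot}$, for the musical isomorphism and $\pi^{T}\colon T^*M\to E^*$ for the purely algebraic transpose of the anchor, one has $\pi^*=\flat^{-1}\circ\pi^{T}$. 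Rescaling $\ip\mapsto c\,\ip$ rescales $\flat\mapsto c\,\flat$, hence $\flat^{-1}\mapsto \tfrac1c\flat^{-1}$, so the new map is $\pi^*_c=\tfrac1c\,\pi^*$. Keeping track of this compensating factor is essentially the whole content of the lemma.

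With this in hand both remaining axioms fall out. For (ECA5) the left-hand side $[a,b]+[b,a]$ is unchanged, while the right-hand side becomes
\[
 (\pi^*_c\circ\dd)\,(c\,\m{a}{b}) \;=\; \tfrac1c\,\pi^*\,\dd(c\,\m{a}{b}) \;=\; \pi^*\,\dd\,\m{a}{b},
\]
using $\dd(c\,f)=c\,\dd f$ for the constant $c$; the two scalings cancel and we recover the original (ECA5). For (ECA6) the anchor $\pi$ is untouched and $\pi^*_c=\tfrac1c\pi^*$ differs from $\pi^*$ by a nowhere-vanishing scalar, so it remains injective with the same image; hence $\operatorname{im}\pi^*_c=\operatorname{im}\pi^*=\ker\pi$ and the sequence \eqref{eqn_exactseq} stays exact. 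All six axioms therefore hold. As noted, the only subtlety is the factor $\tfrac1c$ appearing in $\pi^*_c$; everything else amounts to bookkeeping of a single overall scale.
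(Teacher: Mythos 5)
Your proof is correct and follows essentially the same route as the paper's: the only nontrivial point is that rescaling $\ip$ rescales the identification $E^*\simeq E$ by $c^{-1}$, so that $\pi^*$ becomes $c^{-1}\pi^*$ and the factor cancels against the scaling of $\la a,b\ra$ in (ECA5). Your additional remarks on (ECA4) and (ECA6) are just the bookkeeping the paper dismisses as "clearly remain true".
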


\begin{proof}
Trivially,  $c \ip$ is a non-degenerate symmetric bilinear form of signature $(n,n)$, regardless of the sign of $c$. Moreover,  all the axioms except for (ECA5) in the definition of ECA clearly remain true after scaling $\ip$. As regards  (ECA5), we can notice that the $\pi^*$ in the right-hand-side involves a composition with the isomorphism $\varphi_{\ip} : E^* \simeq E$ induced by the neutral inner product. For $c\ip$,  this isomorphism is simply 
\begin{equation}\label{eqn_scaling_pi*}
	\varphi_{c\ip} = c^{-1} \varphi_{\ip},
\end{equation}
which gives the claim.
\end{proof}

\begin{defn}\label{def_scalingECA}
Let $(E,\ip,\lb,\pi)$ be an exact Courant algebroid and $c\in \mathbb{R}\backslash \{0 \}$. We define the \emph{scaled exact Courant algebroid} $c \cdot E$ to be the one associated with the data $(E, c \, \ip, \lb, \pi)$. 
\end{defn} 
We now study the effect of scaling on the  $3$-form $H$ after choosing an isotropic splitting:

\begin{lem}
Let $E$ be an ECA with isotropic splitting $\sigma$. Then,  the isomorphism $E\simeq_\sigma \TTH$, $H\in \Omega^3 (M)$ closed, induces an isomorphism
\[
	c \cdot E \simeq_\sigma (\TT)_{c H}.
\]
\end{lem}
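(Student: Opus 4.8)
The plan is to apply \Cref{prop_EsimeqTTH} directly to the scaled algebroid $c \cdot E$ and to observe that the only effect of rescaling the neutral inner product is to rescale, by the same factor $c$, the closed $3$-form produced by formula \eqref{eqn_H_sigma}.

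First I would check that the chosen splitting $\sigma$ remains an isotropic splitting for $c \cdot E$. By \Cref{def_scalingECA}, the bundle $E$, the Dorfman bracket $\lb$ and the anchor $\pi$ are untouched in passing to $c \cdot E$; only the inner product is rescaled to $c \, \ip$. In particular $\ker \pi$ is unchanged, so $\sigma \colon TM \to E$ is still a splitting of the underlying sequence, and isotropy is preserved since $c \, \la \sigma X, \sigma Y \ra = c \cdot 0 = 0$ for all $X, Y \in \Gamma(TM)$. The inclusion $\pi^*$ in \eqref{eqn_exactseq} does change, acquiring a factor $c^{-1}$ by \eqref{eqn_scaling_pi*}, but this only rescales the inclusion and affects neither its image nor the splitting condition $\pi \circ \sigma = \id$. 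I would then invoke \Cref{prop_EsimeqTTH} for $c \cdot E$ equipped with $\sigma$, obtaining an isomorphism $c \cdot E \simeq_\sigma (\TT)_{H'}$ where the closed $3$-form $H'$ is given by \eqref{eqn_H_sigma} computed with the inner product $c \, \ip$ of $c \cdot E$. Since the bracket $[\sigma X, \sigma Y]$ and the elements $\sigma Z$ are literally the same whether we regard $E$ or $c \cdot E$, this gives
\[
	H'(X,Y,Z) = 2 \, c \, \la [\sigma X, \sigma Y], \sigma Z \ra = c \cdot 2 \, \la [\sigma X, \sigma Y], \sigma Z \ra = c \, H(X,Y,Z),
\]
that is $H' = cH$, establishing $c \cdot E \simeq_\sigma (\TT)_{cH}$.

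To make precise the sense in which the original isomorphism $E \simeq_\sigma \TTH$ \emph{induces} this one, I would note that the same underlying bundle map $\Phi_\sigma \colon E \to \TTH$ furnished by \Cref{prop_EsimeqTTH} also defines an isomorphism $c \cdot E \simeq c \cdot \TTH$ of the scaled algebroids, as it preserves $\ip$ (hence $c \, \ip$), $\lb$ and $\pi$. Composing it with the fibrewise map $\psi \colon c \cdot \TTH \to (\TT)_{cH}$, $X + \xi \mapsto X + c\,\xi$, then yields the desired isomorphism; a direct check shows $\psi$ intertwines $c \, \ip$ with the standard inner product and $\lb_H$ with $\lb_{cH}$ while covering the identity of $M$, and the resulting composite agrees with the $\sigma$-induced isomorphism produced above. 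The only point requiring genuine care throughout is the bookkeeping of which structures scale: the inner product scales by $c$, the bracket and anchor are unchanged, and the identification $\pi^*$ picks up the factor $c^{-1}$ from \eqref{eqn_scaling_pi*}. Tracking these correctly is exactly what pins down the cotangent rescaling by $c$ in $\psi$ and confirms that $H$ transforms linearly, so that no hidden factor of $c^{-1}$ survives in the final $3$-form.
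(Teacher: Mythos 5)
Your proof is correct and follows essentially the same route as the paper, which simply observes that the lemma is immediate from \Cref{prop_EsimeqTTH} together with formula \eqref{eqn_H_sigma}: scaling the neutral inner product by $c$ scales the $3$-form $2\,\la[\sigma X,\sigma Y],\sigma Z\ra$ by $c$. Your additional verification of the explicit fibrewise map $X+\xi\mapsto X+c\,\xi$ is consistent with the construction the paper records later in \Cref{rmk_scal}.
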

\begin{proof}
This follows immediately from \Cref{prop_EsimeqTTH} and \eqref{eqn_H_sigma}.
\end{proof}






\subsection{Generalized metrics}\label{sec_genmet}

A \emph{generalized metric} on an ECA  $E$ is an orthogonal endomorphism $\gca \in \Or(E,\ip)$ such that the bilinear form 
\[
		(a,b) \mapsto \la \gca a, b \ra, \qquad a,b \in \Gamma(E),
\]
is symmetric and positive definite. We call the pair $(E,\mc G)$ a \emph{metric Courant algebroid}. A Courant algebroid isomorphism $(F,f) \colon (E_1, \mc G_1) \to (E_2, \mc G_2)$ is a \emph{metric Courant algebroid isometry} (or simply \emph{isometry}) if $F \circ \mc G_1 = \mc G_2 \circ F$.

The presence of a generalized metric on  $E$ allows us to consider a preferred isotropic splitting. We quickly recall  its construction since it will  be useful in what follows.  We refer the reader to \cite[Section 2.3]{GRFbook} for the detailed description.
Since   $\mc G^2={\rm Id}$,  we can consider the eigenbundles  $E_{\pm}$ of $E$ associated to $\pm 1$. It is not hard to see that $\pi\colon E_{\pm}\to TM$ is an isomorphism of vector bundles. Then, we can define 
$$
\sigma_{\pm}:= (\pi_{|E_{\pm}})^{-1}\colon TM\to E_{\pm}\,, \quad \tau_{\pm}(X, Y)=\la \sigma_{\pm}X, \sigma_{\pm}Y\ra\,, \quad  X, Y\in \Gamma(TM)\,. 
$$  
With these ingredients, we can consider the following splitting:
$$
\sigma=\sigma_{\pm}-\frac12\pi^*\tau_{\pm}\,.
$$  
It is easy to see that $\sigma $ is  isotropic and it does not depend on the choices of $\sigma_{\pm}$, see \cite[Lemma 2.29]{GRFbook} for the proof.  Moreover, we have that 
\begin{equation}\label{riemmetric}
\tau_{+}(X, X)=\la\mc G\sigma_{+}X, \sigma_{+}X \ra >0\,, \quad  X\in \Gamma(TM)\,, X\ne 0 \,,
\end{equation} 
thus determining a Riemannian metric $g$ on $M$. 


\begin{prop} \label{prop_SplittingFromMetric}\cite[~Proposition 2.38]{GRFbook} Let $E$ be an ECA with isotropic splitting $\sigma$.  Under the isomorphism $E\simeq_\sigma \TTH$, any generalized metric  on $E$ corresponds to a generalized metric on $\TTH$ of the form
\[
 \mc G(g,b) := e^b \begin{pmatrix}0&g^{-1}\\g&0\end{pmatrix} e^{-b},
\]
where $b \in \Omega^2(M)$ and $g$ is a Riemannian metric on $M$. Moreover, the bundle map
\begin{equation}\label{eqn_G(g,b)}
\left(e^{-b},\id_M\right)\colon \left(\TTH,\mc G(g,b)\right) \to \left((\TT)_{H + \dd b}, \mc G(g,0)\right),
\end{equation}
is an isometry.
\end{prop}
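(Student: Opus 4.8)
The plan is to work entirely inside $\TTH$. Since the isomorphism $E \simeq_\sigma \TTH$ of \Cref{prop_EsimeqTTH} preserves $\ip$, it carries a generalized metric on $E$ to an orthogonal, $\ip$-symmetric, positive-definite involution $\mc G$ on $\TT$, so it suffices to bring any such $\mc G$ into the stated normal form. First I would analyse its $(+1)$-eigenbundle $E_+$. For $a \in E_+$ one has $\la a,a\ra = \la \mc G a, a\ra > 0$, so $\ip$ is positive definite on $E_+$; the same computation on the $(-1)$-eigenbundle $E_-$ gives $\la a,a\ra = \la \mc G a, a\ra \cdot(-1) < 0$, so $\ip$ is negative definite there. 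As $\ip$ has signature $(n,n)$ and $\TT = E_+ \oplus E_-$ is the eigenspace decomposition of the involution, both summands must have rank $n$, i.e. $E_+$ is a maximal positive-definite subbundle.

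Next I would identify $E_+$ as a graph. Because $T^*M = \ker\pi$ is isotropic while $E_+$ is positive definite, $E_+ \cap T^*M = 0$, so the anchor restricts to a fiberwise isomorphism $\pi\colon E_+ \to TM$ and $E_+ = \{X + PX : X \in TM\}$ for a bundle map $P\colon TM \to T^*M$. Splitting $P = g + b$ into its symmetric and skew parts, the identity $\la X+PX, X+PX\ra = (PX)(X) = g(X,X)$ shows that positive-definiteness on $E_+$ is exactly the condition that $g$ be a Riemannian metric, while $b \in \Omega^2(M)$ is arbitrary. It then remains to check that $\mc G$ coincides with $\mc G(g,b)$. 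Since a generalized metric is determined by its $(+1)$-eigenbundle (it is $+1$ on $E_+$ and $-1$ on $E_+^\perp = E_-$), I only need to compute the $(+1)$-eigenbundle of $\mc G(g,b)$. I would first note that $e^b$ is $\ip$-orthogonal, the skew part of $b$ cancelling in the pairing, so $\mc G(g,b) = e^b\,\mc G(g,0)\,e^{-b}$ is again an orthogonal, symmetric, positive-definite involution, and its $(+1)$-eigenbundle is $e^b$ applied to that of $\mc G(g,0)$. A direct check gives that $\mc G(g,0)$ fixes exactly the graph of $g$, and $e^b(X + gX) = X + (g+b)X$, so $\mc G(g,b)$ has $(+1)$-eigenbundle the graph of $g+b = P$. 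This matches $E_+$, whence $\mc G = \mc G(g,b)$.

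For the final assertion I would verify the two conditions making $(e^{-b}, \id_M)$ a metric Courant algebroid isometry. The intertwining of metrics is immediate: from $\mc G(g,b) = e^b\,\mc G(g,0)\,e^{-b}$ one reads off $e^{-b}\circ \mc G(g,b) = \mc G(g,0)\circ e^{-b}$. The substantive point is that $e^{-b}$ is an isomorphism of exact Courant algebroids $\TTH \to (\TT)_{H+\dd b}$. Orthogonality of $e^{-b}$ was already observed, so the crux is the bracket identity $e^{-b}[a,a']_H = [e^{-b}a, e^{-b}a']_{H+\dd b}$. I would expand both sides using the defining formula \eqref{eqn_[]H-general} for the twisted Dorfman bracket and reduce the required equality of twisting forms to the pointwise statement $\iota_Y\iota_X(H'-H) = \iota_Y\iota_X\,\dd b$ for all $X,Y$, which forces $H' = H + \dd b$.

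The main, and essentially only, obstacle is this last Cartan-calculus reduction: one must manipulate the terms $\iota_{[X,Y]}b$, $\mc L_X\iota_Y b$ and $\iota_Y\dd\iota_X b$ arising from the B-field shift, using $\iota_{[X,Y]} = \mc L_X\iota_Y - \iota_Y\mc L_X$ together with $\mc L_X = \dd\iota_X + \iota_X\dd$, to collapse them to $-\iota_Y\iota_X\,\dd(-b) = \iota_Y\iota_X\,\dd b$. Everything preceding it is linear algebra on the eigenbundles of an involution, and the metric-intertwining condition is formal.
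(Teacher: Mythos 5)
Your proof is correct and follows the standard route: the paper itself does not prove this statement (it is quoted from \cite[Proposition 2.38]{GRFbook}), but your eigenbundle analysis of $E_+$ as the graph of $g+b$ is precisely the mechanism the paper sets up in \S 2.2 (the bundles $E_\pm$, the isomorphism $\pi|_{E_\pm}$, and \eqref{riemmetric}), and the Cartan-calculus verification that $e^{-b}$ intertwines $\lb_H$ with $\lb_{H+\dd b}$ is the standard B-field transformation identity. Nothing is missing.
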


\begin{remark}
The isometry in \eqref{eqn_G(g,b)} indicates that a generalized metric $\mc G$ on an ECA $E$ with \v{S}evera class $\alpha\in H^3 (M, \mathbb{R})$ is equivalent to a choice of Riemannian metric $g$ on $M$ and a \emph{preferred representative} $H\in \alpha$. This can be also be observed by considering the \emph{preferred} isotropic splitting induced by $\mc G$ and the corresponding isomorphism $E\simeq \TTH$ under which $\mc G$ corresponds to $\mc G(g,0)$.
\end{remark}

Furthermore, the existence of a preferred isotropic splitting induced by a generalized metric $\mc G$ allows us to characterize generalized isometries on a fixed exact Courant algebroid.

\begin{prop}[{\cite[~Proposition 2.41]{GRFbook}}] \label{prop_Isometries} The group of generalized isometries of the metric  Courant algebroid  $(\TTH,\mc G(g,0))$ is given by
\[
\mathsf{Iso}(\TTH,\mc G(g,0)) = \left\{\left(\bar f 
, f\right): f \in \mathsf{Iso}(M,g),\quad f^*H = H\right\} \subset \operatorname{Aut}(\TTH).
\]
\end{prop}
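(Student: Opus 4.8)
The plan is to start from the description of $\operatorname{Aut}(\TTH)$ established in the previous theorem and simply impose the extra requirement that the automorphism commute with the generalized metric. By that theorem, any element of $\operatorname{Aut}(\TTH)$ has the form $(F,f)$ with $F = \bar f \, e^{b}$ and $f^*H = H - \dd b$, so the whole problem reduces to determining for which $f$ and $b$ the bundle map $F$ satisfies the isometry condition $F \circ \mc G(g,0) = \mc G(g,0)\circ F$. The single point demanding care throughout is that $\mc G(g,0)$ is a fiberwise endomorphism whose value depends on the base point: on the source fiber $E_p$ it is built from $g_p$, while on the target fiber $E_{f(p)}$ it is built from $g_{f(p)}$. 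Keeping track of this base-point dependence is exactly what will turn the algebraic identity into the geometric statement that $f$ is a Riemannian isometry.

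First I would write everything in block form. Using $\mc G(g,0) = \begin{pmatrix} 0 & g^{-1} \\ g & 0 \end{pmatrix}$ and $F = \bar f e^b = \begin{pmatrix}\dd f & 0 \\ (f^{-1})^* b & (f^{-1})^*\end{pmatrix}$, I would compute the two composites $F \circ \mc G(g,0)$, evaluating $\mc G$ with $g_p$, and $\mc G(g,0)\circ F$, evaluating $\mc G$ with $g_{f(p)}$, and then compare the four blocks.

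Then I would read off the consequences. The top-left block of $\mc G(g,0) F$ is $g_{f(p)}^{-1}(f^{-1})^* b$, while that of $F\mc G(g,0)$ vanishes; since $g_{f(p)}^{-1}$ and $(f^{-1})^*$ are invertible, this forces $b = 0$. With $b=0$ the off-diagonal blocks reduce to the single identity $(f^{-1})^* g_p = g_{f(p)}\,\dd f$, together with the analogous relation for the $g^{-1}$ block. Unwinding the pullback, i.e.\ evaluating both sides on tangent vectors and substituting $u = \dd f\, w$, turns this precisely into $g_p(v,w) = g_{f(p)}(\dd f\, v, \dd f\, w)$, that is $f^*g = g$, so $f \in \mathsf{Iso}(M,g)$. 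Finally, feeding $b = 0$ back into the automorphism constraint $f^*H = H - \dd b$ gives $f^*H = H$.

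To finish I would note that the converse is immediate: reversing the computation shows that for any $f \in \mathsf{Iso}(M,g)$ with $f^*H = H$ the pair $(\bar f, f)$ lies in $\operatorname{Aut}(\TTH)$ and commutes with $\mc G(g,0)$, hence is a generalized isometry. This establishes both inclusions and yields the claimed description of $\mathsf{Iso}(\TTH, \mc G(g,0))$. The only genuine obstacle is bookkeeping: handling the base-point dependence of $\mc G(g,0)$ correctly in the block computation, since that is exactly the ingredient producing the isometry condition $f^*g=g$ rather than a mere pointwise symmetry relation.
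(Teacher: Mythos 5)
Your argument is correct. Note that the paper itself offers no proof of this proposition --- it is quoted directly from \cite[Proposition 2.41]{GRFbook} --- so there is nothing internal to compare against; your blockwise computation (deducing $b=0$ from the vanishing of the top-left block of $\mc G(g,0)\circ F$, extracting $f^*g=g$ from the off-diagonal blocks while correctly tracking the base points $p$ and $f(p)$, and then recovering $f^*H=H$ from the automorphism constraint $f^*H=H-\dd b$) is exactly the standard derivation one would find in the cited reference, and the converse inclusion is, as you say, immediate by reversing the computation.
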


Let us now examine the effect of scaling on generalized metrics.

\begin{lem}\label{lem_scaling}
Let $(E,\mc G) \simeq_\sigma (\TTH,\mc{G}(g,b))$ be a metric ECA and $c>0$. Then, $\pm \mc G$ is a generalized metric on $\pm c \cdot E$ and the isomorphism $\pm c\cdot E \simeq_\sigma (\TT)_{\pm cH}$ gives an isometry 
\[
		(\pm c \cdot E, \pm \gca) \simeq_\sigma ((\TT)_{\pm cH}, \gca(cg, \pm cb)).
\]
\end{lem}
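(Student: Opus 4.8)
The plan is to check the two assertions in turn: that $\pm\mc G$ is a generalized metric on the scaled algebroid $\pm c\cdot E$, and that it is carried by the canonical isomorphism to $\mc G(cg,\pm cb)$.

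First I would verify the algebraic properties of $\pm\mc G$. Writing $\la\cdot,\cdot\ra_{\pm c}:=\pm c\,\ip$ for the neutral pairing of $\pm c\cdot E$, orthogonality is immediate: $(\pm\mc G)^2=\mc G^2=\id$ and $\la\pm\mc G a,\pm\mc G b\ra_{\pm c}=(\pm c)\la\mc G a,\mc G b\ra=(\pm c)\la a,b\ra=\la a,b\ra_{\pm c}$. For positivity I would compute the associated bilinear form, which in \emph{both} sign cases equals $\la(\pm\mc G)a,b\ra_{\pm c}=(\pm c)(\pm1)\la\mc G a,b\ra=c\,\la\mc G a,b\ra$; since $c>0$ and $\la\mc G\cdot,\cdot\ra$ is symmetric and positive definite, so is this form. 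Hence $\pm\mc G\in\Or(\pm c\cdot E,\pm c\,\ip)$ is a generalized metric.

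The heart of the matter is identifying the isomorphism $\pm c\cdot E\simeq_\sigma(\TT)_{\pm cH}$ furnished by \Cref{prop_EsimeqTTH}. Isotropy of $\sigma$ is insensitive to rescaling $\ip$, so the same splitting applies; the only ingredient of the induced isomorphism that changes is $\pi^*$, which by \eqref{eqn_scaling_pi*} is replaced by $(\pm c)^{-1}\pi^*$. Denoting by $\Phi_\sigma\colon E\to\TT$ the isomorphism attached to $\ip$, I would deduce that the one attached to $\pm c\,\ip$ is
\[
\Phi^{\pm c}_\sigma=D_{\pm c}\circ\Phi_\sigma,\qquad D_{\pm c}:=\begin{pmatrix}\id&0\\0&\pm c\end{pmatrix},
\]
where $D_{\pm c}$ fixes $T$ and scales $T^*$ by $\pm c$; one checks directly that $D_{\pm c}$ intertwines $\pm c\,\ip$ with the standard pairing on $\TT$. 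Consequently, since $(E,\mc G)\simeq_\sigma(\TTH,\mc G(g,b))$ means $\Phi_\sigma\mc G\Phi_\sigma^{-1}=\mc G(g,b)$, the isometry condition $F\mc G_1=\mc G_2F$ shows that $\pm\mc G$ corresponds on $\TT$ to $\Phi^{\pm c}_\sigma(\pm\mc G)(\Phi^{\pm c}_\sigma)^{-1}=\pm\,D_{\pm c}\,\mc G(g,b)\,D_{\pm c}^{-1}$.

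It then remains to compute this conjugation. Using $\mc G(g,b)=e^{b}\left(\begin{smallmatrix}0&g^{-1}\\g&0\end{smallmatrix}\right)e^{-b}$ and the two identities
\[
D_{\pm c}\,e^{b}\,D_{\pm c}^{-1}=e^{\pm cb},\qquad \pm\,D_{\pm c}\begin{pmatrix}0&g^{-1}\\g&0\end{pmatrix}D_{\pm c}^{-1}=\begin{pmatrix}0&(cg)^{-1}\\cg&0\end{pmatrix},
\]
each a one-line block computation in which the sign choices combine as $\pm(\pm c)=c$ and $\pm(\pm c)^{-1}=c^{-1}$, I would obtain
\[
\pm\,D_{\pm c}\,\mc G(g,b)\,D_{\pm c}^{-1}=e^{\pm cb}\begin{pmatrix}0&(cg)^{-1}\\cg&0\end{pmatrix}e^{-(\pm cb)}=\mc G(cg,\pm cb),
\]
as desired. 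The only genuinely delicate point is the structural observation of the previous paragraph: one must track, via \eqref{eqn_scaling_pi*}, that rescaling $\ip$ rescales precisely the cotangent factor through $\pi^*$, producing the correction $D_{\pm c}$. Conjugation by this factor is exactly what turns $(g,b)$ into $(cg,\pm cb)$ while simultaneously absorbing the overall sign; keeping the two sign choices aligned so that each block ends up with a positive coefficient is the one piece of bookkeeping to handle with care.
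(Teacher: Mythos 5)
Your proof is correct, and the computations check out: the conjugation identities are right, and the sign bookkeeping $\pm(\pm c)=c$ works in both cases. However, your route differs from the one the paper actually takes. The paper's proof is structural: it obtains the scaling of $g$ directly from \eqref{riemmetric} (the Riemannian metric is read off from $\tau_+(X,X)=\la\mc G\sigma_+X,\sigma_+X\ra$, which picks up the factor of the rescaled pairing), and it obtains the scaling of $b$ from the identity $b=(\pi^*)^{-1}(\sigma_1-\sigma)$ in \eqref{bsigmas} combined with $\varphi_{c\ip}=c^{-1}\varphi_{\ip}$ from \eqref{eqn_scaling_pi*} — no matrix computation at all. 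What you do instead is essentially the content of \Cref{rmk_scal}, where the paper records (without detail) that the explicit isometry is given by conjugation with $\left(\begin{smallmatrix}\id&0\\0&\pm c\,\id\end{smallmatrix}\right)$; you have supplied the verification that this is the correct correction to the splitting isomorphism (via the rescaling of $\pi^*$) and carried out the block computation. Your version buys an explicit formula for the isometry and a self-contained check; the paper's version buys brevity and avoids committing to matrix conventions, at the cost of leaning on the earlier description of the preferred splitting. Both are complete arguments; yours is closer to a proof of the remark than of the lemma as the authors wrote it, but it establishes the same statement.
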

\begin{proof}
The first claim is clear, since the  bilinear form $\pm c \la \pm \gca \cdot, \cdot \ra = c \, \la \gca \cdot, \cdot \ra$ is symmetric and positive definite.
As regards the second one,  we note that the scaling on the Riemannian metric follows directly from \eqref{riemmetric}. On the other hand,  considering  $\sigma_1$ to be the preferred isotropic splitting induced by $\mc G$, then $(\sigma_1-\sigma)(X)\in \ker \pi={\rm Im}(\pi^*)$, for any $X\in \Gamma(TM)$. Hence,  we can write
\begin{equation}\label{bsigmas}
b=(\pi^*)^{-1}(\sigma_1-\sigma)\colon TM\to T^*M\,,
\end{equation} where the fact that $b$ is a $2$-form is straightforward to check, using  that both $\sigma_1$ and $\sigma$ are isotropic. Then, the scaling on $b$ follows from \eqref{eqn_scaling_pi*}. \end{proof}

\begin{remark}\label{rmk_scal}
The isometry in the second claim  in \Cref{lem_scaling} can be also explicitly constructed.  Assuming $c>0$, we can use  the isotropic splitting $\sigma$ to obtain the isometry $(\pm c\cdot E, \mc G)\simeq_{\sigma}(\pm c\cdot(\TT)_H, \mc G(g, b))$. Then, one can easily see that
$$
\begin{pmatrix}{\rm Id}& 0 \\0& \pm c{\rm Id}
\end{pmatrix}\colon(\pm c\cdot(\TT)_H, \mc G(g, b))\to ((\TT)_{\pm cH}, \mc G(cg, \pm cb))
$$ is an isometry. Hence, composing the two isometries above, we obtain the claim.
\end{remark}

\subsection{The generalized Ricci curvature}\label{sec_genRic}

In this short Section, we will briefly discuss the explicit expression of the generalized Ricci curvature in  the case of $((\TT)_H, \mc G(g, 0 ))$ which will be central in what follows. For a detailed investigation of the general definition, we refer to \cite{CSCW11}, \cite{GF19}, \cite{GRFbook}, \cite{SV17}  and \cite{SV20}.  
Before doing that, we recall that if   $(M, g)$ is a Riemannian manifold  and $H\in \Omega^3(M)$,  there exist  unique connections $\nabla^{\pm} $, called \emph{Bismut connections}, such that 
$$
\nabla g=0\,, \quad gT^{\pm} =\pm H\,,
$$ where the tensor $T^{\pm}$ is the torsion of $\nabla^{\pm}$. In what follows, we will abuse the notation denoting with 
  $$
  {\rm Ric}^B_{g, H}:={\rm Ric}_g-\frac14H^2
    $$ the symmetric part of the Ricci tensor of the Bismut connections $\nabla^{\pm}$ associated to $g$ and $H$.  The symmetric $(0,2)$-tensor $H^2$ is defined by $ H^2(X, Y)=g(\iota_XH, \iota_YH)$,  for all $X, Y\in \Gamma(TM)$.\\\
 Then, following the notation in \cite{GRFbook}, the generalized metric $\gca (g, 0)$ determines two eigenbundles  as in \Cref{sec_genmet},  which,  in this particular case,   takes the following form: 
 $$E_{\pm}=\{X\pm g(X) :  X\in \Gamma(TM)\}\, .$$
   Now, using \cite[Definition 3.31]{GRFbook}  and \cite[Proposition 3.30]{GRFbook}, it is easy to see that, for any $X\in \Gamma(TM)$, 
\begin{equation}\label{eqn_ricpm}
\begin{aligned}
\mc{R}c^+(X-g(X))=&\, g^{-1}{\rm  Ric}^B_{g, H}X-\frac12g^{-1}\dd_g^*HX+ {\rm  Ric}^B_{g, H}X-\frac12\dd_g^*HX\,,\\
\mc{R}c^-(X+g(X))=&\, -g^{-1}{\rm  Ric}^B_{g, H}X-\frac12g^{-1}\dd_g^*HX+ {\rm  Ric}^B_{g, H}X+\frac12\dd_g^*HX\,.
\end{aligned}
\end{equation} 

$$
\mc Rc (\mc G)=\mc Rc^+-\mc Rc^-=\begin{pmatrix}
g^{-1}{\rm  Ric}^B_{g, H}&\frac 12\,g^{\,-1}\, \dd^*_gHg^{-1}\\
-\frac 12 \dd^*_g H&-{\rm  Ric}^B_{g, H}g^{-1}\end{pmatrix}.
$$
 In \cite[Theorem 3]{SV17},  the authors prove the ${\rm Aut}(E)$-equivariance  of the generalized Ricci curvature which will be extensively used  in the next Sections.

\subsection{The generalized Ricci flow}

 A one-parameter family $(\gca(t))_{t\in [0,T)}$ of generalized metrics on an ECA $E$ is a solution to the generalized Ricci flow if it satisfies
\begin{equation}\label{eqn_GRF}
	 \gca^{-1} \frac{\partial}{\partial t} \gca =-2  {\mc Rc}(\gca).
\end{equation}
If we use the isotropic splitting $\sigma_0$ associated to the initial metric $\gca(0)$ to identify $E \simeq_{\sigma_0} (\TT)_{H_0}$, then $\gca(t)$ corresponds to $\gca(g(t),b(t))$ for some one-parameter families of Riemannian metrics $g(t)$ and two-forms $b(t)$ on $M$. Then, Equation \eqref{eqn_GRF} is equivalent to the following coupled system: 
\begin{equation}\label{renormgroupflow}
\begin{aligned}
		\frac{\partial}{\partial t} g &= - 2 \, {\rm Ric}_{g, H}^B, \\
		\frac{\partial}{\partial t} b &= - \dd^*_g H,
\end{aligned}
\end{equation}
where $H(t) = H_0 + \dd  b(t)$.  Since $H_0$ is closed, it follows that $H(t)$ evolves accordingly to the classical heat equation: 
\[
		\frac{\partial}{\partial t} H = \Delta_{g} H .
\]

We should emphasize that \Cref{renormgroupflow} firstly appeared in \cite{CFMP85}  as the \emph{renormalization group flow}. We refer to \cite{ST17} for a detailed explanation of the relation between the renormalization group flow and generalized Geometry.

Since the generalized Ricci curvature is ${\rm Aut}(E)$-equivariant, the PDE system  \eqref{eqn_GRF} defining the generalized Ricci flow is only weakly parabolic.  A suitable application of the DeTurck  trick can be applied to obtain short-time existence and uniqueness, provided $M$ is compact, see \cite[Theorem 5.6]{GRFbook} for details. 

More generally, a one-parameter family $(\mc G(t))_{t\in[0,T)}$ of generalized metrics on an ECA $E$ solves the \emph{gauge-fixed generalized Ricci flow} if there exists a one-parameter family $(F_t)_{t\in[0,T)}\subseteq {\rm Aut}(E)$ so that 
$F_t\mc G(t)F_t^{-1} $ is a solution to the generalized Ricci flow \eqref{eqn_GRF}.  Using again the isotropic splitting $\sigma_0$ induced by $\mc G(0)$ to obtain $(E, \mc G(t))\simeq_{\sigma_0}( (\TT)_{H_0}, \mc G(g(t), b(t)))$ as above,  it can be seen that $\mc G(t)$ is a solution of the gauge-fixed generalized Ricci flow if only if there exists  a one-parameter family of generalized vector fields $X_t+B_t\in\f{aut}((\TT)_{ H_0})$ such that $(g(t), b(t))_{t\in [0, T)}$ is a solution of the following  coupled system:
\begin{equation}\label{eqn_gaugedGRF}
	\begin{aligned}
	\frac{\partial}{\partial t}g=&\, - 2 \, {\rm Ric}^B_{g, H}+ \mc L_Xg\,,\\
	\frac{\partial }{\partial t}b=&\, -\dd^*_gH-B+\mc L_Xb\,.
	\end{aligned}
\end{equation}
It follows that $H(t)$ will evolve by
\begin{equation}\label{eqn_gaugedGRF_H}
	\frac{\partial }{\partial t}H=\Delta_{g}H+ \mc L_XH\,.
\end{equation}


\section{Solitons of the generalized Ricci flow} \label{sec_solitons}
 Motivated by  \Cref{def_scalingECA} and   by the effect of scaling on the $3$-form $H$ described in \Cref{lem_scaling}, we give the following new definition of solitons for the generalized Ricci flow. 

\begin{defn}\label{defn_soliton}
A solution $(E, \gca(t))_{t\in [0,T)}$ to the generalized Ricci flow \eqref{eqn_GRF} is called a \emph{generalized Ricci soliton} if there exists a one-parameter family of scalings $c(t) > 0$, $c(0) = 1$,  and a one-parameter family of generalized isometries 
\[
		F_t : (c(t)\cdot E,  \gca(0)) \longrightarrow (E, \mc G(t)).
\]
\end{defn}

 Differentiating the family $F_t$ at $t=0$ gives rise to a pair $(X,  \bm{D})\in \Gamma(TM)\times {\rm End}(E)$ satisfying the following properties:
  \begin{enumerate}
  \item \label{der1}$\bm{D}\in {\rm Der}([\cdot, \cdot])$, i.e. $\bm{D}[a, b]=[\bm{D}a, b]+[a, \bm{D}b]$, for any $a, b\in \Gamma(E)$;
  \item \label{der2} $X\ip= \la ({\bm D}+ \lambda{\rm Id})\cdot, \cdot\ra+ \la\cdot,  ({\bm D}+ \lambda{\rm Id})\cdot\ra,$
 where $c'(0)=-2\lambda$;
 \item \label{der3} $ [\mc Rc(\mc G(0))+ \bm{D},\mc G(0) ]=0.$
  \end{enumerate}
  As one can easily see, the dependence of \eqref{der2} on the scalings is only through $c'(0)$. This motivates the following definition.

 \begin{defn}\label{defn_lambdader}
 Let $E$ be a  ECA and $\lambda\in \mathbb R$. We say that $(X, \bm{D})\in \Gamma(TM)\times {\rm End}(E)$ is a $\lambda$-derivation if \Cref{der1} and  \Cref{der2}  are satisfied.  We denote by ${\rm Der}_{\lambda}([\cdot, \cdot])$ the set of all $\lambda$-derivations. 
 \end{defn}
 It can easily be observed that  $0$-derivations are precisely the derivations of the Dorfman bracket introduced in \cite{GuaAnnals}. Then, $\lambda$-derivations has to be interpreted as the generalization of derivations when a scaling process is taken into account. 
 
 We can  give a characterization of both the set of isomorphisms  between $c\cdot(\TT)_H$ and $(\TT)_H$ and the set of $\lambda$-derivations which will be useful in the next Sections.
 \begin{lem}\label{lem_der} Let $H\in \Omega^3(M)$ be  closed and $c\in \R\backslash\{0\}$. Then, 
 $$
 {\rm Aut}(c\cdot(\TT)_H, (\TT)_H)=\{\bar f_c e^b :  b\in \Omega^2(M), \quad  f\in {\rm Diff}(M), \quad f^*H=c(H-\dd b)\}\,, 
 $$
  where $$
   \bar f_c=\begin{pmatrix} \dd f & 0 \\0 & c(f^{-1})^*\end{pmatrix}\,.
   $$
Moreover, if $\lambda \in \R$, 
 $$
 {\rm Der}_{\lambda}([\cdot, \cdot])=\{X+ b\in \Gamma(TM)\oplus \Omega^{2}(M) : \mathcal L_XH =-\dd b -2\lambda H \}\,.
 $$
 \end{lem}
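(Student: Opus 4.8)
The plan is to reduce the problem to the already-understood automorphisms of an untwisted exact Courant algebroid by factoring out the scaling. First I record that the endomorphism
\[
\Phi_c := \begin{pmatrix}\id & 0\\ 0 & c\,\id\end{pmatrix}\colon \left(c\cdot(\TT)_H\right) \longrightarrow (\TT)_{cH}
\]
is an isomorphism of exact Courant algebroids for every $c \in \R\setminus\{0\}$; this is exactly the explicit map of \Cref{rmk_scal} (equivalently, it follows from \Cref{prop_EsimeqTTH} together with $c\cdot E \simeq_\sigma (\TT)_{cH}$). Indeed, $\Phi_c(X+\xi)=X+c\xi$ satisfies $\la \Phi_c a,\Phi_c b\ra = c\la a,b\ra$, matching the scaled inner product on the source, and a one-line check using \eqref{eqn_[]H-general} gives $[\Phi_c a,\Phi_c b]_{cH}=\Phi_c[a,b]_H$. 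Next I note that the proof of the automorphism theorem \cite[Proposition~2.21]{GRFbook} applies verbatim to isomorphisms between differently twisted algebroids: any ECA isomorphism preserves $\ip$ and the anchor, hence preserves $\ker\pi = T^*M$ and therefore has the block-triangular form $\bar f e^{b}$; the bracket condition then translates into $f^*H_2 = H_1 - \dd b$. In other words, the ECA isomorphisms $(\TT)_{cH}\to\TTH$ are precisely the $\bar f e^b$ with $f^*H = cH - \dd b$.

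With these two facts in hand, the proof is a bookkeeping composition. Precomposition with the fixed isomorphism $\Phi_c$ gives a bijection from the isomorphisms $(\TT)_{cH}\to\TTH$ onto $\operatorname{Aut}(c\cdot(\TT)_H,(\TT)_H)$, so every element of the latter is of the form $\bar f e^{b}\,\Phi_c$. Using $\bar f_c = \bar f\,\Phi_c$ and the conjugation identity $\Phi_c^{-1} e^{b}\,\Phi_c = e^{c^{-1}b}$, I rewrite $\bar f e^{b}\,\Phi_c = \bar f_c\, e^{c^{-1}b}$. Setting $\tilde b := c^{-1}b$, the constraint $f^*H = cH-\dd b$ becomes $f^*H = c(H-\dd\tilde b)$, which is the asserted description after relabelling $\tilde b \mapsto b$.

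\textbf{Part 2.} I will prove this by linearizing Part 1 at the infinitesimal level, working directly from the defining conditions \eqref{der1} and \eqref{der2}. Since $\bm D$ is a derivation covering $X$, it preserves $T^*M$ and is block lower-triangular; writing $\bm D=\left(\begin{smallmatrix} A & 0\\ C & D_0\end{smallmatrix}\right)$, the anchor relation forces $A=\mathcal L_X$ on $TM$. The content of \eqref{der2} is then purely algebraic: testing it on a vector and a covector yields $D_0=\mathcal L_X - 2\lambda\,\id$ on $T^*M$, and testing it on two vectors forces $C\colon TM\to T^*M$ to be skew, i.e.\ a $2$-form; absorbing the $H$-part of the generalized Lie derivative, I write $CY = \iota_Y\iota_X H + \iota_Y b$ for a genuine $b\in\Omega^2(M)$. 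The remaining condition is the $T^*M$-component of \eqref{der1} evaluated on two vector fields $Y,Y'$, which reads
\[
C[Y,Y'] + D_0(\iota_{Y'}\iota_Y H) = \iota_{Y'}\iota_{[X,Y]}H + \iota_{[X,Y']}\iota_Y H - \iota_{Y'}\dd(CY) + \mathcal L_Y(CY').
\]
Substituting $D_0=\mathcal L_X-2\lambda\id$ and the expression for $C$, and simplifying by Cartan's formula together with $\dd H=0$, this is exactly the classical computation of $\f{aut}(\TTH)$ recalled in the excerpt, except that the $-2\lambda\,\id$ in $D_0$ contributes the single extra term $-2\lambda\,\iota_{Y'}\iota_Y H$. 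Collecting terms gives $\mathcal L_X H = -\dd b - 2\lambda H$, recovering $\mathcal L_X H = -\dd b$ precisely when $\lambda = 0$. Reversing these steps shows that any $X+b$ satisfying the displayed equation defines, through the blocks $A=\mathcal L_X$, $D_0=\mathcal L_X-2\lambda\id$, $CY=\iota_Y\iota_X H+\iota_Y b$, a pair $(X,\bm D)$ satisfying \eqref{der1} and \eqref{der2}, giving the reverse inclusion.

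\textbf{Main obstacle.} The genuinely delicate step is the $T^*M$-component computation of \eqref{der1} in Part~2: correctly tracking the $H$-twist terms, applying Cartan's magic formula and $\dd H=0$, and fixing all signs so that the scaling contribution condenses to exactly $-2\lambda H$. Everything else---the isomorphism property of $\Phi_c$, the extension of the automorphism theorem to distinct twists, and the conjugation identity $\Phi_c^{-1}e^b\Phi_c = e^{c^{-1}b}$---is routine once the reduction to the untwisted case is set up.
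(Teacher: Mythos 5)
Your proposal is correct. For the first claim you and the paper ultimately run the same computation --- the anchor and the pairing force the block form $\bar f e^{b}$ up to scaling, and the bracket condition produces the twist relation --- but you package it by first splitting off the explicit scaling isomorphism $\Phi_c\colon c\cdot(\TT)_H\to(\TT)_{cH}$ of \Cref{rmk_scal} and then quoting the classification of isomorphisms between differently twisted algebroids, whereas the paper writes a general $F=e^{B}\,\bar f\, e^{b}$ with unknown lower-right block $h^*$ and extracts $B=0$ and $h^*=c(f^{-1})^*$ directly; the two differ only by the relabelling $e^{b}\mapsto e^{c^{-1}b}$ coming from $\Phi_c^{-1}e^{b}\Phi_c=e^{c^{-1}b}$. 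The second claim is where you genuinely diverge. The paper argues dynamically: it differentiates a one-parameter family $(F_t,f_t)\in{\rm Aut}(c(t)\cdot(\TT)_H,(\TT)_H)$ to get one inclusion, and for the converse it integrates, constructing $\bar b_t=\int_0^t c(s)f_s^*b_s\,\dd s$ and checking $\dd\bar b_t=H-c(t)^{-1}f_t^*H$. You instead verify the algebraic conditions \Cref{der1} and \Cref{der2} directly, reading off the blocks of $\bm{D}$ and isolating the $T^*M$-component of the Leibniz rule on two vector fields, where the scaling contributes exactly the extra term $-2\lambda\,\iota_{Y'}\iota_YH$; the sign bookkeeping does condense to $\mathcal L_XH=-\dd b-2\lambda H$, consistently with the $\lambda=0$ case. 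Your route is more faithful to \Cref{defn_lambdader} as literally stated and needs no integration, though you should record that the remaining components of \Cref{der1} (two vectors projected to $TM$, vector--covector, and covector--covector) hold identically, and that the lower-triangular block form of $\bm{D}$ with $TM$-block $\mathcal L_X$ rests on the anchor compatibility $\pi\circ\bm{D}=\mathcal L_X\circ\pi$, which is implicit in the definition rather than listed among \Cref{der1}--\Cref{der2}. The paper's route buys the stronger conclusion that every $\lambda$-derivation actually integrates to a one-parameter family of scaled automorphisms, which is the form in which the notion is used in the soliton discussion.
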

 \begin{proof}
 Let $(F, f)\in  {\rm Aut}(c\cdot(\TT)_H, (\TT)_H)$. Since $F$ preserves the Dorfman bracket, we have that  $\pi\circ F=\dd f \circ\pi$. This naturally gives us that if
 $$
 F=e^B\begin{pmatrix}\dd f & 0\\ 0 & h^*
 \end{pmatrix}e^b
 $$ then $B=0$.  On the other hand,  the condition $\la F\cdot, F\cdot\ra=c \la\cdot,\cdot\ra$
 forces $b\in \Omega^2(M)$ and $\dd h=c\dd f^{-1}\,.$
  Moreover, we have that, for any $X+\xi, Y+\eta\in \Gamma (\TT)$,
 $$
 \begin{aligned}
\bar f_c\left [(\bar f_c)^{-1}(X+\xi),(\bar f_c)^{-1}(Y+\eta)\right]=&\,  [ X,  Y]+\mathcal L_{X}\eta- \iota_Y\dd\xi+ c\iota_{Y}\iota_{X}(f^{-1})^*H.\\
\end{aligned}
 $$ Composing this with the usual  action of  $e^b$,  we have that  if $F\in {\rm Aut}( c\cdot (\TT)_H, (\TT)_H) $ then 
 $$
  F=\bar f_ce^b\,, \qquad f^*H=c(H-\dd b)\,.
 $$ The viceversa is trivial, concluding the proof of the first claim. 
 
Let us consider a one parameter of scalings  $c(t)>0 $ such that $c(0)=1$ and $c'(0)=-2\lambda$ and  a one parameter family $
 (F_t, f_t)\in {\rm Aut}( c(t)\cdot(\TT)_H,  (\TT)_H).
 $  Differentiating $(F_t, f_t)$ at $t=0$ gives $X+b\in \Gamma(TM)\oplus \Omega^2(M)$ such that 
 $$
 \mathcal L_XH=-\dd b- 2\lambda H\,.
 $$ Viceversa, given $X+ b\in  \Gamma(TM)\oplus \Omega^2(M)$ such that $ \mathcal L_XH=-\dd b- 2\lambda H$,  we can consider $f_t$ the one parameter family of diffeomorphisms generated by $X$  and a family of scaling $c(t)>0$ such that $c(0)=1$ and $c'(0)=-2\lambda$. Hence, we can define 
 $$
 \bar b_t=\int_0^tc(s)f_s^*b_sds\,,
$$ where $b_s\in \Omega^2(M)$ such that $ \mathcal L_{X_s}H=-\dd b_s+ \frac{c'(s)}{c(s)} H$. 
 	Then, we have that 
 $$
 \begin{aligned}
\dd \bar b_t=&\,
-\int_0^t\frac{1}{c(s)}f_s^*\left(-\frac{c'(s)}{c(s)} H + \mathcal L_{X_s}H\right)\dd s 
 =-\int_0^t\frac{\dd}{\dd s}\left(\frac{1}{c(s)}f_s^* H\right)\dd s=-\frac{1}{c(t)}f_t^* H+ H\,.
 \end{aligned}
 $$ Then, $F=\bar f_{t, c(t)}e^{\bar b_t}\in {\rm Aut}(c(t)\cdot(\TT)_H,   (\TT)_H)$, as we wanted. 
\end{proof}
 \Cref{defn_soliton} is the dynamical definition of solitons for the generalized Ricci flow. As in the classical case, we can deduce from that an equivalent and static definition in terms of classical data, as the next Proposition shows.

\begin{prop}\label{prop_static_soliton}
Let $(E,\gca(t))$ be a generalized Ricci soliton, and consider the time-independent isometry $(E, \gca(t)) \simeq_{\sigma_0}   \left( (\TT)_{H_0}, \, \gca(g(t), b(t)) \right) $ induced by the isotropic splitting $\sigma_0$ associated to $\gca(0)$. Then, $g_0 := g(0)$ and $H_0$ satisfy
\begin{align}
\begin{split}
		  {\rm Ric}_{g_0, H_0}^B  &=  \lambda \, g_0   + \tfrac12 \,  \mc{L}_X g_0, \\
		 \Delta_{g_0} H_0 &=  -2 \, \lambda \, H_0  -  {\mc L}_X H_0,  \label{eqn_static_soliton}
\end{split}
\end{align}
where $\lambda = -\tfrac12  c'(0)$ and $c(t)>0$ are the scalings from \Cref{defn_soliton}. Conversely, if $\left( (\TT)_{H_0}, \, \gca(g_0, 0) \right)$ satisfies \eqref{eqn_static_soliton}, then there exists a soliton solution $\gca(g(t), b(t))$ to \eqref{eqn_GRF} on $(\TT)_{H_0}$ with $\gca(g(0), b(0)) = \gca(g_0,0).$ 
\end{prop}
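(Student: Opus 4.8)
The plan is to prove both implications by passing to the infinitesimal data attached to a soliton and translating between the endomorphism picture on $E$ and the classical pair $(g_0,H_0)$, using the matrix formula for $\mc Rc$ from \Cref{sec_genRic} and the description of $\lambda$-derivations in \Cref{lem_der}.

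For the forward implication I would start from the soliton $(E,\gca(t))$ and the differentiated data $(X,\bm D)$ satisfying \Cref{der1}--\Cref{der3}, recorded just before \Cref{defn_lambdader}. After identifying $E\simeq_{\sigma_0}(\TT)_{H_0}$ and $\gca(0)=\gca(g_0,0)$, I would insert the block forms into \Cref{der3}: namely $\gca(g_0,0)=\left(\begin{smallmatrix}0&g_0^{-1}\\ g_0&0\end{smallmatrix}\right)$, the matrix for $\mc Rc(\gca(g_0,0))$ from \Cref{sec_genRic}, and $\bm D$, which by \Cref{lem_der} is the infinitesimal $\lambda$-derivation corresponding to $X+\beta\in\Gamma(TM)\oplus\Omega^2(M)$ with $\mc L_X H_0=-\dd\beta-2\lambda H_0$. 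An endomorphism $\left(\begin{smallmatrix}P&Q\\ R&S\end{smallmatrix}\right)$ commutes with $\gca(g_0,0)$ exactly when $R=g_0Qg_0$ and $S=g_0Pg_0^{-1}$; applying this to $\mc Rc(\gca(0))+\bm D$, the off-diagonal condition collapses to $\beta=\dd^*_{g_0}H_0$, and then the $\lambda$-derivation relation together with $\dd H_0=0$ (so $\Delta_{g_0}H_0=\dd\dd^*_{g_0}H_0$) yields the second equation of \eqref{eqn_static_soliton}, while the diagonal condition returns exactly ${\rm Ric}^B_{g_0,H_0}=\lambda g_0+\tfrac12\mc L_X g_0$.

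For the converse, given $(g_0,H_0)$ solving \eqref{eqn_static_soliton} I would set $\beta:=\dd^*_{g_0}H_0$ and check, using $\Delta_{g_0}H_0=\dd\dd^*_{g_0}H_0$, that $\mc L_X H_0=-\dd\beta-2\lambda H_0$, so that $(X,\beta)\in{\rm Der}_\lambda(\lb)$ by \Cref{lem_der}. Integrating this $\lambda$-derivation as in the proof of \Cref{lem_der} — via the flow of $X$, scalings $c(t)$ with $c(0)=1$, $c'(0)=-2\lambda$, and the induced two-forms — produces a family $F_t=\bar f_{t,c(t)}e^{b_t}\in\operatorname{Aut}(c(t)\cdot(\TT)_{H_0},(\TT)_{H_0})$ with $F_0=\id$. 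I would then define $\gca(t):=F_t\,\gca(g_0,0)\,F_t^{-1}$, a generalized metric on $(\TT)_{H_0}$ of the form $\gca(g(t),b(t))$ with $\gca(g(0),b(0))=\gca(g_0,0)$, and verify it solves \eqref{eqn_GRF}. Writing $\gca(t)^{-1}\dot\gca(t)=\gca(t)^{-1}[\dot F_tF_t^{-1},\gca(t)]$ and invoking the $\operatorname{Aut}(E)$-equivariance of $\mc Rc$ from \cite{SV17} (corrected for the scaling of the neutral inner product via \eqref{eqn_scaling_pi*}), the flow equation reduces back to the infinitesimal identity \Cref{der3}, which holds by construction; this is precisely what makes $\gca(t)$ self-similar and exhibits it as a soliton in the sense of \Cref{defn_soliton}.

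The main obstacle I anticipate is bookkeeping the scaling of the exact Courant algebroid. The maps $F_t$ are not automorphisms of a single ECA but isomorphisms out of the scaled algebroids $c(t)\cdot E$, so the equivariance of $\mc Rc$ must be combined with the precise dependence of $\mc Rc$ on the neutral inner product: under $(g,H)\mapsto(cg,cH)$ the symmetric tensor ${\rm Ric}^B_{g,H}$ is invariant, yet the blocks of the endomorphism $\mc Rc$ acquire different powers of $c$ through the metric raising and lowering, and it is exactly this mismatch, balanced against $c'(0)=-2\lambda$, that produces the $\lambda g_0$ term. Keeping these factors and the sign conventions for $\mc L_X$ consistent — together with the cohomological consistency $\lambda[H_0]=0$ forced by $\Delta_{g_0}H_0$ being exact — is the delicate part; the remainder is the routine block algebra above.
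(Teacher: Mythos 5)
Your proposal is correct in outline but proceeds along a genuinely different route from the paper. The paper never differentiates $F_t$ or touches the block form of $\mc Rc$: it simply applies \Cref{lem_scaling} to rewrite the source of the isometries as $(c(t)\cdot E,\gca(0))\simeq_{\sigma_0}((\TT)_{c(t)H_0},\gca(c(t)g_0,0))$, observes that the soliton definition then exhibits $(c(t)g_0,\,c(t)H_0)$ as a solution of the gauge-fixed equations \eqref{eqn_gaugedGRF}--\eqref{eqn_gaugedGRF_H}, and reads off \eqref{eqn_static_soliton} by differentiating $c'g_0=-2\,{\rm Ric}^B_{g_0,H_0}+c\,\mc L_Xg_0$ and $c'H_0=\Delta_{g_0}H_0+c\,\mc L_XH_0$ at $t=0$. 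You instead work at the infinitesimal level, feeding the matrix of $\mc Rc$ from \Cref{sec_genRic} and the $\lambda$-derivation description of \Cref{lem_der} into the commutation identity \Cref{der3} and extracting the two equations from the diagonal and off-diagonal blocks. Both arguments are valid; the paper's buys brevity by reusing the already-derived gauge-fixed flow equations and avoids the block algebra entirely, while yours makes explicit where the two-form component $\beta=\dd^*_{g_0}H_0$ of the derivation comes from (information the paper only surfaces later, in the invariant setting of \Cref{sec_gensol}) and, for the converse, actually supplies an integration argument that the paper's proof omits altogether — the published proof only treats the forward direction. Two caveats on your bookkeeping: the paper's properties (1)--(3) of the differentiated data are asserted rather than proved, so your forward direction implicitly carries the burden of deriving \Cref{der3} from \Cref{defn_soliton} (which requires combining $\gca(t)=F_t\gca(0)F_t^{-1}$ with the flow equation and the anticommutation $\mc Rc(\gca)\gca=-\gca\mc Rc(\gca)$); and your parenthetical $\Delta_{g_0}H_0=\dd\dd^*_{g_0}H_0$ uses the Hodge sign convention, whereas the paper's Laplacian satisfies $\Delta_{g}H=-\dd\dd^*_{g}H$ on closed forms (it is defined so that $\partial_tH=\dd(\partial_tb)=-\dd\dd^*_gH$ is the heat equation after \eqref{renormgroupflow}). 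With the paper's convention your off-diagonal identity and the relation $\mc L_XH_0=-\dd\beta-2\lambda H_0$ still combine to give the second equation of \eqref{eqn_static_soliton}, but the signs of $\beta$ and of $\bm D$ must be fixed consistently throughout; as you anticipate, this is the only delicate point, and it does not affect the validity of the overall strategy.
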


\begin{proof}
By \Cref{lem_scaling},  the isomorphism associated to  $\sigma_0$ also induces  time-independent isometries 
\begin{align*}
		(c(t)\cdot E,  \gca(0)) &\simeq_{\sigma_0} \left((\TT)_{c(t) H_0}, \, \gca(c(t)g_0, 0) \right).
\end{align*}
It follows that $c(t) g_0$ and $c(t) H_0$ must solve the gauged-fixed equations \eqref{eqn_gaugedGRF} and \eqref{eqn_gaugedGRF_H}.

Thus,
\begin{align*}
		c' g_0 &= - 2 \, {\rm Ric}^B_{g_0, H_0} + c\,  \mc{L}_X g_0, \\
		c' H_0 &= \Delta_{g_0} H_0 +  c\, {\mc L}_X H_0.
\end{align*}
 Evaluating at $t=0$ and using $c(0) = 1$, $c'(0) = -2 \lambda$, \eqref{eqn_static_soliton} follows.
\end{proof}

\begin{remark}
When $H_0 = 0$, \Cref{prop_static_soliton} implies that \Cref{defn_soliton} is equivalent to the classical definition of Ricci soliton for the Ricci flow. 

On the other hand, if $c(t) \equiv 1$ is constant, we recover the definition of soliton solution given in \cite[$\S$4.4]{GRFbook}, which only allows for steady solitons (i.e.~$\lambda = 0$).  Recently, in \cite[Definition 2.1]{PR23}, a definition of generalized Ricci soliton, equivalent to \Cref{defn_soliton}, was given. 

 Moreover, in the non-steady case, the second equation in \eqref{eqn_static_soliton}, together with $\dd H_0=0 $ and the Cartan's formula, forces $[H_0]=0$. 
 In particular,  if $M$ is compact, this implies that  a non-steady generalized Ricci soliton with harmonic torsion is a classical Ricci soliton. 
\end{remark}
 To conclude this Section, we give a family of explicit examples of expanding generalized Ricci solitons.

\begin{example}\label{ex_solitons}
  Let us consider $M_0=H_3:={\rm Heis}(3, \R)$ the $3$-dimensional Heisenberg group.  It is well known that on $M_0$ we can choose coordinates $\{x, y, z\}$ so that 
  $$
  e_1=\frac{\partial }{\partial x}\,, \quad e_2=\frac{\partial }{\partial y}+ x \frac{\partial }{\partial z}\,, \quad e_3=\frac{\partial }{\partial z}
  $$  are left-invariant vector fields, forming a frame of the Lie algebra of $M_0$,  subjected to the following structure equation:
  $$
  [e_1, e_2]=e_3\,.
  $$ We will denote with $\{e^1, e^2, e^3\}$ the dual coframe of $\{e_1, e_2, e_3\}$ which is, in particular,  given by:
  $$
  e^1=\dd x\,, \quad e^2=\dd y\,, \quad e^3=\dd z-x\dd y\,.
  $$We next consider the following one parameter family of diffeomorphisms of $M_0$:
  $$
  \varphi_t(x, y, z)=((1+4t)^{\frac14}x, (1+4t)^{\frac14}y, (1+4t)^{\frac12}z)\,,\quad (x, y, z)\in M_0\,, \quad t\ge 0 \,.
  $$ It is easy to see that $\varphi_t$ is actually an automorphism of $M_0$ viewed as a Lie group. 
Another easy computation allows us to state that 
  $$
  \dd\varphi_t=(1+4t)^{\frac14}e^1+(1+4t)^{\frac14}e^2+ (1+4t)^{\frac12}e^3\,,\quad t \ge 0\,.
  $$ Now we consider the closed left invariant $3$-form  $H=\dd x\wedge \dd y\wedge \dd z$  and observe that $H$ is harmonic with respect to any  metric on $M_0$.  Then, one can easily see that 
$$
  \varphi_t^*H=(1+4t)H\,, \quad t\ge 0\,.
  $$
  Then, denoting $c(t)=1+4t>0$ and  comparing with \Cref{lem_der}, we are in the position to promote $\varphi_t$ to 
   \begin{equation}\label{eqn_geniso}
   F_t=\bar \varphi_{t, c(t)}\in {\rm Aut}(c(t)\cdot(\TT)_H, (\TT)_H)\,, \quad t\ge 0\,. 
   \end{equation} Moreover, we easily compute the vector field generated by $\varphi_t$:
   $$
   X=\frac{\dd}{\dd t}\Big|_{t=0}\varphi_t=x\frac{\partial }{\partial x}+y\frac{\partial }{\partial y}+ 2z\frac{\partial }{\partial z}=xe_1+ye_2+(2z-xy)e_3\,,
   $$ which,  in particular,  satisfies the following bracket relations:
   $$
   [X, e_1]=-e_1\,, \quad [X, e_2]=-e_2\,, \quad [X, e_3]=-2e_3\,.
   $$ Finally, $X\in {\rm Der}_{-2}([\cdot, \cdot])$ satisfies 
 $$
   \mathcal L_XH=4H
   $$ which is  precisely the second equation in \eqref{eqn_static_soliton} with $-2\lambda=c'(0)=4$, since $H$ is harmonic with respect to any Riemannian metric on $M_0$. Now, we consider the following Riemannian metric:
   $$
   g=e^1\odot e^1+ e^2\odot e^2+ e^3\odot e^3= \dd x\odot\dd x+ (1+x^2)\dd y \odot\dd y + \dd z \odot\dd z - 2x\dd y \odot\dd z\,,
   $$ which is the standard left-invariant Riemannian metric on $M_0$ with respect to the frame $\{e_1, e_2, e_3\}$.  It is easy to see that 
   $$
   {\rm Ric}^B_{g, H}=-(e^1\odot e^1+ e^2\odot e^2)  \mbox{ and } \mathcal L_Xg=2(e^1\odot e^1+ e^2\odot e^2+2e^3\odot e^3)\,.
   $$ Then, 
   $$
    {\rm Ric}^B_{g, H}=-2g+\frac12\mathcal L_X g
   $$  giving precisely the first equation in \eqref{eqn_static_soliton}. Now, we can use \Cref{prop_static_soliton} to infer that $((\TT)_H, \gca (g, 0))$ is an expanding soliton for the generalized Ricci flow. Moreover, we can also specify what actually is the one parameter family of generalized isometries in \Cref{defn_soliton}. Indeed,  considering $F_t$ as in \eqref{eqn_geniso}, we have
   $$
   F_t\gca(g, 0)F_t^{-1}=\gca((1+4t)(\varphi_t^{-1})^*g, 0)
   $$ but
   $$
   g(t)=(1+4t)(\varphi_t^{-1})^*g=(1+4t)^{\frac12}e^1\odot e^1+(1+4t)^{\frac12}e^2\odot e^2+e^3\odot e^3
   $$  is precisely the solution of the generalized Ricci flow starting from $g$, see \cite[Section 6]{Par}.  
   
   In the same fashion and with    a bit more effort, we can construct examples of expanding solitons  for the generalized Ricci flow on $M_k:=H_3\times \R^k$, for any $k\ge 1$.
    On $M_k$, we consider the following  left-invariant frame $\{e_1, e_2, e_3, e_4, \ldots, e_{k+3} \}$ of the Lie algebra of $H_3\times \R^k$  where $e_1, e_2, e_3$ are the same defined above while $e_{3+j}=\frac{\partial}{\partial x_{3+j}}$ where $x_{3+j}$ is the standard coordinate on the $j$-th copy of $\R$ in  the abelian factor $\R^k$. 

     Then, it is sufficient to consider $ c(t)=1+4t$ as scaling, 
     $$
      \varphi_t(x, y, z, x_4, \ldots, x_{3+k})=((1+4t)^{\frac14}x, (1+4t)^{\frac14}y, (1+4t)^{\frac12}z, (1+4t)^{\frac12}x_4, \ldots, (1+4t)^{\frac12}x_{3+k})\,,   $$ for all $(x,y,z,x_4, \ldots, x_{3+k})\in M_k$, 
   as the one parameter family of diffeomorphisms of $M_k$ , $H=e^1\wedge e^2\wedge e^3$ and finally the standard left-invariant metric
   $$
   g=\sum_{i=1}^{k+3}e^i\odot e^i\,,$$ with respect to the coframe $\{e^1, \ldots, e^{3+k}\}$ dual with respect to $\{e_1, \ldots, e_{k+3} \}$ and obtain an expanding soliton of the generalized Ricci flow on $M_k$. 
\end{example}

\section{Homogeneous generalized geometry} \label{sec_homGG}
In this Section we specialize our study  to the case where the underlying manifold $M = \G$ is a simply-connected Lie group. We will denote by $e$ the identity of $\G$ and $\f g = T_e\G$ its Lie algebra. We also ask the ECA to be compatible with the algebraic structure in the following sense:
\begin{defn}\label{defn_liECA}
We say that a metric ECA $(E \to \G,\mc G)$ is \emph{left-invariant} if the action of $\G$ on itself by left-translations lifts to an action on $(E,\mc G)$ by isometries (as defined in $\S$\ref{sec_genmet}).
\end{defn}
The action of $\mathsf G$ on $E$ in \Cref{defn_liECA} is a particular instance of a wider class of Lie group actions on ECAs called \emph{lifted} or \emph{extended} actions, see \cite[Definition 2.4]{BK15} and \cite[Definition 2.6]{BCG07}.\\
 The action  of $\mathsf G$ on a left-invariant  metric ECA allows  to consider $\mathsf G$-equivariant isotropic splittings.  The choice of  such isotropic splittings, using \Cref{prop_SplittingFromMetric}, gives rise to left-invariant classical data as the following Proposition highlights.
\begin{prop}\label{prop_Ggb}
Let $(E\to \G,\mc G)$ be a  left-invariant metric ECA. Then,  any $\G$-equivariant isotropic splitting $\sigma \colon T\mathsf G \to E$ of \eqref{eqn_exactseq} induces a $\G$-equivariant isometry 
\[
(E, \mc G) \simeq_\sigma (\TTH,\mc G(g,b)),
\]
where the tensors $g$, $H$, $b$ on $M$ are left-invariant and $H$ depends only on $\sigma$ and not on $\gca$.
\end{prop}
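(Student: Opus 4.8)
The plan is to feed the given equivariant splitting into the two structural results already at our disposal, Propositions~\ref{prop_EsimeqTTH} and~\ref{prop_SplittingFromMetric}, and then promote the resulting isomorphism and tensors to the equivariant, left-invariant setting using that the $\G$-action consists of Courant isometries.

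First I would apply Proposition~\ref{prop_EsimeqTTH} to the isotropic splitting $\sigma$ to obtain an isomorphism of exact Courant algebroids $F_\sigma\colon E \simeq_\sigma \TTH$, where $H$ is the closed $3$-form of \eqref{eqn_H_sigma}, i.e.\ $H(X,Y,Z) = 2\,\m{[\sigma X,\sigma Y]}{\sigma Z}$. As this formula involves only $\sigma$, the Dorfman bracket and the pairing $\ip$, the form $H$ depends on $\sigma$ alone and not on $\gca$, which already yields the final assertion. Applying Proposition~\ref{prop_SplittingFromMetric} then shows that under $F_\sigma$ the generalized metric $\gca$ corresponds to $\gca(g,b)$ for a Riemannian metric $g$ and a $2$-form $b$ on $\G$.

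It remains to establish equivariance and left-invariance. Write $L_h$ for left-translation by $h\in\G$ and $\Psi_h\colon E\to E$ for the lift of Definition~\ref{defn_liECA}, a Courant isometry of $(E,\gca)$ covering $L_h$; thus $\Psi_h$ preserves $\ip$, the bracket and $\gca$, and satisfies $\dd L_h\circ\pi = \pi\circ\Psi_h$. Let $\G$ act on $\TTH$ by the standard lift $\bar L_h = \left(\begin{smallmatrix}\dd L_h & 0\\ 0 & (L_h^{-1})^*\end{smallmatrix}\right)$. The isomorphism $F_\sigma$ is built canonically from $\sigma,\pi$ and $\ip$: each $a\in E$ is written uniquely as $a = \sigma(\pi a) + \pi^*\xi$, and $F_\sigma(a) = \pi a + \xi$. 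Since $\sigma$ is $\G$-equivariant by hypothesis and $\pi,\ip$ are intertwined by $\Psi_h$, I would verify the identity $F_\sigma\circ\Psi_h = \bar L_h\circ F_\sigma$; this says exactly that the isometry $F_\sigma$ is $\G$-equivariant.

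Finally I would deduce left-invariance. By equivariance, $\bar L_h = F_\sigma\circ\Psi_h\circ F_\sigma^{-1}$ is conjugate to $\Psi_h$, hence a Courant isometry of $(\TTH,\gca(g,b))$: it preserves both the $H$-twisted bracket and the metric $\gca(g,b)$. Preservation of the bracket forces $L_h^*H = H$, so $H$ is left-invariant (equivalently, this is the direct computation $H(\dd L_h\,\cdot,\dd L_h\,\cdot,\dd L_h\,\cdot) = 2\,\m{[\Psi_h\sigma\,\cdot,\Psi_h\sigma\,\cdot]}{\Psi_h\sigma\,\cdot} = H$, using equivariance of $\sigma$ and that $\Psi_h$ preserves the bracket and $\ip$). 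Preservation of $\gca(g,b)$, combined with the explicit transformation of $\gca(g,b)$ under the left-translation lift $\bar L_h$, forces $g$ and $b$ to be left-invariant; the invariance of $b$ can alternatively be read from \eqref{bsigmas}, since both the chosen $\sigma$ and the preferred splitting induced by the invariant $\gca$ are $\G$-equivariant. The genuinely new point, and the main obstacle, is the equivariance identity $F_\sigma\circ\Psi_h = \bar L_h\circ F_\sigma$: it requires tracking the canonical decomposition $a = \sigma(\pi a) + \pi^*\xi$ through the three compatibilities of $\Psi_h$; once it is in place, left-invariance of $g,b,H$ follows formally.
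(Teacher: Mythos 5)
Your proposal is correct and follows essentially the same route as the paper: transfer the $\G$-action through the isomorphism induced by the splitting, then invoke the characterization of automorphisms/isometries of $\TTH$ to force $H$ and $g$ to be left-invariant, and read off the invariance of $b$ from the difference of the two equivariant splittings as in \eqref{bsigmas}. The only difference is one of detail — you spell out the equivariance identity $F_\sigma\circ\Psi_h=\bar L_h\circ F_\sigma$ for a general equivariant $\sigma$, which the paper's proof takes for granted after reducing to the preferred splitting.
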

\begin{proof} 
If $\sigma$ is the isotropic splitting given by $\gca$, firstly,  we note that, by \Cref{prop_SplittingFromMetric}, $(E,\mc G)$ is isometric to $(\TTH,\mc G(g,0))$,  for some closed $3$-form $H$ and Riemannian metric $g$. Using this isomorphism, we can define an action of  $\G$ on $\TTH$, and by definition this action will be  by isometries of $\mc G(g,0)$. By \Cref{prop_Isometries},  $H$ and $g$ must be left-invariant.  It remains to prove the invariance of $b$.
As in \eqref{bsigmas}, the $2$-form $b$ can be expressed as 
$$
b=(\pi^*)^{-1}(\sigma_1-\sigma)\,,
$$ where $\sigma_1$ is the isotropic splitting induced by $\mc G$,  which is $\G$-equivariant. Then, the claim follows directly from the fact that, for any $(F, f)\in {\rm Aut}(E)$, we have that $F\circ\pi^*=\pi^*\circ(f^{-1})^*$ and  from  the  expression of $b$ above.
\end{proof}
\begin{remark}
As said, the preferred isotropic splitting induced by $\mc G$ is  $\G$-equivariant. Thus, any homogeneous metric ECA is isometric to $(\TTH,\mc G (g,0))$ for a left-invariant Riemannian metric $g$, and a closed left-invariant $3$-form $H \in \Omega^3(M)^\G$. 
\end{remark}

Notice that as a vector bundle any left-invariant ECA is trivial, since after choosing an isotropic splitting we get an isomorphism with $\TTH$, where 
\begin{equation}\label{eqn_trivialisation}
		T \G \oplus T^* \G \simeq (\gggo) \times \G
\end{equation}
because Lie groups are parallelizable by using left-invariant vector fields. By abuse of notation, we will simply denote 
\[
	(\gggo)_H := \TTH,
\]
where $H\in \Omega^3(\G)^\G \simeq \Lambda^3\ggo^*$ is a closed, left-invariant $3$-form on $\mathsf G$.

\subsection{The space of left-invariant generalized metrics as a homogeneous space}

Let $E \to \G$ be a left-invariant ECA, and let us fix a background left-invariant generalized metric  $\bG$ on $E$. The preferred isotropic splitting $\bar \sigma : T\G \to E$ induced by $\bG$ yields an isometry
\[
		(E \to \G, \bG) \simeq_{\bar \sigma} ( (\gggo)_{\bar H}, \mc G(\bar g, 0)),
\] 
for some left-invariant metric $\bar g$ and closed $3$-form $\bar H$ on $\G$. By abuse of notation we write $\bG = \mc G(\bar g, 0)$.
Let us set
\begin{align*}
	\mca^\G &:= \{ \gca : \gca \hbox{ is a left-invariant generalized metric on } (\gggo)_{\bar H} \}\\
	 &\simeq \{ \gca(g,b) : g \hbox{ left-invariant metric on }\G, b\in \Omega^2(\G)^\G \} 
\end{align*}
 where the  last identification is due to \Cref{prop_Ggb}.   Next, we consider the following Lie subgroup of $\Or(\gggo,\ip)$:
\begin{equation}\label{eqn_defL}
	\mathsf{L} := \left \{ e^b \cdot \begin{pmatrix} h &0\\0&(h^{-1})^*\end{pmatrix} : h\in \mathsf{GL}(\f g) ,\,\, b\in \Lambda^2 \ggo^* \right\},  \qquad e^b := \begin{pmatrix}\id&0\\b &\id\end{pmatrix}.
\end{equation}
As an abstract Lie group, $\Ll$ is isomorphic to the semi-direct product $ \Lambda^2 \ggo^* \rtimes \mathsf{GL}(\ggo)$. Moreover, its Lie algebra is 
\[
	\lgo := \left \{  \begin{pmatrix} A & 0\\ \alpha& -A^*\end{pmatrix} : A\in \f{gl(g)} ,\,\, \alpha\in \Lambda^2 \ggo^* \right\}.
\]

The standard representation of $\mathsf{O}(\f g \oplus \f g^*,\m{\cdot}{\cdot})$ on $\gggo$ gives rise to a representation of $\Ll$ on $\gggo$, and this extends in a standard way to an action of $\Ll$ on all tensor powers of $\gggo$. In particular, $\Ll$ acts on generalized metrics and Dorfman brackets via the following formulas:
\[
	\ell \cdot \gca  := \ell \circ  \gca \circ \ell^{-1}, \qquad \ell \cdot \lb := \ell [\ell^{-1 }\cdot, \ell^{-1} \cdot ].
\]
Recall that we also have the `change of basis' actions of $\mathsf{GL}(\ggo)$ on the spaces of brackets and $3$-forms: 
\[
		h\cdot \mu (\cdot, \cdot) = h \mu(h^{-1} \cdot ,h^{-1} \cdot), \qquad h\cdot \omega (\cdot,\cdot,\cdot) = \omega(h^{-1} \cdot ,h^{-1} \cdot ,h^{-1} \cdot ),  \qquad h\in \mathsf{GL}(\ggo),
\]
for $\mu \in \Lambda^2 \ggo^* \otimes \ggo$ and $\omega \in \Lambda^3 \ggo^*$.\\
The next Proposition allows to present the space of left-invariant metric on $E$ as a homogeneous space.
\begin{prop}\label{prop_L_trans_mcaG}
The action of $\Ll$  on the space of left-invariant generalized metrics $\mca^\G$  is transitive.
\end{prop}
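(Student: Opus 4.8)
The plan is to show that the orbit of the background metric $\bG = \mc G(\bar g, 0)$ under $\Ll$ is \emph{all} of $\mca^\G$; transitivity is then immediate. Recall from the definition of $\mca^\G$ (using \Cref{prop_Ggb}) that every left-invariant generalized metric has the normal form $\mc G(g,b)$ for a left-invariant metric $g$ and $b \in \Lambda^2\ggo^*$, and recall that $\Ll \cong \Lambda^2\ggo^* \rtimes \mathsf{GL}(\ggo)$ is generated by the $B$-field transformations $e^b$ and the block-diagonal copy $\hat h := \left(\begin{smallmatrix} h & 0 \\ 0 & (h^{-1})^*\end{smallmatrix}\right)$ of $\mathsf{GL}(\ggo)$. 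Accordingly I would split the argument into the two factors, which act on the two pieces of data $(g,b)$ essentially independently.

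First I would treat the block-diagonal subgroup. Using the explicit form $\mc G(g,0) = \left(\begin{smallmatrix} 0 & g^{-1} \\ g & 0\end{smallmatrix}\right)$ from \Cref{prop_SplittingFromMetric}, a direct matrix computation of $\hat h \cdot \mc G(g,0) = \hat h \circ \mc G(g,0) \circ \hat h^{-1}$ yields
\[
\hat h \cdot \mc G(g,0) = \begin{pmatrix} 0 & h g^{-1} h^* \\ (h^{-1})^* g\, h^{-1} & 0 \end{pmatrix} = \mc G(h \cdot g,\, 0),
\]
where $(h\cdot g)(\cdot,\cdot) = g(h^{-1}\cdot, h^{-1}\cdot)$ is the change-of-basis action; the fact that this is again in normal form is exactly the identity $h g^{-1} h^* = \big((h^{-1})^* g\, h^{-1}\big)^{-1}$. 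The substantive (but classical) input is then that $\mathsf{GL}(\ggo)$ acts transitively on the space of inner products on $\ggo$ via $h\cdot g$ — equivalently, any two positive-definite symmetric bilinear forms are congruent. Hence every $\mc G(g,0)$ lies in the $\mathsf{GL}(\ggo)$-orbit of $\bG = \mc G(\bar g,0)$.

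Second, for the $B$-field part, the defining formula $\mc G(g,b) = e^b\,\mc G(g,0)\,e^{-b}$ from \Cref{prop_SplittingFromMetric} says precisely that $e^b \cdot \mc G(g,0) = \mc G(g,b)$, so the $B$-fields sweep out all values of $b$ within each fixed-$g$ slice. Combining the two steps: given a target $\mc G(g,b) \in \mca^\G$, choose $h \in \mathsf{GL}(\ggo)$ with $h\cdot \bar g = g$ and set $\ell := e^b\, \hat h \in \Ll$; then $\ell \cdot \bG = e^b \cdot \mc G(g,0) = \mc G(g,b)$. Thus the $\Ll$-orbit of $\bG$ is all of $\mca^\G$.

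I do not expect a genuine obstacle here: the only non-formal ingredient is the transitivity of $\mathsf{GL}(\ggo)$ on inner products, which is standard linear algebra, and everything else is bookkeeping of the block-matrix action. The only point requiring a little care is verifying that $\hat h \cdot \mc G(g,0)$ returns to the normal form $\mc G(\cdot,0)$ — i.e.\ that its upper-right block is the inverse of its lower-left block — which is the congruence identity noted above.
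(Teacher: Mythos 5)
Your proof is correct and follows essentially the same route as the paper's: both reduce the claim to the normal form $\mc G(g,b)$ from \Cref{prop_Ggb}, compute the conjugation action of $\ell = e^b\,\hat h$ on $\bG$ to obtain $\mc G(h\cdot\bar g,\, b)$, and conclude from the classical transitivity of $\mathsf{GL}(\ggo)$ on inner products. The only cosmetic difference is that you separate the two factors of $\Ll$ while the paper does the matrix computation in one step.
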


\begin{proof}
For any $\ell \simeq (b, h) \in \Ll$, its action on the background metric is given by
\begin{align*}
	\ell \cdot \bG &= e^b   \begin{pmatrix} h &0\\0&(h^{-1})^*\end{pmatrix}   \begin{pmatrix} 0 & \bar g^{-1} \\  \bar g & 0 \end{pmatrix}   \begin{pmatrix} h^{-1} &0\\0&h^*\end{pmatrix}    e^{-b} = e^b  \begin{pmatrix} 0 & g^{-1}\\ g&0\end{pmatrix}  e^{-b}, 
\end{align*}
where 
\[
	g = (h^{-1})^* \bar g h^{-1} = h \cdot \bar g (\cdot ,\cdot).
\]
Since $\mathsf{GL}(\ggo)$ acts transitively on left-invariant metrics, the claim now follows from \Cref{prop_Ggb}.
\end{proof}



\subsection{Moving generalized metrics is equivalent to moving  Dorfman brackets}  In this Subsection, we will define and study the main properties of the space of left-invariant Dorfman brackets. A similar treatment of them can also be found in \cite{Par}. Then, we will focus on describing  the effect of the $\Ll$-action and its infinitesimal version  on Dorfman brackets.

From the axiom (ECA5), the Dorfman bracket associated with a left-invariant ECA is skew-symmetric.
Thus, we consider the vector space $\Lambda^2 (\gggo)^* \otimes (\gggo)$ of all skew-symmetric bilinear maps 
\[
		\mud : (\gggo) \times (\gggo) \to (\gggo).
\]
 Using \Cref{prop_EsimeqTTH}, we know that choosing an isotropic splitting $\sigma$ gives rise to  an isomorphism $E\simeq_{\sigma} (\gggo)_{ H}$. Then,  from left-invariance and \eqref{eqn_[]H-general}, it follows that the Dorfman bracket of $E$ gets identified with $\lb_H \in \Lambda^2 (\gggo)^* \otimes (\gggo)$ given by
\begin{equation}\label{eqn_[]H-leftinv}
	[X + \xi, Y + \eta]_H = [X,Y]_\ggo - \eta \circ \ad_\ggo(X) + \xi \circ \ad_\ggo(Y) + \iota_Y\iota_X H,
\end{equation}
where $\ad_\ggo(X) = [X,\cdot]_\ggo : \ggo \to \ggo$ and $H\in \Lambda^3 \ggo^*$ is closed. It is not hard to see that $\la \lb_H ,\cdot \ra$ is totally skew-symmetric. This motivates the following. 
\begin{defn}\label{def_Dorf_brackets}
The \emph{space of Dorfman brackets} is the algebraic subset of $\Lambda^2 (\gggo)^* \otimes (\gggo)$ given by
\[
		\mc D := \{ \mud \in \Lambda^2 (\gggo)^* \otimes (\gggo) \,\,  : \,\,  \la \mud(\cdot, \cdot), \cdot\ra \in \Lambda^3(\gggo)^*\,, \,\,  \mud(\ggo^*, \ggo^*) = 0, \,\,  \mc J(\mud) = 0\}.
\]
Here,  $\mc J (\mud) (a,b,c) := \mud(\mud(a,b),c) + \mud(\mud(b,c),a) + \mud(\mud(c,a),b) = 0$ is the Jacobi identity.
\end{defn}
 We can show that all elements in $\mathcal D $ can be written as in \eqref{eqn_[]H-leftinv} for suitable choices of the $3$-form $H$ and the Lie bracket on $\ggo$.
\begin{lem}\label{lem_mudexplicit}
We have that $\mud \in \mc D$ if and only if there exists a Lie bracket $\mu$ on $\ggo$ and a closed $3$-form $H\in \Lambda^3 \ggo^*$ such that, for any $X+\xi, Y+\eta\in \gggo$, 
\begin{equation}\label{eqn_mud}
	\mud(X + \xi, Y + \eta) = \mu(X,Y) - \eta \circ \mu_X + \xi \circ \mu_Y + \iota_Y\iota_X H, \qquad \mu_X := \mu(X,\cdot).
\end{equation} 
\end{lem}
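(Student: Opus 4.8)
The plan is to work throughout with the decomposition $\gggo = \ggo \oplus \ggo^*$ and the two isotropic projections, treating the two linear conditions in \Cref{def_Dorf_brackets} first and feeding in the Jacobi condition at the end. For the forward implication, suppose $\mud \in \mc D$. Motivated by \eqref{eqn_H_sigma}, I first set $H(X,Y,Z) := 2\la \mud(X,Y), Z\ra$ for $X,Y,Z \in \ggo$; the hypothesis that $\la \mud(\cdot,\cdot),\cdot\ra$ is totally skew guarantees $H \in \Lambda^3\ggo^*$. Next I define $\mu(X,Y)$ to be the $\ggo$-component of $\mud(X,Y)$, which is skew in $X,Y$ because $\mud$ is. Since $\ggo$ and $\ggo^*$ are isotropic and pair only with each other, pairing $\mud(X,Y)$ against $Z \in \ggo$ recovers exactly its $\ggo^*$-component, giving $\mud(X,Y) = \mu(X,Y) + \iota_Y\iota_X H$.

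It then remains to pin down the mixed brackets. Using total skew-symmetry of $\la\mud(\cdot,\cdot),\cdot\ra$ (in particular its invariance under the cyclic $3$-permutation) together with $\mud(\ggo^*,\ggo^*)=0$, I would show that for $X\in\ggo$ and $\eta\in\ggo^*$ the $\ggo$-component of $\mud(X,\eta)$ vanishes, since $\la\mud(X,\eta),\zeta\ra = \la\mud(\eta,\zeta),X\ra = 0$ for all $\zeta\in\ggo^*$. Evaluating the surviving $\ggo^*$-component on $Z\in\ggo$ and applying skew-symmetry once more to rewrite it in terms of $\la\mud(X,Z),\eta\ra$ yields $\mud(X,\eta) = -\eta\circ\mu_X$; skew-symmetry of $\mud$ then gives $\mud(\xi,Y) = \xi\circ\mu_Y$, and bilinearity assembles all four pieces into \eqref{eqn_mud}, with $\mu$ so far only a skew bracket and $H$ only a $3$-form.

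The final step of the forward direction is to use $\mc J(\mud)=0$ to upgrade $\mu$ and $H$. I would evaluate the Jacobiator on a triple $X,Y,Z\in\ggo$ and decompose the output: its $\ggo$-part equals the $\mu$-Jacobiator $\sum_{\text{cyc}}\mu(\mu(X,Y),Z)$, whose vanishing says $\mu$ is a Lie bracket, while its $\ggo^*$-part, evaluated on an arbitrary $V\in\ggo$, works out to $-\dd H(X,Y,Z,V)$, where $\dd$ is the Chevalley--Eilenberg differential of $\mu$, whose vanishing says $H$ is closed. The one genuinely computational point is this last identification, obtained by matching the six cyclic terms against the six terms of the CE differential of a $3$-form; I expect this bookkeeping to be the main (and essentially only) obstacle, the earlier extraction steps being routine linear algebra forced by the isotropy of $\ggo$ and $\ggo^*$. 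A pleasant feature worth noting is that this computation never uses that $\mu$ satisfies Jacobi, so the two conditions decouple cleanly.

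For the converse, given a Lie bracket $\mu$ and a closed $H\in\Lambda^3\ggo^*$ I define $\mud$ by \eqref{eqn_mud} and check the three defining conditions of $\mc D$. The condition $\mud(\ggo^*,\ggo^*)=0$ is immediate (set $X=Y=0$), and total skew-symmetry of $\la\mud(\cdot,\cdot),\cdot\ra$ is the computation already recorded in the paragraph preceding \Cref{def_Dorf_brackets}. For $\mc J(\mud)=0$ I would realize the algebraic data geometrically: let $\G$ be the simply-connected Lie group with Lie algebra $(\ggo,\mu)$ and regard $H$ as a left-invariant $3$-form, closed precisely because $\dd H=0$ at the Lie-algebra level. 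Then $\mud$ is the restriction to left-invariant sections of the twisted Dorfman bracket $\lb_H$ from \Cref{ex_untwistedTB}, so the Jacobi identity descends from axiom (ECA1) (equivalently from \cite[Lemma 2.5]{GRFbook}); alternatively one may simply reverse the computation of the previous paragraph.
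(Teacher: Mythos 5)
Your proposal is correct and follows essentially the same route as the paper's proof: for necessity it defines $\mu = \pi\circ\mud$ and $H = 2\la\mud(\cdot,\cdot),\cdot\ra$, uses the linear conditions (total skewness plus $\mud(\ggo^*,\ggo^*)=0$) to force $\mud(\xi,Y)=\xi\circ\mu_Y$, and then extracts the Jacobi identity for $\mu$ and $\dd_\mu H=0$ from $\mc J(\mud)=0$; for sufficiency it identifies $\mud$ with the left-invariant restriction of the twisted Dorfman bracket $\lb_H$, which is exactly what the paper means by ``sufficiency was observed above.'' You merely spell out the intermediate pairing computations and the Chevalley--Eilenberg bookkeeping that the paper leaves as ``a simple computation.''
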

\begin{proof}
Sufficiency was observed above. Regarding necessity, for each $X,Y, Z \in \ggo$,  we set 
\[
		\mu(X,Y) := \pi \circ \mud(X,Y), \qquad H(X,Y,Z) := 2 \, \la \mud(X,Y), Z \ra.
\]
The linear conditions on $\mud$  imply that,  for every $\xi \in \ggo^*$,  we have  $\mud(\xi,Y) \in \ggo^*$. Thus, 
\[
	 \mud(\xi, Y)(X)= 2\, \la\mud(\xi, Y), X\ra = 2\, \la\mud(Y, X), \xi \ra=  \xi\mu(Y, X)\,=   \xi \circ \mu_Y (X).
\]
By skew-symmetry, $\mud(X,\eta) = -\eta \circ \mu_X$, and from this \eqref{eqn_mud} follows. A simple computation shows that the Jacobi identity for $\mud$ implies the Jacobi identity for $\mu$ and that $\dd_\mu H = 0$.
\end{proof}

  Moreover, one can notice that the following conditions hold for brackets in $\mc D$: 
\begin{equation}\label{eqn_centralext}
 \mud (\ggo,\ggo) \subset \gggo,  \qquad
 \mud(\ggo,\ggo^*) \subset \ggo^*, \qquad 
 \mud(\ggo^*,\ggo^*) = 0.
\end{equation}
That is, $(\gggo,\mud)$ is a Lie algebra which is a central extension of $(\ggo,\mu)$. We thus have that the space of Dorfman brackets is contained in the linear subspace:
\begin{equation}\label{eqn_VD}
		V_{\mc D} := \left\{ \mud \in \Lambda^2 (\gggo)^* \otimes (\gggo) \, :\,   \mud \hbox{ satisfies (\ref{eqn_centralext}) and } \la \mud(\cdot, \cdot), \cdot\ra \in \Lambda^3(\gggo)^*  \right\} .
\end{equation}


We  then have two projections $(\cdot)_{\ggo} : \mc D \to \Lambda^2 \ggo^* \otimes \ggo$ and $(\cdot)_{\Lambda^3} : \mc D \to \Lambda^3 \ggo^*$,
\begin{equation}\label{eqn_projections}
	\mud \mapsto \mud_{\ggo} = \mu \in \Lambda^2 \ggo^* \otimes \ggo, \qquad \mud\mapsto  \mud_{\Lambda^3} = H \in \Lambda^3 \ggo^*,
\end{equation}
where here $\mu$ is simply the Lie bracket of $\ggo$.  Furthermore, we notice that, by \eqref{eqn_mud}, these projections completely determine a Dorfman bracket $\mud \in \mc D$. 

Let us first study how the action of $\Ll$ relates to the projections in \eqref{eqn_projections}:

\begin{lem}\label{lem_Lact_mud}
Let $\mud \in \mc D$ with $\mu := \mud_{\ggo}$, $H := \mud_{\Lambda^3}$. Then, for any $\ell=  \bar h\,e^\alpha \in \Ll$, we have
\begin{equation}\label{eqn:LactionOnBrackets}
(\ell \cdot \mud)_{\ggo} = h\cdot \mu, \qquad (\ell \cdot \mud)_{\Lambda^3} = h \cdot (H - \dd_\mu \alpha).
\end{equation}
\end{lem}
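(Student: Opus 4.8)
The plan is to exploit the semidirect product structure of $\Ll$ and verify the two formulas one generator at a time. Writing $\ell = \bar h\, e^\alpha$, a direct check shows that $\mud \mapsto \ell\cdot\mud = \ell\,\mud(\ell^{-1}\cdot,\ell^{-1}\cdot)$ is a genuine left action, so that $\ell\cdot\mud = \bar h\cdot(e^\alpha\cdot\mud)$. Moreover $\bar h$ acts on $\ggo$ by $h$ and on $\ggo^*$ by $(h^{-1})^*$, while $e^\alpha$ fixes $\ggo^*$ pointwise and sends $X \mapsto X + \iota_X\alpha$ on $\ggo$. Since both projections are recovered from $\mud_\ggo(X,Y) = \pi\circ\mud(X,Y)$ and $\mud_{\Lambda^3}(X,Y,Z) = 2\la\mud(X,Y),Z\ra$, it suffices to evaluate each generator on pure vectors $X,Y,Z\in\ggo$ and read off the $\ggo$- and $\ggo^*$-components, using \eqref{eqn_mud} throughout.

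For the $\mathsf{GL}(\ggo)$-factor the computation is routine. As $\bar h^{-1}$ preserves $\ggo$ and acts there by $h^{-1}$, projecting $\bar h\,\mud(\bar h^{-1}X,\bar h^{-1}Y)$ onto $\ggo$ gives $h\,\mu(h^{-1}X,h^{-1}Y) = (h\cdot\mu)(X,Y)$, so $(\bar h\cdot\mud)_\ggo = h\cdot\mu$. For the $3$-form I would use that $\bar h\in\Or(\gggo,\ip)$, whence $\la\bar h\,\mud(\bar h^{-1}X,\bar h^{-1}Y),Z\ra = \la\mud(h^{-1}X,h^{-1}Y),h^{-1}Z\ra$, which yields $(\bar h\cdot\mud)_{\Lambda^3} = h\cdot H$.

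The crux, and the step I expect to require the most care, is the $B$-field factor. Using $e^{-\alpha}X = X - \iota_X\alpha$ for $X\in\ggo$, I would expand $e^\alpha\,\mud(e^{-\alpha}X,e^{-\alpha}Y)$ by \eqref{eqn_mud}; since $e^\alpha$ fixes $\ggo^*$ and sends the $\ggo$-component $\mu(X,Y)$ to $\mu(X,Y)+\iota_{\mu(X,Y)}\alpha$, the $\ggo$-projection is unchanged and $(e^\alpha\cdot\mud)_\ggo = \mu$, consistent with the claim. Collecting the $\ggo^*$-components (one from $\iota_{\mu(X,Y)}\alpha$, two from the covector parts of $e^{-\alpha}X,e^{-\alpha}Y$, and the original $\iota_Y\iota_X H$) and pairing against $Z\in\ggo$ then gives
\[
(e^\alpha\cdot\mud)_{\Lambda^3}(X,Y,Z) = H(X,Y,Z) + \alpha(\mu(X,Y),Z) - \alpha(\mu(X,Z),Y) + \alpha(\mu(Y,Z),X).
\]
The main obstacle is to recognise the three $\alpha$-terms as $-\dd_\mu\alpha(X,Y,Z)$, i.e.\ (minus) the Chevalley--Eilenberg differential of the $2$-form $\alpha$ with respect to $\mu$; tracking the signs carefully is the only delicate point. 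This gives $(e^\alpha\cdot\mud)_{\Lambda^3} = H - \dd_\mu\alpha$. Combining with the previous paragraph through $\ell\cdot\mud = \bar h\cdot(e^\alpha\cdot\mud)$ and the $\mathsf{GL}(\ggo)$-equivariance of both projections yields $(\ell\cdot\mud)_\ggo = h\cdot\mu$ and $(\ell\cdot\mud)_{\Lambda^3} = h\cdot(H - \dd_\mu\alpha)$, as desired.
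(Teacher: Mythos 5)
Your proposal is correct and follows essentially the same route as the paper: decompose $\ell = \bar h\, e^\alpha$, compute the action of each factor separately using \eqref{eqn_mud}, identify the three $\alpha$-terms as $-\iota_Y\iota_X(\dd_\mu\alpha)$, and compose. The only cosmetic difference is that you evaluate on pure vectors $X,Y\in\ggo$ and recover the projections, whereas the paper computes the action on general elements $X+\xi$, $Y+\eta$; the signs in your $B$-field computation check out against the paper's.
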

\begin{proof}
First, we compute for $X+\xi,Y + \eta \in \f g\oplus \f g^*$:
\begin{align*}
(e^\alpha\cdot \mud)(X + \xi,Y + \eta) &= e^\alpha \mud(X  + \xi - i_X \alpha ,  \, Y + \eta - i_Y \alpha)\\
&= e^\alpha \left( \mud(X+\xi, Y + \eta) + \alpha(Y, \mu(X,\cdot)) - \alpha(X, \mu(Y,\cdot))\right)  \\
&= \mud(X+\xi, Y + \eta)  -  i_Y i_X (\dd_\mu \alpha).
\end{align*}
Similarly,
\begin{align*}
\left(\begin{pmatrix}h&0\\0 &(h^{-1})^*\end{pmatrix} \cdot \mud \right)(X + \xi,Y + \eta) &= \begin{pmatrix}h&0\\0 &(h^{-1})^*\end{pmatrix}\mud(h^{-1}X + \xi \circ h, h^{-1}Y + \eta \circ h)\\
&= (h\cdot \mu)(X,Y) - \eta \circ (h\cdot \mu)_X  + \xi \circ (h\cdot \mu) + i_Y i_X (h\cdot H).
\end{align*}
The result follows from composing the two actions.
\end{proof}

Given the actions of $\Ll$ on Dorfman brackets on $\gggo$, and of $\mathsf{GL}(\ggo)$ on brackets and $3$-forms on $\ggo$, we denote the corresponding Lie algebra representations (or `infinitesimal actions') by
\[
	\Theta : \lgo \to \End(\Lambda^2 (\gggo)^* \otimes (\gggo)), \qquad \theta : \f{gl(g)} \to \End(\Lambda^2 \ggo^* \otimes \ggo), \qquad \rho : \f{gl(g)} \to \End(\Lambda^3 \ggo^*).
\] 
More precisely, they are given by
\begin{align*}
	   \Theta(A) \mud (\cdot ,\cdot) &= A \mud (\cdot, \cdot) - \mud(A \cdot, \cdot)  - \mud(\cdot, A \cdot), \qquad A\in \f{gl}(\gggo), \\
		\theta(A) \mu (\cdot ,\cdot) &= A \mu (\cdot, \cdot) - \mu(A \cdot, \cdot)  - \mu(\cdot, A \cdot),  \qquad A\in \f{gl}(\ggo), \\
		 \rho(A) \omega (\cdot, \cdot, \cdot) &= -\omega(A \cdot, \cdot, \cdot) -\omega(\cdot, A \cdot, \cdot) -\omega(\cdot,  \cdot, A \cdot),  \qquad A\in \f{gl}(\ggo).
\end{align*}
From \Cref{lem_Lact_mud} we immediately deduce: 
\begin{cor}\label{cor_ThetaL}
For any $L \simeq (\alpha, A) \in \lgo \simeq \Lambda^2 \ggo^* \rtimes \f{gl(g)}$,  we have
\[
	 (\Theta(L)\mud)_{\ggo}  = \theta(A) \mu, \qquad (\Theta(L) \mud)_{\Lambda^3} = \rho(A) H - \dd_\mu \alpha.
\]
\end{cor}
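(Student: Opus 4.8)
The plan is to derive \Cref{cor_ThetaL} directly from the group-level formula \eqref{eqn:LactionOnBrackets} in \Cref{lem_Lact_mud} by differentiating along a one-parameter subgroup at the identity. By construction, the infinitesimal action is $\Theta(L)\mud = \frac{\dd}{\dd t}\big|_{t=0}(\exp(tL)\cdot\mud)$, and similarly $\theta(A)$ and $\rho(A)$ are the derivatives of the $\mathsf{GL}(\ggo)$-actions on brackets and $3$-forms. Since the two projections $(\cdot)_\ggo$ and $(\cdot)_{\Lambda^3}$ of \eqref{eqn_projections} are linear maps between finite-dimensional vector spaces, they commute with $\frac{\dd}{\dd t}$, so it suffices to differentiate the right-hand sides of \eqref{eqn:LactionOnBrackets}.

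First I would fix $L \simeq (\alpha,A)\in\lgo$, written in matrix form as $\begin{pmatrix} A & 0 \\ \alpha & -A^*\end{pmatrix}$, and set $\ell_t := \exp(tL)\in\Ll$. Decomposing $\ell_t = \bar{h_t}\,e^{\alpha_t}$ as in \Cref{lem_Lact_mud}, a block computation of $\exp(tL)$ identifies $h_t = \exp(tA)$, so $\frac{\dd}{\dd t}\big|_{t=0}h_t = A$, and shows $\alpha_0 = 0$ with $\frac{\dd}{\dd t}\big|_{t=0}\alpha_t = \alpha$. This last identity is the only genuinely delicate point: because $\Ll\cong\Lambda^2\ggo^*\rtimes\mathsf{GL}(\ggo)$ is a \emph{semidirect} product, the $\ad$-action of $A$ makes $\alpha_t\ne t\alpha$ in general. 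One recovers the derivative by matching the lower-left block of $\exp(tL)$ with the lower-left block $(h_t^{-1})^*\alpha_t$ of $\bar{h_t}e^{\alpha_t}$ and differentiating at $t=0$; since $h_0=\id$ and $\alpha_0=0$, the higher-order corrections drop out and one is left with $\dot\alpha_0 = \alpha$.

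With these derivatives in hand I would apply \eqref{eqn:LactionOnBrackets}. For the $\ggo$-component, $(\ell_t\cdot\mud)_\ggo = h_t\cdot\mu$, so differentiating at $t=0$ yields $\frac{\dd}{\dd t}\big|_{t=0}(\exp(tA)\cdot\mu) = \theta(A)\mu$ by definition of $\theta$. For the $\Lambda^3$-component, $(\ell_t\cdot\mud)_{\Lambda^3} = h_t\cdot(H - \dd_\mu\alpha_t)$; applying the product rule at $t=0$, where $h_0=\id$ and $H-\dd_\mu\alpha_0 = H$, splits this into $\frac{\dd}{\dd t}\big|_{t=0}(h_t\cdot H) = \rho(A)H$ and $-\dd_\mu\big(\frac{\dd}{\dd t}\big|_{t=0}\alpha_t\big) = -\dd_\mu\alpha$, using linearity of $\dd_\mu$. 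Summing gives $(\Theta(L)\mud)_{\Lambda^3} = \rho(A)H - \dd_\mu\alpha$, which together with the $\ggo$-component completes the proof. Everything here is routine chain- and product-rule bookkeeping; the only step requiring care is the semidirect-product computation of $\dot\alpha_0$ described above.
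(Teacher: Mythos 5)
Your proposal is correct and follows exactly the route the paper intends: the corollary is stated as an immediate consequence of \Cref{lem_Lact_mud}, obtained by differentiating the group-level formula \eqref{eqn:LactionOnBrackets} along $\exp(tL)$ at $t=0$. Your extra care in verifying $\dot\alpha_0=\alpha$ via the lower-left block of $\exp(tL)$ matched against $(h_t^{-1})^*\alpha_t$ is a correct and worthwhile elaboration of the step the paper leaves implicit.
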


Since we will be moving the Dorfman and Lie brackets on $\gggo$ and $\ggo$ respectively, it is convenient to introduce the following notation: given $\mud$ a Dorfman bracket as in \eqref{eqn_mud}, we denote by
\[
		(\gggo)_{\mud} := (\gggo, \ip, \mud, \pi)
\]
the left-invariant ECA structure given by the data $(\ip, \mud, \pi)$, where $\ip$ and $\pi$ are defined in \eqref{eqn_TT}. 
Notice that in order for (ECA2) to be satisfied, we must necessarily have that the Lie bracket on $\ggo$ is $\mu := \mud_{\ggo}$.

\begin{prop} \label{prop:Liso} Let $\ell =  \bar h \, e^{\alpha} \in \Ll$, for $h \in \mathsf{GL}(\ggo)$ and $\alpha\in \Lambda^2 \ggo^*$. Then, the bundle map 
\[
	(\ell, h) : \left((\gggo)_H, (\ell^{-1}) \cdot \gca \right) \to \left( (\gggo)_{\ell \cdot \mud}, \gca \right)
\] is an isometry. Here $\mud_{\ggo} = \lb_\ggo$ is the original Lie bracket of $\ggo$ and $\mud_{\Lambda^3} = H$.
\end{prop}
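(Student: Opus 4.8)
The plan is to verify the three conditions defining a metric Courant algebroid isometry. First I would identify the base diffeomorphism: by \Cref{lem_Lact_mud} the Lie bracket of $\ell\cdot\mud$ is $h\cdot\mu$, and $h$ satisfies $h(\mu(X,Y)) = (h\cdot\mu)(hX,hY)$, so $h$ is a Lie algebra isomorphism from $(\ggo,\mu)$ onto the Lie algebra of $\ell\cdot\mud$; it therefore integrates to an isomorphism of the associated simply-connected groups, which is the diffeomorphism covered by the bundle map $\ell$ (still denoted $h$). Two of the three conditions are then essentially formal. Since $\ell\in\Ll\subset\Or(\gggo,\ip)$ and the neutral inner product is the standard left-invariant pairing on both sides, $\la\ell a,\ell b\ra = \la a,b\ra$ holds fiberwise. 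For the metric intertwining, with $\mc G_1 := (\ell^{-1})\cdot\gca = \ell^{-1}\gca\ell$ and $\mc G_2 := \gca$, a direct computation gives $\ell\circ\mc G_1 = \ell\circ(\ell^{-1}\gca\ell) = \gca\ell = \mc G_2\circ\ell$, which is the required relation $\ell\circ\mc G_1 = \mc G_2\circ\ell$ (all maps being left-invariant, the pointwise identity suffices).

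The heart of the argument is preservation of the Dorfman bracket, $[\ell a,\ell b]_{\ell\cdot\mud} = \ell[a,b]_\mud$ for all sections. On left-invariant sections this is a tautology: the very definition of the action, $\ell\cdot\mud = \ell\,\mud(\ell^{-1}\cdot,\ell^{-1}\cdot)$, is equivalent to $(\ell\cdot\mud)(\ell a,\ell b) = \ell\,\mud(a,b)$. To pass to arbitrary sections I would use the Leibniz axiom (ECA3) together with the anchor compatibility $\pi\circ\ell = \dd h\circ\pi$; the latter follows from the block form $\ell = \bar h\,e^\alpha$, since $\pi(\ell(X+\xi)) = hX = \dd h(\pi(X+\xi))$. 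Writing a general section as $\sum_i f_i a_i$ with $a_i$ left-invariant, and using that a bundle map covering $h$ acts on sections by $\ell(f a) = (f\circ h^{-1})\,\ell a$, the Leibniz rule propagates the left-invariant identity, the correction terms matching precisely because $\ell$ preserves the anchor and the inner product.

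I expect this last extension from left-invariant to arbitrary sections to be the main obstacle, as it requires careful tracking of how functions on the base transform under $h$ and of the Leibniz terms $\pi(\ell a)(f\circ h^{-1})$ against $\pi(a)(f)$; this is exactly where the explicit form $\bar h = \operatorname{diag}(h,(h^{-1})^*)$ and the anchor compatibility are indispensable, the remaining checks being purely algebraic. An alternative, equally viable route would factor $\ell = \bar h\,e^\alpha$ and treat the factors separately: the $B$-field part $e^\alpha$ via the isometry of \eqref{eqn_G(g,b)}, and $\bar h$ via push-forward along the Lie group isomorphism $h$, composing the two.
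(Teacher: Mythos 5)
Your proposal is correct and lands essentially where the paper does: the paper also integrates $h$ to a Lie group isomorphism $\varphi:\G\to\G_\mu$ with $\dd_e\varphi=h$, observes that bracket preservation on left-invariant sections is a tautology from the definition of the action, gets orthogonality from $\ell\in\Or(\gggo,\ip)$, and verifies the metric condition by the same conjugation computation $\bar h\cdot(\bar h^{-1}\cdot\gca)=\gca$. The one point where you diverge is the passage from left-invariant sections to arbitrary sections. The paper sidesteps your ``main obstacle'' by realizing the bundle map globally as $\bar\varphi$, the canonical lift of the diffeomorphism $\varphi$ (and treating the $e^\alpha$ factor separately, i.e.\ it actually follows the route you list as the ``alternative''); naturality of the twisted Dorfman bracket under such lifts then gives the identity on all sections without a Leibniz computation. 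Your direct extension via (ECA3) is also valid, but note that it needs (ECA5) as well: the Leibniz axiom only controls the second slot, and to handle $[F(fa),Fb]$ you must use $[fa,b]=f[a,b]-(\pi(b)f)a+2\la a,b\ra\,\pi^*\dd f$, which brings in preservation of $\ip$ and of $\pi^*$ in addition to the anchor compatibility you cite. With that caveat your argument is complete and makes explicit a step the paper leaves implicit.
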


\begin{proof}
We first prove the claim for $\ell = \bar h$ covering $h \in \mathsf{GL}(\ggo)$. By definition of the action, the map $h : (\ggo, \lb_\ggo) \to (\ggo, h\cdot \lb_\ggo)$ is a Lie algebra isomorphism. Set $\mu := h\cdot \lb_\ggo$, let $\G,\G_\mu$ be respectively the simply-connected Lie groups with Lie algebras $(\ggo,\lb_\ggo)$ and $(\ggo, \mu)$. Then, there exists a Lie group isomorphism $\varphi : \G \to \G_\mu$ for which $\dd_{e}\varphi = h$, where $e\in \G$ is the identity. The corresponding bundle map $\bar \varphi : T\G \oplus T^*\G \to  T\G_\mu \oplus T^*\G_\mu$ covering $\varphi$ clearly maps left-invariant sections to left-invariant sections, and since the trivialization \eqref{eqn_trivialisation} is done via left-invariant sections, it follows that $\bar \varphi$ may be represented by an element of $\mathsf{GL}(\gggo)$. The latter is precisely $\bar h$, and the pair $(\bar h, h)$ is the infinitesimal data corresponding to the ECA isomorphism 
\[
	\bar \varphi : (\gggo)_H \to (\gggo)_{\bar h\cdot \mud}.
\]
Indeed, $\bar h$ preserves the Dorfman bracket by definition of the action, and it preserves the neutral inner product because $\bar h\in \Or(\gggo,\ip)$. Finally, 
\[
		\bar h \cdot  (\bar h^{-1} \cdot \gca)  = \gca,
\]
thus we have an isometry. 

The proof for $\ell = e^\alpha$ covering $\id \in \mathsf{GL}(\ggo)$ is similar (in this case, $\G = \G_\mu$). 
\end{proof}

Let us now consider the effect of scaling when  Dorfman brackets are moving. Recall that, by \Cref{lem_scaling},  a choice of isotropic splitting $\sigma$ induces an isometry $(c\cdot E, \gca) \simeq_\sigma ((\gggo)_{cH}, \gca(cg, cb))$, for any $c>0$.

\begin{prop} \label{prop:scaledmu}
Let $c > 0$
 and let $\mud$ be the Dorfman bracket determined by $\mud_{\ggo} = \lb_\ggo$ and $\mud_{\Lambda^3} = H$. Then, the following bundle map is an isometry:
\[
		\left(\pm\overline{c^{1/2}\id} \,, \, \pm c^{1/2} \id \right) : ((\gggo)_{\pm cH}, \gca(cg, \pm cb)) \longrightarrow  ((\gggo)_{\pm c^{-1/2} \mud}, \gca(g,b)).
\]
\end{prop}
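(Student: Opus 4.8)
\emph{Proof plan.} The plan is to exhibit the map as a composition of two isometries that are already available, and then to pin down the target data with the action formulas of \Cref{lem_Lact_mud}. The two ingredients are the scaling isometry of \Cref{lem_scaling} (made explicit in \Cref{rmk_scal}) and a dilation of $\gggo$.

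First I would peel off the scaling. By \Cref{lem_scaling} and \Cref{rmk_scal}, the fixed isotropic splitting provides the explicit isometry $\mathrm{diag}(\id,\pm c\,\id)\colon (\pm c\cdot(\gggo)_{\mud},\pm\gca(g,b)) \to ((\gggo)_{\pm cH},\gca(cg,\pm cb))$, covering $\id_{\ggo}$. Passing to its inverse reduces the proposition to producing an isometry from $\pm c\cdot(\gggo)_{\mud}$, which carries the product $\pm c\,\ip$ and the bracket $\mud$, to the genuine algebroid $(\gggo)_{\pm c^{-1/2}\mud}$, which carries the product $\ip$, and which preserves the metric. Because rescaling the neutral product does not change the Dorfman bracket, this remaining step is entirely a matter of a dilation.

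In the $+$ case I would take the pure dilation $G:=c^{1/2}\,\id_{\gggo}$. Since $\la Ga,Gb\ra=c\,\la a,b\ra$, it sends the scaled product $c\,\ip$ to $\ip$; being a scalar it commutes with the metric endomorphism $\gca(g,b)$; and from $G\,\mud(a,b)=c^{1/2}\mud(a,b)$ together with $\mud(Ga,Gb)=c\,\mud(a,b)$ one reads that $G$ intertwines $\mud$ with $c^{-1/2}\mud$. Composing $G$ with the inverse of the scaling isometry then produces precisely $\overline{c^{1/2}\id}$ covering $c^{1/2}\id$. Equivalently, one can check the isometry axioms for $(\overline{c^{1/2}\id},c^{1/2}\id)$ directly: it lies in $\Or(\gggo,\ip)$, so preserves $\ip$; it covers $c^{1/2}\id$; writing $\nud$ for the Dorfman bracket of $(\gggo)_{cH}$, \Cref{lem_Lact_mud} with $h=c^{1/2}\id$ gives $(\overline{c^{1/2}\id}\cdot\nud)_{\ggo}=c^{-1/2}\mu$ and $(\overline{c^{1/2}\id}\cdot\nud)_{\Lambda^3}=c^{-1/2}H$, identifying the target bracket with $c^{-1/2}\mud$; and the conjugation computation from the proof of \Cref{prop_L_trans_mcaG} gives $\overline{c^{1/2}\id}\cdot\gca(cg,cb)=\gca(g,b)$.

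The $-$ case runs along the same lines, with the scalar dilation replaced by the anti-orthogonal dilation $c^{1/2}\,\mathrm{diag}(-\id_{\ggo},\id_{\ggo^*})$, which sends $-c\,\ip$ to $\ip$. I expect the sign bookkeeping here to be the main obstacle. Scaling the neutral product by a negative number forces an orientation reversal, and since $H$ enters the bracket as a three-argument (odd) form while $\mu$ is bilinear, the signs induced on the two projections $(\,\cdot\,)_{\ggo}$ and $(\,\cdot\,)_{\Lambda^3}$ need not coincide; the analogous subtlety affects the two-form $b$ inside the metric and the sign of $\pm\gca(g,b)$. I would therefore compute $\pm\overline{c^{1/2}\id}\cdot\nud$ and the conjugate $\pm\overline{c^{1/2}\id}\cdot\gca(cg,\pm cb)$ explicitly via \Cref{lem_Lact_mud} and the conjugation formula, and verify that the results land exactly on $\pm c^{-1/2}\mud$ and $\gca(g,b)$. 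This careful matching of signs, rather than any conceptual point, is the delicate part of the argument.
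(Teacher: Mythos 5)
Your treatment of the plus sign is correct and, in its ``direct check'' form, is essentially the paper's own proof: the paper verifies $\pm\overline{c^{1/2}\id}\in\Or(\gggo,\ip)$, asserts that the bracket identity follows from the explicit formulas for $\lb_{cH}$ and $\mud$, and carries out exactly the conjugation $\overline{c^{1/2}\id}\,\gca(cg,cb)\,\overline{c^{1/2}\id}^{-1}=\gca(g,b)$. Your alternative factorization through the scaling isometry of \Cref{rmk_scal} followed by the dilation $c^{1/2}\id_{\gggo}$ is a harmless repackaging of the same computation.

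The gap is in the minus sign, which you explicitly leave as a computation ``to be verified''; if you carry it out, it does not close on the stated target, so the sign bookkeeping you flag as delicate is in fact where the argument breaks down. Writing $F=-\overline{c^{1/2}\id}=\overline{-c^{1/2}\id}$ and $h=-c^{1/2}\id$, \Cref{lem_Lact_mud} applied to the source bracket $\nud=\lb_{-cH}$ (so $\nud_{\ggo}=\mu$ and $\nud_{\Lambda^3}=-cH$) gives $(F\cdot\nud)_{\ggo}=h\cdot\mu=-c^{-1/2}\mu$ but $(F\cdot\nud)_{\Lambda^3}=(-cH)(h^{-1}\cdot,h^{-1}\cdot,h^{-1}\cdot)=(-c)(-c^{-1/2})^{3}H=+c^{-1/2}H$, whereas $(-c^{-1/2}\mud)_{\Lambda^3}=-c^{-1/2}H$. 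The image bracket is therefore the Dorfman bracket determined by the pair $(-c^{-1/2}\mu,\,+c^{-1/2}H)$, which is not a scalar multiple of $\mud$ unless $H=0$ or $\mu=0$; the same already occurs for $c=1$, where $\overline{-\id}\cdot\lb_{-H}=-\lb_{-H}$ has three-form component $+H$ rather than $-H$. A parallel sign appears in the metric: conjugation is insensitive to the overall sign of $F$, so $F\,\gca(cg,-cb)\,F^{-1}=e^{-b}\gca(g,0)e^{b}=\gca(g,-b)$, not $\gca(g,b)$. So your plan of ``verifying that the results land exactly on $\pm c^{-1/2}\mud$ and $\gca(g,b)$'' would stall here; either the target data in the minus branch must be corrected (to the bracket determined by $(-c^{-1/2}\mu,c^{-1/2}H)$ and the metric $\gca(g,-b)$), or the claim restricted to the situations in which it is actually used ($b=0$, and the minus sign only through $\overline{-\id}$ at $c=1$, which negates the whole bracket). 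Note that the paper's proof also only computes the plus branch explicitly, so it offers no help on this point.
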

\begin{proof}
First, we see that it is an ECA isomorphism. Indeed, we have that $\pm\overline{c^{1/2}\id} \in \Or(\gggo,\ip)$, 
and 
\[
	 \pm\overline{{c^{1/2}}\id} \left[(\pm\overline{c^{1/2}\id})^{-1}  \,\cdot \,, (\pm\overline{c^{1/2}\id})^{-1} \, \cdot \, \right]_{cH} = \pm c^{-1/2} \mud(\cdot, \cdot)
\] 
can be easily verified using  \eqref{eqn_[]H-leftinv} and \eqref{eqn_mud} (with $\mu = \lb_\ggo$). Regarding the isometry claim, we directly compute:
\begin{align*}
	 \left(\overline{c^{1/2}\id} \right) \, \gca(cg,cb) \, \left(\overline{c^{1/2}\id}\right)^{-1} &=
	 \begin{pmatrix} c^{1/2} \id & 0\\ 0 &  c^{-1/2} \id\end{pmatrix}  
	 \, e^{cb} \,
	 \begin{pmatrix} 0  & c^{-1}g^{-1}\\ c g &  0 \end{pmatrix} 
	 \, e^{-cb} \,
	 \begin{pmatrix} c^{-1/2} \id & 0\\ 0 &  c^{1/2} \id\end{pmatrix}   \\
	 & = \begin{pmatrix} -g^{-1} b &  g^{-1}\\ g - b g^{-1} b & b g^{-1} \end{pmatrix}  
	  = \gca(g,b).
\end{align*}
\end{proof}

\subsection{The moment map} 

In this section we define a moment map for the action of $\Ll$ on Dorfman brackets, in the sense of real GIT (see \cite{RS90,HSchw07,GIT20}). In \Cref{prop_Rcmm} we show that this moment map encodes the most complicated part of the generalized Ricci curvature of left-invariant generalized metrics.

As before, we fix a background generalized metric $\bG$ on a left-invariant ECA, yielding an isomorphism with $((\gggo)_{\bar H}, \gca(\bar g, 0))$. The metric $\bar g$ gives rise to an inner product on $\Lambda^2 (\gggo)^* \otimes (\gggo)$,  also denoted by $\bar g$. To define it, we fix a $\bar g$-orthonormal basis $\{ e_i\}$ of $\ggo$  with dual basis $\{e^i\}$ and set
\[
		\bar  g(\mud,\nud) := 2 \, \sum_{i,j} \left\la \bG \mud(e_i,e_j), \nud(e_i,e_j)\right\ra + 4 \, \sum_{i,j} \left\la \bG \mud(e_i,e^j), \nud(e_i,e^j)\right\ra + 2 \, \sum_{i,j} \left\la \bG \mud(e^i,e^j), \nud(e^i,e^j)\right\ra.
\] 
The overall factor of $2$ is due to the $\tfrac12$ factor in the definition of $\ip$. Since we are only interested in Dorfman brackets and their infinitesimal variations, we will mostly work with brackets in $V_{\mc D}$ (see \eqref{eqn_VD}). Then, the only non-vanishing structure coefficients  are: 
\[
		\mud(e_i, e_j) = \mud_{ij}^k e_k + \mud_{ijk} e^k, \qquad \mud(e_i, e^j) = -\mud_{ik}^j e^k = -\mud(e^j, e_i),
\]
using Einstein's summation convention (summing over all $i,j,k$ and \emph{not} just $i<j$). The fact that the coefficient of $e^k$ in $\mud(e_i, e^j)$ is related to that of $e_j$ in $\mud(e_i,e_k)$ follows from $\la \mud(\cdot,\cdot), \cdot \ra \in \Lambda^3(\gggo)^*$. We then have
\begin{equation}\label{eqn_gDorfman}
	\bar g(\mud, \nud)	:= \sum_{i,j,k} \left( 3 \,\mud_{ij}^k \nud_{ij}^k  + \mud_{ijk} \nud_{ijk}\right) = 3 \, \bar g(\mu, \nu) + \bar g(H, \tilde H), \qquad \mud, \nud \in V_{\mc D}, 
\end{equation}
where $\mu = \mud_{\ggo}, \nu = \nud_{\ggo},  H = \mud_{\Lambda^3},  \tilde H = \nud_{\Lambda^3}$. Here we have used the extension of $\bar g$ to a positive-definite inner product on $\Lambda^2 \ggo^* \otimes \ggo$, and on  $\Lambda^k \ggo^*$, given respectively by
\[
		\bar g(\mu,\nu) :=  \sum_{i,j,k} \mu_{ij}^k \nu_{ij}^k, \qquad  \mu,\nu \in \Lambda^2 \ggo^* \otimes \ggo, \qquad \mu_{ij}^k := \bar g(\mu(e_i,e_j), e_k).
\]
and
\[
	\bar g(\alpha, \beta) := \sum_{i_1, \ldots, i_k} \alpha_{i_1 \cdots i_k}  \beta_{i_1 \cdots i_k}, \qquad \alpha,\beta \in \Lambda^k \ggo^*, \qquad \alpha_{i_1\cdots i_k} := \alpha(e_{i_1}, \ldots, e_{i_k}). 
\]

%
%

The metric $\bar g$ also determines  a  maximal compact subgroup $\Or(\ggo, \bar g) \leq \Ll$. We fix on $\lgo$ the $\Ad(\Or(\ggo,\bar g))$-invariant, positive-definite inner product $\bar g_\lgo$ given by 
\begin{equation}\label{iponmfl}
	 \bar g_\lgo \left(\begin{pmatrix}A& 0 \\ \alpha & -A^*\end{pmatrix}, \begin{pmatrix}B& 0 \\ \beta & -B^*\end{pmatrix} \right) :=  2 \tr A B^T + \frac 16 \tr \beta^* \alpha = 2 \,  \sum_{i,j} A_{ij} B_{ij} + \frac16 \sum_{i,j} \alpha_{ij} \beta_{ij},
\end{equation}
where $A_{ij} := g(A e_i, e_j)$, $\alpha_{ij} = \alpha(e_i, e_j)$, etc, for a $g$-orthonormal basis $\{ e_i \}$. While the  $1/6$-factor looks arbitrary, it will play a key role in the forthcoming computations.


\begin{defn}\label{def_momMap}
The \emph{moment map} for the action of $\Ll$ on $(\Lambda^2 (\gggo)^* \otimes (\gggo), \bar g)$ is the map 
\[
	\mm : \Lambda^2 (\gggo)^* \otimes (\gggo) \to \lgo, \qquad  \mud \mapsto \mm_\mud,
\]
 implicitly defined by
\[
		\bar g_\lgo(\mm_\mud, L) =  \frac 16  \, \bar g(\Theta(L)\mud, \mud), \qquad   L\in \lgo.
\]
\end{defn}

\begin{prop}
The moment map is $\Or(\ggo,\bar g)$-equivariant. Moreover,  for each $\mud \in \Lambda^2 (\gggo)^* \otimes (\gggo)$ and $ L\in \lgo$,  it satisfies
\[
		\bar g_\lgo(\mm_\mud, L) = \frac1{12} \, \frac{\dd}{\dd t}\Big|_{t=0} \lvert \exp(t L)\cdot \mud \rvert^2_g\,.
\]
\end{prop}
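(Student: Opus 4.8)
The plan is to treat the two assertions separately, beginning with the infinitesimal formula, which is essentially a differentiated form of the definition. Writing $\mud(t) := \exp(tL)\cdot\mud$, I would observe that $\mud(0)=\mud$ and, since $\Theta$ is by construction the differential of the $\Ll$-action on brackets, $\frac{\dd}{\dd t}\big|_{t=0}\mud(t)=\Theta(L)\mud$. Differentiating the fixed inner product $\bar g$ along this curve (there is no contribution from the metric, as $\bar g$ on the space of brackets is held constant) gives
\[
\frac{\dd}{\dd t}\Big|_{t=0}\lvert\exp(tL)\cdot\mud\rvert^2_g = \frac{\dd}{\dd t}\Big|_{t=0}\bar g(\mud(t),\mud(t)) = 2\,\bar g(\Theta(L)\mud,\mud).
\]
Comparing with \Cref{def_momMap}, the right-hand side equals $12\,\bar g_\lgo(\mm_\mud,L)$, so dividing by $12$ yields exactly the claimed identity.

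For the equivariance I would fix $k\in\Or(\ggo,\bar g)$ and prove $\mm_{k\cdot\mud}=\Ad(k)\,\mm_\mud$ by pairing both sides with an arbitrary $L\in\lgo$ and invoking non-degeneracy of $\bar g_\lgo$. Using the $\Ad(\Or(\ggo,\bar g))$-invariance of $\bar g_\lgo$ built into \eqref{iponmfl} together with \Cref{def_momMap}, I would write
\[
\bar g_\lgo(\Ad(k)\,\mm_\mud,L) = \bar g_\lgo(\mm_\mud,\Ad(k^{-1})L) = \tfrac16\,\bar g\big(\Theta(\Ad(k^{-1})L)\mud,\mud\big).
\]
The next ingredient is the covariance identity $\Theta(\Ad(k)L)\,\nud = k\cdot\Theta(L)(k^{-1}\cdot\nud)$, which follows from $\exp(t\,\Ad(k)L)=k\exp(tL)k^{-1}$ and the fact that $\Theta$ integrates to the $\Ll$-action; applying it with $k$ replaced by $k^{-1}$ and $\nud=\mud$ turns the last expression into $\tfrac16\,\bar g\big(k^{-1}\cdot\Theta(L)(k\cdot\mud),\mud\big)$. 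Finally, using that the $\Or(\ggo,\bar g)$-action on brackets is $\bar g$-orthogonal to move $k^{-1}$ across the inner product, this becomes $\tfrac16\,\bar g\big(\Theta(L)(k\cdot\mud),k\cdot\mud\big)=\bar g_\lgo(\mm_{k\cdot\mud},L)$, as desired.

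The step I expect to be the main obstacle is justifying that the $\Or(\ggo,\bar g)$-action on $\Lambda^2(\gggo)^*\otimes(\gggo)$ is genuinely $\bar g$-orthogonal. For $k\in\Or(\ggo,\bar g)$ the relevant element of $\Ll$ is $\bar k = k\oplus(k^{-1})^*$, and $\bar g$ on brackets is assembled from $\ip$ and the background generalized metric $\bG=\gca(\bar g,0)$. Orthogonality reduces to two facts: that $\bar k\in\Or(\gggo,\ip)$ preserves $\ip$, and that $\bar k$ commutes with $\bG$ — the latter being equivalent, by a direct block-matrix computation, to the defining relation $k^*\bar g\,k=\bar g$ of $\Or(\ggo,\bar g)$. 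Granted these, invariance of $\bar g$ under $\mud\mapsto\bar k\cdot\mud$ follows because the defining sums are taken over a $\bar g$-orthonormal basis and are insensitive to the orthogonal substitution. The covariance of $\Theta$ is then a formal consequence of differentiating a group action, and the computation closes.
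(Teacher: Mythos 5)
Your proof is correct and follows essentially the same route as the paper: the derivative formula is the same direct computation $\tfrac{\dd}{\dd t}\big|_{t=0}\lvert\exp(tL)\cdot\mud\rvert^2 = 2\,\bar g(\Theta(L)\mud,\mud) = 12\,\bar g_\lgo(\mm_\mud,L)$ that the paper dismisses as trivial, and your equivariance argument is the paper's chain of equalities (definition of $\mm$, covariance of $\Theta$ under conjugation, $\Ad$-invariance of $\bar g_\lgo$, orthogonality of the $\Or(\ggo,\bar g)$-action) read in the reverse direction. Your extra verification that $\bar k = k\oplus(k^{-1})^*$ acts $\bar g$-orthogonally on brackets — via preservation of $\ip$ and commutation with $\bG$ — is a detail the paper leaves implicit, and it is correct.
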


\begin{proof}
 Let $K\in \Or(\ggo,\bar g)$, for every $L\in \lgo$,  we have that 
$$
\bar g_{\lgo}(\mm_{\overline K \cdot \mud}, L)=
\frac16\bar g(\overline K ^{-1}\cdot\Theta(L)(\overline K\cdot\mud), \mud)=\frac16\bar g(\Theta(\overline K ^{-1}L\overline K )\mud, \mud)
= \bar g_{\lgo}(\mm_{\mud}, \overline K ^{-1}L\overline K )=\bar g_{\lgo}(\overline K \mm_{\mud}\overline K^{-1},L )
$$
yielding the first claim. The second one is trivial computing the derivative on the right hand side. 
\end{proof}

We now define ${\mc Rc}_\mud \in \End(\gggo)$ to be the generalized Ricci curvature of $((\gggo)_\mud, \bG)$ (at the identity). We also denote by ${\rm Ric}_\mu \in \End(\ggo)$ the $(1,1)$-Ricci tensor of $(\ggo, \mu,\bar g)$, and by $\mm_\mu \in \End(\ggo)$ the moment map for the action of $\mathsf{GL}(\ggo)$ on $\Lambda^2 \ggo^* \otimes \ggo$:
\[
		\bar g(\mm_\mu, A) = \frac14 \bar g(\theta(A) \mu, \mu), \qquad A\in \f {gl(g)}, \qquad \mu \in \Lambda^2 \ggo^* \otimes \ggo.
\]
The factor $\tfrac14$ makes this consistent with the standard notation in the homogeneous Riemannian setting, see for instance \cite{alek}. In this notation, for $\mud \in {\mc D}$ with $\mud_{\ggo} = \mu$, $\mud_{\Lambda^3} = H$, we have
\begin{equation}\label{eqn_Rcmud}
	{\mc Rc}_\mud = 
	\begin{pmatrix} \Ricb_{\mu,H} &\frac 12 \bar g^{-1}(\dd_\mu^*H) \, \bar g^{-1}\\
		-\frac 12 \dd^*_\mu H&  - (\Ricb_{\mu,H})^*\end{pmatrix}, \qquad 
		\Ricb_{\mu,H} := {\rm Ric}_\mu - \frac 14 \,  \bar g^{-1} H^2.
\end{equation}

\begin{prop}\label{prop_Rcmm}
The generalized Ricci curvature and the moment map for the action of $\Ll$ on Dorfman brackets are related by 
\[
		{\mc Rc}_\mud =  \mm_\mud +  \overline{{\rm Ric}_\mu} - \overline{\mm_\mu} + A_\mud\,, 
\]
for some $A_\mud\in \f{so}(\gggo,\bG) \cap \f{so}(\gggo,\ip)$.
\end{prop}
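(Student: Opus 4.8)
The plan is to compute the moment map $\mm_\mud$ explicitly in the block form dictated by the splitting $\lgo\simeq\Lambda^2\ggo^*\rtimes\f{gl}(\ggo)$, and then to \emph{define} $A_\mud$ as the difference of the two sides, checking afterwards that it lands in the stated intersection. First I would record that the linear subspace $V_{\mc D}$ of \eqref{eqn_VD} is $\Ll$-invariant: by \Cref{lem_Lact_mud} the action preserves both the total skew-symmetry of $\la\mud(\cdot,\cdot),\cdot\ra$ and the central-extension conditions \eqref{eqn_centralext}. Consequently $\Theta(L)\mud\in V_{\mc D}$ for every $L\in\lgo$, so I may evaluate the defining pairing of $\mm_\mud$ using the reduced inner product \eqref{eqn_gDorfman} rather than the full expression. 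Writing $\mm_\mud=\begin{pmatrix}A_0&0\\\alpha_0&-A_0^*\end{pmatrix}$ and testing against an arbitrary $L\simeq(\alpha,A)\in\lgo$, \Cref{cor_ThetaL} gives $(\Theta(L)\mud)_\ggo=\theta(A)\mu$ and $(\Theta(L)\mud)_{\Lambda^3}=\rho(A)H-\dd_\mu\alpha$, whence by \Cref{def_momMap} and \eqref{eqn_gDorfman}
\[
\bar g_\lgo(\mm_\mud,L)=\tfrac16\Big(3\,\bar g(\theta(A)\mu,\mu)+\bar g(\rho(A)H,H)-\bar g(\dd_\mu\alpha,H)\Big).
\]

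Next I would split this into its $A$- and $\alpha$-parts and match against the two summands of \eqref{iponmfl}. For the $\alpha$-part, adjointness $\bar g(\dd_\mu\alpha,H)=\bar g(\alpha,\dd^*_\mu H)$ together with the common coefficient $\tfrac16$ forces $\alpha_0=-\dd^*_\mu H$. For the $A$-part, I would use the Riemannian moment map in the form $\bar g(\theta(A)\mu,\mu)=4\,\bar g(\mm_\mu,A)$ together with the identity
\[
\bar g(\rho(A)H,H)=-3\,\bar g(\bar g^{-1}H^2,A),
\]
which follows from the total antisymmetry of $H$ (the three terms of $\rho(A)H$ contribute equally) and from $H^2(X,Y)=\bar g(\iota_XH,\iota_YH)$. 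Balancing against the term $2\tr A_0A^T$ of \eqref{iponmfl} then produces $A_0=\mm_\mu-\tfrac14\bar g^{-1}H^2$. The delicate point, and what I expect to be the crux, is the bookkeeping of constants: it is precisely the factor $3$ in \eqref{eqn_gDorfman}, the $\tfrac16$ in \Cref{def_momMap}, and the $\tfrac16$, $2$ in \eqref{iponmfl} that conspire so that the $H^2$-contribution is \emph{exactly} the Bismut correction $-\tfrac14\bar g^{-1}H^2$ appearing in $\Ricb_{\mu,H}$.

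Finally, since $\mm_\mu$, ${\rm Ric}_\mu$ and $\bar g^{-1}H^2$ are all $\bar g$-symmetric, adding $\overline{{\rm Ric}_\mu}-\overline{\mm_\mu}$ to $\mm_\mud$ cancels the $\mm_\mu-{\rm Ric}_\mu$ terms on the diagonal and gives
\[
\mm_\mud+\overline{{\rm Ric}_\mu}-\overline{\mm_\mu}=\begin{pmatrix}\Ricb_{\mu,H}&0\\-\dd^*_\mu H&-\Ricb_{\mu,H}^*\end{pmatrix}.
\]
Subtracting this from \eqref{eqn_Rcmud} defines
\[
A_\mud=\begin{pmatrix}0&\tfrac12\bar g^{-1}(\dd^*_\mu H)\bar g^{-1}\\\tfrac12\dd^*_\mu H&0\end{pmatrix}.
\]
To conclude I would verify the two membership claims. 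First, $A_\mud\in\f{so}(\gggo,\ip)$: its diagonal blocks vanish, its lower-left block $\tfrac12\dd^*_\mu H$ is a $2$-form, and its upper-right block is the associated bivector, which is exactly the block shape of $\f{so}(\gggo,\ip)$. Second, $A_\mud\in\f{so}(\gggo,\bG)$: a direct computation gives $[\bG,A_\mud]=0$ with $\bG=\mc G(\bar g,0)$, and any $\ip$-skew endomorphism commuting with $\bG$ is automatically skew for the positive-definite metric $\la\bG\cdot,\cdot\ra$, since $\bG$ is $\ip$-self-adjoint.
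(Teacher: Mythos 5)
Your proposal is correct and follows essentially the same route as the paper: both rest on the identities $(\Theta(L)\mud)_{\ggo}=\theta(A)\mu$, $(\Theta(L)\mud)_{\Lambda^3}=\rho(A)H-\dd_\mu\alpha$, the splitting of $\bar g(\Theta(L)\mud,\mud)$ via \eqref{eqn_gDorfman}, and the contraction identity $\bar g(\rho(A)H,H)=-3\tr(H^2A)$, arriving at the same $A_\mud$ as in \eqref{eqn_A_mud}. The only organizational difference is that you compute the block form of $\mm_\mud$ explicitly and read off $A_\mud$ as the remainder, whereas the paper posits $A_\mud$ first and verifies the identity purely through pairings with arbitrary $L\in\lgo$; the content is the same.
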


\begin{proof}
Let us set
\begin{equation}\label{eqn_A_mud}
		 A_\mud := 
		\begin{pmatrix} 0	  &	\frac 12 \bar g^{-1}(\dd_\mu^*H) \, \bar g^{-1}\\
								\frac 12 \dd^*_\mu H &  0 \end{pmatrix}.
\end{equation}
It is not hard to see that indeed $A_\mud\in \f{so}(\gggo,\bG) \cap \f{so}(\gggo,\ip)$. We are thus left with
\[
		{\mc Rc}_\mud - A_\mud = 
		\begin{pmatrix} \Ricb_{\mu,H} & 0 \\
		- \dd^*_\mu H&  - (\Ricb_{\mu,H})^*\end{pmatrix}  \in \lgo.
\]
Furthermore, one can easily see that \eqref{eqn_A_mud} defines the unique element in $\f{so}(\gggo,\bar{\gca}) \cap \f{so}(\gggo,\ip)$ such that ${\mc Rc}_\mud - A_\mud \in \lgo $.

 Moreover, for any $L \simeq \left(  B, \beta  \right)\in \lgo$, we compute using the definition of $\bar g_\lgo$:
\begin{align*}
	\bar g_\lgo \left({\mc Rc}_\mud - A_\mu, L\right)  &= 
		2\operatorname{tr}(\operatorname{Ric}_\mu B)  - \frac12 \tr (H^2 B) - \frac 16 \bar g(\dd^*_\mu H,\beta).
\end{align*}
Furthermore, we notice that
\begin{align*}
 \bar g(\rho(B)H,H) &= \sum_{i,j,k} (\rho(B)H)(e_i,e_j,e_k)H(e_i,e_j,e_k) = -3 \sum_{i,j,k} H(Be_i,e_j,e_k)H(e_i,e_j,e_k)\\
&= -3 \sum_i \bar g(\iota_{e_i}H,\iota_{Be_i}H) = -3 \bar g(H^2e_i,Be_i) = -3\tr(H^2B),
\end{align*}
from which we get
\[
	\bar g_\lgo \left( {\mc Rc}_\mud - A_\mud  , L\right)  = 
		\bar g_\lgo \left(\overline{{\rm Ric}_\mu}, L \right) +  \frac 16  \, \bar  g(\rho(B)H - \dd_\mu \beta,H).
\]
On the other hand, by \Cref{cor_ThetaL} and \eqref{eqn_gDorfman} the moment map satisfies:
\begin{equation}\label{eqn_mommap}
\begin{aligned}
 \bar g_\lgo \left( \mm_\mud, L \right) = \frac16 \, \bar g(\Theta(L) \mud, \mud) &= \frac12 \, \bar g(\theta(B) \mu,\mu) + \frac16\, \bar g( \rho(B) H - \dd_\mu \beta, H) \\
	&= 2 \, \bar g(\mm_\mu, B) + \frac16\, \bar g( \rho(B) H - \dd_\mu \beta, H) \\
	& = \bar g_\lgo \left( \overline{\mm_\mu}, L \right) + \frac16\, \bar g( \rho(B) H - \dd_\mu \beta, H),
\end{aligned}
\end{equation}
and the Proposition follows.
\end{proof}

\begin{cor}\label{cor_mmnil}
If the Lie algebra $(\ggo,\mu)$ is nilpotent, then 
\[
	{\mc Rc}_\mud  - A_\mud =  \mm_\mud, \qquad A_\mud\in \f{so}(\gggo,\bG) \cap \f{so}(\gggo,\ip).
\]
\end{cor}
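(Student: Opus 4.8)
The plan is to read the statement off directly from \Cref{prop_Rcmm}. That proposition already supplies the decomposition
\[
{\mc Rc}_\mud = \mm_\mud + \overline{{\rm Ric}_\mu} - \overline{\mm_\mu} + A_\mud, \qquad A_\mud\in \f{so}(\gggo,\bG) \cap \f{so}(\gggo,\ip),
\]
so the claim reduces to showing that the correction term $\overline{{\rm Ric}_\mu} - \overline{\mm_\mu}$ vanishes once $\mu$ is nilpotent, \emph{with the same} $A_\mud$. Since the assignment $F \mapsto \overline{F} = F \oplus -F^*$ is linear and injective from $\End(\ggo)$ into $\End(\gggo)$, this vanishing is equivalent to the purely Riemannian identity ${\rm Ric}_\mu = \mm_\mu$, i.e.\ that the $\mathsf{GL}(\ggo)$-moment map on brackets coincides with the $(1,1)$-Ricci endomorphism of $(\ggo,\mu,\bar g)$.

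To establish this identity I would invoke the classical formula for the Ricci endomorphism of a left-invariant metric, namely the $H=0$ specialization of \eqref{eqn_GenBFIntro},
\[
{\rm Ric}_\mu = \mm_\mu - \tfrac12 B - \operatorname{Sym}_{\bar g}(\ad_U),
\]
where $B\in\End(\ggo)$ is the Killing endomorphism determined by $\bar g(BX,Y) = \tr(\ad_X \ad_Y)$ and $U\in\ggo$ is the mean curvature vector determined by $\bar g(U,X) = \tr(\ad_X)$. It then suffices to verify that both correction terms vanish in the nilpotent case. For the mean curvature vector, Engel's theorem gives that every $\ad_X$ is a nilpotent endomorphism, hence $\tr(\ad_X)=0$ for all $X\in\ggo$, so $U=0$. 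For the Killing endomorphism, I would run along the lower central series $\ggo \supset [\ggo,\ggo] \supset \cdots$: since each of $\ad_X,\ad_Y$ drops the filtration degree, a sufficiently high power of $\ad_X\ad_Y$ annihilates $\ggo$, so $\ad_X\ad_Y$ is nilpotent and $\tr(\ad_X\ad_Y)=0$; thus $B=0$.

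Combining these, the two correction terms vanish, so ${\rm Ric}_\mu=\mm_\mu$, hence $\overline{{\rm Ric}_\mu}-\overline{\mm_\mu}=0$, and \Cref{prop_Rcmm} collapses to ${\mc Rc}_\mud - A_\mud = \mm_\mud$ with the $A_\mud$ already produced there. I do not anticipate a serious obstacle: the corollary is an immediate consequence of \Cref{prop_Rcmm} together with the well-known fact that the Ricci operator of a nilmanifold equals Lauret's moment map. The only points requiring mild care are the injectivity of $F\mapsto\overline{F}$, which transfers the generalized statement to its Riemannian shadow, and confirming that the $A_\mud$ of the corollary is literally the one constructed in \eqref{eqn_A_mud}, which is automatic since nothing in the nilpotent reduction alters that off-diagonal piece.
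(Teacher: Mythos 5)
Your proposal is correct and follows essentially the same route as the paper: reduce via \Cref{prop_Rcmm} to the Riemannian identity ${\rm Ric}_\mu = \mm_\mu$, invoke Lauret's formula ${\rm Ric}_\mu = \mm_\mu - \tfrac12 B_\mu - \operatorname{Sym}_{\bar g}(\ad_\mu U)$, and observe that the Killing form and the mean curvature vector vanish for nilpotent Lie algebras. The only difference is that you spell out the elementary proofs of $U=0$ and $B_\mu=0$ (Engel's theorem, lower central series), which the paper simply cites as known facts.
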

\begin{proof}
Lauret's formula for the Ricci curvature of left-invariant metrics in terms of the moment map on the variety of Lie algebras \cite{minimal} yields
\[
		{\rm Ric}_\mu = \mm_\mu - \frac12 \, B_\mu - \frac12 (\ad_\mu U + (\ad_\mu U)^t ),
\] 
where $\bar g(B_\mu \cdot, \cdot)$ is the Killing form of $(\ggo,\mu)$, and $U\in \ggo$ denotes the \emph{mean curvature vector} of the metric Lie algebra $(\ggo,\mu,\bar g)$, which vanishes if and only if $(\ggo,\mu)$ is unimodular. In particular, for nilpotent Lie algebras --for which it is known that the Killing form vanishes-- this gives ${{\rm Ric}_\mu} = \mm_\mu$, and the stated formula follows from \Cref{prop_Rcmm}.
\end{proof}

\section{A  flow of Dorfman brackets} \label{sec_BF}

The fact that the Lie group $\Ll$ (see \eqref{eqn_defL}) acts transitively on the space of generalized left-invariant metrics $\mca^\G$ on $(\gggo)_H$ (\Cref{prop_L_trans_mcaG}) and the equivalence between acting on generalized metrics and acting on Dorfman brackets (\Cref{prop:Liso}) lead us to consider what would be a natural counterpart to the generalized Ricci flow of left-invariant metrics, on the space of Dorfman brackets $\mc D \subset \Lambda^2(\gggo)^* \otimes (\gggo)$.

As in previous Sections, we fix a left-invariant ECA $(\gggo)_{\bar H}$ and a background left-invariant generalized metric $\bG = \gca(\bar g, 0)$ on it.

\begin{defn}
The \emph{generalized bracket flow}  with background $\left( (\gggo)_H, \bG \right)$ is the initial value problem for a curve of brackets $\mud(t)$ in $\Lambda^2 (\gggo)^* \otimes (\gggo)$:
\begin{equation}\label{eqn_genbf}
		\frac{\dd}{\dd t} \mud = - \Theta({\mc Rc}_\mud - A_\mud) \mud, \qquad \mud(0) = \mud_0,
\end{equation}
where ${\mc Rc}_\mud$ and $A_\mud$ are respectively defined in \eqref{eqn_Rcmud} and \eqref{eqn_A_mud}. 
\end{defn} 

Since a Dorfman bracket $\mud \in \mc D$  (see \Cref{def_Dorf_brackets}) is determined by its two projections $\mud_{\ggo}, \mud_{\Lambda^3}$, it is natural to expect the generalized bracket flow to be equivalent to a coupled flow of Lie brackets on $\ggo$ and of closed $3$-forms. Indeed:

\begin{prop}
The generalized bracket flow \eqref{eqn_genbf} with initial condition a Dorfman bracket $\mud_0 \in \mc D$ is equivalent to the following coupled flow of brackets $\mu(t)$ and $3$-forms $H(t)$:
\begin{equation}\label{eqn_muH}
\begin{cases}
		 \frac{\dd}{\dd t} \mu = - \theta(\Ricb_{\mu,H}) \mu ,\qquad &\mu(0) = \mu_0 := (\mud_0)_{\ggo},  \\
		\frac{\dd}{\dd t} H =  \Delta_\mu H - \rho(\Ricb_{\mu,H}) H, \qquad &H(0) = H_0 := (\mud_0)_{\Lambda^3}.
		\end{cases}
	\end{equation}
\end{prop}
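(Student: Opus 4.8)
The plan is to use that a Dorfman bracket $\mud \in \mc D$ is completely determined by its two projections $\mu = \mud_\ggo$ and $H = \mud_{\Lambda^3}$ (\Cref{lem_mudexplicit}), under which $\mc D$ is in bijection with the pairs consisting of a Lie bracket on $\ggo$ and a closed $3$-form. Since the projections $(\cdot)_\ggo$ and $(\cdot)_{\Lambda^3}$ are linear, they commute with $\tfrac{\dd}{\dd t}$, so \eqref{eqn_genbf} is equivalent to the pair of equations obtained by projecting it onto $\Lambda^2\ggo^* \otimes \ggo$ and $\Lambda^3\ggo^*$. The whole computation then reduces to evaluating the two projections of $\Theta({\mc Rc}_\mud - A_\mud)\mud$.

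The first step is to record, as already established in the proof of \Cref{prop_Rcmm}, that the operator driving the flow lies in $\lgo$:
\[
	L := {\mc Rc}_\mud - A_\mud = \begin{pmatrix} \Ricb_{\mu,H} & 0 \\ -\dd^*_\mu H & -(\Ricb_{\mu,H})^* \end{pmatrix} \in \lgo,
\]
which under the identification $\lgo \simeq \Lambda^2\ggo^* \rtimes \f{gl(g)}$ corresponds to the pair $(\alpha, A) = (-\dd^*_\mu H, \Ricb_{\mu,H})$. This is exactly why the flow \eqref{eqn_genbf} is written with $A_\mud$ subtracted: the term $A_\mud$ removes the part of ${\mc Rc}_\mud$ lying outside $\lgo$, so that the infinitesimal action $\Theta(L)$ is defined. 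Applying \Cref{cor_ThetaL} to this $L$ and projecting \eqref{eqn_genbf} then gives at once
\[
	\frac{\dd}{\dd t}\mu = -\bigl(\Theta(L)\mud\bigr)_\ggo = -\theta(\Ricb_{\mu,H})\mu,
\]
which is the first equation of \eqref{eqn_muH}, together with
\[
	\frac{\dd}{\dd t}H = -\bigl(\Theta(L)\mud\bigr)_{\Lambda^3} = -\rho(\Ricb_{\mu,H})H - \dd_\mu\dd^*_\mu H,
\]
where I have used $-\dd_\mu\alpha = \dd_\mu\dd^*_\mu H$.

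The only point requiring care, and the place I expect the main obstacle to be purely a matter of conventions rather than ideas, is the identification of the Laplacian term. Every $\mud \in \mc D$ has closed $\Lambda^3$-component, so $\dd_\mu H = 0$; with the sign convention $\Delta_\mu = -(\dd_\mu\dd^*_\mu + \dd^*_\mu\dd_\mu)$ adopted throughout the paper (the one making the heat equation $\partial_t H = \Delta_\mu H$ dissipative, cf.\ the discussion following \eqref{renormgroupflow}), closedness gives $\Delta_\mu H = -\dd_\mu\dd^*_\mu H$. Substituting this turns the second displayed equation into $\tfrac{\dd}{\dd t}H = \Delta_\mu H - \rho(\Ricb_{\mu,H})H$, the second equation of \eqref{eqn_muH}. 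Finally, to see that the equivalence is honest I would note that $L \in \lgo$ makes the right-hand side of \eqref{eqn_genbf} tangent to the $\Ll$-orbit of $\mud$, which by \Cref{lem_Lact_mud} (equivalently \Cref{prop:Liso}) is contained in $\mc D$; hence a solution starting at $\mud_0 \in \mc D$ remains in $\mc D$, keeping $\mu(t)$ a Lie bracket and $H(t)$ closed, consistently with \eqref{eqn_muH}.
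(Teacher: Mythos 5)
Your proof is correct and follows essentially the same route as the paper's: both reduce to applying \Cref{cor_ThetaL} to $\mathcal{R}c_{\mud}-A_{\mud}\in\lgo$ (with $\alpha=-\dd_\mu^*H$, $A=\operatorname{Ric}^B_{\mu,H}$) and identifying $-\dd_\mu\dd_\mu^*H$ with $\Delta_\mu H$ via closedness of $H$; your sign convention for $\Delta_\mu$ is indeed the one the paper uses (as confirmed by \Cref{lem:evdstar} and \eqref{eqn_gaugedGRF_H}). The paper packages the equivalence as two explicit ODE-uniqueness arguments rather than your "linear projections determine the bracket, so the ODEs correspond" phrasing, but the mathematical content, including the need to know that solutions stay in $\mc D$ (via the $\Ll$-orbit), is the same.
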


\begin{proof}

 Let $(\mu_t, H_t)_{t\in [0,T)} $ be a solution of \eqref{eqn_muH}.  Then, we  define 
 $$
 \mud_t(X+\xi, Y+\eta)=\mu_t(X, Y)-\eta\circ (\mu_t)_X+ \xi \circ (\mu_t)_Y + \iota_Y\iota_X H_t\,, \quad X, Y\in \ggo\,,\,\, \xi, \eta\in \ggo^*,\,\, t \in[0,T).
 $$ 
 Using \Cref{lem_mudexplicit}, we have that $\mud_t\in \mc D$\,, for all $t\in [0,T)$. Now, using \eqref{eqn_muH} and \Cref{cor_ThetaL}, we have that 
 $$
 \frac{\dd}{\dd t}\mud_t=-\Theta(\mc Rc_{\mud_t}- A_{\mud_t})\mud_t\,, \quad \mud(0)=\mud_0\,.
 $$
 Then,  since they solve the same ODE with the same initial datum, $(\mud_t)_{t\in[0,T)}$ coincides with the solution of the generalized bracket flow starting from $\mud_0$.

Viceversa, let $(\mud_t)_{t\in [0,T')}$ be  a solution of the generalized bracket flow starting from $\mud_0$.  For the sake of simplicity, we will denote $\tilde{\mu}_t:=(\mud_t)_{\ggo}$ and $\tilde{H}_t:=(\mud_t)_{\Lambda^3}$.  By definition, $\mud_t$ belongs to the $\mathsf L$-orbit of $\mud_0$. Using the fact that  the $\mathsf L$-action does not change $\pi$ and \Cref{cor_ThetaL}, we obtain  that
\begin{equation}\label{eqn_mutildet}
\frac{\dd}{\dd t}\tilde{\mu}=\pi\frac{\dd}{\dd t}\mud=-\pi\Theta(\mc Rc_{\mud}- A_{\mud})\mud=-\theta({\rm Ric}_{\tilde{\mu}, \tilde H}^B)\tilde{\mu}\,.
\end{equation}
Moreover, using again the fact that the $\mathsf L $-action acts trivially on $\ip$, we have, for all $X, Y, Z\in \ggo$, 
\begin{equation}\label{eqn_Htildet}
\begin{aligned}
\frac{\dd}{\dd t }\tilde{H}(X, Y, Z)=&\,2\frac{\dd}{\dd t}\la\mud( X,  Y),  Z\ra=-2\la \Theta (\mc Rc_{\mud}- A_{\mud})\mud( X,  Y),  Z\ra\\
=&\, -(\Theta (\mc Rc_{\mud}- A_{\mud})\mud)_{\Lambda^3}(X, Y, Z)=(\Delta_{\tilde{\mu}}\tilde{H}-\rho({\rm Ric}_{\tilde{\mu}, \tilde{H}}^B)\tilde{H})(X, Y, Z)\,. 
\end{aligned}
\end{equation}
where the last equality is due again to \Cref{cor_ThetaL}. \Cref{eqn_Htildet} and \Cref{eqn_mutildet} readily guarantees that $(\tilde{\mu_t}, \tilde{H}_t)_{t\in[0,T')}$ coincides with the solution of \eqref{eqn_muH}, concluding the proof. \end{proof}

 Before going deeply into the discussion on how the generalized bracket flow is connected to the generalized Ricci flow, we state the following Lemma which will be useful in  what follows.
 \begin{lem}\label{lem:evdstar}
  Let $((\gggo)_{\mud(t)}, \bar{\mc G})_{t\in [0, T)}$ be a solution of the generalized bracket flow starting from $\mud$. Then, 
  $$
   \frac{\dd}{\dd t}\lvert \dd_{ \mu}^* H\rvert^2= 2(\bar g(\rho({\rm Ric}_{ \mu, H}^B)\dd_{ \mu}^* H,\dd_{ \mu}^* H) -2\bar g(\rho({\rm Ric}^B_{\mu, H}) H, \dd_\mu\dd^*_{\mu} H )-\lvert \Delta_{ \mu} H\rvert^2)\,.
   $$ In particular, if $\dd_{\mu(0)}^*H(0)=0$, then $\dd_{\mu}^*H=0$, for any $t\in[0,T)$. 
 \end{lem}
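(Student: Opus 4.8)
The plan is to reduce everything to the equivalent coupled flow \eqref{eqn_muH} of a Lie bracket $\mu(t)$ and a $3$-form $H(t)$, and then to differentiate $\lvert \dd^*_\mu H\rvert^2$ directly; the only subtlety is that the codifferential $\dd^*_\mu$ itself depends on $t$ through $\mu$. Write $R:=\Ricb_{\mu,H}$ for brevity. The algebraic input I would isolate first is the equivariance of the exterior derivative under change of basis, $h\cdot(\dd_\mu\beta)=\dd_{h\cdot\mu}(h\cdot\beta)$ for $h\in\mathsf{GL}(\ggo)$; differentiating this at the identity gives the commutation relation
\[
\dd_{\theta(A)\mu}=[\rho(A),\dd_\mu],\qquad A\in\f{gl}(\ggo).
\]
Since $\dd_\mu$ is linear in $\mu$ (Chevalley--Eilenberg), $\tfrac{\dd}{\dd t}\dd_\mu=\dd_{\dot\mu}$; since $\dd^*_\mu$ is its $\bar g$-adjoint with $\bar g$ fixed, and since $R$ is $\bar g$-symmetric so that $\rho(R)$ is $\bar g$-self-adjoint (using $\rho(A)^*=\rho(A^*)$), the first equation of \eqref{eqn_muH} yields
\[
\tfrac{\dd}{\dd t}\dd^*_\mu=(\dd_{\dot\mu})^*=-[\rho(R),\dd_\mu]^*=\rho(R)\dd^*_\mu-\dd^*_\mu\rho(R).
\]

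Next I would compute $\tfrac{\dd}{\dd t}\dd^*_\mu H$ by the product rule, combining the displayed variation of $\dd^*_\mu$ with the second equation of \eqref{eqn_muH}. Since $\mud(t)$ remains a Dorfman bracket, \Cref{lem_mudexplicit} gives $\dd_\mu H=0$ for all $t$, whence $\Delta_\mu H=-\dd_\mu\dd^*_\mu H$; together with the Hodge identities $\dd^*_\mu\Delta_\mu=\Delta_\mu\dd^*_\mu$ and $(\dd^*_\mu)^2=0$ I expect to obtain
\[
\tfrac{\dd}{\dd t}\dd^*_\mu H=\rho(R)\,\dd^*_\mu H+\Delta_\mu\,\dd^*_\mu H-2\,\dd^*_\mu\rho(R)H.
\]
Pairing with $\dd^*_\mu H$ via $\tfrac{\dd}{\dd t}\lvert\dd^*_\mu H\rvert^2=2\bar g\big(\tfrac{\dd}{\dd t}\dd^*_\mu H,\dd^*_\mu H\big)$ then produces the three summands of the assertion: the first is immediate; the Laplacian term becomes $2\bar g(\Delta_\mu\dd^*_\mu H,\dd^*_\mu H)=-2\lvert\dd_\mu\dd^*_\mu H\rvert^2=-2\lvert\Delta_\mu H\rvert^2$, using $\dd^*_\mu\dd^*_\mu H=0$, the sign convention $\Delta_\mu=-(\dd_\mu\dd^*_\mu+\dd^*_\mu\dd_\mu)$, and $\Delta_\mu H=-\dd_\mu\dd^*_\mu H$; and the last term is $-4\bar g(\dd^*_\mu\rho(R)H,\dd^*_\mu H)=-4\bar g(\rho(R)H,\dd_\mu\dd^*_\mu H)$ after integrating $\dd^*_\mu$ by parts. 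This matches the claimed identity.

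For the second statement I would argue by Grönwall (equivalently, by uniqueness for a linear ODE). Set $\omega(t):=\dd^*_{\mu(t)}H(t)$, so $\omega(0)=0$. Dropping the nonpositive term $-2\lvert\Delta_\mu H\rvert^2$ and rewriting the last summand using the commutation relation,
\[
\dd^*_\mu\rho(R)H=\rho(R)\,\dd^*_\mu H+(\dd_{\theta(R)\mu})^*H,
\]
the evolution reduces to $\tfrac{\dd}{\dd t}\lvert\omega\rvert^2\le 2\bar g(\rho(R)\omega,\omega)-4\bar g(\dd^*_\mu\rho(R)H,\omega)$. If one can show $\lvert\dd^*_\mu\rho(\Ricb_{\mu,H})H\rvert\le C(t)\lvert\omega\rvert$ — equivalently, that the inhomogeneity $(\dd_{\theta(R)\mu})^*H$ is controlled by $\omega$, i.e.\ that $\dd^*_\mu\rho(\Ricb_{\mu,H})H$ vanishes whenever $H$ is harmonic — then Cauchy--Schwarz closes the estimate as $\tfrac{\dd}{\dd t}\lvert\omega\rvert^2\le C'(t)\lvert\omega\rvert^2$, and $\omega(0)=0$ forces $\omega\equiv0$.

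I expect this control of the source term to be the main obstacle, and it genuinely requires the special structure of $\Ricb_{\mu,H}$: a crude bound does not suffice, because $\rho(R)H$ need \emph{not} vanish on harmonic $H$. Indeed, for the Heisenberg soliton of \Cref{ex_solitons} one computes $\rho(R)H=-\tr(R)\,H$, a nonzero multiple of $H$, so a naive Cauchy--Schwarz estimate would leave an uncontrolled constant $\lvert\rho(R)H\rvert^2$. What rescues the argument there is precisely that $\rho(R)H$ is proportional to $H$, making $\dd^*_\mu\rho(R)H=-\tr(R)\,\dd^*_\mu H$ \emph{linear} in $\omega$. In general the required alignment of $\rho(\Ricb_{\mu,H})H$ with $H$ modulo coclosed forms should follow from a contracted Bianchi-type identity for the Bismut--Ricci tensor; proving that identity, and thereby the linearity of the source in $\dd^*_\mu H$, is the step that demands real work.
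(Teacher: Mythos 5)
Your derivation of the displayed identity is correct and is essentially the paper's argument: the paper's key input is the commutation relation $\dd^*_{\theta(A)\mu}=[\dd^*_\mu,\rho(A^t)]$ (your $[\rho(A),\dd_\mu]=\dd_{\theta(A)\mu}$ is its $\bar g$-adjoint), after which one applies the product rule to $\dd^*_{\mu(t)}H(t)$ using both equations of \eqref{eqn_muH}, commutes $\dd^*_\mu$ past $\Delta_\mu$, pairs with $\dd^*_\mu H$, and integrates by parts using $\dd_\mu H=0$ and $(\dd^*_\mu)^2=0$, exactly as you do. The intermediate formula $\tfrac{\dd}{\dd t}\dd^*_\mu H=\rho(\Ricb_{\mu,H})\dd^*_\mu H-2\dd^*_\mu\rho(\Ricb_{\mu,H})H+\Delta_\mu\dd^*_\mu H$ you obtain is literally the one in the paper.

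For the second clause, however, your proposal stops short of a proof, and this is a genuine gap relative to the statement of the lemma. Writing $\omega=\dd^*_\mu H$, the obstruction you identify is real: the term $-4\bar g(\rho(\Ricb_{\mu,H})H,\dd_\mu\omega)$ is a priori only $O(|\omega|)$ (via $|\dd_\mu\omega|\le C|\mu|\,|\omega|$), so Cauchy--Schwarz gives $\tfrac{\dd}{\dd t}|\omega|^2\le a(t)|\omega|^2+b(t)|\omega|$, and since $u\mapsto\sqrt{u}$ fails to be Lipschitz at $0$ this does not force $\omega\equiv 0$ from $\omega(0)=0$ (compare $\dot u=\sqrt u$). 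Closing the argument requires showing that the source $\dd^*_\mu\rho(\Ricb_{\mu,H})H$ — equivalently $\dd^*_{\theta(\Ricb_{\mu,H})\mu}H$ — is itself controlled linearly by $\omega$, i.e.\ that $\rho(\Ricb_{\mu,H})H$ is coclosed whenever $H$ is harmonic; your proposal explicitly defers this to an unproven ``Bianchi-type identity,'' so as written the invariance of the harmonic locus is not established. You should know that the paper's own proof of this clause is the single sentence ``the second part follows trivially from the first one'': the displayed evolution equation by itself only shows that $\tfrac{\dd}{\dd t}|\omega|^2$ vanishes where $\omega=0$ (which is automatic, being $2\bar g(\dot\omega,\omega)$), not that the zero locus is preserved. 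So while your write-up is incomplete, it has correctly isolated the one step of this lemma that genuinely needs an argument and that the paper does not supply; to fix it you must either prove the coclosedness of $\rho(\Ricb_{\mu,H})H$ on the harmonic locus, or establish the estimate $|\dd^*_\mu\rho(\Ricb_{\mu,H})H|\le C(t)|\dd^*_\mu H|$ by some other structural means, before invoking Gr\"onwall.
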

\begin{proof}
First of all, we  easily see that, for all  $A \in \f{gl}(\f g)$ and brackets $\mu$, one has
   \begin{equation}\label{eqn_dRhocom}
   \dd_{\theta(A)\mu}^* = [\dd_\mu^*,\rho(A^t)].
    \end{equation}
Then, we can use \eqref{eqn_dRhocom} to infer that 
   $$
   \begin{aligned}
   \frac{\dd}{\dd t}\dd^*_{ \mu} H
   =&\, [\rho({\rm Ric}_{ \mu, H}^B), \dd^*_{\mu}] H-\dd^*_{ \mu}\rho({\rm Ric}_{ \mu, H}^B)H+\Delta_{ \mu}\dd^*_{ \mu} H
   =  \rho({\rm Ric}_{ \mu, H}^B)\dd^*_{ \mu} H - 2\dd^*_{ \mu}\rho({\rm Ric}_{ \mu, H}^B) H+ \Delta_{ \mu}\dd^*_{ \mu} H\,.
   \end{aligned}
   $$ 
     Then, we easily obtain that 
   $$
   \begin{aligned}
   \frac{\dd}{\dd t}\lvert \dd_{ \mu}^* H\rvert^2  =&\, 2(\bar g(\rho({\rm Ric}_{ \mu, H}^B)\dd_{ \mu}^* H,\dd_{ \mu}^* H) -2\bar g(\rho({\rm Ric}^B_{\mu, H}) H, \dd_{\mu}\dd^*_{\mu} H )-\lvert \Delta_{ \mu} H\rvert^2)\,,
   \end{aligned}
   $$ as claimed. The second part follows trivially from the first one. 
  \end{proof}

\begin{remark} \label{rmk_par}
A similar approach was firstly introduced by Paradiso in \cite{Par}. As a main difference from our setting, the author considered the action of $\mathsf{GL}(\ggo)\subset \Ll$ on $\mc D$. This  choice yields  a coupled flow as follows:
\[
\begin{cases}
		 \frac{\dd}{\dd t} \mu = - \theta(\Ricb_{\mu,H}) \mu ,\qquad &\mu(0) = \mu_0 := (\mud_0)_{\ggo},  \\
		\frac{\dd}{\dd t} H =  - \rho(\Ricb_{\mu,H}) H, \qquad &H(0) = H_0 := (\mud_0)_{\Lambda^3} 
		\end{cases}
	\]
 which, in view of \Cref{lem:evdstar},  is equivalent to \eqref{eqn_muH} only in the particular case in which $H_0$ is harmonic. 
\end{remark}

The following is the main result of this Section. It shows that from a left-invariant  solution of the generalized Ricci flow one can construct a generalized bracket flow solution whose generalized geometry is gauge-equivalent to the original solution, and that the converse is also true. This in particular implies that the maximal existence times for both flows coincide, and that any generalized geometry question can be studied by means of the generalized bracket flow.

\begin{thm}\label{thm_grf=bf}
Let $\left( (\gggo)_{H_0}, \gca_t\right)_{t\in [0,T)}$ be a left-invariant solution to the generalized Ricci flow equation \eqref{eqn_GRF} and let $(\mud(t))_{t\in [0,T')}$ be the  solution to the generalized bracket flow \eqref{eqn_genbf} with background  $\left( (\gggo)_{H_0}, \gca_0\right)$ and initial condition $\mud(0)$ defined by $\mud(0)_{\ggo} = \lb_\ggo$, $\mud(0)_{\Lambda^3} = H_0$, with both solutions defined on a maximal interval of time. Then, $T = T'$ and there exists a one-parameter family of generalized isometries
\[
		F_t : \left( (\gggo)_{H_0}, \gca_t\right) \longrightarrow \left( (\gggo)_{\mud(t)}, \gca_0 \right), \qquad t\in [0,T).
\]
\end{thm}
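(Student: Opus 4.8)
The plan is to realize the isometries $F_t$ as the bundle maps associated, via \Cref{prop:Liso}, to a curve $\ell(t)$ in $\Ll$ that encodes the gauge relating the two flows. Set $\bG := \gca_0$ and write $\widetilde{\mc Rc}_\mud := \mc Rc_\mud - A_\mud$, which lies in $\lgo$ by \Cref{prop_Rcmm} and \eqref{eqn_A_mud}. First I would solve, along the given bracket flow solution $\mud(t)$, the \emph{linear} initial value problem
\[
	\ell'(t)\,\ell(t)^{-1} = -\widetilde{\mc Rc}_{\mud(t)}, \qquad \ell(0) = \id .
\]
Being linear in $\ell$ with coefficients continuous on $[0,T')$, it admits a solution on all of $[0,T')$. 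Putting $\nu(t) := \ell(t)\cdot\mud_0$ one gets $\frac{\dd}{\dd t}\nu = \Theta(\ell'\ell^{-1})\nu = -\Theta(\widetilde{\mc Rc}_{\mud(t)})\nu$; since $\mud(t)$ solves the same linear equation with the same initial value $\mud_0$, uniqueness gives $\ell(t)\cdot\mud_0 = \mud(t)$. Hence, writing $\ell(t) = \overline{h(t)}\,e^{\alpha(t)}$ and applying \Cref{prop:Liso}, the pair $F_t := (\ell(t),h(t))$ is an isometry $\bigl((\gggo)_{H_0},\, \ell(t)^{-1}\cdot\bG\bigr) \to \bigl((\gggo)_{\mud(t)},\, \bG\bigr)$.

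The heart of the argument is to show that $\tilde\gca_t := \ell(t)^{-1}\cdot\bG = \ell(t)^{-1}\,\bG\,\ell(t)$ solves \eqref{eqn_GRF}. Differentiating, using $\ell'\ell^{-1} = -\widetilde{\mc Rc}_{\mud}$ and $\bG^2 = \id$, a direct computation gives
\[
	\tilde\gca_t^{-1}\,\tfrac{\dd}{\dd t}\tilde\gca_t = \ell^{-1}\bigl(P - \bG\, P\, \bG\bigr)\ell, \qquad P := -\widetilde{\mc Rc}_{\mud(t)}.
\]
I would then invoke two algebraic facts. First, the generalized Ricci curvature anticommutes with the generalized metric, $\bG\,\mc Rc_\mud\,\bG = -\mc Rc_\mud$ — precisely the condition that keeps $\tilde\gca_t$ a generalized metric under \eqref{eqn_GRF}. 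Second, $A_\mud$ commutes with $\bG$: indeed $A_\mud\in\f{so}(\gggo,\bG)\cap\f{so}(\gggo,\ip)$ forces $A_\mud\bG = \bG A_\mud$. Together these collapse $P - \bG P\bG$ to $-2\,\mc Rc_{\mud(t)}$, and the $\operatorname{Aut}(E)$-equivariance of the generalized Ricci curvature applied to $F_t$ yields $\ell^{-1}\mc Rc_{\mud(t)}\ell = \mc Rc(\tilde\gca_t)$. Hence $\tilde\gca_t^{-1}\tilde\gca_t' = -2\,\mc Rc(\tilde\gca_t)$, so $\tilde\gca_t$ solves \eqref{eqn_GRF} with $\tilde\gca_0 = \bG = \gca_0$. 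As the left-invariant generalized Ricci flow is an ODE on the finite-dimensional $\mca^\G$, uniqueness gives $\tilde\gca_t = \gca_t$ wherever both are defined; since $\tilde\gca_t$ is defined on all of $[0,T')$, maximality of $\gca_t$ forces $T \geq T'$, and $F_t$ is the desired family of isometries.

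It remains to prove $T = T'$, which I would do by contradiction assuming $T' < T$. Then $\gca_t$ is smooth up to $T'$, so $\{\gca_t : t\in[0,T']\}$ is relatively compact in $\mca^\G$. Because $F_t$ is an isometry of metric Courant algebroids, the $\bG$-norm of $\mc Rc_{\mud(t)}$ equals the $\gca_t$-norm of $\mc Rc(\gca_t)$, a continuous function of $\gca_t$ that is therefore bounded on this compact set; the same applies to $A_{\mud(t)}$ and to $\operatorname{Skew}_{\gca_t}(\dd^*_{g_t}H_t)$. Since all norms on $\lgo$ are equivalent, $|\widetilde{\mc Rc}_{\mud(t)}|_{\bar g_\lgo}$ stays bounded on $[0,T')$, whence $\ell'\ell^{-1}$ is bounded and $\ell(t)$ has finite length for a (complete) right-invariant metric on $\Ll$; thus $\ell(t)$ remains in a compact subset of $\Ll$ as $t\to T'$. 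Consequently $\mud(t) = \ell(t)\cdot\mud_0$ converges as $t\to T'$ and the bracket flow extends past $T'$, contradicting maximality of $[0,T')$. Hence $T' = T$.

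The step I expect to require the most care is this last identification of maximal existence times: a priori the gauge $\ell(t)$ could degenerate even while $\gca_t$ stays bounded, so everything hinges on the velocity $\ell'\ell^{-1} = -\widetilde{\mc Rc}_{\mud(t)}$ being a genuinely \emph{geometric} quantity, controlled through the isometry $F_t$ by the bounded curvature of $\gca_t$. The algebraic core in the second paragraph — reconciling the anticommuting part $\mc Rc_\mud$ with the commuting part $A_\mud$ so that $P - \bG P\bG$ reduces to $-2\,\mc Rc_\mud$ — is where the precise structure of \Cref{prop_Rcmm} and the definition \eqref{eqn_A_mud} of $A_\mud$ are indispensable.
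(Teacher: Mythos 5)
Your construction of the gauge $\ell(t)$, the identification $\mud(t)=\ell(t)\cdot\mud_0$, and the verification that $\tilde\gca_t=\ell(t)^{-1}\cdot\bG$ solves the generalized Ricci flow --- resting on the anticommutation $\bG\,\mc Rc_\mud\,\bG=-\mc Rc_\mud$ together with $[A_\mud,\bG]=0$ (which, as you note, follows from $A_\mud\in\f{so}(\gggo,\bG)\cap\f{so}(\gggo,\ip)$) --- is precisely the paper's argument, with $F_t=\ell(t)^{-1}$ in their notation; this part is correct and yields $T\geq T'$ exactly as you say. Where you genuinely diverge is in proving $T'\geq T$. The paper does this symmetrically: starting from the generalized Ricci flow solution $\gca_t$ on $[0,T)$, it solves the linear gauge ODE $F_t'=(\mc Rc(\gca_t)-A(\gca_t))F_t$, $F_0=\id$, on all of $[0,T)$, shows $\gca_t=F_t\cdot\gca_0$ by an ODE-uniqueness argument, and then checks that $F_t^{-1}\cdot\mud_0$ satisfies the bracket flow equation, so that $T'\geq T$ with no further analysis. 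Your alternative --- transporting the curvature bound through the isometry via $\|\mc Rc_{\mud(t)}\|_{\bG}=\|\mc Rc(\gca_t)\|_{\gca_t}$ (and likewise for $A$), using continuity of these quantities on the finite-dimensional space $\mca^\G$, completeness of a right-invariant metric on $\Ll$ to confine $\ell(t)$ to a compact set, and the escape lemma for the polynomial bracket flow ODE --- is also valid, and it correctly identifies that the velocity $\ell'\ell^{-1}$ is a geometric quantity controlled by $\gca_t$. It is, however, heavier than necessary: the reverse-gauge construction buys the same conclusion purely by linear ODE theory and uniqueness, with no appeal to compactness or completeness, which is why the paper phrases the theorem as a two-way correspondence rather than a blow-up criterion.
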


\begin{proof}
First, given any metric ECA $((\f g \oplus \f g^*)_\mud, \mathcal{G})$ , we define the endomorphism
\[
A_{\mud,\mathcal{G}} := \operatorname{Skew}_{\mathcal{G}}(\dd^{*_g}_\mu(H + \dd_\mu b)) \in \f{o}(\f g \oplus \f g^*,\m{\mathcal{G}\cdot}{\cdot}),
\]
where $\mu = \mud_{\f g}, H := \mud_{\Lambda^3}$, and $g \in \operatorname{Sym}^2_+(\f g), b \in \Lambda^2\f g^*$ satisfy
\[
\mathcal{G} = e^b \begin{pmatrix}0&g^{-1}\\g&0\end{pmatrix}e^{-b}.
\]
One readily checks that the map $(\mud , \mathcal{G}) \mapsto A_{\mud , \mathcal G}$
is $\mathsf{L}$-equivariant: $A_{F\cdot \mud, F \cdot \mathcal{G}} = F \cdot A_{\mud , \mathcal{G}}$. Then, since the generalized Ricci curvature is similarly $\mathsf{L}$-equivariant, it follows from \Cref{eqn_A_mud} that
\[
\mc Rc(\mud,\mathcal{G}) - A_{\mud,\mathcal{G}} \in \f l,
\]
where $\mc Rc(\mud,\mathcal{G})$ is the generalized Ricci curvature of $((\f g\oplus \f g^*)_\mud,\mathcal{G})$.

Now, given a solution to the generalized bracket flow $(\mud(t))_{[0,T')}$ \eqref{eqn_genbf}, we define  the family of left-invariant endomorphisms  $(\mc G_{\mud(t)})_{t\in[0, T')}$  as follows:
\begin{equation}\label{eqn_gcamud}
 \mc G_{\mud(t)}(x)=\overline{{\rm L}_{\mud(t)}(x)}\cdot \mc G_{0}(e)\,, \quad  x\in \mathsf G,
\end{equation} where $\overline{{\rm L}_{\mud(t)}(x)}$ is the lift  on $\gggo$ of the left-translation in $\mathsf G_{\mu(t)}$ by $x$  while  $\mc G_{0}(e)$ and  $\mc G_{\mud(t)}(x)$ are, respectively, the endomorphisms on the fibers in $e$ and $x$.  We easily see that 
$
\mc G_{\mud(t)}
$ are left-invariant generalized metric on $\gggo$. 
Let us then consider
$$
\frac{\dd}{\dd t}F_t=F_t(\mc Rc_{\mud(t)}- A_{\mud(t)})\,, \quad F_0={\rm Id}\,,
$$ where $A_{\mud(t)}\in \f{so}(\gggo,\gca_0) \cap \f{so}(\gggo,\ip)$ is defined as in \eqref{eqn_A_mud}.   By a standard result in ODE theory, the family $F_t$ is defined on $[0,T')$ and $(F_t)_{t\in[0,T')}\subseteq \Ll$. We then define $ \bm{\lambda}(t):=F_t^{-1}\cdot\mud(0)$ yielding that 
\begin{equation}\label{evolutionlambda}
\frac{\dd}{\dd t}\bm{\lambda}=-\Theta(F_t^{-1}F_t')\bm{\lambda}=-\Theta(\mc Rc_{\mud(t)}- A_{\mud(t)})\bm{\lambda}\,.
\end{equation}
 Thus, $\bm{\lambda}(t)$ and $\mud(t)$ satisfy the same ODE with equal initial datum  concluding that $\mud(t)=F_t^{-1}\cdot\mud(0).$ From this,  we can deduce that 
$$
F_t\colon((\gggo)_{\mud(t)}, \mc G_{\mud(t)})\to ((\gggo)_{H_0}, F_t\cdot\mc G_{\mud(t)})\,,
$$  is an isometry. So,  denoting 
$
\tilde{\mc G}_t:=F_t\cdot\mc G_{\mud (t)}
$ we  obtain that,  in $e$, 
$$
\begin{aligned}
\tilde{\mc G}_t^{-1}\frac{\dd}{\dd t}\tilde{\mc G}_t
=&\,  F_t\mc G^{-1}_{\mud (t)}(\mc Rc_{\mud(t)}- A_{\mud(t)}) \mc G_{\mud (t)}F_t^{-1}- F_t(\mc Rc_{\mud(t)}- A_{\mud(t)})F_t^{-1}
= -2\mc Rc(\tilde{\mc G}_t)\,,
\end{aligned}
$$ where the last equality follows from  $\mc Rc(\mc G)\mc G =-\mc G\mc Rc(\mc G)$ and from $[A_{\mud(t)}, \mc G_{\mud (t)}]=0$.   This allows us to conclude  that $\mc G_t=\tilde{\mc G_t}$,  since they solve the same ODE with the same  initial condition.

Conversely, we define
$$
\frac{\dd}{\dd t}F_t=(\mc Rc(\mc G_t)- A(\mathcal G_t))F_t\,, \quad F(0)={\rm Id},
$$ where $A(\mathcal{G}_t) := A_{\mud,\mathcal G_t}$. As before,  we consider
$
\tilde{\mc G_t}:=F_t\cdot\mc G_0\
$  and easily note that
$$
\tilde{\mc G_t}^{-1}\frac{\dd}{\dd t}\tilde{\mc G_t}=\tilde{\mc G_t}^{-1}(\mc Rc(\mc G_t)- A(\mathcal G_t))\tilde{\mc G_t}- (\mc Rc(\mc G_t)- A(\mathcal G_t))\,.
$$On the other hand, 
$$
\mc G_t^{-1}\frac{\dd}{\dd t}\mc G_t=-2\mc Rc(\mc G_t)
=\mc G_t^{-1}(\mc Rc(\mc G_t)-A(\mathcal G_t)) \mc G_t- (\mc Rc(\mc G_t)-A(\mathcal G_t)),
$$  since, again,   $\mc Rc(\mc G_t)\mc G_t =-\mc G_t\mc Rc(\mc G_t)$ and   $[A(\mc G_t), \mc G_t]=0$.  So, in particular, we have that 
$$
\frac{\dd}{\dd t}\tilde{\mc G_t}=[\mc Rc(\mc G_t)- A(\mathcal G_t), \tilde{\mc G_t}]\,, \quad \frac{\dd}{\dd t}\mc G_t=[\mc Rc(\mc G_t)- A(\mathcal G_t), \mc G_t]\,,
$$ Then, $\tilde{\mc G_t}$ and $\mc G_t$ satisfy the same ODE with the same initial condition yielding that $\mc G_t=F_t\cdot\mc G_0 $. 
Therefore,  if we let  $\bm{\lambda}(t)=F_t^{-1}\cdot \mud(0)$, 
 $$
F_t\colon((\gggo)_{\bm{\lambda}(t)}, \mc G_{\bm{\lambda}(t)})\to ((\gggo)_{H_0}, \mc G_t)
$$
 is an isometry. In particular,  this implies that 
$$
F_t\mc Rc _{\bm{\lambda}(t)}F_t^{-1}=\mc Rc(\mc G_t)\,, \quad  F_tA_{\bm{\lambda}(t)}F_t^{-1}=A(\mc G_t), 
$$ using the  ${\rm Aut}(E)$-equivariance of the generalized Ricci curvature and the uniqueness of $A(\mc G)\in\f{so}(\gggo,\gca) \cap \f{so}(\gggo,\ip)$ such that $\mc Rc(\mc G)- A(\mc G)\in \mathfrak l $.  Then we have that 
$$
\frac{\dd}{\dd t}F_t=F_t(\mc Rc_{\bm{\lambda}(t)}- A_{\bm{\lambda}(t)})\,, \quad F_0={\rm Id}
$$ and repeating the computations in \eqref{evolutionlambda}, we infer that $\mud(t)=\bm{\lambda}(t)$, concluding the proof. 
\end{proof}

\begin{remark}
We do not necessarily need to use $\gca_0$ as background metric, but this simplifies the formulas.
\end{remark}

\section{Global existence on solvmanifolds} \label{sec_solv}

As a first application of \Cref{thm_grf=bf}, we prove in this Section the long-time existence of invariant solutions on any solvmanifold. Recall from \cite{GRFbook} that the \emph{generalized scalar curvature} of a metric Courant algebroid $(\TTH,\mc G(g,0))$ is given by
\begin{equation}\label{genscal}
\mc S_{g, H} := \scal_g - \tfrac{1}{12}|H|^2,
\end{equation}
where $\scal_g$ denotes the Riemannian scalar curvature of the metric $g$. Let us remark that this is not the trace of the generalized Ricci curvature (which is in fact traceless), but it is obtained instead by studying Lichnerowicz formulae  for certain Dirac operators \cite[~\S 3.9]{GRFbook}.
 Within the moving Dorfman bracket framework, a  direct consequence of  \cite[Lemma 4.2]{nilRF} allows us to infer  that on a nilpotent Lie group
\begin{equation}\label{eqn_gscmu}
\mc S_{\mud}=-\frac{1}{12}(3\lvert\mu\rvert^2+\lvert H \lvert^2)=-\frac{1}{12}\lvert \mud\rvert^2\,.
\end{equation}
Before stating the main Theorem of this Section, we will need a preliminary Lemma. 
\begin{lem}\label{lem_normmud}
Let $(\mud(t))_{t\in  (\varepsilon_-,\varepsilon_+)}$ be a solution of the generalized bracket flow defined on its maximal time interval. Then, there exists a uniform constant $C>0$ such that:
\begin{enumerate}
\item if $\varepsilon_+<\infty$, then
$$
\lvert\mud(t)\rvert\ge  \frac{C}{(\varepsilon_+-t)^{\frac12}}\,, \quad t\in[0,\varepsilon_+)\,;
$$
\item if $\varepsilon_-> -\infty$, then
$$
\lvert\mud(t)\rvert\ge \frac{C}{(t-\varepsilon_-)^{\frac12}}\,, \quad t\in(\varepsilon_-, 0]\,.
$$
\end{enumerate}
\end{lem}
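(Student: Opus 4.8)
The plan is to exploit the fact that the generalized bracket flow \eqref{eqn_genbf} is, after unwinding the definitions, an autonomous ODE $\tfrac{\dd}{\dd t}\mud = X(\mud)$ on the finite-dimensional vector space $\Lambda^2(\gggo)^* \otimes (\gggo)$, whose right-hand side $X(\mud) = -\Theta(\mc Rc_{\mud} - A_{\mud})\mud$ is a \emph{homogeneous cubic} polynomial map. Indeed, from the explicit expressions \eqref{eqn_Rcmud} and \eqref{eqn_A_mud}, both $\mc Rc_{\mud}$ and $A_{\mud}$ depend quadratically on $\mud$: the tensors ${\rm Ric}_\mu$, $\bar g^{-1}H^2$ and $\dd^*_\mu H$ are each homogeneous of degree two in the structure constants $\mud_{ij}^k, \mud_{ijk}$. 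Since $\Theta$ is linear in its argument and $\Theta(L)$ acts linearly on $\mud$, the composition $X(\mud)$ is homogeneous of degree three. Setting $C_0 := \max_{\lvert \nud \rvert = 1} \lvert \Theta(\mc Rc_{\nud} - A_{\nud})\nud \rvert$, which is finite by continuity and compactness of the unit sphere, homogeneity gives $\lvert X(\mud) \rvert \le C_0 \lvert \mud \rvert^3$ for all $\mud$, with $C_0$ depending only on the ambient data and not on the particular solution.

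First I would turn this into a differential inequality for $f(t) := \lvert \mud(t) \rvert^2$. By Cauchy--Schwarz, $\lvert f'(t) \rvert = 2\lvert \bar g(X(\mud), \mud) \rvert \le 2\lvert X(\mud) \rvert \lvert \mud \rvert \le 2 C_0 \lvert \mud \rvert^4 = 2 C_0 f(t)^2$, so in particular $f' \le 2C_0 f^2$ on the whole maximal interval. To prove (1), fix $t_0 \in [0,\varepsilon_+)$ and set $M := f(t_0)$. Comparing $f$ with the solution of the scalar ODE $y' = 2C_0 y^2$, $y(t_0) = M$, namely $y(t) = M/(1 - 2C_0 M(t-t_0))$, the comparison principle yields $f(t) \le M/(1 - 2C_0 M(t-t_0))$ for all $t \in [t_0, \min\{\varepsilon_+, t_0 + (2C_0 M)^{-1}\})$. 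If we had $\varepsilon_+ < t_0 + (2C_0 M)^{-1}$, then $f$, and hence $\lvert \mud \rvert$, would remain bounded on $[t_0, \varepsilon_+)$; since $X$ is a polynomial (thus smooth) vector field on the whole space, a bounded solution on a finite interval extends past its endpoint, contradicting the maximality of $\varepsilon_+$. Therefore $\varepsilon_+ \ge t_0 + (2C_0 M)^{-1}$, i.e. $\varepsilon_+ - t_0 \ge \tfrac{1}{2C_0 f(t_0)}$, which rearranges to $\lvert \mud(t_0) \rvert^2 = f(t_0) \ge \tfrac{1}{2C_0(\varepsilon_+ - t_0)}$. As $t_0$ was arbitrary, taking square roots gives (1) with $C = (2C_0)^{-1/2}$.

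For (2) I would reverse time: the curve $\tilde \mud(s) := \mud(-s)$ solves $\tfrac{\dd}{\dd s}\tilde\mud = -X(\tilde\mud)$, whose right-hand side is again homogeneous cubic with the same norm bound, and whose maximal forward existence time equals $-\varepsilon_-$. Applying the estimate just proved to $\tilde\mud$ and translating back yields $\lvert \mud(t) \rvert \ge C (t - \varepsilon_-)^{-1/2}$ on $(\varepsilon_-, 0]$, with the \emph{same} constant $C$; this also shows the constant is uniform, as it depends only on $C_0$.

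The only genuinely substantive point is the homogeneity/degree count of the vector field, i.e. verifying from \eqref{eqn_Rcmud} and \eqref{eqn_A_mud} that $\mc Rc_{\mud} - A_{\mud}$ scales quadratically in $\mud$; once this is in place the result is a routine comparison-ODE argument and does not use any special feature of the flow beyond finite-dimensionality and polynomiality. Note in particular that we never need to assume a priori that the solution blows up as $t \to \varepsilon_+$: the lower bound is extracted directly from the maximality of the existence interval.
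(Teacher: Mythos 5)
Your proof is correct and follows essentially the same route as the paper's: both reduce to the differential inequality $\tfrac{\dd}{\dd t}\lvert\mud\rvert^2 \le C\lvert\mud\rvert^4$ and conclude by scalar ODE comparison together with maximality of the existence interval, reversing time for part (2). The only cosmetic difference is that you obtain the key bound $\lvert \mc Rc_\mud - A_\mud\rvert \le C\lvert\mud\rvert^2$ from cubic homogeneity of the vector field and compactness of the unit sphere, whereas the paper derives it by estimating each term ($\operatorname{Ric}_\mu$, $H^2$, $\dd_\mu^* H$) individually.
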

\begin{proof} For the ease of notation, we will always denote with $C$ a uniform  and positive constant which may change from line to line.  
 First of all, we observe that,  
 as in \cite[Lemma 3.1]{scalar}, we have $\lvert {\rm Ric}_{\mu}\rvert^2\le C \lvert\mu\rvert^4$. Moreover, one can easily show that $\lvert H^2\rvert^2\le C \lvert H\rvert^4$. Finally,  we  obtain that $\lvert \dd_{\mu}^*H\rvert^2\le C\lvert \mu\rvert^2\lvert H\rvert^2\le C\lvert\mud\rvert^4$, using that $*_{\mu}$ is an isometry and that $\lvert\dd_{\mu}\alpha\rvert^2\le C \lvert \mu\rvert^2\lvert \alpha\rvert^2$, for any form $\alpha$ and any Lie bracket $\mu$. Then, one can deduce 
 \begin{equation}\label{eqn_estimate}
 \lvert\mc Rc_{\mud}- A_{\mud}\rvert^2=2\lvert {\rm Ric}^B_{\mu, H}\rvert^2+\frac16 \lvert \dd_{\mu}^*H\rvert^2\le C\rvert\mud\lvert^4\,.
 \end{equation}
 Now,  we fix $t_0\in[0,\varepsilon_+)$ and we can use \eqref{eqn_estimate} and the linearity of the infinitesimal $\mathsf L$-action $\Theta $ to infer that 
 $$
 \frac{\dd}{\dd t}\lvert\mud\rvert^2=2\bar g\left(\mud, \frac{\dd}{\dd t}\mud\right)\le C\lvert\mud\rvert^4\,, \quad t\in [t_0, \varepsilon_+)\,.
 $$ This implies, by comparison, that 
 \begin{equation}\label{eqn_estimates2}
 \lvert\mud\rvert^2\le \frac{1}{-C(t-t_0)+ \lvert\mud(t_0)\rvert^{-2}}\,, \quad  t\in [t_0, \varepsilon_+)\,.
 \end{equation}
  On the other hand, \eqref{eqn_estimates2} readily guarantees   that $\varepsilon_+\ge t_0+\frac1C\lvert\mud(t_0)\rvert^{-2}$, giving us the  first claim.  The second claim can be obtained  by reversing the time variable.
\end{proof}
We can now state  the following blow-up result:

\begin{thm}\label{thm_blowup} Let $(E, \gca_t)_{t \in (\varepsilon_-,\varepsilon_+)}$ be a left-invariant solution of the generalized Ricci flow over a simply-connected Lie group $\mathsf G$ defined in its maximal time interval. Then,  
\begin{enumerate}
\item if $\varepsilon_+<\infty$, then $\mc S_t\to \infty$, as $t\to\varepsilon_+$\,;
\item if $\varepsilon_->-\infty$, then $\mc S_t\to -\infty$, as $t\to\varepsilon_-$\,.
\end{enumerate}
\end{thm}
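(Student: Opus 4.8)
The plan is to pass to the generalized bracket flow, where the size of the bracket is already controlled by \Cref{lem_normmud}, and then to use the monotonicity of the generalized scalar curvature to fix the sign of the blow-up.

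First I would apply \Cref{thm_grf=bf}: the given solution $(\gca_t)$ is related by a one-parameter family of generalized isometries $F_t \colon ((\gggo)_{H_0},\gca_t) \to ((\gggo)_{\mud(t)},\gca_0)$ to the generalized bracket flow $(\mud(t))$, both defined on the same maximal interval $(\varepsilon_-,\varepsilon_+)$. Since the generalized scalar curvature is invariant under metric Courant algebroid isometries, $\mc S_t = \mc S_{\mud(t)} = \scal_{\mu(t)} - \tfrac{1}{12}|H(t)|^2$, where $\mu(t) = (\mud(t))_{\ggo}$ and $H(t) = (\mud(t))_{\Lambda^3}$. This reduces the statement to estimating $\mc S_{\mud(t)}$ along the bracket flow.

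The analytic core is the evolution of $\mc S_t$. For left-invariant metrics $\mc S$ is spatially constant, so the Laplacian term in its evolution equation (see \cite{GRFbook}) drops out and the right-hand side becomes manifestly non-negative; the claim is that $\tfrac{\dd}{\dd t}\mc S_t = |\mc Rc_{\mud(t)} - A_{\mud(t)}|^2 \geq 0$. This is consistent with the nilpotent case, where $\mc Rc_{\mud} - A_{\mud} = \mm_{\mud}$ by \Cref{cor_mmnil}, $\mc S_{\mud} = -\tfrac{1}{12}|\mud|^2$ by \eqref{eqn_gscmu}, and $\tfrac{\dd}{\dd t}|\mud|^2 = -12|\mm_{\mud}|^2$ follows from \Cref{def_momMap}. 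Granting this, $\mc S_t$ is non-decreasing, which already fixes the direction: towards $\varepsilon_+$ it increases, towards $\varepsilon_-$ it decreases.

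It then remains to exclude convergence to a finite value at a finite singular time. Assume $\varepsilon_+ < \infty$ and, for contradiction, that $\mc S_t \leq L < \infty$ on $[0,\varepsilon_+)$. Monotonicity gives $\int_0^{\varepsilon_+}|\mc Rc_{\mud} - A_{\mud}|^2\,\dd t = L - \mc S_0 < \infty$, so Cauchy--Schwarz (together with $\varepsilon_+ < \infty$) yields $\int_0^{\varepsilon_+}|\mc Rc_{\mud} - A_{\mud}|\,\dd t < \infty$. On the other hand, since $\Theta$ is a Lie algebra representation one has $\big|\tfrac{\dd}{\dd t}\log|\mud(t)|\big| \leq C\,|\mc Rc_{\mud(t)} - A_{\mud(t)}|$ along the flow, and integrating forces $|\mud(t)|$ to stay bounded on $[0,\varepsilon_+)$ --- contradicting \Cref{lem_normmud}(1). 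Hence $\mc S_t \to +\infty$, and the case $\varepsilon_- > -\infty$ follows by reversing time. I expect the main obstacle to be establishing the evolution equation for $\mc S_t$ with enough precision: that in the possibly non-unimodular, non-compact homogeneous setting its right-hand side is both non-negative and comparable to $|\mc Rc_{\mud} - A_{\mud}|^2$, given that the clean identity $\mc S_{\mud} = -\tfrac{1}{12}|\mud|^2$ holds only in the nilpotent case.
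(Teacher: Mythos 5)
Your proposal is correct and follows essentially the same route as the paper: pass to the bracket flow via \Cref{thm_grf=bf}, use that in the homogeneous setting the Streets monotonicity formula (with a spatially constant $\phi$) gives $\frac{\dd}{\dd t}\mc S_t = 2\lvert \Ricb_{g_t,H_t}-\tfrac12\dd^*_{g_t}H_t\rvert^2 \geq 0$, bound $\log\lvert\mud(t)\rvert^2$ by $C\bigl(t+\mc S_t-\mc S_0\bigr)$ via $\bigl\lvert\tfrac{\dd}{\dd t}\log\lvert\mud\rvert\bigr\rvert\leq C\lvert\mc Rc_\mud - A_\mud\rvert$, and conclude from \Cref{lem_normmud}. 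The only cosmetic discrepancy is that the time derivative of $\mc S_t$ equals $2\lvert\Ricb\rvert^2+\tfrac12\lvert\dd^*_\mu H\rvert^2$ rather than literally $\lvert\mc Rc_\mud-A_\mud\rvert^2_{\bar g_\lgo}=2\lvert\Ricb\rvert^2+\tfrac16\lvert\dd^*_\mu H\rvert^2$, but as you anticipate the two are comparable, which is all the argument requires.
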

\begin{proof}
Fixing the  preferred isotropic splitting $\sigma_0$ associate with $\gca _0$, we have the time-varying isometry $ (E,\mc G_t)_{t \in (\varepsilon_-,\varepsilon_+)} \simeq_{\sigma} ((\f g \oplus \f g^*)_{H_t},\mc G(g_t,0))_{t \in (\varepsilon_-,\varepsilon_+)} $. By \cite[~Proposition 1.1]{GRFEntropyStreets}, if $\phi\colon \G \times (\varepsilon_-,\varepsilon_+) \to \R$ satisfies
\begin{equation}\label{eqn_phi}
\left(\frac{\partial}{\partial t} - \Delta_{g_t}\right)\phi_t = \tfrac 16 |H_t|^2_{g_t};\qquad t\in (\varepsilon_-,\varepsilon_+),
\end{equation}
then, 
\[
\left(\frac{\partial}{\partial t} - \Delta_{g_t}\right)\left(\mc S_{g_t, H_t} + 2\Delta_{g_t}\phi_t - |\nabla_{g_t}\phi_t|^2\right) = 2 |\Ricb_{g_t, H_t} + \nabla^2_{g_t}\phi_t - \tfrac 12 (\dd^*_{g_t}H_t + \iota_{\nabla_{g_t} \phi_t}H_t)|^2;\qquad t \in  (\varepsilon_-,\varepsilon_+).
\]
In this case, we set $\phi_t := \int_{0}^t \tfrac 16 |H_s|^2\dd s$, which is constant in space and hence satisfies \Cref{eqn_phi}. Thus, by left-invariance, we have
\begin{equation}\label{eqn_scalEvo}
\frac{\dd}{\dd t} \mc S_{g_t, H_t} = 2 |\Ricb_{g_t,H_t} - \tfrac 12 \dd^*_{g_t}H_t|^2 = |\mc Rc(\mc G_t)|^2.
\end{equation}
The remainder of the proof follows \cite{scalar} for the classical Ricci flow. By \Cref{thm_grf=bf}, $((\f g \oplus \f g^*)_{H_0},\mc G_t)_{t \in (\varepsilon_-,\varepsilon_+)}$ is isometric to $\left( (\gggo)_{\mud(t)}, \gca(g_0,0) \right)_{t\in (\varepsilon_-,\varepsilon_+)}$, where $\mud$ satisfies \Cref{eqn_genbf}. Let $\mc S_{\mud(t)} = \mc S_{g_t}$ denote the generalized scalar curvature of this latter metric ECA and note that by \Cref{eqn_scalEvo} it satisfies $\tfrac{\dd}{\dd t}\mc S_{\mud(t)} = |\mc Rc_{\mud(t)}|^2$. Also observe that
\[
\tfrac{\dd}{\dd t}|\mud|^2 = - 2 \m{\Theta(\mc Rc_{\mud} - A_{\mud})\mud}{\mud} \leq C |\mc Rc_{\mud} - A_{\mud}||\mud|^2,
\]
for some constant $C > 0$. Hence, observing that $|\mc Rc_\mud - A_\mud|^2 \leq 2 |\mc Rc_\mud|^2$, we have,  for $t \in (\varepsilon_-,\varepsilon_+)$,
\begin{align*}
\log |\mud(t)|^2 - \log |\mud(0)|^2 &= \int_0^t \tfrac {\dd}{\dd s} \log |\mud(s)|^2 \dd s \leq C \int_0^t |\mc Rc_{\mud(s)} - A_{\mud(s)}|\dd s \leq C \int_0^t |\mc Rc_{\mud(s)}| \dd s\\
&\leq C \int_0^t 1 + |\mc Rc_{\mud(s)} - A_\mud(s)|^2 \dd s = C\left(t + \mc S_{\mud(t)} - \mc S_{\mud(0)}\right).
\end{align*}
On the other hand, by \Cref{lem_normmud}, we have that $|\mud(t)|^2 \to \infty$,  as $t \to \varepsilon_+$,  forcing $\mc S_{\mud(t)} \to \infty$,  as $t \to \varepsilon_+$. A similar argument shows the statement for $t \to \varepsilon_-$.
\end{proof}

Recall that left-invariant metrics on solvable Lie groups have non-positive scalar curvature, and zero scalar curvature if and only if they are flat \cite{BB}. Hence, we yield the immediate Corollary:

\begin{cor} \label{cor:SolvInfTime}
Any left-invariant  solution  of the generalized Ricci flow on a solvable Lie group exists for all positive times.
\end{cor}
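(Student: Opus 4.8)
The plan is to combine the blow-up characterization of \Cref{thm_blowup} with the well-known sign of the scalar curvature of left-invariant metrics on solvable Lie groups. First I would reduce to the simply-connected case: since left-invariant generalized geometry depends only on the underlying Lie algebra, passing to the universal cover $\widetilde{\mathsf G}$ replaces the solvable Lie group by a simply-connected solvable one without altering the maximal existence interval of the invariant solution (left-invariant tensors pull back to left-invariant tensors along the covering homomorphism, and the flow equations are unchanged). Thus we may assume $\mathsf G$ is simply-connected and apply \Cref{thm_blowup} directly.

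The heart of the argument is an a priori upper bound $\mc S_t \le 0$ valid along the entire flow. Fixing the preferred isotropic splitting determined by $\gca_t$, we identify $(E,\gca_t) \simeq ((\TT)_{H_t}, \gca(g_t,0))$, so that $g_t$ is a genuine left-invariant Riemannian metric on the solvable group $\mathsf G$ at each time. By the result of \cite{BB}, every such metric satisfies $\scal_{g_t} \le 0$. Since the torsion contribution $-\tfrac{1}{12}|H_t|^2$ is manifestly non-positive, \eqref{genscal} gives
\[
\mc S_t = \scal_{g_t} - \tfrac{1}{12}|H_t|^2 \le 0, \qquad t \in (\varepsilon_-,\varepsilon_+).
\]
Suppose now, for contradiction, that $\varepsilon_+ < \infty$. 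Then \Cref{thm_blowup} forces $\mc S_t \to +\infty$ as $t \to \varepsilon_+$, which is incompatible with the uniform bound just established. Hence $\varepsilon_+ = +\infty$, i.e.\ the solution exists for all positive times.

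I do not expect a genuine obstacle here: all the analytic difficulty has already been absorbed into \Cref{thm_blowup}, via the generalized bracket flow, the norm estimate of \Cref{lem_normmud}, and the evolution \eqref{eqn_scalEvo} of the generalized scalar curvature. The only point requiring a little care is that the scalar-curvature sign must be invoked at \emph{every} time of the flow rather than merely at $t=0$; this is immediate because $g_t$ remains a left-invariant metric on the fixed solvable group $\mathsf G$ throughout (equivalently, in the bracket-flow picture the evolving Lie bracket $\mu(t)$ stays solvable, so $\scal(\mu(t),\bar g)\le 0$ for all $t$). Note also that we do not need the sharper statement that $\scal_{g_t}=0$ only in the flat case; the non-strict inequality $\mc S_t\le 0$ already suffices to contradict a finite-time singularity.
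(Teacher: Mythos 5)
Your proposal is correct and follows essentially the same route as the paper: the authors likewise deduce the corollary immediately from \Cref{thm_blowup} together with the fact (from \cite{BB}) that left-invariant metrics on solvable Lie groups have non-positive scalar curvature, so that $\mc S_t = \scal_{g_t} - \tfrac{1}{12}|H_t|^2 \le 0$ rules out the blow-up $\mc S_t \to +\infty$ at a finite $\varepsilon_+$. Your extra remarks on reducing to the simply-connected case and on the bound holding at every time are sensible clarifications of details the paper leaves implicit.
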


In the case of a left-invariant solution on a nilpotent group, we are able to say much more thanks to \Cref{cor_mmnil}. In particular, we can describe the precise asymptotic behaviour of the generalized scalar curvature:
\begin{thm} For any left-invariant  solution $(g_t,H_t)$ of the generalized Ricci flow on a nilpotent Lie group $\G$, the generalized scalar curvature satisfies $\mathcal{S}_{g_t,H_t} \sim -\frac 1t$, as $t \to \infty$.
\end{thm}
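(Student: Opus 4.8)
The plan is to translate everything into the generalized bracket flow and read the asymptotics off a scalar ODE. By \Cref{thm_grf=bf} the given solution is, up to time-dependent generalized isometries, of the form $\left((\gggo)_{\mud(t)},\B{\gca}\right)$ with $\mud(t)$ solving the bracket flow \eqref{eqn_genbf}, and by \Cref{cor:SolvInfTime} it is immortal, so $t\to\infty$ makes sense. We may assume the metric ECA is non-flat, i.e. $\mud(t)\neq0$ for all $t$: indeed $\mud\equiv0$ is a stationary solution of \eqref{eqn_genbf}, so by uniqueness $\mud(t)\neq0$ for all $t$ whenever $\mud(0)\neq0$. Since $(\ggo,\mu)$ is nilpotent, \Cref{cor_mmnil} gives $\mc Rc_\mud-A_\mud=\mm_\mud$, so the bracket flow reads $\tfrac{\dd}{\dd t}\mud=-\Theta(\mm_\mud)\mud$. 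Combining $\mc S_\mud=-\tfrac1{12}|\mud|^2$ from \eqref{eqn_gscmu} with \Cref{def_momMap} (taking $L=\mm_\mud$) I obtain the monotonicity formula
\[
\tfrac{\dd}{\dd t}\mc S_\mud=-\tfrac1{12}\tfrac{\dd}{\dd t}|\mud|^2=\tfrac16\,\bar g\!\left(\Theta(\mm_\mud)\mud,\mud\right)=|\mm_\mud|^2_\lgo\ \ge 0,
\]
consistent with \eqref{eqn_scalEvo}. As $\mc S_\mud=-\tfrac1{12}|\mud|^2<0$, it therefore suffices to show that $|\mm_\mud|^2_\lgo$ is comparable to $|\mud|^4=144\,\mc S_\mud^2$.

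The upper bound $|\mm_\mud|^2_\lgo\le C\,|\mud|^4$ is immediate from the degree-two homogeneity of the moment map together with continuity on the unit sphere (equivalently, from the estimate $|\mc Rc_\mud-A_\mud|^2\le C|\mud|^4$ in \Cref{lem_normmud}). The crux is the matching lower bound, and here I would test $\mm_\mud$ against the ``trace direction'' $\B{\id}=\id_\ggo\oplus(-\id_{\ggo^*})\in\lgo$. Using \Cref{cor_ThetaL} (so $\theta(\id)\mu=-\mu$ and $\rho(\id)H=-3H$) and the inner product \eqref{eqn_gDorfman}, a direct computation gives
\[
\bar g_\lgo(\mm_\mud,\B{\id})=\tfrac16\,\bar g\!\left(\Theta(\B{\id})\mud,\mud\right)=-\tfrac12\big(|\mu|^2+|H|^2\big),\qquad |\B{\id}|^2_\lgo=2n,
\]
the latter from \eqref{iponmfl} with $n=\dim\ggo$. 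Cauchy--Schwarz then yields $|\mm_\mud|^2_\lgo\ge \tfrac{1}{8n}\big(|\mu|^2+|H|^2\big)^2\ge \tfrac{1}{72n}|\mud|^4$, where the last step uses $|\mu|^2+|H|^2\ge\tfrac13(3|\mu|^2+|H|^2)=\tfrac13|\mud|^2$. The point is that the pairing $\bar g_\lgo(\mm_\mud,\B{\id})$ is a nonzero multiple of $|\mud|^2$ for every nonzero bracket, so it never degenerates.

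Combining the two bounds gives $\tfrac2n\,\mc S_\mud^2\le \tfrac{\dd}{\dd t}\mc S_\mud\le c_2\,\mc S_\mud^2$ for a constant $c_2>0$, with $\mc S_\mud<0$. Dividing by $\mc S_\mud^2$ and integrating, $\tfrac{\dd}{\dd t}\big(-\mc S_\mud^{-1}\big)\in[\tfrac2n,c_2]$, so $-\mc S_{\mud(t)}^{-1}$ grows linearly in $t$; hence there are $c_1,c_2>0$ with $-c_2/t\le \mc S_{g_t,H_t}=\mc S_{\mud(t)}\le -c_1/t$ for $t$ large, i.e. $\mc S_{g_t,H_t}\asymp -1/t$, which is the asserted $1/t$-decay. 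I expect the main obstacle to be precisely the lower bound of the previous paragraph: a priori $|\mm_\mud|$ could decay faster than $|\mud|^2$ (which would let $\mc S$ decay faster than $1/t$), and ruling this out is exactly what the Cauchy--Schwarz estimate against $\B{\id}$ accomplishes, without appealing to any convergence of the normalized flow. Finally I note that the implied coefficient is genuinely not universal --- the expanding soliton on $H_3$ of \Cref{ex_solitons} satisfies $\mc S_{g_t,H_t}=-\tfrac{1}{1+4t}$, so the ratio tends to $-\tfrac14$ --- which is why the statement is to be read as identifying the decay order $1/t$ rather than a sharp leading constant.
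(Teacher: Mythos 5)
Your argument is correct and follows essentially the same route as the paper: reduce to the bracket flow, use \Cref{cor_mmnil} to write the evolution of $|\mud|^2$ (equivalently of $\mc S_\mud$) in terms of $|\mm_\mud|^2_\lgo$, establish the two-sided bound $c\,|\mud|^4\le|\mm_\mud|^2_\lgo\le C\,|\mud|^4$, and conclude by ODE comparison. The only difference is in the key lower bound, where the paper argues by compactness of the unit sphere together with the non-vanishing of the trace of $\mm_\mud$, while you make the same trace observation quantitative by applying Cauchy--Schwarz against $\B{\id}$, which yields an explicit constant.
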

\begin{proof} Let $(g_t,H_t)$ be a solution of the generalized Ricci flow  on $\G$ and $\mud(t)$ the corresponding generalized bracket flow. Then by \Cref{eqn_genbf}, \Cref{cor_mmnil}  and \Cref{def_momMap}, we obtain
\[
\frac{\dd}{\dd t}|\mud(t)|^2 = -2\m{\Theta(\mm_{\mud(t)})\mud(t)}{\mud(t)} = -6\B{g}_{\f l}(\mm_{\mud(t)},\mm_{\mud(t)}) = -6|M_{\mud(t)}|^2.
\]
We now claim that $|\mm_{\mud(t)}|^2 \geq c|\mud|^4$, for some small constant $c > 0$. To see this, first note that the map
\[
\mathbb{S}(\Lambda^2(\f g \oplus \f g^*) \otimes (\f g \oplus \f g^*)) \ni \mud \mapsto |\mm_\mud|^2,
\]
is never zero on the unit sphere. Indeed $\mm_{\mud} = 0 $ implies that $0 = \tr(\mm_{\mud}) = -\frac{1}{6}|\mud|^2 = -\frac 16$, which is a contradiction. The claim therefore follows from compactness of the unit sphere. Thus,
\[
-C|\mud(t)|^4 \leq \frac{\dd}{\dd t}|\mud(t)|^2 \leq -c |\mud(t)|^4,
\]
so $|\mud(t)|^2 \sim \frac 1t$ by ODE comparison. The result now follows from \Cref{eqn_gscmu}.

\end{proof}

\section{Generalized nilsolitons}\label{sec_gensol}

	In this Section,  following the approach by Lauret in \cite{homRF} and \cite{soliton}, we give the definition of algebraic generalized solitons, proving their main  properties  in the nilpotent case. 	
	 First of all, we  introduce the normalized generalized bracket flow. 
	 \begin{defn}
	  The $\ell$-\emph{normalized generalized bracket flow} with background data $((\gggo)_H, \overline {\mc G} )$ is the  initial value problem for a curve of brackets $\mud(t)$ and $\ell=\ell(t)$:
	\begin{equation}\label{eqn_ngbf}
	  \frac{\dd}{\dd t}\mud=-\Theta({\mc Rc}_\mud - A_\mud) \mud+ \ell\mud\,, \qquad \mud(0)=\mud_0 \,.
	  \end{equation}
	 \end{defn}
	 The multiplication of $\mud$ in \eqref{eqn_ngbf} by the factor $\ell$ has to be interpreted using \Cref{lem_scaling} and \Cref{prop:scaledmu}.  On the other hand, given a function $\ell$,  one can  obtain a solution of the $\ell$-normalized generalized bracket flow from a solution of the generalized bracket flow \eqref{eqn_genbf}  via a time reparametrization and a scaling. 
	 	   \begin{lem}\label{lem_norm}
	    Let  $((\gggo)_H, \bG)$ be a left-invariant metric ECA over a Lie group $\mathsf G$. Let  $\mud^{\ell }(t)$ and $\mud(t)$, respectively, be  the solution of the $\ell$-normalized generalized bracket flow and the solution of the generalized bracket flow  starting from $\mud_0$.  Then, there exist  a family of scaling $c(t)>0$ such that $c(0)=1$ and a time reparametrization $\tau=\tau(t)$ so that 
	    $$
	    \mud^{\ell}(t)=c(t)\mud(\tau)\,.
	    $$
	   \end{lem}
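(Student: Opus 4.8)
The plan is to exhibit the scaling $c(t)$ and the time reparametrization $\tau(t)$ explicitly, to verify directly that $c(t)\,\mud(\tau(t))$ solves the $\ell$-normalized flow \eqref{eqn_ngbf} with initial datum $\mud_0$, and then to invoke uniqueness of solutions of the initial value problem to conclude that it must coincide with $\mud^{\ell}(t)$.

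The one structural fact I would establish first is the homogeneity of the right-hand side of the generalized bracket flow under scalar multiplication of the bracket. Interpreting $a\mud$ (for $a>0$) via \Cref{lem_scaling} and \Cref{prop:scaledmu} as the honest scalar multiple of the bracket tensor --- which again lies in $\mc D$, since the Jacobi identity defining $\mc D$ is quadratic --- the projections obey $(a\mud)_\ggo = a\mu$ and $(a\mud)_{\Lambda^3} = aH$. From the explicit expression \eqref{eqn_Rcmud}, together with the standard quadratic homogeneity $\operatorname{Ric}_{a\mu} = a^2 \operatorname{Ric}_\mu$ of the Ricci operator, the identities $(aH)^2 = a^2 H^2$ and $\dd^*_{a\mu}(aH) = a^2\, \dd^*_\mu H$, one reads off that both the generalized Ricci curvature and the skew part $A_\mud$ from \eqref{eqn_A_mud} are homogeneous of degree two:
\[
\mc Rc_{a\mud} = a^2\, \mc Rc_\mud, \qquad A_{a\mud} = a^2\, A_\mud.
\]
Since the infinitesimal action $\Theta(L)\nud$ is bilinear in $(L,\nud)$, this yields the cubic scaling
\[
\Theta(\mc Rc_{a\mud} - A_{a\mud})(a\mud) = a^3\, \Theta(\mc Rc_\mud - A_\mud)\,\mud,
\]
which is the only nonlinearity I need to track.

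With this in hand, set $\tilde\mud(t) := c(t)\,\mud(\tau(t))$ and differentiate, using that $\mud$ solves \eqref{eqn_genbf}:
\[
\tfrac{\dd}{\dd t}\tilde\mud = c'(t)\,\mud(\tau) - c(t)\,\tau'(t)\, \Theta(\mc Rc_{\mud(\tau)} - A_{\mud(\tau)})\,\mud(\tau).
\]
On the other hand, the right-hand side of \eqref{eqn_ngbf} evaluated at $\tilde\mud$ equals, by the cubic scaling above,
\[
-c(t)^3\, \Theta(\mc Rc_{\mud(\tau)} - A_{\mud(\tau)})\,\mud(\tau) + \ell(t)\, c(t)\, \mud(\tau).
\]
Matching the two expressions term by term forces the decoupled system $\tau'(t) = c(t)^2$ and $c'(t) = \ell(t)\, c(t)$. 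I would solve these by $c(t) = \exp\!\left(\int_0^t \ell(s)\,\dd s\right)$ and $\tau(t) = \int_0^t c(s)^2\,\dd s$, which satisfy $c(0)=1$, $\tau(0)=0$, and $\tau'>0$ (so $\tau$ is a genuine, strictly increasing reparametrization, and $\tilde\mud(t)\in\mc D$ for all $t$). With these choices $\tilde\mud$ solves \eqref{eqn_ngbf} with $\tilde\mud(0) = c(0)\mud(0) = \mud_0$.

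Finally, since the right-hand side of \eqref{eqn_ngbf} is polynomial in $\mud$ (and $\ell$ is a prescribed, say continuous, function of $t$), the initial value problem has a unique solution; hence $\mud^{\ell}(t) = \tilde\mud(t) = c(t)\,\mud(\tau(t))$, as claimed. The only genuinely delicate point is the homogeneity bookkeeping of the first step --- in particular, checking that $A_\mud$ scales quadratically and that reading the normalization term $\ell\mud$ as honest scalar multiplication via \Cref{lem_scaling} and \Cref{prop:scaledmu} is consistent --- after which the coefficient matching and the ODE integration are routine.
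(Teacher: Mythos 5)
Your proposal is correct and follows essentially the same route as the paper: the paper likewise sets $c'=\ell c$, $c(0)=1$, $\tau'=c^2$, $\tau(0)=0$, differentiates $c(t)\mud(\tau)$, and concludes via the quadratic homogeneity ${\mc Rc}_{c\mud}-A_{c\mud}=c^2({\mc Rc}_\mud-A_\mud)$. Your write-up merely makes explicit the homogeneity bookkeeping and the final appeal to ODE uniqueness, both of which the paper leaves implicit.
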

	\begin{proof} Following  \cite{homRF}, we choose 
	$$
	c'=\ell c\,, \quad c(0)=1\quad \mbox{and} \quad\tau'=c^2\,, \quad \tau(0)=0.
	$$  Then,  we have  
	$$
	\frac{\dd}{\dd t}(c(t)\mud(\tau))=-c^3\Theta({\mc Rc}_{\mud(\tau)} - A_{\mud(\tau)}) \mud(\tau)+\ell c\mud(\tau).
	$$  
	Using that ${\mc Rc}_{c\mud }- A_{c\mud}=c^2({\mc Rc}_\mud - A_\mud)$, we obtain the claim. 
	\end{proof}
	
	   
	 The case in which we will be interested the most is when  we choose $c=\frac{1}{\lvert \mud\rvert}.$ Applying  \Cref{lem_norm}, we have that $\bar \mud:=\frac{\mud}{\lvert \mud\rvert}$ is a solution of the $\ell$-normalized generalized bracket flow with 
	 $$
	 \ell_{\bar \mud}=\frac{\bar g(\Theta({\mc Rc}_{\mud} - A_{\mud}) \mud, \mud)}{\lvert \mud\rvert^4}
	 =6\bar g(\mc Rc_{\bar \mud} - A_{\bar \mud},\mm_{\bar \mud} )\,.
	 $$  With this choice of the normalization, we will refer to  the corresponding normalized generalized bracket flow as \emph{scalar-normalized generalized bracket flow}.  In this case,  the norm of the solution of the scalar-normalized generalized bracket flow will remain constantly $1$, provided the initial datum has unit norm. Indeed, 
	 $$
	 \frac{\dd}{\dd t}\lvert\mud\rvert^2= 2\bar g(\Theta({\mc Rc}_{\mud} - A_{\mud}) \mud, \mud)(-1+\lvert\mud \rvert^2)\,,
	 $$ then $\lvert\mud \rvert^2=1$, since $\lvert \mud_0\rvert^2=1$.  Recalling \eqref{eqn_gscmu}, this  also implies that $\mc S_{\mud}=\mc S_{\mud_0}=-\frac{1}{12}$ along the scalar-normalized generalized bracket flow.
	 
	 With this in mind, we can give the definition of generalized algebraic soliton.
	 \begin{defn}\label{def_algsol}
	 Let $\mud\in \mc D$. We say that $\mud$ is an algebraic soliton for the generalized Ricci flow if it is  a fixed point of the scalar-normalized generalized bracket flow,  i.e.  the following is satisfied
	 \begin{equation}\label{eqn_algsoli}
	 \Theta({\mc Rc}_\mud - A_\mud) \mud= \ell_{\mud}\mud\,.
	 \end{equation}
	 We say that $\mud$ is expanding, steady or shrinking if, respectively, $\ell_{\mud}>0$, $\ell_{\mud}=0$ or $\ell_{\mud}<0$. 
	 \end{defn}
	  Building from this definition,  we can derive equivalent conditions for a Dorfman bracket to be an algebraic soliton for the generalized Ricci flow. 
	 \begin{prop}\label{prop_equisol}
	 Let $\mud\in\mc D$. Then, the following are equivalent:
	 \begin{enumerate}
	 \item\label{item_algsol}$\mud $ is an algebraic soliton;
	 \item\label{GBFscaling} the generalized bracket flow starting at $\mud$   evolves only by scaling;
	 \item \label{staticas} ${\mc Rc}_\mud - A_\mud=\lambda {\rm Id}+ \bm{D}$ with $\bm{D}\in {\rm Der}_{\lambda}(\mud)$ and $\lambda\in \mathbb R $;
	 \item \label{staticclassicas}there exist  $D\in {\rm Der}(\mu)$ and $\lambda\in \R$ such that 
	 \begin{equation}\label{eqns_soli}
	 \begin{cases}
	 {\rm Ric}_{\mu, H}^{B}=\lambda{\rm Id}+ D\,,  \\
	 \Delta_{\mu}H=\lambda H+\rho({\rm Ric}_{\mu, H}^{B})H\,.  
	 \end{cases}
	 \end{equation}
	 \end{enumerate}
	  \end{prop}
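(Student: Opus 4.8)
The plan is to treat the algebraic soliton equation \eqref{eqn_algsoli} as a hub and prove \eqref{item_algsol}$\Leftrightarrow$\eqref{GBFscaling}, \eqref{item_algsol}$\Leftrightarrow$\eqref{staticclassicas} by direct algebra, and then handle the passage to \eqref{staticas} separately. The backbone of every step is the elementary observation that rescaling a bracket is an infinitesimal $\Theta$-direction: from $\Theta(A)\mud = A\mud(\cdot,\cdot) - \mud(A\cdot,\cdot) - \mud(\cdot,A\cdot)$ one reads off $\Theta(\id_{\gggo})\mud = -\mud$, and likewise $\theta(\id_{\ggo})\mu = -\mu$ and $\rho(\id_{\ggo})\omega = -3\omega$. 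Combined with the projection formulas of \Cref{cor_ThetaL}, the scaling/reparametrization lemma \Cref{lem_norm}, and the flow correspondence \Cref{thm_grf=bf}, these identities let me move between the four formulations.

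For \eqref{item_algsol}$\Leftrightarrow$\eqref{GBFscaling}: by \Cref{lem_norm} the scalar-normalized generalized bracket flow and the unnormalized flow \eqref{eqn_genbf} differ only by a scaling $c(t)$ and a time reparametrization, so $\mud$ is a fixed point of the scalar-normalized flow exactly when the unnormalized solution issuing from $\mud$ stays on the ray $\R_{>0}\,\mud$. For the converse direction I substitute the ansatz $\mud(t) = c(t)\mud$ into \eqref{eqn_genbf} and use the homogeneity $\mc Rc_{c\mud} - A_{c\mud} = c^2(\mc Rc_\mud - A_\mud)$ (recorded in the proof of \Cref{lem_norm}) and linearity of $\Theta(\cdot)\mud$; this collapses the flow to the scalar ODE $c' = -\ell_\mud\,c^3$, whose solution with $c(0)=1$ is unique, so the solution is genuinely a rescaling. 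Pairing with $\mud$ and invoking \Cref{def_momMap} identifies the proportionality constant as $\ell_\mud$.

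For \eqref{item_algsol}$\Leftrightarrow$\eqref{staticclassicas}: I write $\mc Rc_\mud - A_\mud = \overline{\Ricb_{\mu,H}} - \dd^*_\mu H \in \lgo$ and project \eqref{eqn_algsoli} via \Cref{cor_ThetaL}. The $\ggo$-component is $\theta(\Ricb_{\mu,H})\mu = \ell_\mud\,\mu$, which by $\theta(\id)\mu = -\mu$ is equivalent to $\Ricb_{\mu,H} = \lambda\,\id + D$ with $\lambda := -\ell_\mud$ and $D := \Ricb_{\mu,H} - \lambda\,\id \in {\rm Der}(\mu)$; this is the first line of \eqref{eqns_soli}. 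The $\Lambda^3\ggo^*$-component, after using that $H$ is $\dd_\mu$-closed so $\dd_\mu\dd^*_\mu H = \Delta_\mu H$, rearranges to the second line $\Delta_\mu H = \lambda H + \rho(\Ricb_{\mu,H})H$ with the same $\lambda$. The computation is reversible. The delicate bookkeeping here is the sign and factor with which $\dd^*_\mu H$ embeds as the $\Lambda^2\ggo^*$-component of $\lgo$, which is what fixes $\lambda = -\ell_\mud$ consistently in both lines.

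The remaining point, and the one requiring the most care, is the $\lambda$-derivation statement \eqref{staticas}. The implication \eqref{staticas}$\Rightarrow$\eqref{item_algsol} is immediate: applying $\Theta(\cdot)\mud$ to $\mc Rc_\mud - A_\mud = \lambda\,\id + \bm D$ and using $\Theta(\bm D)\mud = 0$ together with $\Theta(\id)\mud = -\mud$ gives \eqref{eqn_algsoli} with $\ell_\mud = -\lambda$. Conversely the same identity shows $\bm D := \mc Rc_\mud - A_\mud - \lambda\,\id$ lies in ${\rm Der}(\mud)$, which is condition \eqref{der1} of \Cref{defn_lambdader}. For condition \eqref{der2} I use that $\Ll \subset \Or(\gggo,\ip)$, so $\mc Rc_\mud - A_\mud \in \lgo \subset \f{so}(\gggo,\ip)$; hence $\bm D + \lambda\,\id = \mc Rc_\mud - A_\mud$ is $\ip$-skew, the right-hand side of \eqref{der2} vanishes, and \eqref{der2} reduces to $X\ip = 0$. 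I would then take $X$ to be the generator on $\mathsf G$ of the one-parameter group of Lie-group automorphisms integrating the derivation $D = \Ricb_{\mu,H} - \lambda\,\id \in {\rm Der}(\mu)$ (via \Cref{prop:Liso}); since automorphism lifts preserve the canonical neutral pairing, $X\ip = 0$, as needed. Conceptually this matches the dynamical picture, since by \eqref{GBFscaling} and \Cref{thm_grf=bf} the associated generalized Ricci flow is a generalized Ricci soliton in the sense of \Cref{defn_soliton}, and $(X,\bm D)$ is precisely the data obtained by differentiating the defining family of isometries, in harmony with \Cref{prop_static_soliton} and the manifold-level description in \Cref{lem_der}. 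I expect the reconciliation of the algebraic endomorphism $\bm D$ with the geometric pair $(X,b)$, and the sign/normalization bookkeeping across \Cref{lem_scaling}, \Cref{prop:scaledmu} and \Cref{prop_static_soliton}, to be the only genuinely subtle part; the algebraic core is short.
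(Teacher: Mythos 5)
Your proposal is correct and takes essentially the same route as the paper: the equivalence of (1) and (2) via \Cref{lem_norm}, the passage to (3) via $\Theta(\id)\mud=-\mud$ together with ${\mc Rc}_\mud - A_\mud\in\f{so}(\gggo,\ip)$ (the paper simply reads off the condition $2\lambda\ip+\la\bm{D}\cdot,\cdot\ra+\la\cdot,\bm{D}\cdot\ra=0$ with no vector field, which suffices since $\ip$ is constant on left-invariant sections, so your detour through the automorphism flow to produce $X$ is harmless but unnecessary), and the passage to (4) by projecting \eqref{eqn_algsoli} with \Cref{cor_ThetaL}. The one bookkeeping slip is the Laplacian convention: the paper takes $\Delta_\mu H=-\dd_\mu\dd_\mu^*H$ on closed forms (so that $\tfrac{\partial}{\partial t}H=\Delta H$ is the heat equation, cf.\ \eqref{eqn_muH}), whereas you assert $\dd_\mu\dd_\mu^*H=\Delta_\mu H$; carried through, your sign would turn the $\Lambda^3$-projection of \eqref{eqn_algsoli} into $\Delta_\mu H=-\lambda H-\rho(\Ricb_{\mu,H})H$ rather than the second line of \eqref{eqns_soli}, so that identity should be corrected before the rearrangement.
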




	  \begin{proof}
	  The equivalence between \Cref{item_algsol} and \Cref{GBFscaling} is trivial using \eqref{eqn_algsoli} and \Cref{lem_norm}.  On the other hand, if $\mud$ is an algebraic soliton,   the fact that $\Theta({\rm Id})\mud=-\mud$, for all $\mud\in \mathcal D$,  implies ${\mc Rc}_\mud - A_\mud=\lambda {\rm Id}+ \bm{D}$ with  $\bm{D}\in{\rm Der}(\mud)$. Moreover,  ${\mc Rc}_\mud - A_\mud\in \mathfrak{so}(\gggo, \ip)$, then,  $\bm{D}$ has to satisfy
	  $$
	  2\lambda\ip+ \la {\bm D}\cdot, \cdot\ra+ \la\cdot,  {\bm D}\cdot\ra=0\,,
	  $$ which gives \Cref{staticas}.
	  Viceversa, it is easy to show that if \Cref{staticas} holds, then \eqref{eqn_algsoli} is satisfied with $\ell_{\mud}=-\lambda$. 
	     Furthermore, \Cref{cor_ThetaL} guarantees the equivalence between  \eqref{eqn_algsoli} and 
	  $$
	  \begin{cases}
	  \theta({\rm Ric}^B_{\mu, H}+\ell_{\mud}{\rm Id})\mu=0\,, \\
	  \rho({\rm Ric}_{\mu, H}^B)H-\Delta_{\mu }H=\ell_{\mud} H \, 
	  \end{cases}
	  $$ which is equivalent to \eqref{eqns_soli} with $\lambda=-\ell_{\mud}$, concluding the proof. 
	  \end{proof}
	  	  
	 \Cref{staticas}  and \Cref{staticclassicas} in \Cref{prop_equisol} can be considered as the static definition,  in terms of  both generalized  and classical  objects, of  algebraic solitons   and so \Cref{prop_equisol} has to be regarded as the analogue of \Cref{prop_static_soliton} in the varying  Dorfman bracket framework.

Moreover, \Cref{prop_equisol} allows us to construct  from algebraic solitons for the generalized Ricci flow  generalized metrics which are  generalized Ricci  solitons as in \Cref{defn_soliton}.
\begin{lem}\label{lem_algrealsol}
Let $\mud$ be an algebraic soliton for the generalized Ricci flow. Then, $\mc G_{\mud}$, defined as in \eqref{eqn_gcamud},  is a generalized Ricci soliton. 
\end{lem}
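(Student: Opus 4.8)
The plan is to reduce the statement to the static characterisation of generalized Ricci solitons in \Cref{prop_static_soliton}. Under the identification $(\gggo)_\mud \simeq (\gggo)_H$ with $\mu = \mud_\ggo$ and $H = \mud_{\Lambda^3}$, the generalized metric $\mc G_\mud$ defined in \eqref{eqn_gcamud} corresponds to $\mc G(\bar g, 0)$; that is, to the classical data given by the left-invariant metric $g := \bar g$ and the closed left-invariant $3$-form $H$ on the simply-connected Lie group $\mathsf{G}_\mu$ with Lie algebra $(\ggo,\mu)$. By the converse direction of \Cref{prop_static_soliton}, it therefore suffices to produce $\lambda \in \R$ and a vector field $X$ on $\mathsf{G}_\mu$ for which $(g,H)$ solves \eqref{eqn_static_soliton}: the resulting soliton solution to \eqref{eqn_GRF} starting at $\mc G(g,0) = \mc G_\mud$ is, by uniqueness of the left-invariant flow, the flow emanating from $\mc G_\mud$, whence $\mc G_\mud$ is a generalized Ricci soliton in the sense of \Cref{defn_soliton}.

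Since $\mud$ is an algebraic soliton, \Cref{prop_equisol} supplies $\lambda \in \R$ and a derivation $D \in \operatorname{Der}(\mu)$ satisfying \eqref{eqns_soli}. Because $\operatorname{Ric}^B_{\mu,H} = \bar g^{-1}\operatorname{Ric}^B_{g,H}$ is $g$-self-adjoint, so is $D = \operatorname{Ric}^B_{\mu,H} - \lambda\operatorname{Id}$. As $D$ is a derivation of $\mu$ and $\mathsf{G}_\mu$ is simply connected, it integrates to a one-parameter group of Lie group automorphisms $\varphi_t \in \operatorname{Aut}(\mathsf{G}_\mu)$ with $\dd_e\varphi_t = e^{tD}$; let $X$ be its infinitesimal generator, so that $\mc L_X = \tfrac{\dd}{\dd t}|_{t=0}\varphi_t^*$ on tensors. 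Evaluating at the identity, where $\varphi_t$ fixes $e$, any left-invariant tensor $T$ satisfies $(\varphi_t^* T)_e = T_e(e^{tD}\cdot,\dots,e^{tD}\cdot)$; differentiating and using the $g$-symmetry of $D$ yields
\[
	\mc L_X g = 2\,g(D\,\cdot,\cdot), \qquad \mc L_X H = -\rho(D)H.
\]

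With these two identities the verification is immediate. The first equation of \eqref{eqns_soli} gives $\operatorname{Ric}^B_{g,H} = \lambda g + g(D\,\cdot,\cdot) = \lambda g + \tfrac12 \mc L_X g$, which is the first line of \eqref{eqn_static_soliton}. For the second, substitute $\operatorname{Ric}^B_{\mu,H} = \lambda\operatorname{Id} + D$ and use $\rho(\operatorname{Id})H = -3H$ (from the definition of $\rho$ on a $3$-form) to obtain $\rho(\operatorname{Ric}^B_{\mu,H})H = -3\lambda H + \rho(D)H$; the second equation of \eqref{eqns_soli} then reads $\Delta_\mu H = \lambda H - 3\lambda H + \rho(D)H = -2\lambda H - \mc L_X H$, which is the second line of \eqref{eqn_static_soliton}. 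Thus $(g,H)$ satisfies \eqref{eqn_static_soliton} and the claim follows as above. I expect the only delicate point to be the sign bookkeeping in passing between the endomorphism-level equations \eqref{eqns_soli} and the Lie-derivative form \eqref{eqn_static_soliton}; in particular the self-adjointness of $D$ is essential, as it is precisely what guarantees that $\varphi_t$ reproduces the full $\tfrac12\mc L_X g$ and not merely its symmetric part, and the factor $-3$ in $\rho(\operatorname{Id})H = -3H$ is what converts the $\lambda$ of \eqref{eqns_soli} into the correct $-2\lambda$ of \eqref{eqn_static_soliton}.
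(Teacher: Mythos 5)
Your proof is correct and follows essentially the same route as the paper: extract the static system \eqref{eqns_soli} via \Cref{prop_equisol}, integrate the ($g$-symmetric) derivation $D$ to a one-parameter group of automorphisms fixing the identity, and invoke the converse direction of \Cref{prop_static_soliton}. Your sign bookkeeping ($\mc L_X g = 2g(D\,\cdot,\cdot)$ and $\mc L_X H = -\rho(D)H$) is in fact the consistent one --- it matches the explicit computation in \Cref{ex_solitons} --- whereas the paper's own two-line proof records these identities with the opposite signs, an inconsequential slip since \Cref{defn_soliton} only requires the existence of \emph{some} generating vector field.
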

\begin{proof}
	  The first equation in \eqref{eqns_soli} can be equivalently written, in terms of symmetric $(0,2)$-tensors, as  
	  $$
	  {\rm Ric}_{g_{\mu}, H}^B=\lambda g_{\mu}+\frac12(g_{\mu}(D\cdot, \cdot)+ g_{\mu}(\cdot, D\cdot)=\lambda g_{\mu}-\frac12\mc L_{X_{D}}g_{\mu}\,,
	  $$ where $X_D=\frac{\dd}{\dd t}\big|_{t=0} \varphi_t$  with $\varphi_t \in {\rm Aut}(\mathsf G_{\mu})$ such that $\dd_e\varphi_t=e^{tD}$. Moreover,  plugging   the first equation of \eqref{eqns_soli} into the second one, we have 
	  $$
	  \Delta_\mu H=\lambda H+\rho(\lambda  {\rm Id}+ D)H=-2\lambda H +\rho(D)H =-2\lambda H +\mc L_{X_D}H\,.
	  $$ The claim follows  using \Cref{prop_static_soliton}.
	  \end{proof}
	   Before  going into the discussion of algebraic solitons in the nilpotent case, we will need  the following preliminary Lemma about $\lambda$-derivations, which is nothing but an adaptation of \Cref{lem_der} to the invariant case. 
	 \begin{lem}\label{lem_invlambdader}
	Let  $(\gggo)_{\mud}$ be a   left-invariant  ECA  over a Lie group $\mathsf G $. Then,  we have that, for any $c\in\mathbb R\backslash\{0\}$ and $\lambda\in \R$, 
	 $$
	 {\rm Aut}(c\cdot(\gggo)_{\mud}, (\gggo)_{\mud})=\left\{ \bar A_ce^b  :  b\in \Lambda^2\ggo^*\,,\quad A\in {\rm Aut}(\mu)\,,\quad A^{-1}\cdot H=c( H-\dd b )\right\}
	 $$
and 	 $$
	 {\rm Der}_{\lambda}(\mud)=\left\{\begin{pmatrix} D& 0 \\ \alpha & -2\lambda{\rm Id}-D^*\end{pmatrix} : D\in {\rm Der}(\mu), \quad \rho(D)H-\dd_{\mu}\alpha -2 \lambda H=0\right\}\,.
	 $$
	 \end{lem}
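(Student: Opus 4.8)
The plan is to mirror the proof of \Cref{lem_der}, carrying out the same computation at the level of the Lie algebra $\gggo$ and replacing diffeomorphisms by Lie group automorphisms and arbitrary forms by left-invariant ones. In analogy with $\bar f_c$, I write $\bar A_c := \begin{pmatrix} A & 0 \\ 0 & c(A^{-1})^* \end{pmatrix}$ for $A \in \mathsf{GL}(\ggo)$, so that $\bar A_c \, e^b = \begin{pmatrix} A & 0 \\ c(A^{-1})^* b & c(A^{-1})^* \end{pmatrix}$.

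First I would establish the description of ${\rm Aut}(c\cdot(\gggo)_\mud, (\gggo)_\mud)$. Let $F \in \mathsf{GL}(\gggo)$ define an ECA isomorphism $c\cdot(\gggo)_\mud \to (\gggo)_\mud$, so that $\mud(F\cdot, F\cdot) = F\mud(\cdot,\cdot)$ and $\m{F\cdot}{F\cdot} = c\m{\cdot}{\cdot}$. Exactly as in \Cref{lem_der}, preservation of the Dorfman bracket forces $\pi\circ F = A\circ\pi$ for some $A \in \mathsf{GL}(\ggo)$, which rules out any upper ($\beta$-field) component and makes $F$ block lower-triangular of the form $\bar A_c\, e^b$; the inner-product condition then forces $b \in \Lambda^2\ggo^*$ and fixes the lower-right block as $c(A^{-1})^*$. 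Projecting the bracket identity by $\pi$ and using $\pi\circ F = A\circ\pi$ gives $\mu(AX,AY) = A\mu(X,Y)$ for all $X,Y \in \ggo$, i.e. $A \in {\rm Aut}(\mu)$, since source and target carry the same bracket $\mud$. Finally, evaluating bracket preservation on the $\iota_Y\iota_X H$ term via \eqref{eqn_mud}, together with the change-of-basis action on $\Lambda^3\ggo^*$ (noting $A^{-1}\cdot H = H(A\cdot,A\cdot,A\cdot)$ is the algebraic analogue of $\varphi^*H$ for a group automorphism $\varphi$ with $\dd_e\varphi = A$), produces the condition $A^{-1}\cdot H = c(H - \dd_\mu b)$. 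The converse inclusion is a direct verification that any such $\bar A_c\, e^b$ scales $\ip$ by $c$ and intertwines the brackets.

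Next, for the $\lambda$-derivations, I would differentiate a one-parameter family $F_t = \overline{(A_t)}_{c(t)}\, e^{b_t} \in {\rm Aut}(c(t)\cdot(\gggo)_\mud, (\gggo)_\mud)$ with $A_0 = \id$, $b_0 = 0$, $c(0) = 1$ and $c'(0) = -2\lambda$, following the second half of \Cref{lem_der}. Setting $D := \tfrac{\dd}{\dd t}\big|_{0} A_t$ and $\alpha := \tfrac{\dd}{\dd t}\big|_0 b_t$, the block structure of $\bar A_c\, e^b$ together with $b_0 = 0$ gives $\tfrac{\dd}{\dd t}\big|_0 F_t = \begin{pmatrix} D & 0 \\ \alpha & -2\lambda\id - D^* \end{pmatrix}$, where $D \in {\rm Der}(\mu)$ because $A_t \in {\rm Aut}(\mu)$. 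Differentiating the constraint $A_t^{-1}\cdot H = c(t)(H - \dd_\mu b_t)$ at $t=0$, and using $\tfrac{\dd}{\dd t}\big|_0(A_t^{-1}\cdot H) = -\rho(D)H$ together with $c'(0) = -2\lambda$, yields $\rho(D)H - \dd_\mu\alpha - 2\lambda H = 0$. Conversely, given such a pair $(D,\alpha)$, one integrates $D$ to a one-parameter group $A_t = \exp(tD) \in {\rm Aut}(\mu)$ and reconstructs $b_t$ by the same integral formula used in \Cref{lem_der}, producing a family in ${\rm Aut}(c(t)\cdot(\gggo)_\mud, (\gggo)_\mud)$ whose derivative is the prescribed $\lambda$-derivation.

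I expect no serious obstacle, since everything descends from \Cref{lem_der}; the only points requiring care are bookkeeping the scaling factor $c$ through the lower-right block, and tracking the sign conventions of $\rho$ so that the factor $-2\lambda$ (rather than $+2\lambda$ or $-\lambda$) appears correctly. The one genuinely structural input is the standard passage between Lie algebra automorphisms/derivations of $\mu$ and Lie group automorphisms of the simply-connected $\mathsf{G}_\mu$, exactly as invoked in the proof of \Cref{prop:Liso}, which is what lets us identify the algebraic data $(A,b)$ and $(D,\alpha)$ with the geometric maps governed by \Cref{lem_der}.
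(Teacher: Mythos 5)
Your proposal is correct and follows essentially the same route as the paper, whose proof of this lemma simply specializes \Cref{lem_der} (and \Cref{defn_lambdader}) to the left-invariant setting; you have just made explicit the block computations, the differentiation of the constraint $A_t^{-1}\cdot H = c(t)(H-\dd_\mu b_t)$, and the integration step that the paper leaves implicit. The signs and the placement of the factor $c$ in $\bar A_c$ all check out against the stated formulas.
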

	 \begin{proof} The first assertion follows directly from the discussion in the non-invariant case.  \Cref{defn_lambdader} guarantees that  a $\lambda$-derivation  has to be of the form 
	 $$
	\bm{D}= \begin{pmatrix} D& 0 \\ \alpha & -2\lambda{\rm Id}-D^*\end{pmatrix}\,,
	 $$ where $D\in{\rm Der}(\mu)$ and $\rho(D)H-\dd \alpha -2\lambda H =0 $\,. 
	 \end{proof}

	The moment map formulation in the nilpotent case   allows us to derive many other properties concerning algebraic solitons, which, in this particular case, will be called generalized nilsolitons, mimicking the classical nomenclature, see \cite{soliton}.
\begin{prop}
Let $\mud\in \mc D$ be a generalized nilsoliton.  Then, it is expanding.  Moreover, the derivation $D$ satisfies the following property:
$$
{\rm tr}\left(\left(D+\frac14H^2\right)D'\right)=-\lambda {\rm tr}(D')\,,$$  for all $D'\in {\rm Der}( \mu)$\, defining $\bm{D}'=(D', \alpha)\in {\rm Der}_{\lambda'}(\mud)$, for some $\lambda'\in \R$. Finally,  $\bar g(\iota_XH, \dd_{\mu}^*H) = 0$,  for all $X \in \f g$.
\end{prop}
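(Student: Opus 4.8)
The plan is to read everything off the two soliton equations of \Cref{staticclassicas}, which for a generalized nilsoliton state $\Ricb_{\mu,H} = \lambda\operatorname{Id} + D$ with $D \in \operatorname{Der}(\mu)$, together with the nilpotent moment-map identity $\mc Rc_\mud - A_\mud = \mm_\mud$ of \Cref{cor_mmnil}. Throughout I write $H^2$ for the symmetric operator $\bar g^{-1}H^2$, so that $\Ricb_{\mu,H} = \operatorname{Ric}_\mu - \tfrac14 H^2$ and hence $\tfrac14 H^2 = \operatorname{Ric}_\mu - \lambda\operatorname{Id} - D$; note that $D$ is $\bar g$-symmetric because $\Ricb_{\mu,H}$ is, and that $H^2$ is positive semidefinite since $H^2(X,X) = |\iota_X H|^2_{\bar g}$.

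To see that the soliton is expanding, I would pair the fixed-point equation $\Theta(\mm_\mud)\mud = \ell_\mud\,\mud$ with $\mud$ and invoke the defining property of the moment map in \Cref{def_momMap}, obtaining $\ell_\mud\,|\mud|^2 = \bar g(\Theta(\mm_\mud)\mud,\mud) = 6\,|\mm_\mud|_\lgo^2 \geq 0$, so that $\ell_\mud = -\lambda \geq 0$. To exclude equality I note that $\mm_\mud = 0$ would force $\Ricb_{\mu,H} = 0$ and hence $0 = \operatorname{tr}\Ricb_{\mu,H} = \scal_\mu - \tfrac14\operatorname{tr} H^2$; but on a nilpotent metric Lie algebra $\scal_\mu \leq 0$ (vanishing only in the abelian case) and $\operatorname{tr} H^2 \geq 0$ (vanishing only for $H=0$), so this would give $\mud = 0$, a contradiction. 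Therefore $\lambda < 0$.

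For the trace identity I would use $D + \tfrac14 H^2 = \operatorname{Ric}_\mu - \lambda\operatorname{Id}$ to reduce the claim to $\operatorname{tr}(\operatorname{Ric}_\mu D') = 0$ for every $D' \in \operatorname{Der}(\mu)$; the hypothesis that $D'$ be the linear part of a $\lambda'$-derivation only singles out a subset of derivations, so proving it for all $D' \in \operatorname{Der}(\mu)$ suffices. Since $\mu$ is nilpotent, $\operatorname{Ric}_\mu = \mm_\mu$, and the definition of the moment map gives $\bar g(\operatorname{Ric}_\mu, D') = \tfrac14\bar g(\theta(D')\mu,\mu) = 0$, because $\theta(D')\mu = 0$ is exactly the condition $D' \in \operatorname{Der}(\mu)$. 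As $\operatorname{Ric}_\mu$ is symmetric, $\operatorname{tr}(\operatorname{Ric}_\mu D')$ agrees with $\bar g(\operatorname{Ric}_\mu, D')$ up to $\operatorname{tr}(\operatorname{Ric}_\mu(D' - (D')^T))$, the trace of a symmetric operator against a skew one, which vanishes; hence $\operatorname{tr}((D + \tfrac14 H^2)D') = -\lambda\operatorname{tr}(D')$.

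The last identity is the most delicate, and I expect its crux to be a grading argument. First, for each $X \in \ggo$ Cartan's formula together with $\dd_\mu H = 0$ gives $\dd_\mu\iota_X H = \mc L_X H = \rho(\ad_X)H$, so taking adjoints and using the identity $\bar g(\rho(B)H,H) = -3\operatorname{tr}(H^2 B)$ from the proof of \Cref{prop_Rcmm} one finds $\bar g(\iota_X H, \dd^*_\mu H) = \bar g(\rho(\ad_X)H, H) = -3\operatorname{tr}(H^2\ad_X)$. It therefore suffices to prove $\operatorname{tr}(H^2\ad_X) = 0$. Substituting $\tfrac14 H^2 = \operatorname{Ric}_\mu - \lambda\operatorname{Id} - D$ splits this into $\operatorname{tr}(\operatorname{Ric}_\mu\ad_X)$, which vanishes by the previous paragraph since $\ad_X \in \operatorname{Der}(\mu)$; the term $\lambda\operatorname{tr}(\ad_X)$, which vanishes by nilpotency; and $\operatorname{tr}(D\ad_X)$. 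The hard part is this last term: since $D$ is a symmetric derivation it diagonalizes with real eigenvalues, inducing a grading $\ggo = \bigoplus_a V_a$ with $\mu(V_a,V_b)\subseteq V_{a+b}$; for $X \in V_c$ the operator $D\ad_X$ raises degree by $c$, hence is trace-free when $c \neq 0$, while for $c = 0$ each $\ad_X|_{V_a}$ is nilpotent and thus trace-free. Summing over the $D$-eigencomponents of an arbitrary $X$ gives $\operatorname{tr}(D\ad_X) = 0$, and therefore $\bar g(\iota_X H, \dd^*_\mu H) = 0$ for all $X$, completing the proof.
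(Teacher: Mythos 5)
Your proof is correct, and the middle part takes a genuinely different (and leaner) route than the paper's. For the trace identity, the paper pairs $\lambda\,\id+\bm D$ against $\lambda'\id+\bm D'$ with the inner product $\bar g_\lgo$ and evaluates both sides using the full generalized moment map, which forces it to carry along the $2$-form $\alpha$ and the relation $\dd_\mu\alpha=\rho(D')H-2\lambda' H$ from \Cref{lem_invlambdader}; you instead observe that $D+\tfrac14 H^2=\operatorname{Ric}_\mu-\lambda\,\id$ and reduce everything to the classical fact $\tr(\mm_\mu D')=0$ for $D'\in\operatorname{Der}(\mu)$, which follows in one line from $\theta(D')\mu=0$ and $\operatorname{Ric}_\mu=\mm_\mu$. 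This buys you a strictly stronger statement (the identity holds for every derivation of $\mu$, whether or not it extends to a $\lambda'$-derivation of $\mud$) at the cost of not exhibiting the generalized moment map structure the paper is advertising. Your expanding argument is essentially the paper's, with the non-vanishing of $\mm_\mud$ ruled out by a trace computation rather than by citing \cite[Lemma 4.2(iii)]{nilRF}; both implicitly exclude the degenerate case $\mud=0$. For the final identity your chain of equalities is the same as the paper's; the only difference is that the paper simply recalls that $\tr(\ad_X D)=0$ for symmetric derivations of metric nilpotent Lie algebras, whereas you prove it via the eigenvalue grading induced by $D$ together with nilpotence of $\ad_{X}$ on each graded piece --- a correct and self-contained substitute for the cited fact.
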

\begin{proof}
   We have that
   $$
   \ell_{\mud}=6\bar g(\mc Rc_{\mud} - A_{\mud},\mm_{\mud} )=6\lvert \mc Rc_{\mud} - A_{\mud}\rvert^2\ge 0\,, $$
   where the last equality follows from \Cref{cor_mmnil}. On the other hand, 
   $
   \lvert \mc Rc_{\mud} - A_{\mud}\rvert^2= 0
   $ would, in  particular,  imply that
   $$
   {\rm Ric}_{\mu}=\frac14 H^2\ge 0 \,,
   $$  which cannot happen in the nilpotent case, see \cite[Lemma 4.2, (iii)]{nilRF}, giving us the first  claim. 
   
   As regards the second,  fixed  $\bm{D}'=(D', \alpha)\in {\rm Der}_{\lambda'}(\mud)$, we have that 
   \begin{equation}\label{eqn_mmlambda'der}
   \bar g_{\mathfrak l }(\lambda {\rm Id}+\bm{D}, \lambda'{\rm Id}+\bm{D}')=\frac16\bar g(\Theta(\lambda'{\rm Id})\mud, \mud)
   =-\frac{\lambda'}{6}\lvert\mud\rvert^2=\lambda'\left(2{\rm R}_{\mu}-\frac{1}{6}\lvert H\rvert^2\right)\,. 
   \end{equation}  On the other hand,  we can use \Cref{prop_equisol} to deduce  that $\bm{D}=(D, -\dd_{\mu}^*H)$ and then obtain 
   $$
  \bar g_{\mathfrak l }(\lambda {\rm Id}+\bm{D}, \lambda'{\rm Id}+\bm{D}')=2{\rm tr}((\lambda{\rm Id}+D)(\lambda'{\rm Id}+D'))- \frac16\bar g(\dd_{\mu}^*H, \alpha)\,.
   $$ The fact that $\bm{D}'\in {\rm Der}_{\lambda'}(\mud)$ implies, thanks to \Cref{lem_invlambdader}, that 
     $\dd_{\mu}\alpha=\rho(D')H-2\lambda'H $. This can be used to infer that
       \begin{equation}\label{eqn_glambda'der}
   \begin{aligned}
   \bar g_{\mathfrak  l }(\lambda {\rm Id}+\bm{D}, \lambda'{\rm Id}+ \bm{D}')=&\,  2\lambda'{\rm tr}({\rm Ric}_{\mu, H}^B)+ 2\lambda{\rm tr}(D')+ 2{\rm tr}(DD')-\frac16\bar g(\alpha, \dd_{\mu}^*H)\\
   =&\, 2\lambda'\left({\rm R_{\mu}}-\frac14 \lvert H\rvert^2\right)+ 2\lambda{\rm tr}(D')+ 2{\rm tr}(DD')+\frac12{\rm tr}(H^2D')+ \frac13\lambda'\lvert H\rvert^2\\
   =&\, \lambda'\left(2{\rm R}_{\mu}-\frac16 \lvert H\rvert^2\right)+ 2\lambda{\rm tr}(D')+ 2{\rm tr}(DD')+\frac12{\rm tr}(H^2D')\,.
   \end{aligned}
   \end{equation} Now, it is sufficient to use \eqref{eqn_glambda'der} in \eqref{eqn_mmlambda'der} to obtain the claim.
      Finally, recall that for metric nilpotent Lie algebras, any symmetric derivation $D$ satisfies $\tr(\ad_XD) = 0$, for all $X \in \f g$. Then, the soliton equation in $\mu$  implies
   \[
   		0 = \operatorname{tr}(\ad_XH^2) = -\frac{1}{3}\bar g (\rho(\ad_X)H, H) = -\frac 13 \bar g(\mathcal{L}_XH, H) = -\frac 13 \bar g (\dd_{\mu} \iota_XH, H) = -\frac 13 \bar g (\iota_XH, \dd_{\mu}^*H),
   \]
   as claimed.
\end{proof}

Motivated by the wide range of results concerning  classical nilsolitons proved throughout the years, see for instance \cite{soliton}, many questions about generalized nilsolitons can be raised.  First of all, as usual,  let $(E, \bG)$ be a left-invariant  metric ECA over a nilpotent Lie group $\mathsf G$ and assume that $\bG$ is  a generalized nilsoliton.  Using the preferred isotropic splitting induced by $\bG$, we can identify $(E,\bG)\simeq((\gggo)_H, \mc G(\bar g, 0 ))$, for some  closed $H\in \Lambda^3\ggo^*$.  So, in this setting, one can wonder if $\bG$ is unique, up to scaling  and up to the action of ${\rm Aut}([\cdot, \cdot])$, among left-invariant generalized metrics defining a preferred representative $H'\in [H]\in H^3(\ggo, \R)$.
\begin{remark}
Considering $[H]\in H^3(\mathsf G, \R )$, the uniqueness is not true in general.  Indeed,  on $\mathfrak n_3={\rm Lie}({\rm Heis}(3, \R))$, the following 
$$
   \mu(e_1, e_2)= e_3\,, \quad H=e^{123}\,, \quad \lambda=-2\,,\quad D={\rm diag}(1,1,2)\,
$$ defines a generalized nilsoliton, see \Cref{heisnils} for the proof,  with $[H]=0\in H^3({\rm Heis}(3, \R), \R)$. One can easily observe that the above generalized nilsoliton defines exactly the expanding soliton for the generalized Ricci flow described in \Cref{ex_solitons}.  On the other hand, $\mu(e_1, e_2)= e_3$ defines  also the classical nilsoliton, see \cite[Theorem 4.2]{finding}, i.e. $ H=0$.
\end{remark}

In order to address the uniqueness question, we need to understand better the geometric properties of the space of left-invariant generalized metrics.

   As we saw in \Cref{prop_L_trans_mcaG}, 
    fixed $E$ a left-invariant ECA over a Lie group $\mathsf G$, not necessarily nilpotent,  the space of left-invariant generalized metric $\mc M ^{\mathsf G}$ can be presented as a homogeneous space where $\mathsf L $ is acting transitively by conjugation. 
   
   Of course, \Cref{prop_L_trans_mcaG} gives also   that the action of $\mathsf{SO}(\langle\cdot,\cdot \rangle)$ is transitive on $\mathcal M^{\G}$. This allows us  to  present  $\mc M^{\mathsf G}$ as a homogeneous space in a different manner. 
 Clearly, the isotropy  of $\mathsf{SO}(\langle\cdot,\cdot \rangle)$ in $\bG\in \mc M^{\mathsf G}$ is 
    $$
    \bG_{\mathsf{SO}(\langle\cdot,\cdot \rangle)}=\left\{F\in \mathsf{SO}(\langle\cdot,\cdot \rangle) :  F\bG F^{-1}=\bG\right\}=:{\rm Isom}(\bG, \mathsf{SO})\,.$$
     On the other hand, we observe that
     $$
     \theta\colon \mathsf{SO}(\langle\cdot,\cdot \rangle)\to\mathsf{SO}(\langle\cdot,\cdot \rangle)\,, \quad \sigma(F)=\bG F\bG\,,
     $$ is an automorphism of $\mathsf{SO}(\langle\cdot,\cdot \rangle)$ which is  clearly involutive, 
and the set of fixed points of $\theta$  precisely coincides with  
     $
     {\rm Isom}(\bG, \mathsf{SO}).
     $ 
     Thus, we obtain that $(\mathsf{SO}(\langle\cdot,\cdot \rangle), {\rm Isom}(\bG, \mathsf{ SO}))$ is a Riemannian  symmetric pair, as defined in \cite[IV 3.4]{Helgason01}. As a consequence,  we obtain a reductive  decomposition of $\mathfrak {so}(\langle\cdot, \cdot\rangle)=\mathfrak h \oplus\mathfrak m $  such that $[\mathfrak m , \mathfrak m ]\subset \mathfrak h $,  where, using $(E, \bG)\simeq ((\gggo)_H, \mc G(\bar g, 0))$, 
    $$
    \mathfrak h:={\rm Lie}({\rm Isom}(\bG, \mathsf{SO}))=\f{so}(\gggo, \bG)=    \left\{\begin{pmatrix} A & \bar g^{-1}\alpha \bar g^{-1}\\
    \alpha& -A^{*}\end{pmatrix} : A\in \mathfrak {so}(n)\,,\quad \alpha\in \Lambda^2\mathfrak g^*\right\}
    $$
     and
     $$\mathfrak m:=\ker(\dd\theta + {\rm Id}) =\left\{\begin{pmatrix} A & -\bar g^{-1}\alpha \bar g^{-1}\\
    \alpha& -A^{*}\end{pmatrix}: A\in {\rm Sym}(n)\,,\quad  \alpha\in \Lambda^2\mathfrak g^*\right\}\,.
         $$As usual,  we will be  identifying $\mathfrak m $ with $T_{\bG}\mathcal M ^{\mathsf G}$. Now, using again the  identification $(E, \bG)\simeq((\gggo)_H, \mc G(\bar g , 0))$,  we can define   a non degenerate bilinear form $\bar g$ on $\mathfrak{ so}(\ip) $  such that, for all $(A, \alpha), (A', \alpha')\in \mathfrak m $ and $(B, \beta), (B', \beta')\in \mathfrak h $, 
         $$
         \bar g((A, \alpha), (A', \alpha'))=2{\rm tr}(AA')+ 2\bar g(\alpha, \alpha')\,, \quad \bar g((B, \beta), (B', \beta'))=-\frac34({\rm tr}(BB')+ \bar g(\beta, \beta'))\,.
         $$ It is not hard to see that $\bar g $ is an  Ad$({\rm Isom}(\bG, \mathsf{SO}))$-invariant  bilinear form which,  when restricted to $\mathfrak l$,  coincides with the inner product $\bar g_{\mathfrak l}$ defined in \eqref{iponmfl}.  Moreover, using \cite[Theorem 3.5]{KobNom96ii}, the $\mathsf{SO}(\langle\cdot, \cdot \rangle)$-invariant Riemannian metric induced by it  on $\mathcal M^{\mathsf G}$ will be naturally reductive. This readily  guarantees  that all the geodesics emanating from $\bG$ are of the form $\exp(tX)\cdot\bG $, for some $X\in \mathfrak  m$, see \cite[Theorem 3.3]{KobNom96ii}. Finally,  \cite[Theorem 3.5]{KobNom96ii} guarantees that $(\mathcal M^{\G}, \bar g)$ has all non-positive  sectional curvatures.
        
Keeping in mind this and  building from \cite{Heb}, we can define the generalized scalar functional
$$
      \mathcal S\colon \mathcal M^{\G}\to \mathbb R\,, 
      $$ which associates to   a given  left-invariant generalized metric its  generalized scalar curvature as in \eqref{genscal}. We will study some analytic properties of this functional. 
      \begin{prop}\label{genscalfunc} Let  $E$ be  a left-invariant ECA over a nilpotent Lie group  $\mathsf G $. Then, we have:\begin{enumerate}
      \item  the gradient of $\mc S$ at any $\bG \in \mc M^{\mathsf G}$ is given by 
      $$
      \nabla_{\bG}\mc S=\tilde{\mm }(\bG):=\begin{pmatrix}{\rm Ric}^B_{\bar g, H} & \frac16\bar g^{-1}\dd_{\bar g} ^*H\bar g^{-1}\\
        -\frac16\dd_{\bar g}^{*}H & -({\rm Ric}_{\bar g, H}^{B})^*
        \end{pmatrix}
      $$ after using $(E, \bG)\simeq_{\sigma}((\gggo)_H, \mc G(\bar g, 0 ))$;
      \item $\mc S$ is concave along  geodesics, i.e. if $\gamma $ is a geodesic emanating from $\bG \in \mc M^{\mathsf G}$, then $(\mc S\circ \gamma)^{''}(0 )\le 0$;
      \item $ (\mc S\circ \gamma)^{''}(0 )= 0$ if and only if $\gamma(t)=\exp(tX)\cdot\bG$ where $X\in {\rm Der}_0([\cdot, \cdot])\cap \mathfrak m $. In this case, $\mc S\circ \gamma$ is constant. 
      \end{enumerate}
            \end{prop}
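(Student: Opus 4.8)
The plan is to reduce all three statements to the ``moving bracket'' picture, in which the functional becomes a constant multiple of the squared norm of a bracket transported along the $\mathsf{SO}(\ip)$-action, and then to differentiate once and twice. The first step is to establish, for \emph{every} $h \in \mathsf{SO}(\ip)$, the identity $\mc S(h\cdot \bG) = -\tfrac1{12}\lvert h^{-1}\cdot \mud\rvert^2_{\bar g}$, where $\mud$ is the fixed nilpotent Dorfman bracket. Indeed, by transitivity of $\Ll$ (\Cref{prop_L_trans_mcaG}) I can write $h\cdot \bG = \ell^{-1}\cdot \bG$ for some $\ell\in \Ll$; \Cref{prop:Liso} identifies $((\gggo)_\mud, \ell^{-1}\cdot\bG)$ isometrically with $((\gggo)_{\ell\cdot\mud}, \bG)$, whose generalized scalar curvature is $-\tfrac1{12}\lvert \ell\cdot\mud\rvert^2$ by \eqref{eqn_gscmu} since $\ell\cdot\mud\in \mc D$ is still nilpotent; and finally $\ell h \in {\rm Isom}(\bG,\mathsf{SO})$ preserves the $\bar g$-norm on brackets, so $\lvert \ell\cdot\mud\rvert = \lvert h^{-1}\cdot\mud\rvert$. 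Specializing to $h=\exp(tX)$ with $X\in \mathfrak m$ and setting $p(t):= \exp(-t\,\Theta(X))\mud$ (so that $p'=-\Theta(X)p$), this gives $(\mc S\circ\gamma)(t) = -\tfrac1{12}\lvert p(t)\rvert^2$.

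For part (1) I differentiate once: $(\mc S\circ\gamma)'(0) = \tfrac16\,\bar g(\Theta(X)\mud,\mud)$, so $\nabla_{\bG}\mc S$ is the $\bar g$-dual in $\mathfrak m$ of the functional $X\mapsto \tfrac16\bar g(\Theta(X)\mud,\mud)$. To match it with $\tilde{\mm}(\bG)$ I split $X = L_X + N_X$, where $L_X = \begin{pmatrix} A&0\\ \alpha & -A^*\end{pmatrix}\in \lgo$ (with $A$ symmetric) and $N_X = \begin{pmatrix} 0 & -\bar g^{-1}\alpha\bar g^{-1}\\ 0&0\end{pmatrix}$ is a bivector transform. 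For the $L_X$-part, the computation in the proof of \Cref{prop_Rcmm} together with ${\rm Ric}_\mu=\mm_\mu$ in the nilpotent case (\Cref{cor_mmnil}) and $\bar g(\rho(A)H,H)=-3\tr(\bar g^{-1}H^2 A)$ yields $\tfrac16\bar g(\Theta(L_X)\mud,\mud) = 2\tr(\Ricb_{\bar g,H}A) - \tfrac16\bar g(\alpha,\dd^*_\mu H)$. Comparing with $\bar g(\tilde{\mm}(\bG),X) = 2\tr(\Ricb_{\bar g,H}A) - \tfrac13\bar g(\dd^*_\mu H,\alpha)$ using the $\mathfrak m$-metric, the claim reduces to the bivector identity $\bar g(\Theta(N_X)\mud,\mud) = -\bar g(\alpha,\dd^*_\mu H)$. \textbf{This is the main obstacle:} one must compute the action of the bivector $N_X$ on the Dorfman bracket $\mud$ and pair the result with $\mud$, checking that the only surviving contribution is the divergence term $\dd^*_\mu H$ (this is the algebraic shadow of why $A_\mud$ in \eqref{eqn_A_mud} carries exactly the $\dd^*_\mu H$ off-diagonal part).

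For parts (2) and (3) I differentiate twice, obtaining $(\mc S\circ\gamma)''(0) = -\tfrac16\big(\bar g(\Theta(X)^2\mud,\mud) + \lvert \Theta(X)\mud\rvert^2\big)$. The key observation is that, since $X\in \mathfrak m$ satisfies $\bG X = -X\bG$ and $X\in \mathfrak{so}(\ip)$, it is $\bar g$-self-adjoint; because the $\Or(\gggo,\bar g)$-action on tensors is orthogonal, $\Theta$ sends $\bar g$-self-adjoint endomorphisms to $\bar g$-self-adjoint operators, so $\Theta(X)$ is $\bar g$-self-adjoint on $\Lambda^2(\gggo)^*\otimes(\gggo)$. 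Hence $\bar g(\Theta(X)^2\mud,\mud) = \lvert \Theta(X)\mud\rvert^2$ and
\[
(\mc S\circ\gamma)''(0) = -\tfrac13\,\lvert \Theta(X)\mud\rvert^2 \le 0,
\]
which is part (2). Equality holds precisely when $\Theta(X)\mud = 0$, i.e.\ when $X\in {\rm Der}(\mud)\cap\mathfrak m = {\rm Der}_0([\cdot,\cdot])\cap\mathfrak m$ (the condition $X\in\mathfrak{so}(\ip)$ being automatic from $X\in\mathfrak m$); in that case $p(t)\equiv \mud$, so $\mc S\circ\gamma$ is constant, giving part (3). Apart from the bivector computation flagged above, the remaining work is routine bookkeeping of the factors coming from the inner product $\bar g_\lgo$ and from $\lvert \mud\rvert^2 = 3\lvert\mu\rvert^2 + \lvert H\rvert^2$.
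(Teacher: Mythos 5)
Your reduction is exactly the one the paper uses: identify $\mc S(\exp(tX)\cdot\bG)$ with $-\tfrac1{12}\lvert\exp(-t\Theta(X))\mud\rvert^2$ via \Cref{prop:Liso} and \eqref{eqn_gscmu}, differentiate once for the gradient and twice for concavity, and use self-adjointness of $\Theta(X)$ for $X\in\mathfrak m$ to get $(\mc S\circ\gamma)''(0)=-\tfrac13\lvert\Theta(X)\mud\rvert^2$. Parts (2) and (3) are complete and correct as you state them (your explicit justification that $\Theta(X)$ is $\bar g$-self-adjoint is in fact more careful than the paper, which uses this silently), and your bookkeeping for the $\lgo$-part of $X$ in part (1) agrees with \eqref{eqn_mommap} together with \Cref{cor_mmnil}.

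However, the step you flag as ``the main obstacle'' is a genuine gap as written: part (1) is not proved until the bivector identity $\bar g(\Theta(N_X)\mud,\mud)=-\bar g(\alpha,\dd^*_\mu H)$ is actually established, and this identity is precisely the content of the proposition --- it is the reason the off-diagonal entry of $\nabla_{\bG}\mc S$ is $\tfrac16\,\dd^*_{\bar g}H$ rather than the $\tfrac12\,\dd^*_{\bar g}H$ appearing in $\mc Rc_\mud$, so it cannot be waved through. The identity is true, and the paper proves it in \eqref{eqn_premm} by a direct structure-constant computation: writing $\mud(e_i,e_j)=\mud_{ij}^k e_k+\mud_{ijk}e^k$ and $\mud(e_i,e^j)=-\mud_{ik}^je^k$, one finds that $\Theta(N_X)\mud$ contributes $-\mud_{ijk}\mud_{ij}^s\alpha_{ks}$ from the $(e_i,e_j)$-slots and $-2\mud_{is}^j\mud_{iks}\alpha_{jk}$ from the $(e_i,e^j)$-slots of the inner product \eqref{eqn_gDorfman}, which sum to $3\mud_{ijk}\mud_{ij}^s\alpha_{sk}=-\bar g(\dd_\mu\alpha,H)=-\bar g(\alpha,\dd^*_\mu H)$. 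To complete your proof you must supply this computation (or an equivalent invariant argument); everything else in your proposal then goes through and coincides with the paper's argument.
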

\begin{proof} We fix $\bG \in \mc M^{\mathsf G}$ and,  as usual, we identify $(E,\bG)\simeq_{\sigma }((\gggo)_H, \mc G(\bar g, 0))$.  We will make use of the moving Dorfman brackets approach. First of all, we observe that, if $\alpha \in \Lambda^2\mathfrak g^*$ and $\mud\in \mathcal D$, we have that 
         $$
         2\left\langle\bG\Theta\begin{pmatrix}0 & -\bar g^{-1}\alpha \bar g^{-1}\\
         0 & 0 \end{pmatrix}\mud(e_i, e_j), \mud(e_i, e_j)\right\rangle=-\mud_{ijk}\mud_{ij}^s\alpha_{ks}\,,
         $$ while 
         $$
         4\left\langle\bG\Theta\begin{pmatrix}0 & -\bar g^{-1}\alpha \bar g^{-1}\\
         0 & 0 \end{pmatrix}\mud(e_i, e^j), \mud(e_i, e^j)\right\rangle=-2\mud_{is}^j\mud_{iks}\alpha_{jk}
         $$ which gives us that 
         \begin{equation}\label{eqn_premm}
        \bar g\left(\Theta\begin{pmatrix}0 & -\bar g^{-1}\alpha \bar g^{-1}\\
         0 & 0 \end{pmatrix}\mud, \mud\right)=3\mud_{ijk}\mud_{ij}^s\alpha_{sk}=-\bar g(\dd_{\mu}\alpha, H)\,.
         \end{equation} Then, combining  \eqref{eqn_premm} with  \eqref{eqn_mommap}, we obtain that, for $X=(A, \alpha)\in \mathfrak m $, 
         $$
         \begin{aligned}
         \bar g(\Theta(X)\mud, \mud)=&\, \bar g\left(\Theta\begin{pmatrix}A & 0\\
         \alpha & -A^* \end{pmatrix}\mud, \mud\right)+\bar g\left(\Theta\begin{pmatrix}0 & -\bar g^{-1}\alpha \bar g^{-1}\\
         0 & 0 \end{pmatrix}\mud, \mud\right)
         = 12{\rm tr}({\rm Ric}_{\mu, H}^BA)-2\bar g(\dd_{\mu}\alpha, H)\,.
         \end{aligned}
         $$ On the other hand, considering $$
        \tilde{\mm }_{\mud}=\begin{pmatrix}{\rm Ric}^B_{\mu, H} & \frac16\bar g^{-1}\dd^*_{\mu}H\bar g^{-1}\\
        -\frac16\dd^{*}_{\mu}H & -({\rm Ric}_{\mu, H}^{B})^*
        \end{pmatrix}
         $$
          we then obtain  that 
         $$
         \bar g(\tilde{\mm }_{\mud}, X)=2{\rm tr}({\rm Ric}_{\mu, H}^BA)-\frac13\bar g(\alpha,\dd_{\mu}^{*}H)=\frac16\bar g(\Theta(X)\mud, \mud)=\frac{1}{12}\frac{\dd}{\dd t}\Big|_{t=0}\lvert\exp(tX)\cdot\mud \rvert^2\,.
         $$
         
   We now compute the gradient of $\mathcal S$ at $\bG$. If $\mud\in \mathcal D$ is the given Dorfman bracket on $E$,  we know that, for any $t$, the generalized metric $\exp(tX)\cdot \bG$ is isometric   to $\mathcal G_{\exp(-tX)\cdot \mud}$. So,   given $X\in \mathfrak{m}$, we have that 
      $$
      \begin{aligned}
      \dd_{\bG}\mathcal S(X) =&\, \frac{\dd}{\dd t}\Big|_{t=0}\mathcal S (\exp(tX)\cdot \bG)=-\frac{1}{12}\frac{\dd }{\dd t}\Big|_{t=0}\lvert\exp(-tX)\cdot\mud \rvert^2=\frac16\bar g(\Theta(X)\mud, \mud)
      = \bar g(\tilde{\mm }_{\mud}, X)=\bar g(\tilde {\mm }(\bG), X)\,,
      \end{aligned}
           $$   giving us the first claim. 
          
           Let now  $\gamma(t)=\exp(tX)\cdot\bG$, for some  $X\in \mathfrak m $,  be a geodesic emanating from $\bG$,   we have          
            \begin{equation}\label{concavity}
          (\mathcal S\circ \gamma)^{''}(0)= -\frac{1}{12}\frac{\dd^2}{\dd t^2}\Big|_{t=0}\lvert\exp(-tX)\cdot\mud \rvert^2
          = -\frac16(\bar g(\Theta(X)\Theta(X)\mud, \mud)+ \lvert \Theta (X)\mud\rvert^2)=-\frac13\lvert\Theta(X)\mud\rvert^2\le 0 \,,
          \end{equation}giving us the second claim. 
          
          Finally,  clearly, if $X\in{\rm Der}_0(\mud)\cap \mathfrak m $, $\exp(tX)\in {\rm Aut}(\mud)$ which guarantees that $\mathcal S\circ \gamma$ is constant for all $t$, the viceversa holds too thanks to \eqref{concavity}.
\end{proof}
\begin{remark}\label{rmrkmmso}
The element $\tilde \mm_{\mud}\in \mathfrak m $ can also be viewed as the moment map of the $\mathsf{ SO}(\ip)$-action on the space of Dorfman brackets $\mc D$.  Indeed, it precisely satisfies the relation:

\[
		\bar g(\tilde{\mm}_\mud, X) =  \frac 16  \, \bar g(\Theta(X)\mud, \mud)\, , \qquad X\in \mathfrak m\, .
\] Unfortunately, $\tilde{ \mm} _{\mud}$ coincides with the generalized Ricci curvature only on the subspace of Dorfman bracket with harmonic torsion.
\end{remark}
\Cref{genscalfunc} can be considered as the analogue, in the generalized setting,  of \cite[Lemma 3.3]{Heb} specialized in the nilpotent case.  So, following the steps in  \cite[Theorem 3.5]{soliton},   we can give a partial answer to the uniqueness question, proving that generalized nilsolitons with harmonic torsion are unique.
\begin{thm}\label{thm_unique}
 Let $E$ be a left-invariant ECA over a nilpotent Lie group $\mathsf G$ and  let $\bG$ be a generalized nilsoliton with harmonic torsion. Then, up to scaling and up to the action of ${\rm Aut}(E)$, $\bG$ is unique.
\end{thm}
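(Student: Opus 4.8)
The plan is to transplant Lauret's uniqueness argument for classical nilsolitons \cite[Theorem 3.5]{soliton} (following Heber \cite{Heb}) into the variational framework just set up in \Cref{genscalfunc}. The decisive consequence of the harmonic-torsion hypothesis is that the generalized Ricci curvature becomes the gradient of the generalized scalar functional: by \Cref{rmrkmmso} one has $\tilde{\mm}_\mud = \mc Rc_\mud$ on the locus of Dorfman brackets with harmonic torsion, and by \Cref{genscalfunc} this equals $\nabla_{\bG}\mc S$. Thus a generalized nilsoliton with harmonic torsion will be a critical point of $\mc S$ in a suitable sense, and uniqueness should follow by combining the concavity of $\mc S$ with the nonpositive curvature of $\mc M^{\mathsf G}$.

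First I would pass to the moving-bracket model. Using \Cref{prop_equisol} and \Cref{lem_algrealsol}, two competing generalized nilsolitons on $E$ are represented by algebraic solitons $\mud_1,\mud_2 \in \mc D$, both with harmonic torsion, realized as two points $\bG_1,\bG_2$ of the symmetric space $\mc M^{\mathsf G}$. After rescaling -- harmless since by the remark following \Cref{prop_static_soliton} a non-steady soliton forces $[H]=0$, so that scaling stays internal to $\mc M^{\mathsf G}$ -- I normalize $|\mud_1| = |\mud_2| = 1$, whence $\mc S_{\mud_1} = \mc S_{\mud_2} = -\tfrac{1}{12}$ by \eqref{eqn_gscmu}.

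The heart of the matter is the critical-point characterization. Writing the soliton equation of \Cref{prop_equisol} as $\mc Rc_{\mud_i} - A_{\mud_i} = \lambda_i \, \mathrm{Id} + \bm D_i$ with $\bm D_i \in \mathrm{Der}_{\lambda_i}(\mud_i) \subset \mathrm{Der}(\mud_i)$, and using $A_{\mud_i}=0$ (harmonic torsion) together with $\Theta(\mathrm{Id})\mud = -\mud$ and $\Theta(\bm D_i)\mud_i = 0$, I obtain $\Theta(\tilde{\mm}_{\mud_i})\mud_i = -\lambda_i \, \mud_i$. Hence each $\mud_i$ is a critical point of $\mud \mapsto |\tilde{\mm}_\mud|^2$ on the unit sphere; in the language of \Cref{genscalfunc}, $\nabla \mc S(\bG_i)$ is a combination of the scaling direction and a vector tangent to the $\mathrm{Aut}(E)$-orbit, the latter being exactly the degenerate directions of the Hessian of $\mc S$ by \Cref{genscalfunc}. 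In other words, $\bG_i$ is a critical point of $\mc S$ modulo scaling and automorphisms.

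To conclude I would run the concavity argument. Join $\bG_1$ to $\bG_2$ by the geodesic $\gamma(t) = \exp(tX)\cdot \bG_1$, $X \in \mathfrak m$, of the nonpositively curved $\mc M^{\mathsf G}$. Since both endpoints are normalized critical points and $\mc S\circ\gamma$ is concave (\Cref{genscalfunc}), the standard Kempf--Ness/Heber convexity argument forces $\mc S\circ\gamma$ to be constant; then \Cref{genscalfunc} gives $(\mc S\circ\gamma)'' \equiv 0$, so $X \in \mathrm{Der}_0([\cdot,\cdot]) \cap \mathfrak m$ and $\exp(X) \in \mathrm{Aut}(E)$. Therefore $\bG_2 = \exp(X)\cdot\bG_1$ lies in the $\mathrm{Aut}(E)$-orbit of $\bG_1$, which together with the scaling normalization proves uniqueness; alternatively, the last two steps can be replaced by a direct appeal to the Kirwan--Ness uniqueness theorem of real GIT \cite{GIT20} for the reductive group $\mathsf{SO}(\ip)$. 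The step I expect to be the main obstacle is making the critical-point characterization and its normalization fully rigorous -- carefully accounting for the scaling direction inside $\mc M^{\mathsf G}$ (legitimate precisely because $[H]=0$ in the non-steady case) and verifying that the soliton condition is equivalent to $\bG_i$ being critical on the quotient by $\mathrm{Aut}(E)\times \R_{>0}$ -- and this is exactly where harmonic torsion is indispensable, since otherwise $\tilde{\mm}_\mud \neq \mc Rc_\mud$ and the soliton is no longer a critical point of the scalar functional.
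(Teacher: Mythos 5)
Your proposal is correct and follows essentially the same route as the paper: identify $\nabla\mc S$ with $\tilde{\mm}=\mc Rc$ via harmonic torsion, observe that at a nilsoliton this gradient is tangent to the orbit of scalings and automorphisms, and then run the Heber/Kempf--Ness concavity argument along a geodesic of the nonpositively curved space $\mc M^{\mathsf G}$, concluding rigidity from \Cref{genscalfunc}(3). The only step you leave implicit --- choosing the geodesic that realizes the distance between the two orbits of the group generated by ${\rm Aut}(c\cdot E,E)$ and the scalings, so that orthogonality at the endpoints forces $(\mc S\circ\gamma)'(0)=(\mc S\circ\gamma)'(1)=0$ --- is exactly what the paper makes explicit by constructing the closed subgroup $\mathsf K=\pm\prod_{c}c^{-1/2}{\rm Aut}(c\cdot E,E)\leq\mathsf{SO}(\ip)$ and computing the tangent space of its orbit.
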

\begin{proof}
Let us now consider the following closed Lie subgroup of $\mathsf{ SO}(\ip)$:
            $$
            \mathsf K:=\pm\prod_{c\in \R^+} c^{-\frac12}{\rm Aut}(c\cdot E, E)\,.
            $$
 First of all, we  need some clarifications  about $  \mathsf K$. The action of an element in $\mathsf K $ on $\bG\in \mc M^{\mathsf G}$ is defined as follows: an element $ c^{-\frac12}F\in   \mathsf K$ acts on $\bG $ as the following composition:
            \begin{equation}\label{actionK}
             c^{-\frac12}F\colon (E, \ip,  c^{-\frac12}\mud,\bG)\to (c\cdot E, \overline{\mc G})\to (E, F\cdot \bG)
            \end{equation}
            where the first arrow is defined, using the isometry induced by the preferred isotropic splitting of $\bG$, by the transformation
            $$
            \overline{ c^{-\frac12}{\rm Id}}\colon((\gggo)_{ c^{-\frac12}\mud}, \mc G(\bar g, 0 ))\to ((\gggo)_{cH}, \mc G(c\bar g, 0))
            $$ which is an isometry of ECA's thanks to \Cref{prop:scaledmu}. Furthermore,  we can use the explicit isometry in  \Cref{rmk_scal} to identify           
             $$
             ((\gggo)_{cH}, \mc G(c\bar g, 0))\simeq(c\cdot E,\bG)\,.
            $$In conclusion, the first arrow in \eqref{actionK} is the composition of the above isometries and it is then induced by the transformation
            $
            c^{-\frac12}{\rm Id}.$
             Finally, the last arrow in \eqref{actionK} is just the usual action of the  element $F\in{\rm  Aut}(c\cdot E, E)$. 
              Moreover,   by \Cref{lem_invlambdader}, any  element $c^{-\frac12}F$ can be represented with
                           $$
             c^{-\frac12}{\rm Id}\circ \bar A_ce^{b}=\begin{pmatrix}c^{-\frac12}A& 0 \\ ((c^{-\frac12}A)^{-1})^* b & ((c^{-\frac12}A)^{-1})^*             \end{pmatrix}=\overline{c^{-\frac12}A}e^b\in \mathsf {SO}(\ip)\,,$$
              where $ b\in \Lambda^2\ggo^*\,,$ and $ A\in {\rm Aut}(\mu)$  such that $A^{-1}\cdot H=c(H-\dd b)$. Finally, $-c^{\frac12}F$ is given by the composition of $$
              \overline{-{\rm Id}}\colon ((\gggo)_{ -c^{-\frac12}\mud}, \mc G(\bar g, 0 ))\to ((\gggo)_{ c^{-\frac12}\mud}, \mc G(\bar g, 0 ))
              $$ with the previously defined action.
             Now,  we can prove that  $  \mathsf K$ is actually a  Lie subgroup of $\mathsf{SO}(\ip).$ Clearly, it is sufficient to prove that, if  $c^{-\frac12}F\in c^{-\frac{1}{2}}{\rm Aut}(c\cdot E, E)$  and $\gamma^{-\frac12}G\in\gamma^{-\frac{1}{2}}{\rm Aut}(\gamma\cdot E, E)$, then 
             $c^{-\frac12}F(\gamma ^{-\frac12}G)^{-1}\in \mathsf K $. 
             It is not hard to see that 
             $$
         c^{-\frac12}\overline{ A_c}e^b(\gamma^{-\frac12}\overline{B_{\gamma}}e^{b'})^{-1}=\left(\frac{c}{\gamma}\right)^{-\frac12}(\overline{AB^{-1}})_{\frac{c}{\gamma}}e^{\beta}\,, \quad \beta:=\gamma B\cdot(b-b')\,.
             $$ Then,  we observe that $B\cdot H=\frac{1}{\gamma}H+ B\cdot \dd b' $.  This implies that  
             $$
             BA^{-1}\cdot H=c(B\cdot H-B\cdot \dd b )=\frac{c}{\gamma}(H-\dd\beta)\,,
             $$giving us the claim.
             Moreover,   $\mathsf K $ is closed in $\mathsf{SO}(\ip)$.  Thanks to the structure of the subgroup $\mathsf  K$, the claim is equivalent to prove that if $$
          c_i^{-\frac12}{\rm Aut}(c_i\cdot E, E)\ni  c_i^{-\frac12}F_i\to F_{\infty}\in \mathsf {SO}(\ip)\,, \quad i \to \infty\,, 
             $$ then $F_{\infty }\in \mathsf K$. Following \cite[Remark 3.7]{Heb},   let us fix a generalized metric  $\bG\in \mc M^{\mathsf G}$, then 
             $$
             c_i^{-\frac12}F_i\cdot \bG\to \tilde{\mc G}\in \mc M^{\mathsf G}.
                         $$
                         But, as a consequence of \eqref{genscal},  the effect of scaling on the scalar curvature and on $\lvert H\rvert^2$, recalling that $F_i$ preserves the Dorfman bracket, for all $i$,  we have that,
             $$
             \mc S( c_i^{-\frac12}F_i\cdot \bar{\mc G})=c_i\mc S(F_i\cdot\bar{\mc G})=c_i\mc S(\bar{\mc G})\to \mc S(\tilde{\mc G })\,, \quad i\to \infty\,.
             $$
              If  $S(\tilde{\mc G })\ne 0 $, then $c_i\to c$, as $i\to \infty$. This,  in particular,  implies that $ F_i \to \tilde{F}\in \mathsf {SO}(\ip)$ and so $F_{\infty}=c^{-\frac12}\tilde{F}$. We just need to prove that $\tilde F\in {\rm Aut}(c \cdot E, E)$. This is guaranteed by the fact that  
              $
              F_i= \overline{ (A_i)}_{c_i}e^{b_i}\in {\rm Aut}(c_i\cdot E, E ) 
               $ if and only if $$  A_i^{-1}\cdot H=c_i(H-\dd b_i)\,\, \mbox{and }  A_i\in {\rm Aut}(\mu)
              $$ and passing to the limit in the last equality gives us the claim.  Finally,  if  $\mc S(\tilde{\mc G})=0$, then, automatically, the Dorfman bracket on $E$ is abelian, i.e. $H=0$ and $\mu $ is abelian.  In this case, 
              $$
              {\rm Aut}(c\cdot E, E)=\{\bar A_ce^{b}:  A \in \mathsf{ GL}(\ggo)\,, \quad  b\in \Lambda^{2}\ggo^*\}\,.
              $$ Then,  $\mathsf K={\rm Aut}({E})$, which is clearly closed. 
            
            Easily, we see that 
          $$
          T_{\bG}(   \mathsf K\cdot\bG)=\bigoplus_{\lambda\in \R}(\lambda{\rm Id}+({\rm Der}_{\lambda}(\mud)\cap \mathfrak m))\,.
                    $$  Moreover, we  note that, using again \Cref{lem_invlambdader} 
                    $$
              \lambda{\rm Id}+({\rm Der}_{\lambda}(\mud)\cap \mathfrak m)=\left\{\begin{pmatrix} \lambda{\rm Id}+ D& 0 \\ 0 & -\lambda{\rm Id}- D^* \end{pmatrix}: D\in {\rm Der}(\mu)\cap {\rm Sym}(n),\quad \rho(D)H-2\lambda H=0\right\}\,.
                    $$
Now, suppose that $\bG$ and $\bG'$ are two  generalized nilsolitons with harmonic torsion which do not belong to the same $  \mathsf K$-orbit.
           Since $\mathcal O=  \mathsf  K\cdot\bG$ and $\mathcal O'=  \mathsf K\cdot\bG'$  consist in generalized nilsolitons,  we can assume that $d(\bG, \bG')=d(\mathcal O, \mathcal O')$ and that the unique geodesic $\gamma \colon[0,1]\to \mathcal M^{\G}$ such that $\gamma(0)=\bG$ and $\gamma(1)=\bG'$ meets $\mathcal O$ and $\mathcal O'$ orthogonally.
            Thanks to \Cref{rmrkmmso}, we know   $\tilde{\mm}(\bG)=\mathcal Rc(\bG)= \lambda {\rm Id}+ {\bm D}$\,, ${\bm D}\in {\rm Der}_{\lambda}(\mud)\cap \mathfrak m $ and using \Cref{genscalfunc} (1),  we have that $0=g(\gamma'(0), \nabla_{\bG}\mathcal S)=g(\gamma'(0), \mathcal Rc(\bG))$ and $0=g(\gamma'(1), \nabla_{\bG'}\mathcal S)=g(\gamma'(1), \mathcal Rc(\bG'))$ which gives us that 
           $$
           (\mathcal S\circ \gamma)'(0)=(\mathcal S\circ \gamma)'(1)=0\,.
           $$ On the other hand, using \Cref{genscalfunc} (2),  $\mathcal S$ is concave along the geodesics, so 
       $\mathcal S\circ \gamma$ is constant, which, thanks to \Cref{genscalfunc} (3),  happens if and only if $\gamma=\exp(tX)\cdot \bG$ where $X\in{\rm Der}_0(\mud)\cap \mathfrak m$, giving us  a contradiction.
\end{proof}
A first consequence of \Cref{thm_unique} is the uniqueness among generalized nilsolitons within $0\in H^3(\ggo, \R)$  of the classical nilsoliton, if it exists. 

Unfortunately, the proof of \Cref{thm_unique} is exclusive of the  harmonic torsion case. In view of this and of \Cref{thm_unique} itself, we can  weaken the uniqueness question as follows.
\begin{quest}\label{questharm}
Let $(E, \bG)$ a left-invariant metric  ECA over a nilpotent Lie group $\G$ and assume $\bG$ is  a generalized nilsoliton. Using $(E, \bar{ \mc G})\simeq ((\gggo )_H, \mc G(\bar g,0))$, is it $H$ $\bar g$-harmonic?
\end{quest}
 With last objective of addressing \Cref{questharm}, we can find a equivalent condition for it to be true. 
 \begin{prop} Let $\mud \in \mc D$ be an algebraic soliton. Then, $\dd_{\mu}^*H=0$ if and only if    $\tr((\dd_\mu^* H)^2 \operatorname{Ric}^B_{\mu, H}) \le  0$.
   \end{prop}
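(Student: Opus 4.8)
The forward implication is immediate: if $\dd_\mu^* H = 0$ then $\tr\big((\dd_\mu^* H)^2\,\operatorname{Ric}^B_{\mu,H}\big)=0\le 0$. So all the content is in the converse, and my plan is to prove the sharper statement that the quantity in question is in fact a nonnegative multiple of a square, namely
\[
\tr\big((\dd_\mu^* H)^2\,\operatorname{Ric}^B_{\mu,H}\big)=\tfrac12\,\lvert \dd_\mu\dd_\mu^* H\rvert^2 \;\ge\;0,
\]
where $(\dd_\mu^* H)^2$ is read as the positive–semidefinite symmetric square (as an endomorphism it is $T^{t}T$ with $T:=\bar g^{-1}\dd_\mu^* H$ skew), exactly as for $H^2$. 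Granting this, the hypothesis $\tr\big((\dd_\mu^* H)^2\,\operatorname{Ric}^B_{\mu,H}\big)\le 0$ forces $\dd_\mu\dd_\mu^* H=0$, whence $\lvert \dd_\mu^* H\rvert^2=\bar g(\dd_\mu\dd_\mu^* H,H)=0$ and therefore $\dd_\mu^* H=0$.

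To obtain the boxed identity I would first unpack the soliton data. By \Cref{prop_equisol} we may write $\operatorname{Ric}^B_{\mu,H}=\lambda\,{\rm Id}+D$ with $D\in{\rm Der}(\mu)$; since $\operatorname{Ric}^B_{\mu,H}$ and $\lambda\,{\rm Id}$ are $\bar g$–symmetric, so is $D$. Setting $S:=\dd_\mu^* H$, the key intermediate relation is
\[
\Delta_\mu S=\rho(\operatorname{Ric}^B_{\mu,H})\,S. \qquad (\ast)
\]
This I would derive by applying $\dd_\mu^*$ to the second soliton equation $\Delta_\mu H=\lambda H+\rho(\operatorname{Ric}^B_{\mu,H})H$ and commuting operators: $\dd_\mu^*$ commutes with $\Delta_\mu$; since $D$ is a symmetric derivation, \eqref{eqn_dRhocom} applied to $A=D$ (so that $\theta(D)\mu=0$) gives $[\dd_\mu^*,\rho(D)]=0$; and $\rho({\rm Id})$ acts as $-k\,{\rm Id}$ on $k$–forms. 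Splitting $\rho(\operatorname{Ric}^B_{\mu,H})=-3\lambda\,{\rm Id}+\rho(D)$ on $3$–forms and $=-2\lambda\,{\rm Id}+\rho(D)$ on $2$–forms, the explicit $\lambda$–contributions cancel and $(\ast)$ falls out.

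Finally I would pair $(\ast)$ with $S$ in $\bar g$. On the left, using the paper's convention $\Delta_\mu=-(\dd_\mu\dd_\mu^*+\dd_\mu^*\dd_\mu)$ together with $\dd_\mu^* S=\dd_\mu^*\dd_\mu^* H=0$, one finds $\bar g(\Delta_\mu S,S)=-\lvert \dd_\mu S\rvert^2=-\lvert \dd_\mu\dd_\mu^* H\rvert^2$. On the right, the $2$–form analogue of the trace computation in the proof of \Cref{prop_Rcmm}, namely $\bar g(\rho(B)S,S)=-2\,\tr\big((\dd_\mu^* H)^2 B\big)$ for every $B\in\f{gl}(\f g)$, yields $\bar g(\rho(\operatorname{Ric}^B_{\mu,H})S,S)=-2\,\tr\big((\dd_\mu^* H)^2\,\operatorname{Ric}^B_{\mu,H}\big)$. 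Equating the two sides gives the boxed identity, and the argument is complete. The only delicate points are the sign bookkeeping (the negative Hodge Laplacian and the eigenvalues of $\rho({\rm Id})$) and verifying the $2$–form trace identity; I expect the cancellation producing $(\ast)$ to be the crux, as it is what turns the soliton equation into a manifestly signed expression. Note that nilpotency of $\mu$ is used nowhere, so the statement holds for arbitrary algebraic solitons.
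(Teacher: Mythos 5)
Your proof is correct and takes essentially the same route as the paper: apply $\dd_\mu^*$ to the second soliton equation, use the commutation relation \eqref{eqn_dRhocom} together with the first soliton equation to obtain $\rho(\operatorname{Ric}^B_{\mu,H})\dd_\mu^*H + \dd_\mu^*\dd_\mu\dd_\mu^*H = 0$, and pair with $\dd_\mu^*H$ to arrive at $\tr\bigl((\dd_\mu^*H)^2\operatorname{Ric}^B_{\mu,H}\bigr) = \tfrac12\lvert\dd_\mu\dd_\mu^*H\rvert^2$. Your intermediate step of splitting $\operatorname{Ric}^B_{\mu,H} = \lambda\,\mathrm{Id} + D$ and using $[\dd_\mu^*,\rho(D)]=0$ is only a repackaging of the paper's direct use of $\theta(\operatorname{Ric}^B_{\mu,H})\mu = -\lambda\mu$ in that same commutation relation, so there is no substantive difference.
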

  \begin{proof} 
 
     Using the fact that \eqref{eqn_projections} completely determines $\mud \in \mc D$ and \eqref{eqn_algsoli}, we have that $\mud$ can be viewed as a pair $(\mu,H)$ solving
   \[
   	\begin{cases}
   		\theta(\operatorname{Ric}^B_{\mu, H})\mu = -\lambda \mu,\\
   		\rho(\operatorname{Ric}^B_{\mu, H})H + \dd_\mu \dd_\mu^*H = -\lambda H,
   	\end{cases}
   \]
   for some $\lambda \in \R$. 
   Applying $\dd^*$ to the second equation yields,  using \eqref{eqn_dRhocom},
   \begin{equation}\label{eqn_dstarHsol}
   		0 = \lambda \dd_\mu^*H + \dd_\mu^*\rho(\operatorname{Ric}^B_{\mu, H})H + \dd_\mu^*\dd_\mu \dd_\mu^*H
   		= \rho(\operatorname{Ric}^B_{\mu, H})\dd_\mu^*H + \dd_\mu^*\dd_\mu \dd_\mu^*H,
   \end{equation}
   where in the last equality we used $\theta(\operatorname{Ric}^B_{\mu, H})\mu = -\lambda \mu$. Now, just as in the proof of \Cref{prop_Rcmm}, we have that $\bar g(\rho(A)\alpha, \alpha) = -2 \tr(\alpha^2 A)$,  for all $2$-forms $\alpha\in \Lambda^2\ggo^*$ and $A\in \f{gl}(\f g)$. Now taking an inner product of \eqref{eqn_dstarHsol} with $\dd_\mu^*H$ gives
   \[
   0 = \bar g(\rho(\operatorname{Ric}^B_{\mu, H})\dd_\mu^*H, \dd_\mu^*H) + |\dd_\mu\dd_\mu^* H|^2 = - 2\tr((\dd_\mu^* H)^2 \operatorname{Ric}^B_{\mu, H}) + |\dd_\mu \dd_\mu^* H|^2.
   \]
	Since  $-\tr((\dd_\mu^* H)^2 \operatorname{Ric}^B_{\mu, H}) \geq 0$, then, we yield $\dd_\mu^*H = 0$ as required.
   \end{proof}
   Besides \Cref{questharm}, one can also  pose a finiteness question on generalized nilsolitons, namely:
   \begin{quest}\label{questfinite}
   Let $\G$ be a nilpotent Lie group. Is the set of generalized nilsolitons on $\G$ finite?
   \end{quest}
   In \Cref{sec_classifi} we will provide, within the full classification of generalized nilsolitons up to dimension $4$, a counterexample to \Cref{questfinite}. 
   
 Finally, it is well known, see for instance \cite{soliton},  that there exist nilpotent Lie algebras which do not admit any classical nilsoliton.  So,  one may hope to produce generalized nilsolitons even on such Lie algebras. This cannot be done in general. Indeed, on characteristically nilpotent Lie algebras, i.e. such that the Lie algebra of derivations is nilpotent, we cannot find neither classical nor generalized nilsolitons,  due to  the fact that any derivation is nilpotent, see \cite[Theorem 5]{To61}.

\section{Long-time behavior on nilmanifolds} \label{sec_LongNil}
In this Section, we will recall a definition of Cheeger-Gromov convergence for exact Courant algebroids introduced in \cite{GRFbook}. Then, we will discuss the long-time behavior of the Generalized Ricci flow on nilpotent Lie groups in the particular case in which the starting ECA has harmonic torsion, showing that, in this case, the asympotics are precisely the generalized nilsolitons defined in the previous Section. 

Given a subset $V \subset M$ of a manifold,  we will denote by $i_V \colon V \to M$ the inclusion map.
\begin{defn}[{\cite[Definition 5.25]{GRFbook}}]\label{def_ CCGconv}
A sequence of pointed metric ECAs $(E_i \to M_i,\mc G_i,p_i)_{i\geq 1}$ converges to $(\B E \to \B M , \B{\mc G},\B p)$ in the \emph{generalized Cheeger--Gromov} topology if there exists a  sequence  $\{U_i\}_{i\geq 1}$  of neighbourhoods of $ \B p$ exhausting $\B M$ and Courant algebroid isomorphisms 
\[
	(F_i,f_i) \colon (i_{U_i}^*\B E \to U_i) \to (F_i(i_{U_i}^*\B E) \to f_i(U_i))\subset (E_i \to M_i),
\]
satisfying:
\begin{enumerate}
\item $f_i^{-1}(p_i) \to \B p$, as $i \to \infty$;
\item $F_i^{-1} \circ \mc G_i \circ F_i \to \B{\mc G}$ in $C^\infty_{\operatorname{loc}}(\B E)$, as $i \to \infty$ .
\end{enumerate}
\end{defn}

As usual, fixing isotropic splittings of any ECA of the sequence, we can rewrite this definition as follows.
\begin{lem}\label{lem:CCGconv}
The sequence $((TM_i \oplus T^*M_i)_{H_i},\mc G(g_i,b_i),p_i)_{i\ge 1}$ converges to $((T\B M \oplus T^* \B M)_{\B H},\mc G(\B g,\B b),\B p)$ in the generalized Cheeger--Gromov topology if and only if there exists an exhaustion $\{U_i\}_{i\ge 1 }$ of $\B M$ consisting in neighbourhoods of $\B p$, diffeomorphisms $f_i\colon U_i \to f_i(U_i)\subset M_i$, and 2-forms $a_i \in \Omega^2(U_i)$ such that
\begin{enumerate}
	\item $i_{U_i}^*\B H = f_i^*H_i + \dd a_i$,  for all $i \geq 1$;
	\item $f_i^{-1}(p_i) \to \B p$,  as $i \to \infty$;
	\item $f_i^*g_i \to \B g$ in $C^{\infty}_{\operatorname{loc}}(\B M)$,  as $i \to \infty$;  
	\item $f_i^*b_i - a_i \to \B b$ in $C^{\infty}_{\operatorname{loc}}(\B M)$,  as $i \to \infty$.
\end{enumerate}
\end{lem}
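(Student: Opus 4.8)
The plan is to unwind the abstract Courant-algebroid isomorphisms appearing in \Cref{def_ CCGconv} into the concrete classical data $(f_i,a_i)$, exploiting that all the ECAs in sight are already presented in the standard form $(\TT)_H$ with generalized metrics $\mc G(g,b)$. The pivotal observation is a factorization: any isomorphism of exact Courant algebroids $(F_i,f_i)\colon (TU_i\oplus T^*U_i)_{i_{U_i}^*\B H}\to (TM_i\oplus T^*M_i)_{H_i}$ covering the diffeomorphism $f_i\colon U_i\to f_i(U_i)$ must take the form $F_i=\bar f_i\, e^{a_i}$ for a unique $a_i\in\Omega^2(U_i)$.

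First I would establish this factorization. As computed (with $c=1$) in the proof of \Cref{lem_der}, the lift $\bar f_i$ is itself an isomorphism $(TU_i\oplus T^*U_i)_{f_i^*H_i}\to (TM_i\oplus T^*M_i)_{H_i}$, so the composite $\bar f_i^{-1}\circ F_i$ is an isomorphism covering the identity on $U_i$, relating the twists $i_{U_i}^*\B H$ and $f_i^*H_i$. Arguing as in the first part of the proof of \Cref{lem_der} — preservation of the anchor forces the lower-triangular block shape, preservation of $\ip$ forces the off-diagonal block to be a $2$-form $a_i$, and preservation of the Dorfman bracket via \eqref{eqn_[]H-general} forces the twist relation — one finds $\bar f_i^{-1}\circ F_i=e^{a_i}$ together with $i_{U_i}^*\B H=f_i^*H_i+\dd a_i$. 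This is exactly condition (1), and it holds precisely when $\bar f_i e^{a_i}$ is an ECA isomorphism; hence the correspondence $(F_i,f_i)\leftrightarrow(f_i,a_i)$ is a genuine equivalence that powers both directions of the ``if and only if''.

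Next I would pull back the metrics. Using that $\bar f_i$ intertwines $\mc G(f_i^*g_i,f_i^*b_i)$ with $\mc G(g_i,b_i)$ (the functoriality underlying \Cref{prop_SplittingFromMetric}), and that conjugation by a $B$-field satisfies $e^{-a}\,\mc G(g,b)\,e^{a}=\mc G(g,b-a)$ (immediate from $\mc G(g,b)=e^b\left(\begin{smallmatrix}0&g^{-1}\\ g&0\end{smallmatrix}\right)e^{-b}$ and commutativity of $B$-fields), I obtain
\[
	F_i^{-1}\circ\mc G(g_i,b_i)\circ F_i = e^{-a_i}\,\mc G(f_i^*g_i,\,f_i^*b_i)\,e^{a_i} = \mc G\!\left(f_i^*g_i,\; f_i^*b_i-a_i\right).
\]
Condition (2) of \Cref{def_ CCGconv} is literally condition (2) here. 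Since the assignment $(g,b)\mapsto\mc G(g,b)$ and its inverse (reading $g^{-1}$ and $b$ off the blocks of $\mc G$) are smooth, $C^\infty_{\operatorname{loc}}$-convergence $F_i^{-1}\mc G_i F_i\to\mc G(\B g,\B b)$ is equivalent to the simultaneous convergences $f_i^*g_i\to\B g$ and $f_i^*b_i-a_i\to\B b$, i.e.\ conditions (3) and (4). The converse direction merely reverses these steps: given $(f_i,a_i)$ satisfying (1)--(4), the maps $F_i:=\bar f_i e^{a_i}$ are ECA isomorphisms by (1), and the displayed identity together with (3)--(4) yields condition (2) of \Cref{def_ CCGconv}.

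The main obstacle I anticipate is the bookkeeping in the factorization step: one must verify that an ECA isomorphism covering the identity between two \emph{different} twists is forced to be a pure $B$-field with the exact twist relation, and in particular pin down the sign in $i_{U_i}^*\B H=f_i^*H_i+\dd a_i$. This is conceptually the same computation as in \Cref{lem_der}, but care is needed because here source and target have distinct base manifolds and distinct $3$-forms, so one cannot directly invoke the automorphism group of a single $\TTH$; the reduction to the covering-identity case via $\bar f_i$ is what makes the argument go through.
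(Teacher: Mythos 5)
Your argument is correct and follows essentially the same route as the paper's proof: factor each isomorphism as $F_i=\bar f_i\,e^{a_i}$ with $i_{U_i}^*\B H = f_i^*H_i + \dd a_i$, compute $F_i^{-1}\circ\mc G(g_i,b_i)\circ F_i=\mc G(f_i^*g_i,\,f_i^*b_i-a_i)$, and read off the equivalence of the convergence conditions. The only difference is that you supply the justification for the factorization (via reduction to an identity-covering $B$-field), which the paper simply asserts.
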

\begin{proof} For the first direction, we see that in these isotropic splittings, the isomorphisms in \Cref{def_ CCGconv} are given by
\[
(F_i,f_i) = (\B{f_i}\circ e^{a_i},f_i),
\]
for some $2$-forms $a_i \in \Omega^{2}(U_i)$ satisfying
\[
i_{U_i}^*\B{H} = f_i^*H_i + \dd a_i.
\]
In this case
\[
F_i^{-1} \circ \mc G_i(g_i,b_i) \circ F_i = \mc G(f_i^* g_i,f_i^*b_i - a_i), 
\]
so $F_i \circ \mc G_i(g_i,b_i) \circ F_i^{-1} \to \mc G (\B g,\B b)$ if and only if Items (3) and (4) hold. The converse is easily seen.
\end{proof}
Moreover, we can specify \Cref{lem:CCGconv} when the preferred isotropic splitting of $\mathcal G_i$ are chosen. 
\begin{cor} \label{cor:ConvCanSplitting}
The sequence $((TM_i \oplus T^*M_i)_{H_i},\mc G(g_i,0),p_i)$ converges to $((T\B M \oplus T^* \B M)_{\B H},\mc G(\B g,0),\B p)$ in the generalized Cheeger--Gromov topology if and only if there is an exhaustion $\{U_i\ni \B p\}$ of $\B M$, diffeomorphisms $f_i\colon U_i \to f_i(U_i)\subset M_i$, and 2-forms $a_i \in \Omega^2(U_i)$ such that
\begin{enumerate}
	\item $i_{U_i}^*\B H = f_i^*H_i + \dd a_i$,  for all $i \geq 1$;
	\item $f_i^{-1}(p_i) \to \B p$,  as $i \to \infty$;
	\item $f_i^*g_i \to \B g$ in $C^{\infty}_{\operatorname{loc}}(\B M)$,  as $i \to \infty$;  
	\item $a_i \to 0$ in $C^{\infty}_{\operatorname{loc}}(\B M)$,  as $i \to \infty$.
\end{enumerate}
In particular, $f_i^*H_i \to \B H$ in $C^{\infty}_{\operatorname{loc}}(\B M)$,  as $i \to \infty$.
\end{cor}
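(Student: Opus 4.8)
The plan is to obtain this statement directly from \Cref{lem:CCGconv} by specializing to preferred isotropic splittings, in which $b_i \equiv 0$ along the sequence and $\B b = 0$ in the limit, and then to extract the ``In particular'' clause from item (1).

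First I would invoke \Cref{lem:CCGconv} with $\mc G(g_i,b_i) = \mc G(g_i,0)$ and $\mc G(\B g,\B b) = \mc G(\B g,0)$. Since the Lemma is an equivalence, so is its specialization, and it remains only to see how its four conditions simplify. Items (1)--(3) are literally unchanged. In item (4), substituting $b_i \equiv 0$ and $\B b = 0$ turns the condition $f_i^* b_i - a_i \to \B b$ in $C^\infty_{\operatorname{loc}}(\B M)$ into $-a_i \to 0$, which is the same as $a_i \to 0$ in $C^\infty_{\operatorname{loc}}(\B M)$. This gives precisely the asserted equivalence.

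For the ``In particular'' statement I would rearrange item (1) as $f_i^* H_i = i_{U_i}^* \B H - \dd a_i$ on $U_i$. Because $a_i \to 0$ in $C^\infty_{\operatorname{loc}}(\B M)$ and the exterior differential is continuous in the $C^\infty_{\operatorname{loc}}$ topology, one has $\dd a_i \to 0$ in $C^\infty_{\operatorname{loc}}(\B M)$. Moreover, for any fixed compact $K \subset \B M$ the exhaustion property gives $K \subset U_i$ for all large $i$, so $i_{U_i}^* \B H$ agrees with $\B H$ on $K$ for large $i$, whence $i_{U_i}^* \B H \to \B H$ locally smoothly. Combining the two convergences yields $f_i^* H_i \to \B H$ in $C^\infty_{\operatorname{loc}}(\B M)$.

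I expect no serious obstacle, as the result is a routine specialization of \Cref{lem:CCGconv}. The only points meriting a remark are the continuity of $a \mapsto \dd a$ in the $C^\infty_{\operatorname{loc}}$ topology and the elementary fact that the restriction of the fixed limiting form $\B H$ to an exhausting family of open sets converges to $\B H$ in $C^\infty_{\operatorname{loc}}(\B M)$.
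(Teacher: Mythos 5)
Your proposal is correct and is precisely the argument the paper intends: the corollary is stated as an immediate specialization of \Cref{lem:CCGconv} to the case $b_i\equiv 0$, $\B b=0$, which turns item (4) of the Lemma into $a_i\to 0$, and the ``in particular'' clause follows from item (1) together with continuity of $\dd$ in $C^\infty_{\operatorname{loc}}$ exactly as you describe.
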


We now describe how such convergence is achieved at the level of Lie brackets. Fix a vector space $\f g$ and a generalized metric $\overline{\mc G} = \mc G(\bar g,0)$ on $\f g \oplus \f g^*$. For a Dorfman bracket $\mud$ on $\f g \oplus \f g^*$, denote by $((\f g \oplus \f g)_{\mud},\G_{\mud_{\f g}},\mc G_{\mud})$ the corresponding metric ECA.
\begin{thm} \label{thm_BToCG}Let   $(\mud_i)_{i\geq 1}$ be a sequence of nilpotent Dorfman brackets converging to $\B \mud$ on $\f g \oplus \f g^*$. Then, $\left((\f g \oplus \f g^*)_{\mud_i},\G_{(\mud_i)_{\f g}},\mc G_{\mud_i},e\right)_{i\ge 1}$ converges to $\left((\f g \oplus \f g^*)_{\B \mud},\G_{\B\mud_{\f g}},\mc G_{\B \mud},e\right)$ in the generalized Cheeger--Gromov topology.
\end{thm}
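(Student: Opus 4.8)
The plan is to verify the explicit criterion for generalized Cheeger--Gromov convergence supplied by \Cref{cor:ConvCanSplitting}, since all the metric ECAs in question are already presented in the preferred isotropic splitting of $\mc G_{\mud_i}$. Indeed, writing $\mu_i := (\mud_i)_\ggo$ and $H_i := (\mud_i)_{\Lambda^3}$, the metric ECA $((\gggo)_{\mud_i},\mc G_{\mud_i})$ is isometric, via this splitting, to $((T\G_{\mu_i}\oplus T^*\G_{\mu_i})_{H_i},\mc G(g_i,0))$, where $g_i$ is the left-invariant metric on $\G_{\mu_i}$ with $g_i|_e=\B g$ (this uses that the fixed value $\mc G_0(e)=\mc G(\B g,0)$ has vanishing $b$-part, see \eqref{eqn_gcamud}). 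Set $\B\mu:=\B\mud_\ggo$ and $\B H:=\B\mud_{\Lambda^3}$; by hypothesis $\mu_i\to\B\mu$ and $H_i\to\B H$ as tensors on $\ggo$. Since all brackets are nilpotent, the exponential maps $\exp_{\mu_i}\colon\ggo\to\G_{\mu_i}$ and $\exp_{\B\mu}\colon\ggo\to\G_{\B\mu}$ are global diffeomorphisms, so I would take the exhaustion $U_i:=\G_{\B\mu}$ (or an increasing sequence of balls exhausting it) and define $f_i:=\exp_{\mu_i}\circ\exp_{\B\mu}^{-1}\colon\G_{\B\mu}\to\G_{\mu_i}$. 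As $f_i(e)=e$, Item (2) of \Cref{cor:ConvCanSplitting} holds trivially with all basepoints equal to $e$.

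To establish Item (3) together with the auxiliary convergence $f_i^*H_i\to\B H$, I would pass to exponential coordinates. Fixing a $\B g$-orthonormal basis $\{e_k\}$ of $\ggo$, the Baker--Campbell--Hausdorff formula expresses the multiplication of $\G_\mu$ in the chart $\exp_\mu$ as a \emph{polynomial} map whose coefficients are polynomials in the structure constants of $\mu$; this is precisely where nilpotency is used. Consequently the left-invariant coframe dual to the left-translates of $\{e_k\}$, and hence any left-invariant tensor with prescribed value at $e$, has components in this chart that depend smoothly --- indeed polynomially --- on $(\mu,x)$. Identifying $\G_{\B\mu}$ with $\ggo$ via $\exp_{\B\mu}$, both $f_i^*g_i$ and $\B g$ become such left-invariant metrics written in exponential coordinates (for $\mu=\mu_i$ and $\mu=\B\mu$ respectively), so $\mu_i\to\B\mu$ forces $f_i^*g_i\to\B g$ in $C^\infty_{\mathrm{loc}}$. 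The identical argument applied to the left-invariant $3$-form, together with $H_i\to\B H$, gives $f_i^*H_i\to\B H$ in $C^\infty_{\mathrm{loc}}$.

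It remains to produce $2$-forms $a_i$ with $\B H=f_i^*H_i+\dd a_i$ and $a_i\to0$ in $C^\infty_{\mathrm{loc}}$ (Items (1) and (4)). Here I would use that $\G_{\B\mu}\cong\ggo$ is diffeomorphic to $\R^n$, hence contractible, so the closed $3$-form $\B H-f_i^*H_i$ is exact. Applying the standard Poincar\'e homotopy operator $K$ on the star-shaped domain $\ggo$, which satisfies $\dd K+K\dd=\id$ in positive degrees, I set $a_i:=K(\B H-f_i^*H_i)$, so that $\dd a_i=\B H-f_i^*H_i$ as required. Since $K$ is continuous in the $C^\infty_{\mathrm{loc}}$ topology and $\B H-f_i^*H_i\to0$ by the previous paragraph, we obtain $a_i\to0$ in $C^\infty_{\mathrm{loc}}$, which completes the verification of all four conditions.

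The main obstacle I anticipate lies in making the smooth dependence on the bracket of the second paragraph fully rigorous, i.e.~that the left-invariant metric and $3$-form, read off in exponential coordinates, converge in $C^\infty_{\mathrm{loc}}$ as $\mu_i\to\B\mu$. This is exactly the point where nilpotency is indispensable: it is the polynomiality of BCH that both makes $\exp_\mu$ a global diffeomorphism --- providing a single chart valid for all $\mu$ simultaneously --- and renders the tensor components polynomial, hence continuous, in the structure constants. For a non-nilpotent family neither the uniform chart nor this continuity would be available.
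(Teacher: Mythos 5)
Your proposal is correct and follows essentially the same route as the paper: the same maps $f_i=\exp_{\mu_i}\circ\exp_{\B\mu}^{-1}$, the same verification of the criterion in \Cref{cor:ConvCanSplitting}, and the same radial-primitive idea for producing the $a_i$ (the paper writes $a_i=\iota_{\id}(\B H-f_i^*H_i)$ and invokes Cartan's formula, whereas your use of the genuine Poincar\'e homotopy operator is, if anything, the more carefully stated version of that step). The only substantive difference is that the paper justifies $f_i^*g_i\to\B g$ and $f_i^*H_i\to\B H$ via the explicit series for $\dd\exp_\mu$ and a citation to Lauret, while you argue via polynomiality of Baker--Campbell--Hausdorff; both are valid and rest on nilpotency in the same way.
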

\begin{proof} As usual, we consider $\mc G( g,0)$ as a background generalized metric, hence 
\[
\left((\f g \oplus \f g^*)_{\mud_i},\G_{(\mud_i)_{\f g}},\mc G_{\mud_i},e\right) = \left((\f g \oplus \f g^*)_{H_i},\G_{\mu_i},\mc G(g_{\mu_i},0),e\right),
\]
where $H_i = \mud_{\Lambda^3}$ and $\mu_i = \mud_{\f g}$. We also set $\B \mu = \B \mud_{\f g}$ and $\B H = \B \mud_{\Lambda^3}$. Then by nilpotence, the exponential maps of $\G_{\mu_i}$ and $\G_{\B \mu}$ are diffeomorphisms. We consider
\[
f_i := \operatorname{exp}_{\mu_i}\circ \operatorname{exp}_{\B \mu}^{-1} \colon \G_{\B \mu} \to \G_{\mu_i},
\]
which is a diffeomorphism with $f_i(e) = e$. Recall (see for instance \cite{Lauret2012}) that for $X \in \f g$, the exponential map $\exp_\mu \colon \f g \to \G_\mu$ of the Lie bracket $\mu \in \Lambda^2 \f g^* \otimes \f g$ satisfies
\[
(\dd \exp_\mu)_X = (\dd L_{\exp_\mu(X)})_e \circ \frac{\id - e^{-\ad_\mu X}}{\ad_\mu X},
\]
where
\[
\frac{\id - e^{-\ad_\mu X}}{\ad_\mu X} = \sum_{k=0}^\infty \frac{(-1)^k}{(k+1)!}(\ad_\mu X)^k.
\]
Hence,  $\dd f_i$ depends continuously on $\mu_i$, and an argument analogous to \cite[Corollary 6.10]{Lauret2012} shows that  $f_i^*g_{\mu_i} \to g_{\B \mu}$ and $f_i^*H_i \to \B H$ smoothly and uniformly on compact subsets. Then, we define
\[
a_i := \iota_{\id}(\B H - f_i^*H_i),
\]
where $\id \in \Gamma(T\R^n)$ is the identity vector field on $\R^n$. By Cartan's formula,
\[
\dd a_i = \mathcal{L}_{\id} (\B H - f_i^*H_i) = \B H - f_i^*H_i.
\]
Hence, $\B H = f_i^*H_i + \dd a_i$ with $a_i \to 0$ smoothly and uniformly on compact subsets. Thus, the result follows by \Cref{cor:ConvCanSplitting}.
\end{proof}

With the technical details established, we now turn to studying the long-time behaviour of the generalized Ricci flow on nilpotent Lie groups when the initial metric ECA has harmonic torsion.
\begin{prop}\label{prop_BFConv}
Let $(\mud(t))_{t \in [0,\infty)}$ be a solution to the generalized bracket flow \eqref{eqn_genbf} such that $\mud(0)_{\ggo}$ is a nilpotent Lie bracket and $\mud(0)_{\Lambda^3}$ is harmonic. Then, the rescaled brackets $\tfrac{\mud(t)}{|\mud(t)|}$ converge as $t\to \infty$ to an algebraic soliton bracket $\mud_\infty \neq 0$.
\end{prop}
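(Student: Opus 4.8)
The plan is to use the two hypotheses to turn the scalar-normalized generalized bracket flow into a genuine real-analytic gradient flow on a compact sphere, and then to invoke the {\L}ojasiewicz--Simon gradient inequality (equivalently, the real GIT convergence theory of \cite{GIT20}) to obtain convergence to a single critical point, which is then identified as an algebraic soliton.

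First I would check that both hypotheses persist along the flow. Nilpotency of $\mud(t)_{\ggo}$ is clear, since $\mud(t)$ stays in the $\Ll$-orbit of $\mud(0)$ and hence $\mud(t)_{\ggo}$ remains isomorphic to the nilpotent bracket $\mud(0)_{\ggo}$; harmonicity $\dd_\mu^*H = 0$ is preserved by \Cref{lem:evdstar}, so that $A_{\mud(t)} = 0$ for all $t$. Combining \Cref{cor_mmnil} and \Cref{rmrkmmso}, this gives the identity
\[
	\mc Rc_{\mud} - A_\mud = \mc Rc_\mud = \mm_\mud = \tilde\mm_\mud \in \mathfrak m
\]
throughout the flow, so \eqref{eqn_genbf} becomes $\tfrac{\dd}{\dd t}\mud = -\Theta(\tilde\mm_\mud)\mud$. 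By \Cref{lem_norm} (applied with $c = 1/|\mud|$), the rescaled curve $\mud/|\mud|$ is, after a time reparametrization, a solution of the scalar-normalized generalized bracket flow, which remains on the compact set $\mc D \cap \mathbb S$, where $\mathbb S$ denotes the unit sphere. Since $|\mud(t)|^2 \sim 1/t$ (shown earlier for nilpotent groups), the reparametrization carries $t \to \infty$ to infinite normalized time, so it suffices to prove convergence of the scalar-normalized flow.

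Next I would exhibit the gradient structure. Consider the homogeneous degree-$4$ polynomial, hence real analytic, functional
\[
	\tilde\Phi(\mud) := |\tilde\mm_\mud|^2 = |\mc Rc_\mud|^2 .
\]
Since $\mathfrak m$ consists of $\bG$-symmetric endomorphisms, each operator $\Theta(X)$ with $X \in \mathfrak m$ is self-adjoint for the inner product $\bar g$ on brackets — this is precisely the fact already used in the concavity computation of \Cref{genscalfunc}. Differentiating the defining relation $\bar g(\tilde\mm_\mud, X) = \tfrac16\bar g(\Theta(X)\mud, \mud)$ of \Cref{rmrkmmso} and using self-adjointness gives $\nabla\tilde\Phi_\mud = \tfrac23\,\Theta(\tilde\mm_\mud)\mud$. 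Hence the generalized bracket flow is, up to the positive constant $\tfrac32$, the negative gradient flow of $\tilde\Phi$; projecting onto $T\mathbb S$ — the radial component producing exactly the normalization term $\ell_{\mud}\mud$, by homogeneity of $\tilde\Phi$ together with $\ell_{\mud} = 6|\mm_\mud|^2$ — the scalar-normalized flow is the negative gradient flow of the restriction $\tilde\Phi|_{\mathbb S}$.

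Finally, as $\mc D \cap \mathbb S$ is compact and $\tilde\Phi|_{\mathbb S}$ real analytic, the {\L}ojasiewicz--Simon inequality forces the trajectory to have finite length and to converge to a single limit $\mud_\infty$ with $|\mud_\infty| = 1 \neq 0$. Being a rest point of the scalar-normalized flow, $\mud_\infty$ satisfies $\Theta(\mc Rc_{\mud_\infty} - A_{\mud_\infty})\mud_\infty = \ell_{\mud_\infty}\mud_\infty$, i.e.\ \eqref{eqn_algsoli}, so $\mud_\infty$ is an algebraic soliton by \Cref{prop_equisol}. The main obstacle is the gradient structure, and this is exactly where harmonicity is indispensable: it forces $A_\mud = 0$, placing $\mc Rc_\mud - A_\mud$ in $\mathfrak m$ so that $\Theta(\mc Rc_\mud - A_\mud)$ is self-adjoint and the flow is a true gradient flow; without it one has $\mc Rc_\mud - A_\mud = \mm_\mud \in \lgo$ with a nonzero skew part (reflecting the non-reductivity of $\Ll$), the flow is no longer gradient, and this argument breaks down.
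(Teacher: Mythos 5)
Your proof is correct and follows essentially the same route as the paper's: both pass to the scalar-normalized flow, use \Cref{lem:evdstar} and \Cref{cor_mmnil} to identify it (via harmonicity and nilpotency) as the negative gradient flow of the real-analytic normalized moment-map functional, and conclude by {\L}ojasiewicz. The only cosmetic difference is that you restrict $|\tilde{\mm}_{\mud}|^2$ to the unit sphere whereas the paper works with the scale-invariant quotient $|\mm_{\mud}|^2/|\mud|^4$; these agree on the sphere, and your explicit gradient computation correctly supplies the details the paper delegates to \cite{realGIT} and the computations of \Cref{genscalfunc}.
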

\begin{proof}
Long-time existence follows immediately from \Cref{cor:SolvInfTime}. Up to a time-reparametrization, the rescaled brackets $\B \mud(t) = \frac{\mud(t)}{|\mud(t)|}$ solve \eqref{eqn_ngbf} with $\ell = \ell_{\B{\mud}}$ (defined in \S 7.) That is, the scalar-normalized bracket flow. Since the brackets are nilpotent, \Cref{cor_mmnil} implies that
\[
\frac{\dd}{\dd t}\B \mud = - \Theta(M_{\B \mud} + 6 |M_{\B{\mud}}|^2\id)\B \mud .
\]
 On the other hand,  using the same strategy as in  \cite[Lemma 7.2]{realGIT}, \Cref{lem:evdstar} and similar computations as in the proof of \Cref{genscalfunc}, one can easily see that this is, up to scaling, the negative gradient flow of the real-analytic functional 
\[
\Lambda^2 (\f g \oplus \f g^*)^* \otimes (\f g \oplus \f g^*) \ni \mud \longmapsto \frac{|M_{\mud}|^2}{|\mud|^4} \in \R.
\]
By compactness and Łojasiewicz’s Theorem on real-analytic gradient flows \cite{Loj63}, the $\omega$-limit consists of a unique fixed point $\mud_\infty$ to the scalar-normalized bracket flow, and hence  it is an algebraic soliton.
\end{proof}
Let us remark some important facts.   First of all, we should observe that the group $\mathsf L $ is not reductive. This does not allow us, in general,  to use directly  \cite[Lemma 7.2]{realGIT} to deduce the fact that the scalar-normalized bracket flow is a gradient flow. Moreover, it is easy to be shown that, removing the harmonicity of the initial torsion, the scalar-normalized bracket flow  is not the negative  gradient flow  the above functional.  Finally, this gives an alternative and dynamical proof of the uniqueness of generalized nilsolitons with harmonic torsion.

Let us now prove the main convergence results on nilpotent Lie groups.
\begin{thm}
Let $(E\to \G,\mc G_t)$ be a left-invariant  solution of the generalized Ricci flow on a left-invariant ECA over a simply-connected, nilpotent, non-abelian Lie group $\G$. Assume that the preferred  representative of the Ševera class of $E$  determined by $\mc G_0$ is harmonic. Then, the scaled ECA's $(-\mc S(\mc G_t)\cdot E,\mc G_t)$ converge in the generalized Cheeger--Gromov sense to a left-invariant, nilpotent, non-abelian, generalized Ricci soliton.
\end{thm}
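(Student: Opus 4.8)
The plan is to transport the flow to the space of Dorfman brackets, run the normalized bracket flow to a soliton, and then convert bracket convergence into generalized Cheeger--Gromov convergence.

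First I would apply \Cref{thm_grf=bf} to replace $(\mc G_t)$ by the solution $(\mud(t))_{t\in[0,\infty)}$ of the generalized bracket flow \eqref{eqn_genbf} with background $\bG=\mc G_0$, initial data $\mud(0)_\ggo=\lb_\ggo$ and $\mud(0)_{\Lambda^3}=H_0$; this solution exists for all time by \Cref{cor:SolvInfTime}, and there are generalized isometries $F_t\colon ((\gggo)_{H_0},\mc G_t)\to ((\gggo)_{\mud(t)},\bG)$. By \eqref{eqn_gscmu} the scaling factor is $c(t):=-\mc S(\mc G_t)=\tfrac{1}{12}|\mud(t)|^2$, and by \Cref{lem_scaling} together with \Cref{prop:scaledmu} scaling the ECA by $c(t)$ corresponds, up to a canonical isometry, to uniformly rescaling the Dorfman bracket by $c(t)^{-1/2}$. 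Thus $(c(t)\cdot E,\mc G_t)$ is isometric to $((\gggo)_{c(t)^{-1/2}\mud(t)},\bG)$, and the rescaled bracket is exactly
\[
c(t)^{-1/2}\mud(t)=\sqrt{12}\,\frac{\mud(t)}{|\mud(t)|},
\]
which has constant norm $\sqrt{12}$.

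Since $\mud(0)_\ggo$ is nilpotent and $H_0$ is harmonic, \Cref{prop_BFConv} gives that $\mud(t)/|\mud(t)|\to\mud_\infty$ as $t\to\infty$, a nonzero algebraic soliton. Hence $c(t)^{-1/2}\mud(t)\to\sqrt{12}\,\mud_\infty$, a convergent sequence of nilpotent Dorfman brackets, so \Cref{thm_BToCG} yields that the associated metric ECAs $((\gggo)_{c(t)^{-1/2}\mud(t)},\bG)$ converge in the generalized Cheeger--Gromov topology to $((\gggo)_{\sqrt{12}\,\mud_\infty},\bG)$. Composing with $F_t$ and the scaling isometries of \Cref{prop:scaledmu}, I would conclude that $(-\mc S(\mc G_t)\cdot E,\mc G_t)$ converges in the generalized Cheeger--Gromov sense to the metric ECA of $\sqrt{12}\,\mud_\infty$. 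As $\sqrt{12}\,\mud_\infty$ is again an algebraic soliton, \Cref{lem_algrealsol} identifies $\mc G_{\sqrt{12}\,\mud_\infty}$ as a generalized Ricci soliton in the sense of \Cref{defn_soliton}; it is left-invariant by construction, and nilpotent because each $\mud(t)_\ggo$ lies in the $\mathsf{GL}(\ggo)$-orbit of the nilpotent $\lb_\ggo$ (\Cref{lem_Lact_mud}) and the nilpotent brackets form a closed set.

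The main obstacle is the claim that the limit group is \emph{non-abelian}, i.e.\ that $\mu_\infty:=(\mud_\infty)_\ggo\neq 0$. A priori the normalized $\mu$-component $\mud(t)_\ggo/|\mud(t)|$ could degenerate to zero while the torsion part carries all the norm; indeed abelian generalized solitons with nonzero harmonic torsion exist (e.g.\ $(\R^3,e^{123})$, cf.\ \Cref{ex_solitons}), so the conclusion is not formal. To rule this out I would show that $|\mud(t)_\ggo|$ stays bounded below relative to $|\mud(t)|$. The key mechanism is that if the $\mu$-part becomes small, then $\Ricb_{\mu,H}=\operatorname{Ric}_\mu-\tfrac14\bar g^{-1}H^2$ is negative semidefinite up to an $O(|\mu|^2)$ error, whence the preserved harmonicity of $H$ together with the $H$-evolution $\tfrac{\dd}{\dd t}H=-\rho(\Ricb_{\mu,H})H$ (the Laplacian term in \eqref{eqn_muH} vanishing by \Cref{lem:evdstar}) forces $|H|^2$ to contract; this should prevent the torsion from dominating in the limit. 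Equivalently, one expects the limiting soliton to lie in the same GIT stratum as the non-abelian initial bracket. Turning this heuristic into a rigorous lower bound, in the non-reductive setting where the acting group is $\Ll$, is where I expect the real work to be.
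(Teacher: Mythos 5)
Your proposal follows the paper's proof step for step: \Cref{thm_grf=bf} to pass to the bracket flow, \eqref{eqn_gscmu} to identify $-\mc S(\mc G_t)=\tfrac1{12}|\mud(t)|^2$, \Cref{prop_BFConv} for convergence of the normalized brackets, \Cref{prop:scaledmu} and \Cref{lem_scaling} to match the rescaled brackets with the scaled ECAs, and \Cref{thm_BToCG} to upgrade bracket convergence to generalized Cheeger--Gromov convergence. The one step you flag as unresolved --- that the limit is \emph{non-abelian}, i.e.\ $(\mud_\infty)_\ggo\neq 0$ --- is exactly the step the paper does not argue either: its proof simply asserts that the limit bracket is ``nilpotent, non-abelian'' by citing \Cref{prop_BFConv}, but that proposition only guarantees $\mud_\infty\neq 0$ (equivalently $3|\mu_\infty|^2+|H_\infty|^2=1$ from the preserved normalization $\mc S_{\mud}=-\tfrac1{12}$), which is compatible a priori with $\mu_\infty=0$ and $H_\infty\neq 0$; indeed such degenerations do occur in the closure of the projectivized $\Ll$-orbit (e.g.\ $h=\operatorname{diag}(\epsilon,\epsilon,\epsilon^2)$ on $\f n_3$ fixes $\mu$ and blows up $H$ as $\epsilon\to 0$), so the claim is not formal. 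Your concern is therefore legitimate, and the heuristic you sketch (a lower bound on $|\mu|^2/|\mud|^2$ via the contraction forced by the $-\tfrac14 H^2$ term) is a reasonable direction, but the paper's own proof supplies no such argument for you to compare against.
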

\begin{proof}
After choosing an isotropic splitting, the left-invariant solution is equivalent to the bracket flow \eqref{eqn_genbf} by \Cref{thm_grf=bf}. Denoting this solution by $(\mud(t))_{t\in [0,\infty)}$, we have by \eqref{eqn_gscmu} and \Cref{prop_BFConv} that the rescaled brackets $\frac{\mud(t)}{(-\mc S (\mc G_t))^{1/2}}$ converge to a nilpotent, non-abelian algebraic soliton bracket. These rescaled brackets correspond to the rescaled ECA's $(-\mc S(\mc G_t)\cdot E,\mc G_t)$ by \Cref{prop:scaledmu} and \Cref{lem_scaling}. Finally, by \Cref{thm_BToCG}, we obtain convergence of $(-\mc S(\mc G_t)\cdot E,\mc G_t)$ to an algebraic soliton.
\end{proof}
From a more classical perspective, we obtain the following slightly weaker statement:
\begin{cor}
Let $(\G,H_t,g_t)$ be a left-invariant  solution of the generalized Ricci flow on a simply-connected, non-abelian, nilpotent Lie group $\G$ and assume that $H_0$ is $g_0$-harmonic. Then,  the rescaled family\\ $(\G,-\mc S(H_t, g_t)H_t,-\mc S(H_t, g_t)g_t)$ converges to a nilpotent, generalized soliton $(\B \G,\B H,\B g)$ in the following sense: for every sequence of times, there is a subsequence $t_k \to \infty$, and diffeomorphisms $f_k \colon \B \G \to \G$ fixing the identity, such that $(f_k^*H_{t_k},f_k^*g_{t_k}) \to (\B H,\B g)$,  as $k \to \infty$,  in $C^\infty_{\textrm{loc}}(\B \G)$.
\end{cor}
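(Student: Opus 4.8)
The plan is to regard this Corollary as the classical translation of the preceding Theorem, so that the task reduces to unwinding the generalized Cheeger--Gromov convergence established there into convergence of the pairs $(g_t,H_t)$. I would begin from the conclusion of that Theorem: along the flow the scaled metric ECAs $(-\mc S(\mc G_t)\cdot E,\mc G_t)$ converge in the generalized Cheeger--Gromov topology to a left-invariant generalized Ricci soliton over a simply-connected, nilpotent, non-abelian group $\B\G$. I would first record that $-\mc S(\mc G_t)=-\mc S(H_t,g_t)=\tfrac1{12}|\mud(t)|^2>0$ by \eqref{eqn_gscmu} (the bracket being nonzero since $\G$ is non-abelian), so the scaling factor is genuinely positive and \Cref{lem_scaling} applies.

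Next I would pass to classical data through the preferred isotropic splittings. The splitting induced by $\mc G_t$ identifies $(E,\mc G_t)\simeq((\TT)_{H_t},\mc G(g_t,0))$, and the one induced by the limit gives $(\B E,\B{\mc G})\simeq((\TT)_{\B H},\mc G(\B g,0))$. Since scaling the neutral inner product leaves the $\pm1$-eigenbundles of $\mc G_t$ unchanged, this same splitting remains preferred for $-\mc S(\mc G_t)\cdot E$, and \Cref{lem_scaling} with $b=0$ presents the scaled metric ECA in the normal form $((\TT)_{-\mc S(\mc G_t)H_t},\mc G(-\mc S(\mc G_t)g_t,0))$. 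The convergence is therefore exactly of the type described by \Cref{cor:ConvCanSplitting}, which for any $t_k\to\infty$ produces an exhaustion of $\B\G$, maps $f_k$, and $2$-forms $a_k\to0$ in $C^\infty_{\mathrm{loc}}$ with $i_{U_k}^*\B H=f_k^*\big(-\mc S(\mc G_{t_k})H_{t_k}\big)+\dd a_k$ and $f_k^*\big(-\mc S(\mc G_{t_k})g_{t_k}\big)\to\B g$; since $a_k\to0$, the final assertion of \Cref{cor:ConvCanSplitting} then gives $f_k^*\big(-\mc S(\mc G_{t_k})H_{t_k}\big)\to\B H$. As $-\mc S(\mc G_t)=-\mc S(H_t,g_t)$, this is precisely the asserted convergence of the rescaled pairs.

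The point requiring care, and the principal obstacle, is producing \emph{global} diffeomorphisms $f_k\colon\B\G\to\G$ fixing the identity, rather than the local isomorphisms over an exhaustion that appear in the definition of generalized Cheeger--Gromov convergence, and transporting them onto the \emph{fixed} group $\G$. In this homogeneous setting both issues are handled by the way the limit was produced: the convergence in the preceding Theorem comes, via \Cref{prop_BFConv} and \Cref{thm_BToCG}, from the exponential-map maps $\exp_{\mu_{t_k}}\circ\exp_{\B\mu}^{-1}$, which for nilpotent groups are global diffeomorphisms fixing $e$. These land in the varying bracket-flow groups $\G_{\mu(t_k)}$, so I would compose them with the gauge isomorphisms of \Cref{thm_grf=bf} (whose $\mathsf{GL}(\ggo)$-parts are Lie group isomorphisms $\G\to\G_{\mu(t)}$ by \Cref{prop:Liso}) together with the scalings relating the normalized brackets to $\mu(t_k)$, obtaining global $f_k\colon\B\G\to\G$ fixing $e$; the accompanying $B$-field parts are absorbed into the vanishing $2$-forms $a_k$. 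This is exactly where the harmonic-torsion hypothesis enters, through \Cref{prop_BFConv}, to guarantee that the normalized brackets converge in the first place. I would also note that, although the statement is phrased subsequentially, \Cref{prop_BFConv} in fact yields a \emph{unique} limit $\mud_\infty$, so the convergence holds along the entire family.

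Finally I would record that $(\B\G,\B g,\B H)$ is a generalized Ricci soliton in the sense of \Cref{defn_soliton}: the limit bracket $\mud_\infty\neq0$ is an algebraic soliton by \Cref{prop_BFConv}, and \Cref{lem_algrealsol} converts it into the left-invariant soliton data $(\B g,\B H)$, whose nilpotency and non-abelianity are inherited from $\mud_\infty$.
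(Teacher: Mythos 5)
Your proposal is correct and follows the route the paper intends: the corollary is stated without proof as an immediate consequence of the preceding theorem, and your argument simply unwinds the generalized Cheeger--Gromov convergence via \Cref{cor:ConvCanSplitting} and the explicit exponential-map diffeomorphisms of \Cref{thm_BToCG}, composed with the gauge isomorphisms of \Cref{thm_grf=bf}, exactly as the paper's framework dictates. Your additional observations (positivity of $-\mc S$, globality of the $f_k$, and uniqueness of the limit from \Cref{prop_BFConv}) are accurate refinements of that same argument.
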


\section{Connections to the pluriclosed flow} \label{sec_PCF}

 The pluriclosed flow is an important geometric flow in Hermitian Geometry firstly introduced by Streets and Tian in \cite{ST10}.  We quickly recall its definition and its link with the generalized Ricci flow. 
\begin{defn}
Let $(M, J, g_0)$ be a Hermitian manifold, i.e. $J\in {\rm End}(TM)\cap \mathsf {O}(g_0)$, $J^2=-{\rm Id}$ and such that 
$$
N_J(X, Y)=[X, Y]+J[JX, Y]+J[X, JY]-[JX, JY]=0\,, \quad  X, Y\in \Gamma (TM)\,.
$$ The pluriclosed flow is the following evolution equation:
$$
\frac{\partial}{\partial t}g=-S+Q\,,\quad  g(0)=g_0
$$ where $S_{i\bar j }=g^{k\bar l }R_{k\bar l i\bar j }$ is the {\em second Chern-Ricci tensor} while $Q_{i\bar j }=g^{k\bar l }g^{m\bar n }T_{ik\bar n }T_{\bar j \bar l m }.$
\end{defn}
The tensors $R$ and $T$  in the definition above are, respectively, the curvature tensor and the torsion of the \emph{Chern connection} $\nabla$, namely the unique linear connection on $M$ such that 
 $$
 \nabla g=0 \,, \quad \nabla J=0 \quad \mbox{and }\,\, T^{1,1}=0\,.
 $$
 
The pluriclosed flow is part of a larger class of geometric flows called \emph{Hermitian curvature flows}, introduced in \cite{ST11},  which share many common properties such as the  short-time existence on compact complex manifolds and the preservation of  K\"ahlerianity along the flow. However, among this class, the pluriclosed flow is the natural flow  in the realm of  strong K\"ahler with torsion (SKT, for short) or pluriclosed metrics. The latter are defined by requiring that their fundamental form $\omega(\cdot, \cdot):=g(J\cdot, \cdot)\in \Omega^{1,1}(M)$ is $
\dd\dd^c
$-closed, where $\dd^c:=J^{-1}\dd J$ is the twisted exterior differential.
\begin{prop}
Let $(M, J, g_0)$ be a  SKT Hermitian manifold. Then, the pluriclosed flow preserves the SKT condition. Moreover,  $(g(t))_{t\in[0, T)}$ is a solution of the pluriclosed flow starting at $g_0$ if and only if the family $(\omega(t))_{t\in[0,T)}$ of the fundamental forms associated to $g(t)$ is a solution of 
$$
\frac{\partial }{\partial t}\omega=-(\rho^B)^{1,1}(\omega)\,, \quad \omega(0)=\omega_0
$$ where $(\rho^B)^{1,1}(\omega)$ denotes the $(1,1)$-part of the \emph{Bismut-Ricci form} of $\omega$.
\end{prop}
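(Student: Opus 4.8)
The plan is to prove the two assertions in the reverse of the order stated: first establish the equivalence of the two flow equations, and then deduce preservation of the SKT condition from it. Throughout, $J$ is fixed and only $g$ (equivalently $\omega = g(J\cdot,\cdot)$) evolves. Recall that the Bismut connection $\nabla^B$ of $(M,J,g)$ is the unique Hermitian connection with totally skew-symmetric torsion, and that this torsion is the $3$-form $H = -\dd^c\omega$; consequently $\dd H = -\dd\dd^c\omega$ vanishes precisely when $\omega$ is SKT. The Bismut--Ricci form $\rho^B$ is the trace of the curvature of $\nabla^B$ on the canonical bundle $K_M = \Lambda^{n,0}$; since $\nabla^B$ is a $J$-compatible metric connection, $\rho^B$ is a real $\dd$-closed $2$-form representing a fixed multiple of $c_1(M)$. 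Unlike the Chern--Ricci form, $\rho^B$ need not be of type $(1,1)$, which is why only its $(1,1)$-part enters the flow.

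For the equivalence, note that since $J$ is constant, $\frac{\partial}{\partial t}\omega(\cdot,\cdot) = \big(\frac{\partial}{\partial t}g\big)(J\cdot,\cdot)$, so the two evolution equations correspond term-by-term once one has the pointwise identity
\[
(\rho^B)^{1,1}(\cdot,\cdot) = (S - Q)(J\cdot,\cdot).
\]
This identity is the technical core. I would prove it by writing $\nabla^B = \nabla^C + \tfrac12 g^{-1}H$ in terms of the Chern connection $\nabla^C$, computing the induced curvature on $K_M$, taking its trace and extracting the $(1,1)$-component: the curvature of $\nabla^C$ contributes the second Chern--Ricci tensor $S$, while the torsion correction contributes exactly the quadratic torsion term $Q$, with the signs dictated by the formula $\Ricb_{g,H} = {\rm Ric}_g - \tfrac14 H^2$ specialised to the Hermitian setting. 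Granting this identity, $\frac{\partial}{\partial t}\omega = -(\rho^B)^{1,1}$ is equivalent to $\frac{\partial}{\partial t}g = -(S-Q) = -S + Q$; moreover $S$ and $Q$ are Hermitian symmetric, so $-S+Q$ is $J$-Hermitian and the evolution keeps $g$ compatible with $J$, making the reformulation in terms of $\omega$ legitimate.

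Preservation of the SKT condition then follows formally. Differentiating, $\frac{\partial}{\partial t}(\dd\dd^c\omega) = -\dd\dd^c(\rho^B)^{1,1}$, and since $\dd\dd^c = 2i\,\partial\overline{\partial}$ it suffices to show $\partial\overline{\partial}(\rho^B)^{1,1} = 0$. Decomposing the closed real form $\rho^B = (\rho^B)^{2,0} + (\rho^B)^{1,1} + (\rho^B)^{0,2}$ and reading off the $(1,2)$-component of $\dd\rho^B = 0$ gives $\overline{\partial}(\rho^B)^{1,1} = -\partial(\rho^B)^{0,2}$; applying $\partial$ and using $\partial^2 = 0$ yields $\partial\overline{\partial}(\rho^B)^{1,1} = 0$. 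Hence the SKT obstruction $\dd\dd^c\omega$ is constant in $t$, so it remains zero if it is zero initially.

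The main obstacle is the pointwise identity $(\rho^B)^{1,1} = (S-Q)(J\cdot,\cdot)$: it requires a careful comparison of the Bismut and Chern curvatures in complex coordinates and bookkeeping of the torsion terms, and it is where the specific structure of the tensors $S$ and $Q$ is used. Everything else — the correspondence $\frac{\partial}{\partial t}\omega \leftrightarrow \frac{\partial}{\partial t}g$ and the closedness of $\rho^B$ forcing $\partial\overline{\partial}(\rho^B)^{1,1}=0$ — is formal.
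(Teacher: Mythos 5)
The paper states this Proposition without proof --- it is recalled from Streets--Tian's original work on the pluriclosed flow --- so your proposal has to stand on its own. Its overall architecture is the standard one and is sound: reduce everything to the pointwise identity $(\rho^B)^{1,1}(\cdot,\cdot) = (S-Q)(J\cdot,\cdot)$, then deduce preservation of the SKT condition from the closedness of $\rho^B$. The second half of your argument is complete and correct: since $J$ is fixed, $\tfrac{\partial}{\partial t}(\partial\bar\partial\omega) = -\partial\bar\partial(\rho^B)^{1,1}$, and reading off the $(1,2)$-component of $\dd\rho^B = 0$ gives $\bar\partial(\rho^B)^{1,1} = -\partial(\rho^B)^{0,2}$, whence $\partial\bar\partial(\rho^B)^{1,1}=0$ and $\partial\bar\partial\omega$ is constant in time. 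The observation that $-S+Q$ is $J$-Hermitian, so the metric stays compatible with $J$ and the reformulation in terms of $\omega$ is legitimate, is also needed and you supply it.

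The gap is in the key identity, which you correctly isolate as the technical core but whose proposed derivation contains two concrete errors. First, the trace of the Chern curvature on the canonical bundle is the \emph{first} Chern--Ricci form $g^{k\bar l}\Omega_{i\bar j k\bar l} = -\partial_i\partial_{\bar j}\log\det g$, not the second Chern--Ricci tensor $S_{i\bar j} = g^{k\bar l}\Omega_{k\bar l i\bar j}$; passing between these two traces requires the Bianchi identity for the Chern connection and produces first covariant derivatives of the torsion, which must then combine with the $\nabla T$-type terms coming from the curvature of the difference tensor $\nabla^B - \nabla^C$. Your sketch (``Chern curvature contributes $S$, torsion correction contributes $Q$'') omits exactly these derivative-of-torsion terms. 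Second, the sign bookkeeping you propose to import from $\Ricb_{g,H} = \operatorname{Ric}_g - \tfrac14 H^2$ concerns the symmetric Bismut--Ricci \emph{tensor}, which is a different contraction of the Bismut curvature than the Bismut--Ricci \emph{form} $\rho^B$ (the trace on $K_M$). The two differ by Lee-form terms; this discrepancy is precisely the gauge term $\tfrac12\mc L_{\theta^{\sharp}}g$ appearing in the comparison between the pluriclosed and generalized Ricci flows recalled immediately after this Proposition, so following that recipe literally would land you on the generalized Ricci flow equation rather than on $\tfrac{\partial}{\partial t}\omega = -(\rho^B)^{1,1}$. The identity itself is true, but establishing it requires the full coordinate computation of Streets--Tian rather than the shortcut described.
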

  Moreover,  we have the following Theorem. 
  \begin{thm}[\cite{Str13}, Proposition 6.3, 6.4] \label{thm_ST}Let $(M, J)$  be a complex manifold  and $(g(t))_{t\in [0,T)}$ be a solution of the pluriclosed flow starting from a  SKT metric $g_0$. Then, $(g(t), H(t))_{t\in [0,T)}$ solves the following:
  $$
  \begin{aligned}
  \frac{\partial }{\partial t}g=&\, -{\rm Ric}_{g, H}^{B}-\frac12\mathcal L_{\theta^{\sharp}}g\,, \quad  g(0)=g_{0}\,,\\
  \frac{\partial }{\partial t }H=&\, \frac12\Delta_gH-\frac12\mathcal L_{\theta^{\sharp}}H\,, \quad H(0)=H_0\,,
  \end{aligned}
  $$ where $\theta\in \Omega^1(M)$ is the \emph{Lee form}  of the time-varying metric defined by $\dd\omega^{n-1}=\theta\wedge \omega^{n-1}$   while $H(t):=\dd^c\omega(t)$ is the torsion of the Bismut connection.
  \end{thm}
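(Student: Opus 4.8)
The plan is to translate the pluriclosed flow, written in terms of the fundamental form, into evolution equations for the Riemannian metric $g$ and the Bismut torsion $H$, and then to recognise the resulting system as the generalized Ricci flow \eqref{renormgroupflow} after a gauge transformation and a reparametrisation of time. By the preceding Proposition I may assume the SKT condition is preserved and that $\partial_t\omega = -(\rho^B)^{1,1}(\omega)$ along the flow. Since $J$ is fixed and $g = \omega(\cdot, J\cdot)$, differentiating gives $\partial_t g(X,Y) = -(\rho^B)^{1,1}(X, JY)$; and since $H = \dd^c\omega$ with $\dd H = \dd\dd^c\omega = 0$ by the SKT hypothesis, the torsion stays closed and evolves by $\partial_t H = \dd^c\partial_t\omega = -\dd^c(\rho^B)^{1,1}$. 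The goal is therefore to identify these two right-hand sides with the claimed expressions.

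The heart of the argument is a pointwise curvature identity on a Hermitian manifold: the symmetrisation in $X,Y$ of $(\rho^B)^{1,1}(X, JY)$ equals ${\rm Ric}^B_{g,H}(X,Y) + \tfrac12(\mc L_{\theta^\sharp}g)(X,Y)$. To establish it, I would compare the two natural Ricci-type contractions of the Bismut curvature $R^B$ -- the Ricci \emph{form} $\rho^B$ (tracing the curvature endomorphism against $J$) and the Ricci \emph{tensor} obtained by tracing the other pair of indices -- whose symmetric parts are governed by ${\rm Ric}^B_{g,H} = {\rm Ric}_g - \tfrac14 H^2$ (as in the definition preceding \eqref{eqn_ricpm}). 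Projecting onto the $(1,1)$-component and symmetrising produces a remainder which, using the defining relation $\dd\omega^{n-1} = \theta\wedge\omega^{n-1}$ for the Lee form to rewrite the relevant trace and divergence terms, is exactly $\tfrac12\mc L_{\theta^\sharp}g$. Feeding this into $\partial_t g(X,Y) = -(\rho^B)^{1,1}(X,JY)$ yields the first equation $\partial_t g = -{\rm Ric}^B_{g,H} - \tfrac12\mc L_{\theta^\sharp}g$.

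For the torsion, the cleanest route is structural: the previous paragraph shows that the metric equation is obtained from the gauge-fixed generalized Ricci flow \eqref{eqn_gaugedGRF} by choosing gauge field $X = -\theta^\sharp$ and halving the right-hand side (equivalently, reparametrising time), since $\tfrac12\big(-2\,{\rm Ric}^B_{g,H} + \mc L_{-\theta^\sharp}g\big) = -{\rm Ric}^B_{g,H} - \tfrac12\mc L_{\theta^\sharp}g$. Because $H = \dd^c\omega$ is determined by $(g,J)$ with $J$ fixed, the evolution of $H$ is forced by that of $g$; and for a closed $H$ the general formula \eqref{eqn_gaugedGRF_H} for the gauge-fixed flow gives $\partial_t H = \tfrac12\big(\Delta_g H + \mc L_{-\theta^\sharp}H\big) = \tfrac12\Delta_g H - \tfrac12\mc L_{\theta^\sharp}H$, which is the second equation. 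Alternatively one can verify this directly by computing $-\dd^c(\rho^B)^{1,1}$ using the closedness of the Bismut--Ricci form together with the Bianchi identity for the closed $3$-form $H$.

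The main obstacle is the curvature identity of the second paragraph: pinning down the Lee form with the exact coefficient $\tfrac12$ as a Lie-derivative (DeTurck) term requires a careful Bismut-geometry computation, tracking how the $(1,1)$-projection interacts with the symmetric--skew decomposition of the Bismut--Ricci contractions and how the trace of the torsion is encoded by $\theta$. Keeping the sign and normalisation conventions consistent -- the definition $H = \dd^c\omega$, the orientation of $\theta$ via $\dd\omega^{n-1} = \theta\wedge\omega^{n-1}$, and the factor of $2$ relating the two flows -- is where the bulk of the care is needed; once the identity is in hand, both evolution equations, and in particular the heat-type form of the $H$-equation guaranteed by $\dd H = 0$, follow.
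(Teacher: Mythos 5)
The paper does not prove this statement: it is imported verbatim from \cite{Str13} (Propositions 6.3 and 6.4), so there is no in-paper argument to compare against. Your outline does reproduce the strategy of the cited source --- differentiate $g=\omega(\cdot,J\cdot)$ and $H=\dd^c\omega$ along $\partial_t\omega=-(\rho^B)^{1,1}$, and then invoke the curvature identity $(\rho^B)^{1,1}(\cdot,J\cdot)={\rm Ric}^B_{g,H}+\tfrac12\mc L_{\theta^\sharp}g$ --- but as written the proposal is an outline rather than a proof: the entire content of the theorem is exactly that identity together with its $3$-form analogue $-\dd^c(\rho^B)^{1,1}=\tfrac12\Delta_gH-\tfrac12\mc L_{\theta^\sharp}H$, and you assert both rather than establish them. (One small simplification you missed: since $(\rho^B)^{1,1}$ is a real $(1,1)$-form, $(\rho^B)^{1,1}(\cdot,J\cdot)$ is automatically symmetric, so no symmetrisation is needed.)

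The genuine gap is in your ``structural'' derivation of the torsion equation. The gauge-fixed generalized Ricci flow \eqref{eqn_gaugedGRF}--\eqref{eqn_gaugedGRF_H} is a \emph{coupled system}: knowing that $g(t)$ satisfies the first equation with gauge $X=-\theta^\sharp$ (after halving time) does not imply that $H(t)=\dd^c\omega(t)$ satisfies the corresponding second equation --- that implication is precisely what the theorem claims, so the argument is circular. It is true that $\partial_tH$ is forced by $\partial_tg$ via $\partial_tH=\dd^c\partial_t\omega$, but identifying the forced evolution $-\dd^c(\rho^B)^{1,1}$ with $\tfrac12\Delta_gH-\tfrac12\mc L_{\theta^\sharp}H$ is an independent computation (using $\dd\rho^B=0$ and $\dd H=\dd\dd^c\omega=0$); this is the content of \cite[Proposition~6.4]{Str13} and cannot be bypassed. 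Your fallback remark that one can ``verify this directly'' is the correct route, but it is the necessary route, not an alternative one.
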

  Therefore,  up to a time reparametrization, a  solution of the pluriclosed flow is a solution of  a gauged fixed generalized Ricci flow where the gauge is generated by the generalized vector field $-\theta^{\sharp}-\dd\theta+\iota_{\theta^{\sharp}}H_0+\dd\iota_{\theta^{\sharp}}b\in\f{aut}((\TT)_{H_0})$.
   \begin{remark}\label{equipluriGRF}
   Streets and  Tian in \cite[Corollary 3.3]{ST12} proved that, given a SKT Hermitian  manifold $(M, J, g)$, the generalized Ricci flow coupled with  an evolution equation for $J$:
  \begin{equation}\label{eqn_J}
     \frac{\partial }{\partial t}J=\Delta J+ [J, g^{-1}{\rm Ric}_g]+ \mc Q(DJ)\,,
   \end{equation}
    where $\mc Q(DJ)$ is an appropriate quadratic term in the covariant derivative of $J$ with respect to the Levi-Civita connection, see \cite[(3.24)]{ST12}  for the precise expression of $\mc Q(DJ)$, preserves the SKT condition, namely $g(t)$ is $J(t)$-hermitian and SKT for all times of existence.  Then, gauging  with the family of diffeomorphisms generated by $(-J(t) \dd_{g(t)}^*\omega(t))^{\sharp }$ the generalized Ricci flow coupled with  \eqref{eqn_J} and  using \cite[Proposition 3.1]{ST12}, we obtain a solution for the pluriclosed flow, up to a time reparametrization. 
    \end{remark} 
    As a direct consequence of \Cref{thm_ST},  \Cref{equipluriGRF}  and \Cref{cor:SolvInfTime}, we have the following. 
    \begin{cor}\label{cor_longtimepluri}
    Let $ (\mathsf G,J, g)$ be a solvable Lie group endowed with a left-invariant SKT structure. Then, the solution of the pluriclosed flow starting from $g$ is immortal.
    \end{cor}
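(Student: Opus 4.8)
The plan is to transfer the global existence of the generalized Ricci flow on solvmanifolds (\Cref{cor:SolvInfTime}) to the pluriclosed flow, using the gauge-correspondence between the two flows recorded in \Cref{thm_ST} and \Cref{equipluriGRF}. Since $(J,g)$ is left-invariant and SKT, the Bismut torsion $H_0 = \dd^c\omega$ is a left-invariant closed $3$-form, and $[H_0] \in H^3(\G,\R)^{\G}$ is an invariant \v{S}evera class; thus the initial data determines a left-invariant metric ECA $((\gggo)_{H_0}, \gca(g,0))$ of the kind studied in the previous sections.

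First I would run the generalized Ricci flow $(g(t), H(t))$ starting from $(g, H_0)$. As this is a left-invariant solution on a solvable Lie group, \Cref{cor:SolvInfTime} guarantees that it exists for all $t \in [0,\infty)$. Alongside it I would solve the auxiliary evolution \eqref{eqn_J} for the complex structure, starting from $J$ and driven by $g(t)$. By \Cref{equipluriGRF} (the Streets--Tian result), this coupled system preserves the SKT condition, so $g(t)$ remains $J(t)$-Hermitian for all times; in particular each $J(t)$ is constrained to the fibre of compatible almost-complex structures, which is compact, so $J(t)$ cannot blow up and is defined for as long as $g(t)$ is, i.e. for all $t \in [0,\infty)$.

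Next, following \Cref{equipluriGRF}, I would gauge the coupled solution by the one-parameter family of diffeomorphisms generated by $(-J(t)\,\dd^*_{g(t)}\omega(t))^\sharp$ and reparametrize time, obtaining a solution of the pluriclosed flow with initial metric $g$. Here left-invariance is crucial twice over: the gauge vector field is left-invariant, hence complete on $\G$, so the generating diffeomorphisms exist for all $t$; and the time reparametrization relating \Cref{thm_ST} to \eqref{eqn_GRF} is a positive, proper rescaling. Consequently neither the gauge nor the reparametrization can shorten the maximal interval, and the resulting pluriclosed solution is defined on $[0,\infty)$. Finally, since the pluriclosed flow on left-invariant data reduces to an ODE on a finite-dimensional space, its solution is unique, so the solution constructed above is precisely the pluriclosed flow starting from $g$, which is therefore immortal.

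The main obstacle I anticipate lies in the non-compact bookkeeping: one must check that the gauge diffeomorphisms and the reparametrization are genuinely defined up to $t = \infty$, rather than only on a finite subinterval, and that the compactness-of-fibre argument really bounds $J(t)$ without invoking further a priori estimates. The completeness of left-invariant vector fields on $\G$ and the $J(t)$-Hermitian constraint resolve both points, but they are the parts of the argument where the passage from the compact case treated in \Cref{thm_ST} to our setting requires care.
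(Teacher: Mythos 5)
Your proof is correct and follows essentially the same route as the paper, which deduces the corollary directly from \Cref{thm_ST}, \Cref{equipluriGRF} and \Cref{cor:SolvInfTime}. The extra bookkeeping you supply (completeness of the left-invariant gauge vector field, boundedness of $J(t)$ on finite time intervals, and the constant time reparametrization) is exactly the content the paper leaves implicit.
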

     \Cref{cor_longtimepluri} recovers many known results in the literature such as those in \cite{Bol2016} on compact  complex surfaces, in \cite{EFV15} on SKT nilmanifolds, in \cite{AL19} on almost-abelian SKT solvmanifolds and in \cite{FV23} on Oeljeklaus-Toma manifolds.
     
The gauge-equivalence between pluriclosed and generalized Ricci flow allows us also  to  give an equivalent condition for a generalized Ricci soliton to be a pluriclosed soliton.
   \begin{thm}\label{thm_grfsvspls}
   Let $(M, J)$ be a complex manifold endowed with a SKT Hermitian metric $g$. Then, a soliton  $(g, H, X)$ for the generalized Ricci flow is a pluriclosed soliton if and only if $H=\dd^c\omega$ and $X+\theta^{\sharp}$ is holomorphic. 
   \end{thm}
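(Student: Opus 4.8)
The plan is to reduce everything to the gauge equivalence between the two flows recorded in \Cref{thm_ST} and \Cref{equipluriGRF}, and then to translate the two soliton conditions into statements about the infinitesimal generators of the respective self-similar families of diffeomorphisms.

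First I would recall the static characterizations. By \Cref{prop_static_soliton}, the generalized Ricci soliton data $(g,H,X)$ satisfies $\Ricb_{g,H}=\lambda g+\tfrac12\mc{L}_X g$ and $\Delta_g H=-2\lambda H-\mc{L}_X H$ for some $\lambda\in\R$, the corresponding self-similar solution being generated by the flow $\varphi_t$ of $X$ together with scalings $c(t)$. On the other side, a pluriclosed soliton is a solution of the pluriclosed flow that evolves only by scaling and pullback by biholomorphisms of $(M,J)$; equivalently there is a real holomorphic vector field $Y$ (i.e. $\mc{L}_Y J=0$) and $\lambda\in\R$ with $(\rho^B)^{1,1}(\omega)=\lambda\omega+\mc{L}_Y\omega$. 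By \Cref{thm_ST}, once $H=\dd^c\omega$ is the Bismut torsion, the pluriclosed flow is, after a fixed reparametrization of time, the gauge-fixed generalized Ricci flow whose gauge has vector-field part $-\theta^\sharp$; concretely, if $g^{\mathrm{GRF}}$ solves the ungauged generalized Ricci flow, then $g^{\mathrm{PCF}}=\phi_t^*\,g^{\mathrm{GRF}}$ solves the pluriclosed flow, where $\phi_t$ is the flow of $-\theta^\sharp$.

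For the forward implication, suppose $(g,H,X)$ is a pluriclosed soliton. Being a solution of the pluriclosed flow forces $H$ to equal the Bismut torsion $\dd^c\omega$ of $\omega=g(J\cdot,\cdot)$, since the Streets--Tian correspondence of \Cref{thm_ST} and \Cref{equipluriGRF} identifies $H(t)$ with $\dd^c\omega(t)$ along the flow; evaluating at $t=0$ gives $H=\dd^c\omega$. Writing the pluriclosed soliton as $g^{\mathrm{PCF}}(t)=c(t)\,\chi_t^*g_0$ with $\chi_t\in\operatorname{Aut}(M,J)$ and undoing the Lee-form gauge via $g^{\mathrm{PCF}}=\phi_t^*g^{\mathrm{GRF}}$, one finds $\chi_t=\varphi_t^{-1}\circ\phi_t$ up to the time reparametrization, whose infinitesimal generator at $t=0$ is (a nonzero multiple of) $-(X+\theta^\sharp)$. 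Since $\chi_t$ are biholomorphisms, this generator is real holomorphic, and hence so is $X+\theta^\sharp$. Conversely, assuming $H=\dd^c\omega$ and $X+\theta^\sharp$ real holomorphic, I would run the argument backwards: the generalized Ricci soliton produces, via $\phi_t^*$, a pluriclosed solution whose self-similar diffeomorphisms $\chi_t=\varphi_t^{-1}\circ\phi_t$ are generated by the holomorphic field $-(X+\theta^\sharp)$, hence lie in $\operatorname{Aut}(M,J)$; together with the scalings this exhibits $g^{\mathrm{PCF}}$ as a pluriclosed soliton.

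The main obstacle will be handling the time-dependence carefully and justifying the generator computation at the level of flows. The Lee form $\theta=\theta(t)$ evolves along the pluriclosed flow, so $\phi_t$ is the flow of a time-dependent vector field; I must verify that on a self-similar solution $\theta^\sharp$ is itself self-similar. This follows because $\theta$ is scale-invariant (from $\dd\omega^{n-1}=\theta\wedge\omega^{n-1}$) and transforms as $\theta(t)=\chi_t^*\theta_0$ under the biholomorphisms, so that $\theta^\sharp_{g(t)}=c(t)^{-1}\chi_t^*\theta^\sharp_{g_0}$ and the composed family $\chi_t=\varphi_t^{-1}\circ\phi_t$ is a genuine one-parameter family with the asserted generator. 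A second delicate point is the $2$-form (B-field) part of the gauge generalized vector field $-\theta^\sharp-\dd\theta+\iota_{\theta^\sharp}H_0+\dd\iota_{\theta^\sharp}b\in\f{aut}((\TT)_{H_0})$ appearing in \Cref{thm_ST}: I would check that in the preferred splitting induced by $\mc G(g,0)$ this part only reshuffles the $b$-component and is consistent with the soliton scaling of $H=\dd^c\omega$, so that it does not alter the identification of the underlying diffeomorphisms. Finally, I would invoke the standard fact that a real vector field generates biholomorphisms precisely when it is real holomorphic ($\mc{L}_V J=0$) to close both directions simultaneously.
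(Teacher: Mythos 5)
Your route is genuinely different from the paper's: you argue dynamically, by composing the self-similar diffeomorphism family of the generalized soliton with the Lee-form gauge flow from \Cref{thm_ST}. The paper instead works entirely at the static level: starting from $(\rho^B)^{1,1}(\omega)=\lambda\omega+\tfrac12\mc{L}_Y\omega$ with $Y$ holomorphic, it applies $\dd^c$ (using $[\mc{L}_Y,\dd^c]=0$) and invokes the two pointwise identities of \cite{Str13} underlying \Cref{thm_ST}, namely $-\dd^c(\rho^B)^{1,1}(\omega)=\tfrac12\Delta_g H-\tfrac12\mc{L}_{\theta^\sharp}H$ and $(\rho^B)^{1,1}(\cdot,J\cdot)={\rm Ric}^B_{g,H}+\tfrac12\mc{L}_{\theta^\sharp}g$, to read off the static equations \eqref{eqn_static_soliton} for the shifted field $Y-\theta^\sharp$; the converse is the same computation read backwards. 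No flows of time-dependent vector fields ever appear.

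The concrete gap in your sketch is the identification $\chi_t=\varphi_t^{-1}\circ\phi_t$. Even granting that the two presentations use the same scaling function $c(t)$ (which itself needs an argument), the equality $c(t)\,\chi_t^*g_0=c(t)\,(\varphi_t\circ\phi_t)^*g_0$ only tells you that $\chi_t$ and the composed family differ by isometries of $(g_0,H_0)$: the pair (scalings, diffeomorphisms) realizing a soliton is not unique. Since an isometry need not be a biholomorphism, your forward direction only gives that $X+\theta^\sharp$ is holomorphic modulo a Killing field, which is weaker than the claim. The converse has the mirror defect: holomorphicity of the generator at $t=0$ does not make the whole family $\varphi_t^{-1}\circ\phi_t$ a family of biholomorphisms; you would need the time-dependent generator to be holomorphic at every $t$, which requires first establishing the self-similarity of $\theta^\sharp$ along the flow --- essentially what you are trying to prove --- and also requires tracking that the gauge diffeomorphisms move the complex structure on the generalized Ricci flow side (the soliton diffeomorphisms $\varphi_t$ need not preserve $J$, so $\omega_t$ and hence $\theta_t$ are not automatically self-similar). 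All of this bookkeeping disappears once you descend to the static soliton equations, which is the step your proposal is missing.
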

   \begin{proof}
   We recall that $(\omega, X)$ is a pluriclosed soliton if and only if 
   $$
   (\rho^B)^{1,1}(\omega)=\lambda\omega+\frac12\mc L_X\omega\,, \quad X \mbox{ holomorphic}\,, \quad \lambda \in \R\,.
   $$ By means of the holomorphicity of $X$,   we have that $[\mc L_X, J]=0$ on any form on $M$,  yielding that $\dd^c\mc L_X\omega=\mc L_X\dd^c\omega$. Then,  using \cite[Proposition 6.4]{Str13} and defining $H=\dd^c\omega$, we obtain 
   $$
  \frac12\Delta_gH-\frac12\mc L_{\theta^{\sharp}}H= -\dd^c(\rho^B)^{1,1}(\omega)=-\lambda H-\frac12\mc L_XH\,,
   $$ which gives us 
   $$
   \Delta_gH=-2\lambda H-\mc L_{X-\theta^{\sharp}}H\,.
   $$ On the other hand, we have that
   $$
   {\rm Ric}^B_{g, H}+\frac12\mc L_{\theta^{\sharp}}g=(\rho^B)^{1,1}(\cdot, J\cdot)=\lambda g+\frac12\mc L_Xg.
   $$ This guarantees that 
   $$
   {\rm Ric}_{g, H}^B=\lambda g+\frac12\mc L_{X-\theta^{\sharp }}g\,,
   $$giving us the first claim. The converse can be proved following the same steps backwards.    
   \end{proof}
     
   In view of  \Cref{thm_ST} and \Cref{equipluriGRF},  we readily obtain the gauge-equivalence  between  the generalized bracket flow and  the bracket flow for the pluriclosed flow introduced in \cite{EFV15} and \cite{AL19}.  First of all, we recall the definition of the latter.
  \begin{defn} Let $(\mathsf G, \mu_0)$ be  a Lie group endowed with left-invariant complex structure $J$ and a left-invariant SKT metric $\omega$. The bracket flow is the following evolution equation: 
  \begin{equation}\label{eqn_plu}
  \frac{\dd}{\dd t}\mu=-\frac12\theta(P)\mu\,, \quad \mu(0)=\mu_0\,
  \end{equation} where  $P\in  \mathfrak{gl}(\ggo, J  )$ defined by $\omega(P\cdot, \cdot)=(\rho^B)^{1, 1}(\cdot, \cdot)$.
  
  \end{defn}

   Combining \Cref{thm_grf=bf}, \Cref{thm_ST}, \Cref{equipluriGRF} and \cite[Theorem 4.2]{EFV15}, we have the following.
  \begin{cor}\label{cor_gaugeequi} Up to a time reparametrization by $2$,  the bracket flow \eqref{eqn_plu} is gauge equivalent to the generalized bracket flow \eqref{eqn_genbf}.
  \end{cor}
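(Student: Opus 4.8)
The plan is to assemble the statement from the four cited ingredients as a single chain of gauge equivalences, with the factor of $2$ entering at exactly one link. The guiding observation is that \eqref{eqn_plu} and \eqref{eqn_genbf} are two bracket-flow encodings of one and the same geometric evolution — the pluriclosed flow on the one hand, the generalized Ricci flow on the other — and that these two geometric flows are themselves gauge-equivalent up to reparametrization by \Cref{thm_ST} and \Cref{equipluriGRF}. Concretely, I would fix left-invariant SKT initial data $(\mathsf G, J, g_0)$ with $\mu_0 = \lb_\ggo$ and $H_0 = \dd^c\omega_0$, run both bracket flows from the matching initial condition $\mud(0)_\ggo = \mu_0$, $\mud(0)_{\Lambda^3} = H_0$, and transport one solution onto the other through the chain of isometries supplied by the cited results.

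First I would invoke \cite[Theorem 4.2]{EFV15}: the solution $\mu(t)$ of \eqref{eqn_plu} coincides, up to a time-dependent family of Lie-group automorphisms of $\mathsf G$, with the left-invariant pluriclosed flow $\omega(t)$ starting from $\omega_0$. Next, by \Cref{thm_ST} together with \Cref{equipluriGRF}, this pluriclosed flow is, after rescaling time, a \emph{gauge-fixed} solution of the generalized Ricci flow, the gauge being the one-parameter family generated by the explicit generalized vector field $-\theta^\sharp - \dd\theta + \iota_{\theta^\sharp}H_0 + \dd\iota_{\theta^\sharp}b \in \f{aut}((\TT)_{H_0})$ recorded in \Cref{equipluriGRF}. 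The factor of $2$ is forced at this link by comparing the coefficient of $\Ricb_{g,H}$ in \Cref{thm_ST}, which is $1$, with the coefficient $2$ appearing in the gauge-fixed generalized Ricci flow \eqref{eqn_gaugedGRF}; matching the two paths therefore requires rescaling time by this ratio. Finally, \Cref{thm_grf=bf} identifies this left-invariant generalized Ricci flow solution, via a one-parameter family $F_t \in \mathsf L$, with the generalized bracket flow \eqref{eqn_genbf}. Composing the automorphisms of the first step, the gauge diffeomorphisms of the second, and the family $F_t$ of the third produces a single time-dependent element of $\mathsf L$ intertwining $\mu(t)$, suitably lifted to $\ggo \oplus \ggo^*$, with $\mud$ at the reparametrized time, which is the asserted gauge equivalence.

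The point that needs genuine care — and which I expect to be the main obstacle — is the compatibility of the various gauge groups and the bookkeeping of what each flow actually moves. The flow \eqref{eqn_plu} moves only the Lie bracket $\mu$ while the fundamental form $\omega$ is held fixed, so the torsion it sees is the \emph{induced} $H_\mu = \dd^c_\mu\omega$; by contrast \eqref{eqn_genbf} moves the full Dorfman bracket $\mud$, i.e. both $\mu$ and $H$ independently. One must check that the $\ggo$-component of the generalized bracket flow, namely $\tfrac{\dd}{\dd t}\mu = -\theta(\Ricb_{\mu,H})\mu$ from \eqref{eqn_muH}, agrees after reparametrization with $-\tfrac12\theta(P)\mu$ modulo a gauge direction, that is, modulo a term of the form $\theta(E)\mu$ tangent to the $\mathsf{GL}(\ggo)$-orbit. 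By the identity $(\rho^B)^{1,1}(\cdot,J\cdot) = \Ricb_{g,H} + \tfrac12\mc L_{\theta^\sharp}g$ already established in the proof of \Cref{thm_grfsvspls}, the endomorphism $P$ differs from $\Ricb_{\mu,H}$ precisely by the Lee-form contribution, so this reduces to matching the Lee-form gauge of \Cref{thm_ST} with the $\mathsf L$-gauge of \Cref{thm_grf=bf}, together with checking that the induced evolution of $H_\mu$ is consistent with the $\Lambda^3$-component of \eqref{eqn_genbf}. Once this matching is in place, the two bracket flows lie in a common $\mathsf L$-orbit for all time and the corollary follows.
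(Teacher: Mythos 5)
Your proposal is correct and follows essentially the same route as the paper, which proves the corollary simply by declaring that it follows from combining \Cref{thm_grf=bf}, \Cref{thm_ST}, \Cref{equipluriGRF} and \cite[Theorem 4.2]{EFV15}; your write-up is a more careful expansion of exactly that chain, with the factor of $2$ correctly located at the link between \Cref{thm_ST} and the gauge-fixed flow \eqref{eqn_gaugedGRF}.
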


   Finally, \Cref{thm_grfsvspls} can be specialized in the case of algebraic solitons,  as defined in \Cref{def_algsol} and in \cite[Definition 2.7]{AL19}. For the sake of clarity, we quickly  recall  the definition of algebraic solitons for the pluriclosed flow.
   \begin{defn}[\cite{AL19}, Definition 2.7]A Lie group $\mathsf G$ endowed with a left-invariant  SKT structure $(J, g)$ is called algebraic pluriclosed soliton if there exists $D \in {\rm Der}(\ggo)\cap \mathfrak{gl}(\ggo, J)$ and $ \lambda\in \mathbb R$ such that 
   $$
   P=\lambda{\rm Id}+\frac12(D+D^t). 
   $$ 
   \end{defn}
   \begin{cor}\label{cor_algplurivsgensol}
    Let $\mathsf G$ be a Lie group endowed with a left-invariant SKT structure $(J, g)$. Then, an algebraic soliton $(g, H, D)$ for the generalized Ricci flow is an algebraic pluriclosed  soliton if and only if $D- {\rm ad}_{\theta^{\sharp}}\in {\rm Der}(\ggo)\cap \mathfrak{gl}(\ggo, J)$ and $H=\dd^c\omega$. 
   \end{cor}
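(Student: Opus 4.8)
The plan is to read the statement as the algebraic specialization of \Cref{thm_grfsvspls}, supplemented by a short computation identifying the holomorphy condition there with the algebraic condition $D-\ad_{\theta^\sharp}\in{\rm Der}(\ggo)\cap\mathfrak{gl}(\ggo,J)$, and by matching the two notions of solitons through the endomorphism $P$. First I would record the data attached to a generalized Ricci algebraic soliton $(g,H,D)$: by \Cref{prop_equisol} and \Cref{lem_algrealsol} it is equivalent to the static soliton $(g,H,X_D)$ of \Cref{prop_static_soliton}, where $X_D$ is the (automorphism-generating) vector field with $\dd_e\varphi_t=e^{tD}$ and where $\Ricb_{\mu,H}=\lambda\,\id+D$ with $D\in{\rm Der}(\ggo)$ symmetric. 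Recall from \Cref{ex_solitons} the sign convention $\ad_{X_D}=-D$.

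Next I would translate the two conditions of \Cref{thm_grfsvspls}. The torsion condition $H=\dd^c\omega$ is unchanged. For the holomorphy of $X_D+\theta^\sharp$, I would compute the two relevant Lie derivatives of $J$ at the level of the Lie algebra: since the $\varphi_t$ are complex-linear automorphisms generated by $e^{tD}$ one gets $\mc L_{X_D}J=-[D,J]$, while for the left-invariant field $\theta^\sharp$ the bracket formula gives $\mc L_{\theta^\sharp}J=[\ad_{\theta^\sharp},J]$. Hence $\mc L_{X_D+\theta^\sharp}J=-[D-\ad_{\theta^\sharp},J]$, so $X_D+\theta^\sharp$ is holomorphic precisely when $[D-\ad_{\theta^\sharp},J]=0$, i.e.\ $D-\ad_{\theta^\sharp}\in\mathfrak{gl}(\ggo,J)$; and since $D$ is a derivation and $\ad_{\theta^\sharp}$ is inner, $D-\ad_{\theta^\sharp}\in{\rm Der}(\ggo)$ automatically. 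Combined with \Cref{thm_grfsvspls}, this already shows that $(g,H,D)$ is a \emph{pluriclosed} soliton if and only if $H=\dd^c\omega$ and $D-\ad_{\theta^\sharp}\in{\rm Der}(\ggo)\cap\mathfrak{gl}(\ggo,J)$.

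It then remains to see that, within the class of algebraic solitons, being a pluriclosed soliton is the same as being an \emph{algebraic} pluriclosed soliton in the sense of \cite{AL19}. For this I would use the identity $(\rho^B)^{1,1}(\cdot,J\cdot)=\Ricb_{\mu,H}+\tfrac12\mc L_{\theta^\sharp}g$ established in the proof of \Cref{thm_grfsvspls}, which by definition of $P$ reads $g(P\cdot,\cdot)=\Ricb_{\mu,H}+\tfrac12\mc L_{\theta^\sharp}g$. Substituting $\Ricb_{\mu,H}=\lambda\,\id+D$ together with the left-invariant computation $\tfrac12\mc L_{\theta^\sharp}g=-\operatorname{Sym}_g(\ad_{\theta^\sharp})$, and using that $D$ is symmetric, yields $P=\lambda\,\id+\operatorname{Sym}_g(D-\ad_{\theta^\sharp})=\lambda\,\id+\tfrac12(\tilde D+\tilde D^{t})$ with $\tilde D:=D-\ad_{\theta^\sharp}$. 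Thus, once $H=\dd^c\omega$ and $\tilde D\in{\rm Der}(\ggo)\cap\mathfrak{gl}(\ggo,J)$, the endomorphism $P$ has exactly the form in the definition of an algebraic pluriclosed soliton, so the pluriclosed soliton is algebraic; the converse implication is immediate, since an algebraic pluriclosed soliton is in particular a pluriclosed soliton. Putting the two steps together gives the claimed equivalence.

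The main obstacle I anticipate is purely bookkeeping: getting every sign and normalization right. In particular one must pin down the convention relating $D$, the generated field $X_D$, and $\ad_{X_D}$ (as in \Cref{ex_solitons}, where $\ad_{X_D}=-D$), so that the holomorphy condition comes out as $D-\ad_{\theta^\sharp}$ rather than $D+\ad_{\theta^\sharp}$; and one must note that the skew part of $\tilde D$, which is invisible to the symmetric endomorphism $P$, is nonetheless controlled by the holomorphy condition supplied by \Cref{thm_grfsvspls}, so that the bridge in the third step is not circular. Verifying $\tfrac12\mc L_{\theta^\sharp}g=-\operatorname{Sym}_g(\ad_{\theta^\sharp})$ and $g(P\cdot,\cdot)=(\rho^B)^{1,1}(\cdot,J\cdot)$ from the definition of $P$ are the two routine identities underpinning the argument.
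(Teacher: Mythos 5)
Your proposal follows the same route as the paper: reduce the algebraic soliton to a generalized Ricci soliton with vector field $\pm X_D$ via \Cref{lem_algrealsol}, apply \Cref{thm_grfsvspls}, and translate the holomorphy of $\pm X_D+\theta^\sharp$ into the bracket condition $[D-\ad_{\theta^\sharp},J]=0$. You do, however, supply two things the paper's proof leaves implicit, and both are worth having: the explicit Lie-derivative computation $\mc L_{X_D}J=-[D,J]$, $\mc L_{\theta^\sharp}J=[\ad_{\theta^\sharp},J]$ behind the asserted equivalence, and, more substantively, the bridge from ``pluriclosed soliton'' (the conclusion of \Cref{thm_grfsvspls}) to ``\emph{algebraic} pluriclosed soliton'' (what the corollary actually claims) via the identity $P=\lambda\,\id+\operatorname{Sym}_g(D-\ad_{\theta^\sharp})$ --- the paper's proof jumps straight to ``giving the claim'' without this step. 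Your sign bookkeeping ($\ad_{X_D}=-D$, hence the soliton field $+X_D$ and the combination $D-\ad_{\theta^\sharp}$) is consistent with \Cref{ex_solitons} and with the statement of the corollary, whereas the paper's proof writes $-X_D$ following the sign in the proof of \Cref{lem_algrealsol}; this discrepancy is internal to the paper and immaterial in the applications (where $\ad_{\theta^\sharp}=0$), and you handle it coherently.
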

   \begin{proof}
  By \Cref{lem_algrealsol},  $(g, H, D)$ is an algebraic soliton for the generalized Ricci flow if and only if $(g, H, -X_D)$ is a  soliton for the generalized Ricci flow where $X_D$ is the vector field defined by 
  $$
  X_D=\frac{\dd}{\dd t}\Big|_{t=0}\varphi_t \mbox{ with } \varphi_t\in {\rm Aut}(\mathsf G) \mbox{ such that }\dd_e\varphi_t=e^{tD}\,.
  $$ 
  On the other hand, by \Cref{thm_grfsvspls},  $(g, H, -X_D)$ is a pluriclosed soliton if and only if $H=\dd^c\omega$ and $-X_D+\theta^{\sharp}$ is holomorphic. But, $-X_D+\theta^{\sharp}$ is holomorphic if and only if $D-{\rm ad}_{\theta^{\sharp }}\in \mathfrak{gl}(\ggo, J)$, giving the claim.
   \end{proof}

\section{Classification of solitons up to dimension 4}\label{sec_classifi}
 In this Section, we will provide a classification of generalized nilsolitons on nilpotent Lie algebras of dimension up to $4$.  As it is clear  from the discussion in \Cref{sec_gensol}, we can only hope to classify generalized nilsolitons up to scaling and up to generalized automorphisms of the ECA we are considering.  Classical nilsolitons were classified in \cite{finding}. So, we will focus on classifying the non-classical ones.

  As it is well known, besides the abelian ones, nilpotent Lie algebras of dimension up to $4$ are $3$: $\mathfrak n_3$, the Lie algebra associated to the Heisenberg group ${\rm Heis}(3, \mathbb R)$,  which is the only non-abelian nilpotent Lie algebra of dimension $3$, $\mathfrak n_3\oplus \mathbb R$ and $\mathfrak n_4$, the only  indecomposable non-abelian nilpotent Lie algebra of dimension $4$.  
 
 We will go through the classification case by case using \eqref{eqns_soli}. 
 
 In dimension 3, there is only one generalized soliton  which is non-classic. This  is mainly due to the fact that $H^3(\mathfrak n_3, \R)=\mathbb R$.
 \begin{prop}\label{heisnils}
  On $\mathfrak n_3$, the unique  non-classic generalized nilsoliton  is given by:
  $$
   \mu(e_1, e_2)= e_3\,, \quad H=e^{123}\,, \quad \lambda=-2\,,\quad D={\rm diag}(1,1,2)\,.
  $$
  \end{prop}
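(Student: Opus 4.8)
The plan is to record the two structural reductions that make the problem finite, and then solve the resulting scalar system. Since $\mathfrak{n}_3$ is the unique non-abelian nilpotent Lie algebra in dimension three, I fix the bracket $\mu(e_1,e_2)=e_3$ on $\mathbb{R}^3$; and since $\Lambda^3(\mathbb{R}^3)^\ast=\mathbb{R}\,e^{123}$ is one-dimensional, every (automatically closed) $3$-form is $H=c\,e^{123}$ for some $c\in\mathbb{R}$, with $c=0$ giving the classical nilsoliton of \cite{finding}. Thus the non-classical case is $c\neq0$. For existence I would simply verify the algebraic soliton equations \eqref{eqns_soli} for the stated data; this also recovers, via \Cref{prop_equisol}, the expanding soliton already exhibited in \Cref{ex_solitons}.

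The first reduction normalizes the background metric. I would show that, up to $\operatorname{Aut}(\mu)$ and scaling, every inner product on $\mathfrak{n}_3$ is equivalent to the standard one making $\{e_1,e_2,e_3\}$ orthonormal. Concretely, the centre $\mathfrak{z}=[\ggo,\ggo]=\mathbb{R}e_3$ is intrinsic; choosing a $\bar g$-unit vector $\zeta\in\mathfrak{z}$ and a $\bar g$-orthonormal basis $u_1,u_2$ of $\mathfrak{z}^\perp$ with $\mu(u_1,u_2)=\kappa\zeta$, $\kappa>0$, the assignment $e_1\mapsto u_1,\ e_2\mapsto u_2,\ e_3\mapsto\kappa\zeta$ is a Lie algebra isomorphism pulling $\bar g$ back to $\operatorname{diag}(1,1,\kappa^2)$, and a further diagonal automorphism $\operatorname{diag}(a,a,a^2)$ composed with a scaling normalizes $\kappa$ to $1$. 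Hence we may assume $\bar g$ standard and $H=c\,e^{123}$.

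With this normalization the system becomes explicit. A top-degree form is automatically $\bar g$-coclosed; indeed $\dim\Lambda^3\ggo^\ast=1$ forces $\dd_\mu\Lambda^2\ggo^\ast=0$, so $\dd_\mu^\ast H=0$ and $H$ is harmonic. Thus $\Delta_\mu H=0$ and $\rho(\Ricb_{\mu,H})H=-\tr(\Ricb_{\mu,H})\,H$, so the second equation in \eqref{eqns_soli} reduces to $\lambda=\tr(\Ricb_{\mu,H})$. For the first equation I would compute $\operatorname{Ric}_\mu=\operatorname{diag}(-\tfrac12,-\tfrac12,\tfrac12)$ and, using the all-indices convention for $\bar g$, $\bar g^{-1}H^2=2c^2\operatorname{Id}$, whence $\Ricb_{\mu,H}=\operatorname{diag}(-\tfrac12-\tfrac{c^2}{2},\,-\tfrac12-\tfrac{c^2}{2},\,\tfrac12-\tfrac{c^2}{2})$. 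Since a diagonal endomorphism $\operatorname{diag}(p,q,r)$ lies in $\operatorname{Der}(\mu)$ iff $p+q=r$, the requirement $\Ricb_{\mu,H}-\lambda\operatorname{Id}\in\operatorname{Der}(\mu)$ forces $\lambda=-\tfrac32-\tfrac{c^2}{2}$. Combining with $\lambda=\tr(\Ricb_{\mu,H})=-\tfrac12-\tfrac{3c^2}{2}$ yields $c^2=1$, hence $\lambda=-2$ and $D=\Ricb_{\mu,H}-\lambda\operatorname{Id}=\operatorname{diag}(1,1,2)$, exactly as claimed.

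The main obstacle is the metric normalization: establishing that $\operatorname{Aut}(\mu)$ together with scaling acts transitively on inner products (equivalently, that the family $\operatorname{diag}(1,1,\kappa^2)$ exhausts all metrics up to homothety) is what collapses an a priori infinite-dimensional problem to the single scalar unknown $c$, after which everything is a short linear computation. A minor point to settle is the sign of $c$: every automorphism of $\mathfrak{n}_3$ has positive determinant and $\dd_\mu\Lambda^2\ggo^\ast=0$, so $H=e^{123}$ and $H=-e^{123}$ are not identified by $\operatorname{Aut}(E)$; however, since $\dd_\mu^\ast H=0$, the soliton equations depend on $H$ only through $H^2$, so both signs give the same soliton up to orientation and we may take $c=1$.
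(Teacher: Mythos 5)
Your proof is correct and follows essentially the same route as the paper: both reduce to a one-parameter normal form (you fix the bracket and normalize the metric via $\operatorname{Aut}(\mu)$ and scaling, while the paper cites Milnor to get an orthonormal basis with $\mu(e_1,e_2)=ae_3$), compute $\operatorname{Ric}^B_{\mu,H}$ for $H$ a multiple of $e^{123}$, and solve the two equations of \eqref{eqns_soli} — your trace identity $\rho(A)H=-\operatorname{tr}(A)H$ for the top form is just a slicker packaging of the paper's direct computation of the second equation. One micro-quibble: the implication ``$\dim\Lambda^3\ggo^*=1$ forces $\dd_\mu\Lambda^2\ggo^*=0$'' is not valid as stated (a priori $\dd_\mu$ could surject onto the one-dimensional $\Lambda^3\ggo^*$); the correct reason $H$ is coclosed is unimodularity of $\mathfrak n_3$, or the one-line check that $\dd_\mu e^{ij}=0$ for all $i<j$ — the conclusion is true, so nothing downstream breaks.
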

  \begin{proof}
  Given a left-invariant  metric on  ${\rm Heis}(3, \mathbb R)$,  thanks to \cite[pag 305, (4.2)]{Mln},   we can always find an orthonormal basis $\{e_1, e_2, e_3\}$ of $\mathfrak n_3$ such that 
 $$
 \mu(e_1, e_2)=a e_3\,, \quad a\in\mathbb R\backslash\{0\}\,.
 $$ Moreover, thanks to \cite[Theorem 4.3]{Mln},   we know that
 $$
 {\rm Ric}_{\mu}=\frac12a^2{\rm diag}(-1,-1,1)\,.
 $$ Now, every  $0\ne H\in\Lambda^3\mathfrak n_3^*$, it will be of the form $H=be^{123}$ with  $b\ne 0 $ and it will be trivially closed and harmonic.  On the other hand, we have that 
 $$
 H^2=2b^2{\rm Id}
 $$ which implies that 
 $$ {\rm Ric}_{\mu, H}^B=\frac12 {\rm diag}(-(a^2+b^2), -(a^2+b^2), a^2-b^2)
 $$ Moreover,  every  $D\in {\rm Der}(\mathfrak n_3, \mu)$ has to satisfy 
 $$
 D_3^3=D_1^1+D_2^2\,.
 $$  This guarantees that 
 $$
 {\rm Ric}_{\mu, H}^B-\lambda{\rm Id}\in{\rm Der}(\mathfrak n_3, \mu)
 $$ if and only if 
 $$
  \lambda=-\frac12(3a^2+b^2)\,.
 $$
  On the other hand,  by straightforward computations we have that 
 $$
 \begin{aligned}
0= &\, \rho({\rm Ric}_{\mu, H}^B)H-\Delta_{\mu}H+\ell H
=-(a^2-b^2)H\,,
\end{aligned}
 $$  which is satisfied if and only if $ a^2=b^2$, giving us the claim, using \Cref{prop_equisol}.
 \end{proof}
 
  Now, we will focus on the $4$-dimensional case.  A first difference between the $3$-dimensional one is  a  larger third cohomology group which allows a  wider  possibility for a generalized metric to be a generalized nilsoliton. 
  
 We will, first of all, analyse the case of $\mathfrak n_3\oplus \mathbb R$. An important remark to be done is that this Lie algebra is the only one among the $4$-dimensional ones which admits left-invariant complex structures, see for instance \cite{MadsenSwann} or \cite{Sal}. The complex structure on $ \mathfrak n_3\oplus \mathbb R$ is unique  up to biholomorphisms and quotients by co-compact  lattices give rise to the so-called \emph{primary Kodaira surfaces}, the only compact complex surfaces with zero  Kodaira dimension and first Betti number equal to 3.  Clearly, every left-invariant  Hermitian metric on $\mathfrak n_3\oplus \mathbb R$ is SKT. So, using \Cref{cor_algplurivsgensol}, we will be able to detected which generalized nilsoliton on $\mathfrak n_3\oplus \mathbb R$ is an algebraic pluriclosed soliton.
 \begin{prop}\label{prop_n3}
   On $\mathfrak n_3\oplus \mathbb R$,  the only non-classic generalized nilsolitons are the following:
   \begin{enumerate}
   \item\label{firstcase} $\displaystyle{ \mu(e_1, e_2)=e_3\,, \quad H=e^{123}\,,\quad  \lambda =-2\,, \quad D={\rm diag}(1,1,2,2)\,.}$
   \item \label{inffamilysol}$\displaystyle{\mu(e_1, e_2)=e_3\,, \quad H=\lambda_1e^{234}+\lambda_2e^{134}\,, \quad \lambda_1^2+\lambda_2^2=1\,, \quad  \lambda =-\frac 32\,, \quad D={\rm diag}\left(1-\frac{\lambda_2^2}{2},1-\frac{\lambda_1^2}{2},\frac32,1\right)}\,.$
   \end{enumerate}
   In particular, the first one is the unique algebraic pluriclosed soliton on  $\mathfrak n_3\oplus \mathbb R$.
 \end{prop}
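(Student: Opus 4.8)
The plan is to solve the algebraic soliton system \eqref{eqns_soli} directly in a basis adapted to the Lie algebra, exactly as in \Cref{heisnils}. First I would fix, by the normal form for metric nilpotent Lie algebras (cf. the proof of \Cref{heisnils}), a $\bar g$-orthonormal basis $\{e_1,e_2,e_3,e_4\}$ of $\mathfrak{n}_3 \oplus \R$ with $\mu(e_1,e_2) = a e_3$, $a\neq 0$, where $\R e_3$ is the derived algebra and $e_4$ spans the remaining central direction; up to scaling we may take $a=1$, and then $\operatorname{Ric}_\mu = \tfrac{a^2}{2}\operatorname{diag}(-1,-1,1,0)$. Writing a general $3$-form as $H = p\,e^{123} + q\,e^{124} + r\,e^{134} + s\,e^{234}$ (every $3$-form on $\mathfrak{n}_3\oplus\R$ is $\dd_\mu$-closed, as one checks on the basis), I would compute the symmetric endomorphism $H^2$ and record which of the four generators are $\bar g$-harmonic: $e^{123}, e^{134}, e^{234}$ are co-closed, whereas $e^{124} = -\dd_\mu e^{34}$ is exact.

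The key reduction comes from the first equation of \eqref{eqns_soli}. I would determine $\operatorname{Der}(\mathfrak{n}_3\oplus\R)$ and observe that a \emph{symmetric} derivation is supported on the diagonal together with the $(1,2)$-block. Since $\operatorname{Ric}^B_{\mu,H} - \operatorname{Ric}_\mu = -\tfrac14 H^2$ with $\operatorname{Ric}_\mu$ diagonal, the requirement $\operatorname{Ric}^B_{\mu,H} - \lambda\operatorname{Id}\in\operatorname{Der}(\mu)$ forces every off-diagonal entry of $H^2$ except the $(1,2)$ one to vanish, i.e. $pq = pr = ps = qr = qs = 0$. This splits the analysis into exactly three cases: (A) $H\parallel e^{123}$; (B) $H\parallel e^{124}$; (C) $H\in\operatorname{span}(e^{134},e^{234})$. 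Case (B) I would dispose of at once: as $e^{124}$ is exact, \Cref{lem_Lact_mud} shows that a $B$-field $e^\alpha\in\mathsf L$ sends the bracket to one with $H=0$, and by \Cref{prop:Liso} the resulting metric ECA is isometric to the original; hence any soliton here is equivalent to the classical nilsoliton and is excluded.

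In cases (A) and (C) I would substitute into both equations of \eqref{eqns_soli}. Harmonicity gives $\Delta_\mu H = 0$, so the derivation constraint $D_{33} = D_{11}+D_{22}$ fixes $\lambda$, while the torsion equation $\rho(\operatorname{Ric}^B_{\mu,H})H = -\lambda H$ fixes the size of $H$ relative to $a$. This produces \Cref{firstcase} with $\lambda=-2$, $D=\operatorname{diag}(1,1,2,2)$, and — since in case (C) the torsion equation reduces to $r^2+s^2=1$ with $\lambda=-\tfrac32$ for \emph{every} such $(r,s)$ — the one-parameter family \Cref{inffamilysol}. For the final assertion I would invoke \Cref{cor_algplurivsgensol}: the (essentially unique) left-invariant complex structure $J$ on $\mathfrak{n}_3\oplus\R$ and its fundamental form $\omega$ give a preferred Bismut torsion $\dd^c\omega$, and a non-classical generalized nilsoliton $(g,H,D)$ is an algebraic pluriclosed soliton precisely when $H=\dd^c\omega$ and $D-\operatorname{ad}_{\theta^\sharp}$ is $J$-linear. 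I expect a short computation to show $\dd^c\omega$ is proportional to $e^{123}$ and that $J$-linearity holds for \Cref{firstcase} but fails for the family \Cref{inffamilysol}, whose torsion lies in $\operatorname{span}(e^{134},e^{234})$, thereby singling out the first soliton.

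The main obstacle, to my mind, is not the (somewhat lengthy) linear algebra but getting the bookkeeping of equivalences right: the classification is only up to scaling and $\operatorname{Aut}(E)$, so one must recognise that the exact direction $e^{124}$ is gauge-trivial (case (B)) and that the residual $\mathsf{SO}(2)$-action on $(e_1,e_2)$ relates the members of case (C), yet still exhibit the entire family, since its very existence is what answers \Cref{questfinite} negatively. The complex-geometric step also demands that one pin down $J$, $\omega$ and the Lee form $\theta$ explicitly enough to verify the two conditions of \Cref{cor_algplurivsgensol}.
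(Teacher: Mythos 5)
Your overall strategy for cases (A) and (C) and for the final pluriclosed statement coincides with the paper's: the normal form $\mu(e_1,e_2)=ae_3$ with $\operatorname{Ric}_\mu=\tfrac{a^2}{2}\operatorname{diag}(-1,-1,1,0)$, the derivation constraint on the off-diagonal entries of $H^2$ splitting $H$ into the three directions, the two equations of \eqref{eqns_soli} fixing $\lambda$ and the size of $H$, and the verification via \Cref{cor_algplurivsgensol} using the explicit $J$, $\omega$, $\dd^c\omega\propto e^{123}$ and Lee form. (Your description of the symmetric derivations as supported on the diagonal and the $(1,2)$-block is correct, since $De_4$ must be central and $De_3\in\R e_3$, and it leads to exactly the constraints $pq=pr=ps=qr=qs=0$ and hence to the same three cases the paper analyses.)

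The genuine gap is your disposal of case (B), $H=q\,e^{124}$ with $q\neq 0$: the $B$-field argument does not exclude it. By \Cref{prop:Liso}, the isomorphism $e^\alpha$ with $\dd_\mu\alpha=qe^{124}$ carries $\left((\gggo)_{qe^{124}},\gca(\bar g,0)\right)$ to $\left((\gggo)_{0},\gca(\bar g,\alpha)\right)$, i.e.\ it changes the generalized metric; the preferred torsion representative determined by $\gca(\bar g,\alpha)$ is still $\dd_\mu\alpha=qe^{124}\neq0$, so the image is \emph{not} a classical soliton and nothing has been ruled out. Moreover, the algebraic soliton condition \eqref{eqn_algsoli} is a fixed-point condition for the scalar-normalized bracket flow with respect to the fixed background $\bG$, and it is not preserved by elements of $\Ll$ outside the isotropy $\Or(\gggo,\bG)$, so solitonhood cannot be transported along $e^\alpha$ in the first place. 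What is actually required is to show that \eqref{eqns_soli} has no solution with $H\parallel e^{124}$, and here the non-harmonicity you noted is essential: $\Delta_\mu(e^{124})=-a^2e^{124}\neq 0$, the derivation condition forces $\lambda=-\tfrac12(3a^2+2q^2)$, and the torsion equation then reduces to $q(a^2+q^2)=0$, which is impossible for $q\neq0$. (Alternatively, one may invoke the necessary condition $\bar g(\iota_XH,\dd_\mu^*H)=0$ for generalized nilsolitons, which fails for $X=e_4$ when $H\parallel e^{124}$.) Without one of these computations, case (B) is not closed and the classification is incomplete.
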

 \begin{proof}
 Thanks to  \cite[ Theorem 3.1]{VanT}, every inner product on $\mathfrak n_3\oplus \mathbb R$ is isometric to an inner product such that an  orthonormal basis $\{e_1, \ldots, e_4\}$   satisfies the following
 $$
 \mu(e_1, e_2)=a e_3\,, \quad a\ne 0\,. 
 $$We know that $D\in {\rm Der}(\mathfrak n_3\oplus \mathbb R, \mu)\cap {\rm Sym}(\mathfrak n_3\oplus \mathbb R)$ if and only if 
 $$
  D=\begin{pmatrix}
  a_1& a_3 & 0 & a_5\\
  a_3&a_2& 0 & a_6\\
  0 & 0 &a_1+a_2& 0\\
  a_5&a_6&0& a_4
  \end{pmatrix}\,, \quad a_i\in \mathbb R\,, \quad i=1,\ldots, 6\,,
  $$ with respect to the frame $\{e_1, \ldots, e_4\}$.
  Moreover,  using \cite[Lemma 4.2]{nilRF}, it is easy to see that 
$$
  {\rm Ric}_{\mu}=\frac12a^2 {\rm diag}(-1, -1, 1, 0)\,.
  $$
It is straightforward to check  that  
   a generic closed $H\in \Lambda^3(\mathfrak n_3\oplus \mathbb R)^*$ is of the form: 
 $$
 H=\lambda_4e^{123}+\lambda_3e^{124}+\lambda_2e^{134}+\lambda_1e^{234}\,,\quad \lambda_i\in \mathbb R\,, \quad i=1,\ldots, 4\,.
 $$Consequently, we have that 
 $
 \Delta_{\mu}H=-a^2\lambda_3e^{124}\,
 $
 and 
 \begin{equation}\label{eqn_sqh}
  H^{2}=2\begin{pmatrix}
  \lambda_4^2+\lambda_2^2+ \lambda_3^2& \lambda_2\lambda_1&  -\lambda_3\lambda_1& \lambda_4\lambda_1\\
  \lambda_2\lambda_1& \lambda_4^2+\lambda_1^2+\lambda_3^2& \lambda_2\lambda_3& -\lambda_4\lambda_2\\
  -\lambda_3\lambda_1 & \lambda_3\lambda_2 & \lambda_1^2+\lambda_2^2+\lambda_4^2& \lambda_4\lambda_3 \\
  \lambda_4\lambda_1& -\lambda_4\lambda_2& \lambda_4\lambda_3 & \lambda_1^2+\lambda_2^2+\lambda_3^2
  \end{pmatrix}\,.
 \end{equation}
  This readily implies that we need to impose that  $\lambda_3=0 $ or $  \lambda_4=\lambda_2=\lambda_1=0$ in order to hope for   ${\rm Ric}_{\mu, H}^{B}-\lambda{\rm Id}\in {\rm Der}(\mathfrak n_3\oplus \mathbb R, \mu)$.\\

 First of all,  we suppose $\lambda_3=0$. In this case,   ${\rm Ric}_{\mu, H}^{B}-\lambda{\rm Id}\in{\rm Der}(\mathfrak n_3\oplus \mathbb R, \mu)$  if and only if 

 $$
 \lambda=-\frac12(\lambda_4^2+3a^2)\,.
 $$ The condition 
  $$
  \rho({\rm Ric}_{\mu, H}^{B})H-\Delta_{\mu}H+\lambda H=0
  $$
  is equivalent to 
  \begin{equation}\label{sist}
  \begin{cases}
 \lambda_4(-2a^2+3\lambda_2^2+2\lambda_4^2+3\lambda_1^2)=0\,,\\
\lambda_2(-3a^2+3\lambda_2^2+2\lambda_4^2+3\lambda_1^2)=0\,,\\
\lambda_1(-3a^2+3\lambda_2^2+2\lambda_4^2+3\lambda_1^2)=0\,.
  \end{cases}
 \end{equation} 
 However, the first equation implies that $a^2=\frac12\left( 3\lambda_2^2+2\lambda_4^2+3\lambda_1^2\right)$ or $ \lambda_4=0\,.$ 
On the other hand, if $a^2=\frac12( 3\lambda_2^2+2\lambda_4^2+3\lambda_1^2)$, we have that 
$$
-3a^2+3\lambda_2^2+2\lambda_4^2+3\lambda_1^2=-a^2\ne 0 \,.
$$ From this, we obtain that $\lambda_1=\lambda_2=0\,,$ giving us \Cref{firstcase}. 

  Now let $\lambda_4=\lambda_3=0$. In this case, the second equation in \eqref{sist} implies that 
  $\lambda_2=0$ or $\lambda^2=\lambda_1^2+\lambda_2^2$. If  the last condition holds, then,  the third equation is satisfied too, giving us \Cref{inffamilysol}.  
  
  It remains to analyse  when $\lambda_4=\lambda_3=\lambda_2=0$. In this case,  the third equation in \eqref{sist} gives us that 
  $ a^2=\lambda_1^2 $, since $\lambda_1\ne 0$ if we want non-classical generalized nilsoliton. Thus, we  recover a particular generalized nilsoliton belonging to \Cref{inffamilysol}.

  It remains to analyse when $\lambda_3\ne 0 $ and $\lambda_4=\lambda_2=\lambda_1=0$. In this case, $ {\rm Ric}_{\mu, H}^{B}-\lambda{\rm Id}  \in {\rm Der}(\mathfrak n_3\oplus \mathbb R, \mu)$ if and only if $\lambda=-\frac12(3a^2+2\lambda_3^2)
$.
 On the other hand, the condition
  $$
  \rho({\rm Ric}_{\mu, H}^B)H-\Delta_{\mu}H+\lambda H =0 
  $$ is equivalent to 
  $
  \lambda_3(a^2+\lambda_3^2)=0
  $ which cannot happen unless $\lambda_3=0$, concluding the classification.\\
 To address the last part of the statement, we  
 define $Je_1=e_2$, $Je_4=e_3$ and  consider 
 $$
 \omega^1=\frac{1}{\sqrt{2}}(e^1+\sqrt{-1}e^2)\,, \quad \omega^2=\frac{1}{\sqrt{2}}(e^4+\sqrt{-1}e^3)
 $$ obtaining that 
 $$
 \dd\omega^1=0\,,\quad \dd\omega^2=\frac{a}{\sqrt{2}}\omega^{1\bar 1}\,,
 $$ which, in particular, gives us the integrability of $J$.
 The fundamental form of the metric  is then $\omega=\sqrt{-1}(\omega^{1\bar 1}+ \omega^{2\bar 2})$,  which is SKT,  while the torsion of the Bismut connection is given by 
 $$
 H:=\dd^c\omega=\sqrt{-1}(\bar\partial\omega-\partial \omega)=-\frac{1}{\sqrt{2}}a(\omega^{1\bar 1\bar 2}-\omega^{1\bar 12})=a e^{123}\,.
 $$ On the other hand, we have that 
 $$
 \dd\omega=\frac{\sqrt{-1}}{{\sqrt{2}}}a(\omega^{1\bar 12}+\omega^{1\bar 1 \bar 2})=a e^{124}\,.
 $$Then, it is easy to see that Lee form associated to $\omega$ has the following form:
 $$
 \theta=\frac{a}{\sqrt{2}}(\omega^2+\omega^{\bar 2})=a e^4\,.
 $$ 
 This gives easily that ${\rm ad}_{\mu }\theta^{\sharp}=0.$
 Moreover, given a diagonal derivation $D$, it will commute with $J$ if and only if  it is of the form 
 $$
 D= b{\rm diag}(1,1,2,2)\,,\quad b\in \mathbb R\,.
 $$  But, the derivation in \Cref{firstcase}   is of the above form. Then, applying \Cref{thm_grfsvspls},  we obtain the claim, concluding the proof. 
   \end{proof}
  \Cref{inffamilysol} in \Cref{prop_n3} in particular provides the first example of nilpotent Lie group admitting a infinite family of generalized nilsolitons, answering negatively to \Cref{questfinite}.  
 
  Now, we focus on the study of generalized nilsolitons in the case of $\mathfrak n_4$.
  \begin{prop}
  On $\mathfrak n_4$, the unique non-classic generalized nilsolitons are the following:
  \begin{enumerate}
  \item\label{firstcasen4}$\displaystyle{\mu(e_1, e_2)=e_3\,,\quad \mu(e_1, e_3)=\frac{\sqrt{3}}{2}e_4\,, \quad H=\frac{\sqrt{3}}{2}e^{134}\,, \quad \lambda=-\frac32\,, \quad D={\rm diag}\left(\frac23, 1, \frac54, \frac32\right)}\,,$
  \item \label{secondcase}$\displaystyle{\mu(e_1, e_2)=e_3\,,\quad \mu(e_1, e_3)=e_4\,, \quad H=e^{234}\,, \quad \lambda=-\frac32\,, \quad D={\rm diag}\left(\frac12, \frac12, 1, \frac12\right)}$\,.
  \end{enumerate}
  \end{prop}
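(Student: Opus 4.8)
The plan is to reduce, via \Cref{prop_equisol}\,\eqref{staticclassicas}, the classification to solving the algebraic soliton system
\[
\operatorname{Ric}^B_{\mu,H} = \lambda\operatorname{Id} + D, \qquad \Delta_\mu H = \lambda H + \rho(\operatorname{Ric}^B_{\mu,H})H,
\]
for some $D\in\operatorname{Der}(\mu)$ and $\lambda\in\R$, exactly as in the proofs of \Cref{heisnils} and \Cref{prop_n3}. First I would fix a convenient normal form. The lower central series gives the canonical flag $\langle e_4\rangle=\mathfrak{n}_4^{(2)}\subset\langle e_3,e_4\rangle=\mathfrak{n}_4^{(1)}\subset\mathfrak{n}_4$; choosing a unit $e_4$ spanning the last term, a unit $e_3\in\mathfrak{n}_4^{(1)}\cap e_4^\perp$, and an orthonormal pair $e_1,e_2$ in $(\mathfrak{n}_4^{(1)})^\perp$, and then using the residual $\mathsf{O}(2)$-rotation in the $e_1e_2$-plane to arrange $\mu(e_2,e_3)=0$, reduces the bracket to $\mu(e_1,e_2)=a e_3+c e_4$, $\mu(e_1,e_3)=b e_4$ with $a,b>0$ and $c\in\R$; scaling normalizes one parameter. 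Since $\dim\mathfrak{n}_4=4$ one checks directly that every left-invariant $3$-form is closed, so $H=\lambda_4 e^{123}+\lambda_3 e^{124}+\lambda_2 e^{134}+\lambda_1 e^{234}$ is an arbitrary closed torsion.

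Next I would impose the first equation. Using the standard formula for the Ricci endomorphism of a nilpotent metric Lie algebra (cf.\ \cite{nilRF}) I would compute $\operatorname{Ric}_\mu$ (diagonal apart from an entry $\operatorname{Ric}_{34}\propto ac$ produced by $c\neq0$), then $H^2$ and $\operatorname{Ric}^B_{\mu,H}=\operatorname{Ric}_\mu-\tfrac14 H^2$. In parallel I would compute $\operatorname{Der}(\mu)\cap\operatorname{Sym}$, whose diagonal part is cut out by the weight relations $d_3=d_1+d_2$ and $d_4=d_1+d_3$. Requiring $\operatorname{Ric}^B_{\mu,H}-\lambda\operatorname{Id}$ to lie in $\operatorname{Der}(\mu)$ then forces the off-diagonal entries of $\operatorname{Ric}^B_{\mu,H}$ to vanish — so that in particular $c=0$ and the support of $H$ is constrained (e.g.\ the $e^{124}$-component is killed) — while the two weight relations determine $\lambda$ together with relations between $a,b$ and the surviving $\lambda_i$.

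Then I would impose the second equation. Writing $d^*_\mu$ and $\Delta_\mu=d_\mu d^*_\mu$ (legitimate since $dH=0$) on the four basis $3$-forms, I would observe that $e^{134}$ and $e^{234}$ are $\mu$-harmonic whereas $e^{123},e^{124}$ are not; for a harmonic $H$ supported on a single $e^{ijk}$ the equation collapses to the scalar condition $\operatorname{Ric}^B_{ii}+\operatorname{Ric}^B_{jj}+\operatorname{Ric}^B_{kk}=\lambda$. Branching on the support of $H$ and combining with the output of the first equation then yields, after discarding the classical solutions ($H=0$) and the branches that are inconsistent or reduce to previously found ones, exactly the two families: $H\parallel e^{134}$ with $b=\tfrac{\sqrt3}{2}$, and $H\parallel e^{234}$ with $b=1$. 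Both have harmonic torsion, consistent with the remark that in dimensions $\le 4$ all such solitons do.

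The main obstacle I anticipate is twofold. First, carrying the extra metric parameter $c$ — absent for $\mathfrak{n}_3$ and $\mathfrak{n}_3\oplus\R$ — cleanly through the Ricci and derivation computations and showing that the soliton conditions force $c=0$. Second, organizing the case analysis on $(\lambda_1,\dots,\lambda_4)$ so that each branch is solved and, crucially, that solutions equivalent under $\operatorname{Aut}(E)$ and scaling are identified, so that precisely the two stated solitons remain. The computation of $\Delta_\mu H$ via the codifferential on the full four-dimensional space of closed $3$-forms is the most calculation-heavy ingredient, but it simplifies considerably once one notes that only the $\mu$-harmonic components can support a soliton.
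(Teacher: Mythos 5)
Your proposal is correct and follows essentially the same route as the paper: the same normal form $\mu(e_1,e_2)=ae_3+be_4$, $\mu(e_1,e_3)=ce_4$ (the paper cites it from the literature rather than rederiving it), the same reduction via \Cref{prop_equisol}, and the same computation of $\operatorname{Ric}_\mu$, $H^2$, $\operatorname{Der}(\mu)\cap\operatorname{Sym}$ and $\Delta_\mu H$ followed by a branch on which $\lambda_i$ survives. Two small cautions: $\operatorname{Ric}_\mu$ has a second off-diagonal entry (in the $(2,3)$ slot, proportional to the product of the two extra structure constants) besides the $(3,4)$ one you mention, and the fact that only the harmonic components $e^{134},e^{234}$ can carry a soliton is an \emph{output} of checking all four branches (the $e^{123}$ and $e^{124}$ branches end in contradictions), not something one may assume up front.
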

  \begin{proof}
  Thanks to \cite[Theorem 3.2]{VanT}, up to isometries, given any inner product on $\mathfrak n_4$, we can always find an orthonormal frame  $\{e_1, \ldots, e_4\}$ of $\mathfrak n_4$ such that 
 $$
 \mu(e_1,e_2)=ae_3+be_4\,, \quad \mu(e_1, e_3)=ce_4\,, \quad a, c>0\,.
 $$
In this framework, a symmetric derivation $D$ is of the form 
 $$
 D={\rm diag}(\alpha, \beta, \alpha+\beta, 2\alpha + \beta)\,,\quad \beta, \alpha \in \mathbb R\,,$$
 see for instance \cite[Lemma 1]{Tho22}.  Moreover, the Ricci endomorphism, using again \cite{nilRF}, takes the following form:
\begin{equation}\label{eqn_ric}
{\rm Ric}_{\mu}=\frac12\begin{pmatrix}-(a^2+b^2+c^2)& 0 & 0 & 0 \\
0 & -(a^2+b^2)& -bc& 0 \\
0 & -bc&a^2-c^2& ab\\
0 & 0 &ab & b^2+c^2
\end{pmatrix}\,.
\end{equation}
 Again, a generic closed $H\in \Lambda^3\mathfrak n_4^*$ will be of the following form:
 $$
 H=\lambda_4e^{123}+\lambda_3e^{124}+\lambda_2e^{134}+\lambda_1e^{234}\,,\quad \lambda_i\in \mathbb R\,, \quad i=1,\ldots, 4\,.
 $$ Consequently, it is easy to see that 
 $$
 \Delta_{\mu}H
 =(-\lambda_4(c^2+b^2)+\lambda_3ab)e^{123} +(-\lambda_3a^2+\lambda_4ab)e^{124}\,
 $$ and the endomorphism $H^2$ will have the same form as in \eqref{eqn_sqh}. 
Combining \eqref{eqn_ric} and \eqref{eqn_sqh}, we can see that,  in order for ${\rm Ric}_{\mu, H}^B-\lambda{\rm Id}$ to be a derivation, we need to impose that 
$$
\lambda_2\lambda_1=\lambda_3\lambda_1=\lambda_4\lambda_1=\lambda_4\lambda_2=0\,,  \quad \lambda_4\lambda_3=ab\,,\quad \lambda_2\lambda_3=-bc\,,
$$ which gives us that $b=0$ and just one among $\lambda_i$'s can be non-zero.

First of all, we consider  $\lambda_1\ne 0 $.  In this particular case,  we have that ${\rm Ric}_{\mu, H}^{B}-\lambda{\rm Id}\in{\rm Der}(\mathfrak n_3\oplus \mathbb R, \mu)$ if and only if 
$$
 \lambda=-\frac32a^2
\quad \mbox{and }\quad  c^2=a^2\,.
$$
On the other hand,
$$
0=\rho({\rm Ric}_{\mu, H}^B)H-\Delta_{\mu}H+\lambda H=\frac32\lambda_1\left( \lambda_1^2-a^2\right)e^{234}
$$ which is satisfied if and only if $\lambda_1^2=a^2$, giving us \Cref{secondcase}.

 
 Now, we  assume $\lambda_2\ne 0 $.  In this case, we have that ${\rm Ric}_{\mu, H}^B-\lambda{\rm Id}\in{\rm Der}(\mathfrak n_4, \mu)$ if and only if 
\begin{equation}\label{eqn_lac}
 \lambda=-\frac32a^2 \quad \mbox{and }\quad 3c^2+\lambda_2^2=3a^2\,.
\end{equation}
 On the other hand, 
 $$
 \rho({\rm Ric}_{\mu, H}^B)H-\Delta_{\mu}H+\lambda H=0
 $$ is equivalent to 
 $$
 \frac{\lambda_2}{2}(c^2+3\lambda_2^2-3a^2)=0
 $$ which gives that $3a^2=c^2+3\lambda_2^2$. Combining this last equality with the second relation in \eqref{eqn_lac}, we obtain that $c^2=\lambda_1^2=\frac34a^2$, giving us \Cref{firstcasen4}. 
 
Now, we consider the case in which $\lambda_3\ne 0 $.  Then,  we have that ${\rm Ric}_{\mu, H}^B-\lambda{\rm Id}\in{\rm Der}(\mathfrak n_4, \mu)$ if and only if 
$$
\lambda=-\frac12(2\lambda_3^2+3a^2)\quad \mbox{and }\quad c^2=a^2+\frac23\lambda_3^2\,.
$$ 
Moreover,   the condition
$$
\rho({\rm Ric}_{\mu, H}^B)H-\Delta_{\mu}H+\ell H=0
$$ is equivalent to 
$$
\frac12\lambda_3\left(a^2+\lambda_3^2\right)=0
$$ which is impossible since we are assuming that   $\lambda_3\ne0$.

The left case to analyse is that in which $\lambda_4\ne 0$. In this hypothesis, we have that ${\rm Ric}_{\mu, H}^{B}-\lambda{\rm Id}\in{\rm Der}(\mathfrak n_3\oplus \mathbb R, \mu)$ if and only if 
\begin{equation}\label{eqn_lac2}
 \lambda=-\frac12(3a^2+\lambda_4^2)\quad \mbox{and }\quad  a^2=\frac13(\lambda_4^2+3c^2)\,.
\end{equation}
Moreover, 
$$
\rho({\rm Ric}_{\mu, H}^B)H-\Delta_{\mu}H+\lambda H =0
$$ is equivalent to 
$$
\lambda_4(-a^2+2c^2+\lambda_4^2)=0$$
 which is satisfied if and only if $ 2c^2+\lambda_4^2=a^2$. On the other hand, combining this with the second relation in \eqref{eqn_lac2} gives that 
$c^2=-\frac23\lambda_4^2$ which is not possible, concluding the proof.
 \end{proof}


\bibliography{grf}
\bibliographystyle{amsalpha}

\end{document}